\pdfoutput=1
\documentclass[english]{shinybook}
\newcommand{\half}{\frac{1}{2}}
\newcommand{\N}{\mathbb{N}}

\newcommand{\R}{\mathbb{R}}
\newcommand{\1}{\mathbb{1}}
\newcommand{\Rnn}{\mathbb{R}^{n\times n}}
\newcommand{\calI}{\mathcal{I}}

\newcommand{\calN}{\mathcal{N}}
\newcommand{\calP}{\mathcal{P}}

\newcommand{\calT}{\mathcal{T}}

\newcommand{\eps}{\varepsilon}
\renewcommand{\phi}{\varphi}
\renewcommand{\bar}{\overline}
\newcommand{\ran}{\operatorname{\mathrm{ran}}}
\renewcommand{\div}{\operatorname{\mathrm{div}}}
\newtheorem{theorem}{Theorem}[chapter]
\newtheorem{lemma}[theorem]{Lemma}
\newtheorem{cor}[theorem]{Corollary}

\theoremstyle{definition}
\newtheorem{defn}[theorem]{Definition}

\usepackage{functan}
\Macro{Lp}{{L}^{p}(\Omega)}
\Macro{Lq}{{L}^{q}(\Omega)}
\Macro{Linf}{{L}^{\infty}(\Omega)}
\Macro{L2}{{L}^{2}(\Omega)}
\Macro{L1}{{L}^{1}(\Omega)}
\Macro{L1loc}{{L}^{1}_{\mathrm{loc}}(\Omega)}
\Macro{l2}{{L}^{2}}
\Macro{linf}{{L}^{\infty}}
\Macro{H1}{{H}^{1}(\Omega)}
\Macro{H2}{{H}^{2}(\Omega)}
\Macro{Hm1}{{{H}^{-1}(\Omega)}}
\Macro{H10}{{{H}^{1}_0}(\Omega)}
\Macro{h1}{{H}^{1}}
\Macro{Hdiv}{{H}^{\mathrm{div}}(\Omega)}
\Macro{hdiv}{{H}^{\mathrm{div}}}
\Macro{c}{{{C}^0}}
\Macro{Ck}{{{C}^k(\Omega)}}
\Macro{Ckb}{{{C}^k(\bar\Omega)}}
\Macro{Ck0}{{{C}_0^k(\bar\Omega)}}
\Macro{Cinf}{{{C}^\infty(\bar\Omega)}}
\Macro{Cinf0}{{{C}_0^\infty(\bar\Omega)}}
\Macro{Wkp}{{{W}^{k,p}(\Omega)}}
\Macro{Wkp0}{{{W}^{k,p}_0(\Omega)}}
\Macro{W1p}{{{W}^{1,p}(\Omega)}}
\Macro{W1p0}{{{W}^{1,p}_0(\Omega)}}

\newcommand{\evolution}[2]{{#1}(0,T;{#2})}
\Macro{CkT}{\evolution{C^k}}
\Macro{CT}{\evolution{C}}
\Macro{LpT}{\evolution{L^p}}
\Macro{LqT}{\evolution{L^q}}
\Macro{L2T}{\evolution{L^2}}

\newcommand{\inner}[1]{\left(#1\right)}
\newcommand{\dual}[4][auto]{\deltwoarg[#1]{#3}{#4}{\langle}{\rangle}{,}_{{#2}^*,{#2}}}
\usepackage{stmaryrd}
\newcommand{\jump}[1]{\left\llbracket#1\right\rrbracket}
\newcommand{\avg}[1]{\{\!\{#1\}\!\}}
\newcommand{\setof}[2]{\left\{#1:#2\right\}}

\DeclareFontFamily{U}{matha}{\hyphenchar\font45}
\DeclareFontShape{U}{matha}{m}{n}{
    <5> <6> <7> <8> <9> <10> gen * matha
    <10.95> matha10 <12> <14.4> <17.28> <20.74> <24.88> matha12
}{}
\DeclareSymbolFont{matha}{U}{matha}{m}{n}
\DeclareFontSubstitution{U}{matha}{m}{n}
\DeclareFontFamily{U}{mathx}{\hyphenchar\font45}
\DeclareFontShape{U}{mathx}{m}{n}{
    <5> <6> <7> <8> <9> <10>
    <10.95> <12> <14.4> <17.28> <20.74> <24.88>
    mathx10
}{}
\DeclareSymbolFont{mathx}{U}{mathx}{m}{n}
\DeclareFontSubstitution{U}{mathx}{m}{n}
\DeclareMathDelimiter{\vvvert}     {0}{matha}{"7E}{mathx}{"17}
\newnorm{dg}{\delonearg[#1]{#2}{\vvvert}{\vvvert}}

\usepackage{booktabs}
\usepackage{tikz}
\usetikzlibrary{calc}
\usetikzlibrary{shapes.arrows}
\usepackage{enumerate}
\usepackage{sidecap}

\usepackage{algpseudocode}
\makeatletter
\setlength{\@fptop}{0pt}
\makeatother

\addbibresource{fem.bib}

\title{Introduction to\\ Finite Element Methods}
\subtitle{Lecture Notes}
\author{Christian Clason}
\publishers{\small\email{c.clason@uni-graz.at}\\\url{https://homepage.uni-graz.at/c.clason}}
\date{%
    \today
    \\
    {\small\sffamily\textsc{arxiv}:\,\href{https://arxiv.org/abs/1709.08618}{\nolinkurl{1709.08618v2}}}
}

\begin{document}
\maketitle

\frontmatter

\tableofcontents

\mainmatter

\chapter*{Preface}

Partial differential equations appear in many mathematical models of physical, biological and economic phenomena, such as elasticity, electromagnetics, fluid dynamics, quantum mechanics, pattern formation or derivative valuation. However, closed-form or analytic solutions of these equations are only available in very specific cases (e.g., for simple geometries or constant coefficients), and so one has to resort to numerical approximations of these solutions.

In these notes, we will consider \emph{finite element methods}, which have developed into one of the most flexible and powerful frameworks for the numerical (approximate) solution of partial differential equations. They were first proposed by Richard Courant in \cite{Courant:1943}; but the method did not catch on until engineers started applying similar ideas in the early 1950s. Their mathematical analysis began later, with the works of Milo{\v{s}} Zl{\'a}mal, starting with \cite{Zlamal:1968}.

Knowledge of real analysis (in particular, Lebesgue integration theory) and functional analysis (especially Hilbert space theory) as well as some familiarity of the weak theory of partial differential equations is assumed, although the fundamental results of the latter (Sobolev spaces and the variational formulation of elliptic equations) are recalled in \cref{chap:weak_elliptic}.

\enlargethispage{1cm}
These notes are mostly based on the following works:
{\small
\begin{enumerate}[{[1]}]
    \item \fullcite{Braess:2007}
    \item \fullcite{Brezzi:2013}
    \item \fullcite{Brenner:2008}
    \item \fullcite{Ern:2004}
    \item \fullcite{Rannacher}
    \item \fullcite{Thomee:2006}
\end{enumerate}
}

\part{Background}

\chapter{Overview of the finite element method}\label{chap:overview}

We begin with a \enquote{bird's-eye view} of the finite element method by considering a simple one-dimensional example. Since the goal here is to give the flavor of the results and techniques used in the construction and analysis of finite element methods, not all arguments will be completely rigorous (especially those involving derivatives and function spaces). These gaps will be filled by the more general theory in the following chapters.

\section{Variational form of elliptic PDEs}

Consider for a given function $f:(0,1)\to\R$ the solution $u:(0,1)\to \R$ of the two-point boundary value problem
\begin{equation}\label{eq:bvp1d:strong}
    \left\{ \begin{aligned}
            -u''(x) &= f(x) \quad \text{for }x\in(0,1),\\
            u(0) &= 0, \qquad u'(1) = 0.
    \end{aligned} \right.\tag{BVP}
\end{equation}
The idea is to pass from \eqref{eq:bvp1d:strong} to a system of linear equations -- which can be solved on a computer -- by projection onto a finite-dimensional subspace. Any projection requires some kind of inner product, which we introduce now. We begin by multiplying the differential equation with any sufficiently regular \emph{test function} $v$ with $v(0)=0$, integrating over $x\in(0,1)$, and integrating by parts. Then any solution $u$ of \eqref{eq:bvp1d:strong} satisfies
\begin{equation*}
    \begin{aligned}[t]
        \inner{f,v} &:= \int_0^1 f(x)v(x)\,dx = -\int_0^1 u''(x)v(x)\,dx\\
        &= \int_0^1 u'(x)v'(x)\,dx\\
        &=: a(u,v),
    \end{aligned}
\end{equation*}
where we have used that $u'(1) = 0$ and $v(0)=0$. Let us (formally for now) define the space
\begin{equation*}
    V := \setof{v:(0,1)\to\R \text{ integrable}}{a(v,v) < \infty,\, v(0)=0}.
\end{equation*}
Then we can pose the following problem: Find $u\in V$ such that
\begin{equation}\label{eq:bvp1d:weak}
    a(u,v) = (f,v) \qquad \text{ for all } v\in V\tag{W}
\end{equation}
holds. This is called the \emph{weak} or \emph{variational} form of \eqref{eq:bvp1d:strong} (since $v$ varies over all $V$). If the solution $u$ of \eqref{eq:bvp1d:weak} is twice continuously differentiable and $f$ is continuous, one can prove (by taking suitable test functions $v$) that $u$ satisfies \eqref{eq:bvp1d:strong}. On the other hand, there are solutions of \eqref{eq:bvp1d:weak} even for a discontinuous right-hand side $f$. Since then the second derivative of $u$ is discontinuous, $u$ is not necessarily a solution of \eqref{eq:bvp1d:strong}. For this reason, $u\in V$ satisfying \eqref{eq:bvp1d:weak} is called a \emph{weak solution} of \eqref{eq:bvp1d:strong}.

Note that the \emph{Dirichlet boundary condition} $u(0)=0$ appears explicitly in the definition of $V$, while the \emph{Neumann condition} $u'(1) = 0$ is implicitly incorporated in the variational formulation. In the context of finite element methods, Dirichlet conditions are therefore frequently called \emph{essential conditions}, while Neumann conditions are referred to as \emph{natural conditions}.

\section{Ritz--Galerkin approximation}\label{sec:overview:galerkin}

The fundamental idea is now to approximate $u$ by considering \eqref{eq:bvp1d:weak} on a \emph{finite-dimensional} subspace $S\subset V$. We are thus looking for $u_S\in S$ satisfying
\begin{equation}\label{eq:bvp1d:galerkin}
    a(u_S,v_S) = (f,v_S) \qquad \text{ for all } v_S\in S.\tag{W$_S$}
\end{equation}
Note that this is still the same equation; only the function spaces have changed. This is a crucial point in (conforming) finite element methods. (Nonconforming methods, for which $S\nsubseteq V$ or $v\notin V$, will be treated in \cref{part:nonconforming}.)

We first have to ask whether \eqref{eq:bvp1d:galerkin} has a unique solution. Since $S$ is finite-dimensional, there exists a basis $\phi_1,\dots,\phi_n$ of $S$. Due to the bilinearity of $a(\cdot,\cdot)$, it suffices to require that $u_S = \sum_{i=1}^n U_i \phi_i \in S$, $U_i\in\R$ for $i=1,\dots,n$, satisfies
\begin{equation*}
    a(u_S,\phi_j) = (f,\phi_j) \qquad \text{ for all } 1\leq j \leq n.
\end{equation*}
This is now a system of linear equations for the unknown coefficients $U_i$. If we define
\begin{align*}
    \mathbf{U} &= (U_1,\dots,U_n)^T\in\R^n,\\
    \mathbf{F} &= (F_1,\dots,F_n)^T\in\R^n,\quad F_i = \inner{f,\phi_i},\\
    \mathbf{K} &= (K_{ij}) \in \R^{n\times n}, \quad\qquad K_{ij} = a(\phi_i,\phi_j),
\end{align*}
we have that $u_S$ satisfies \eqref{eq:bvp1d:galerkin} if and only if (\enquote{iff}) $\mathbf{KU} = \mathbf{F}$. This linear system has a unique solution iff $\mathbf{KV} = {0}$ implies $\mathbf{V}={0}$. To show this, we set $v_S := \sum_{i=1}^n V_i \phi_i \in S$. Then,
\begin{equation*}
    {0} = \mathbf{KV} = (a(v_S,\phi_1),\dots,a(v_S,\phi_n))^T
\end{equation*}
implies that
\begin{equation*}
    0 = \sum_{i=1}^n V_i a(v_S,\phi_i) = a(v_S,v_S) = \int_0^1 v_S'(x)^2\, dx.
\end{equation*}
This means that $v_S'$ must vanish almost everywhere and thus that $v_S$ is constant. (This argument will be made rigorous in the next chapter.) Since $v_S(0)=0$, we deduce that $v_S\equiv 0$, and hence it follows for the linear independence of the $\phi_i$ that $V_i=0$ for all $1\leq i \leq n$.

There are two remarks to made here. First, we have argued unique solvability of the finite-dimensional system by appealing to the properties of the variational problem to be approximated. This is a standard argument in finite element methods, and the fact that the approximation \enquote{inherits} the well-posedness of the variational problem is one of the strengths of the Galerkin approach. Second, this argument shows that the \emph{stiffness matrix} $\mathbf{K}$ is (symmetric and) positive definite, since $\mathbf{V}^T\mathbf{KV} = a(v_S,v_S) > 0$ for all $\mathbf{V}\neq 0$.

\bigskip

Now that we have an approximate solution $u_S\in S$, we are interested in estimating the \emph{discretization error} $\norm*{}{u_S-u}$, which of course depends on the choice of $S$. The fundamental observation is that by subtracting \eqref{eq:bvp1d:weak} and \eqref{eq:bvp1d:galerkin} for the same test function $v_S\in S$, we obtain
\begin{equation}\label{eq:bvp1d:galerkin_orth}
    a(u-u_S,v_S) = 0 \quad\text{ for all } v_S\in S.
\end{equation}
This key property is called \emph{Galerkin orthogonality}, and expresses that the discretization error is (in some sense) orthogonal to $S$. This can be exploited to derive error estimates in the \emph{energy norm}
\begin{equation*}
    \norm*{E}{v}^2 = a(v,v) \quad \text{ for } v\in V.
\end{equation*}
It is straightforward to verify that this indeed defines a norm, which satisfies the \emph{Cauchy--Schwarz} inequality
\begin{equation*}
    a(v,w) \leq \norm*{E}{v}\norm*{E}{w} \quad \text{ for all } v,w\in V.
\end{equation*}
We can thus show that for any $v_S\in S$,
\begin{equation*}
    \begin{aligned}
        \norm*{E}{u-u_S}^2 &= a(u-u_S,u-v_S) + a(u-u_S,v_S-u_S) \\
        &= a(u-u_S,u-v_S) \\
        &\leq \norm*{E}{u-u_S}\norm*{E}{u-v_S}
    \end{aligned}
\end{equation*}
due to the Galerkin orthogonality for $v_S-u_S\in S$. Taking the infimum over all $v_S$, we obtain
\begin{equation*}
    \norm*{E}{u-u_S} \leq \inf_{v_S\in S}\norm*{E}{u-v_S},
\end{equation*}
and equality holds -- and hence this infimum is attained -- for $u_S\in S$ solving \eqref{eq:bvp1d:galerkin}. The discretization error is thus completely determined by the approximation error of the solution $u$ of \eqref{eq:bvp1d:weak} by functions in $S$:
\begin{equation}\label{eq:bvp1d:cea}
    \norm*{E}{u-u_S} = \min_{v_S\in S}\norm*{E}{u-v_S}.
\end{equation}

To derive error estimates in the $\m{l2}(0,1)$ norm
\begin{equation*}
    \norm{l2}{v}^2 = \inner{v,v} = \int_0^1v(x)^2\,dx ,
\end{equation*}
we apply a \emph{duality argument} (also called \emph{Aubin--Nitsche trick}). Let $w$ be the solution of the \emph{dual} (or \emph{adjoint}) \emph{problem}
\begin{equation}\label{eq:bvp1d:dual}
    \left\{ \begin{aligned}
            -w''(x) &= u(x)-u_S(x) \quad \text{for }x\in(0,1),\\
            w(0) &= 0, \qquad w'(1) = 0.
    \end{aligned} \right.
\end{equation}
Inserting this into the error and integrating by parts (using $(u-u_S)(0)=0=w'(1)$ and adding the productive zero), we obtain for all $v_S\in S$ the estimate
\begin{equation*}
    \begin{aligned}
        \norm{l2}{u-u_S}^2 &= \inner{u-u_S,u-u_S} = \inner{u-u_S,-w''}\\
        &= \inner{(u-u_S)',w'}\\
        &= a(u-u_S,w) - a(u-u_S,v_S)\\
        &= a(u-u_S,w-v_S)\\
        &\leq \norm*{E}{u-u_S}\norm*{E}{w-v_S}.
    \end{aligned}
\end{equation*}
Dividing by $\norm{l2}{u-u_S}=\norm{l2}{w''}$ from \eqref{eq:bvp1d:dual} and taking the infimum over all $v_S\in S$ yields
\begin{equation*}
    \norm{l2}{u-u_S} \leq \inf_{v_S\in S} \norm*{E}{w-v_S}\norm*{E}{u-u_S}\norm{l2}{w''}^{-1}.
\end{equation*}
To continue, we require an \emph{approximation property} for $S$: There exists a constant $c_S >0$ such that
\begin{equation}\label{eq:bvp1d:approx}
    \inf_{v_S\in S} \norm*{E}{g-v_S} \leq c_S \norm{l2}{g''}
\end{equation}
holds for sufficiently smooth $g\in V$. If we can apply this estimate to $w$ and $u$, we obtain
\begin{equation*}
    \begin{aligned}
        \norm{l2}{u-u_S} &\leq c_S \norm*{E}{u-u_S} = c_S \min_{v_S\in S}\norm*{E}{u-v_S}\\
        &\leq c_S^2 \norm{l2}{u''} = c_S^2 \norm{l2}{f}.
    \end{aligned}
\end{equation*}
This is another key observation: The error estimate depends on the regularity of the weak solution $u$, and hence on the data $f$. The smoother $u$, the better the approximation. Of course, we wish that $c_S$ can be made arbitrarily small by choosing $S$ sufficiently large.
The finite element method is characterized by a special class of subspaces -- of piecewise polynomials -- which have these approximation properties.

\section{Approximation by piecewise polynomials}\label{sec:overview:ppoly}

Given a set of \emph{nodes}
\begin{equation*}
    0 = x_0 < x_1 < \dots < x_n = 1,
\end{equation*}
set
\begin{equation*}
    S := \setof{v\in\m{c}(0,1)}{v|_{[x_{i-1},x_i]}\in P_1 \text{ and } v(0) = 0},
\end{equation*}
where $P_1$ is the space of all linear polynomials. (The fact that $S\subset V$ is not obvious, and will be proved later.) This is a subspace of the space of linear splines. A basis of $S$, which is especially convenient for the implementation, is formed by the linear B-splines  (\emph{hat functions})
\begin{equation*}
    \phi_i(x) = \begin{cases} \frac{x-x_{i-1}}{x_i-x_{i-1}} & \text{ if } x\in[x_{i-1},x_i],\\
        \frac{x_{i+1}-x}{x_{i+1}-x_i} & \text{ if } x\in[x_{i},x_{i+1}] \text{ and }i<n,\\
        0 & \text{ else,}
    \end{cases}
\end{equation*}
for $1\leq i \leq n$, which satisfy $\phi_i(0)=0$ and hence $\phi_i\in S$. Furthermore,
\begin{equation*}
    \phi_i(x_j) = \delta_{ij}:= \begin{cases} 1 & \text{ if } i=j,\\
        0& \text{ if } i\neq j.
    \end{cases}
\end{equation*}
This \emph{nodal basis property} immediately yields linear independence of the $\phi_i$. To show that the $\phi_i$ span $S$, we consider the \emph{interpolant} $v_I\in S$ of a given $v\in V$, defined via
\begin{equation*}
    v_I := \sum_{i=1}^n v(x_i)\phi_i(x).
\end{equation*}
For $v_S\in S$, the interpolation error $v_S-(v_S)_I$ is piecewise linear as well, and since $(v_S)_I(x_i)=v_S(x_i)$ for all $1\leq i \leq n$, this implies that $v_S-(v_S)_I\equiv 0$. Any $v_S\in S$ can thus be written as a unique linear combination of $\phi_i$ (given by its interpolant), and hence the $\phi_i$ form a basis of $S$.  We also note that this implies that the \emph{interpolation operator} $\calI:V\to S$, $v\mapsto v_I$ is a projection (i.e., $\calI\circ\calI = \calI$).

We are now in a position to prove the approximation property \eqref{eq:bvp1d:approx} of $S$. Let
\begin{equation*}
    h:= \max_{1\leq i \leq n} h_i ,\qquad h_i:=x_i-x_{i-1},
\end{equation*}
denote the \emph{mesh size}. Since the best approximation error  is certainly not bigger than the interpolation error, it suffices to show that there exists a constant $C>0$ such that for all sufficiently smooth $u\in V$,
\begin{equation*}
    \norm*{E}{u-u_I}\leq C h \norm{l2}{u''}.
\end{equation*}
We now consider this error separately on each \emph{element} $[x_{i-1},x_i]$, i.e., we show that
\begin{equation*}
    \int_{x_{i-1}}^{x_i} (u-u_I)'(x)^2\,dx \leq C^2 h_i^2  \int_{x_{i-1}}^{x_i} u''(x)^2\,dx.
\end{equation*}
First, since $u_I$ is piecewise linear, the error $e:=u-u_I$ satisfies $(e|_{[x_{i-1},x_i]})'' = (u|_{[x_{i-1},x_i]})''$. Using the affine transformation $\tilde e(t):=e(x(t))$ with $x(t) = x_{i-1}+t(x_i-x_{i-1})$ (a \emph{scaling argument}), the previous estimate is equivalent to
\begin{equation}\label{eq:bvp1d:poincare}
    \int_0^1 \tilde e'(t)^2\,dt \leq C^2 \int_0^1 \tilde e''(t)^2\,dt.
\end{equation}
(This is an elementary version of \emph{Poincar\'e's inequality}). Since $u_I$ is the nodal interpolant of $u$, the error satisfies $e(x_{i-1}) = e(x_{i}) = 0$. In addition, $u_I$ is linear and $u$ continuously differentiable on $[x_{i-1},x_i]$. Hence, $\tilde e$ is continuously differentiable on $[0,1]$ with $\tilde e(0) = \tilde e(1) = 0$, and Rolle's theorem yields a $\xi\in(0,1)$ with $\tilde e'(\xi) = 0$. Thus, for all $y\in [0,1]$ we have (with $\int_a^b f(t)\,dt = -\int_b^a f(t)\,dt$ for $a>b$)
\begin{equation*}
    \tilde e'(y) = \tilde e'(y) - \tilde e'(\xi) =  \int_\xi^y \tilde e''(t)\,dt.
\end{equation*}
We can now use the Cauchy--Schwarz inequality to estimate
\begin{equation*}
    \begin{aligned}
        |\tilde e'(y)|^2 &= \left|\int_\xi^y \tilde e''(t)\,dt\right|^2
        \leq \left|\int_\xi^y 1^2\,dt\right| \cdot \left| \int_\xi^y \tilde e''(t)^2\,dt\right|\\
        &\leq |y-\xi| \int_0^1 \tilde e''(t)^2\,dt.
    \end{aligned}
\end{equation*}
Integrating both sides with respect to $y$ and taking the supremum over all $\xi\in(0,1)$ yields \eqref{eq:bvp1d:poincare} with
\begin{equation*}
    C^2 := \sup_{\xi\in(0,1)} \int_0^1 |y-\xi|\,dy = \frac12.
\end{equation*}

Summing over all elements and estimating $h_i$ by $h$ shows the approximation property \eqref{eq:bvp1d:approx} for $S$ with $c_S:=Ch$.
For this choice of $S$, the solution $u_S$ of \eqref{eq:bvp1d:galerkin} satisfies
\begin{equation*}
    \norm*{E}{u-u_S} \leq \min_{v_S\in S}\norm*{E}{u-v_S} \leq \norm*{E}{u-u_I}\leq Ch \norm{l2}{u''}
\end{equation*}
as well as
\begin{equation}\label{eq:bvp1d:apriori}
    \norm{l2}{u-u_S} \leq C^2 h^2 \norm{l2}{u''}.
\end{equation}
These are called \emph{a priori estimates}, since they only require knowledge of the given data $f=u''$ but not of the solution $u_S$. They tell us that if we can make the mesh size $h$ arbitrarily small, we can approximate the solution $u$ of \eqref{eq:bvp1d:weak} arbitrarily well. Note that the power of $h$ is one order higher for the $\m{l2}(0,1)$ norm compared to the energy norm, which represents the fact that it is more difficult to control errors in the derivative than errors in the function value.

\section{Implementation}

As seen in \cref{sec:overview:galerkin}, the numerical computation of $u_S\in S$ boils down to solving the linear system $\mathbf{KU}=\mathbf{F}$ for the vector of coefficients $\mathbf{U}$. The missing step is the computation of the elements $K_{ij} = a(\phi_i,\phi_j)$ of $\mathbf{K}$ and the entries $F_j = \inner{f,\phi_j}$ of $\mathbf{F}$. (This procedure is called \emph{assembly}.) In principle, this can be performed by computing the integrals for each pair $(i,j)$ in a nested loop (\emph{node-based assembly}). A more efficient approach (especially in higher dimensions) is \emph{element-based assembly}: The integrals are split into sums of contributions from each element, e.g.,
\begin{equation*}
    a(\phi_i,\phi_j) = \int_0^1  \phi_i'(x)\phi_j'(x)\,dx = \sum_{k=1}^n \int_{x_{k-1}}^{x_k} \phi_i'(x)\phi_j'(x)\,dx=:\sum_{k=1}^n a_k(\phi_i,\phi_j),
\end{equation*}
and the contributions from a single element for all $(i,j)$ are computed simultaneously. Here we can exploit that by its definition, $\phi_i$ is non-zero only on the two elements $[x_{i-1},x_i]$ and $[x_i,x_{i+1}]$. Hence, for each element $[x_{k-1},x_k]$, the integrals are non-zero only for pairs $(i,j)$ with $k-1\leq i,j\leq k$. Note that this implies that $\mathbf{K}$ is tridiagonal and therefore \emph{sparse} (meaning that the number of non-zero elements grows as $n$, not $n^2$), which allows efficient solution of the linear system even for large $n$, e.g., by the method of conjugate gradients (since $\mathbf{K}$ is also symmetric and positive definite).

Another useful observation is that except for an affine transformation, the basis functions are the same on each element. We can thus use the substitution rule to transform the integrals over $[x_{k-1},x_k]$ to the \emph{reference element} $[0,1]$.
Setting $\xi(x) = \frac{x-x_{k-1}}{x_k-x_{k-1}}$ and
\begin{equation*}
    \hat \phi_1(\xi) = 1-\xi,\qquad \hat\phi_2(\xi) = \xi,
\end{equation*}
we have that $\phi_{k-1}(x) = \hat\phi_1(\xi(x))$ and $\phi_k(x) = \hat\phi_2(\xi(x))$. Using $\xi'(x) =(x_k-x_{k-1})^{-1}=h_k^{-1}$, the integrals for $i,j\in\{k-1,k\}$ can therefore be computed via
\begin{equation*}
    \int_{x_{k-1}}^{x_k}\phi_i'(x)\phi_j'(x)\,dx = h_k^{-1}\int_0^1\hat\phi_{\tau(i)}'(\xi)\hat\phi_{\tau(j)}'(\xi)\,d\xi,
\end{equation*}
where
\begin{equation*}
    \tau(i) =
    \begin{cases}
        1 &\text{if } i=k-1,\\
        2 &\text{if } i=k,
    \end{cases}
\end{equation*}
is the so-called \emph{global-to-local index}. (Correspondingly, the inverse mapping $\tau^{-1}$ is called the \emph{local-to-global index}.) Since the derivatives of $\hat \phi_1,\hat\phi_2$ are constant, the contribution from the element $[x_{k-1},x_k]$ to $K_{ij}=a(\phi_i,\phi_j)$ for $i,j\in\{k-1,k\}$ (the contribution for all other pairs $(i,j)$ being zero) is thus
\begin{equation*}
    a_k(\phi_i,\phi_j) = \begin{cases} \phantom{-}h_k^{-1} & \text{if } i=j,\\
        -h_k^{-1}&  \text{if } i\neq j.
    \end{cases}
\end{equation*}

The right-hand side $\inner{f,\phi_j}$ can be computed in a similar way, using numerical quadrature if necessary. Alternatively, one can replace $f$ by its nodal interpolant $f_I = \sum_{i=0}^n f(x_i)\phi_i$ and use
\begin{equation*}
    \inner{f,\phi_j} \approx \inner{f_I,\phi_j} = \sum_{i=0}^n f(x_i)\inner{\phi_i,\phi_j}.
\end{equation*}
The elements $M_{ij}:=\inner{\phi_i,\phi_j}$ of the \emph{mass matrix} $\mathbf{M}$ are again computed elementwise using transformation to the reference element:
\begin{equation*}
    \int_{x_{k-1}}^{x_k}\phi_i(x)\phi_j(x)\,dx = h_k \int_0^1\hat\phi_{\tau(i)}(\xi)\hat\phi_{\tau(j)}(\xi)\,d\xi =\begin{cases}
        \frac{h_k}3  & \text{if } i=j,\\
        \frac{h_k}6 &  \text{if } i\neq j.
    \end{cases}
\end{equation*}
This can be done at the same time as assembling $\mathbf{K}$. Setting $\mathbf{f}:=(f(x_1),\dots,f(x_n))^T$, the right-hand side of the linear system is then given by $\mathbf{F} = \mathbf{M}\mathbf{f}$.

Finally, the Dirichlet condition $u(0)=0$ can be enforced by replacing the first equation in the linear system by $U_0 = 0$, i.e., replacing the first row of $\mathbf{K}$ by $(1,0,\dots)$ and the first element of $\mathbf{F}$ by $0$. The main advantage of this approach is that it can easily be extended to non-homogeneous Dirichlet conditions $u(0)=g$ (by replacing the first element with $g$). The full algorithm (in \textsc{matlab}-like notation) for our boundary value problem is given in \cref{alg:bvp1d}.

\begin{algorithm}
    \caption{Finite element method in 1\textsc{d}}\label{alg:bvp1d}
    \begin{algorithmic}[1]
        \Require{$0=x_0<\dots<x_n=1$, $F:=(f(x_0),\dots,f(x_n))^T$}
        \State Set $K_{ij} = M_{ij} = 0$
        \For{$k=1,\dots,n$}
        \State Set $h_k = x_{k}-x_{k-1}$
        \State Set $K_{k-1:k,k-1:k} \gets  K_{k-1:k,k-1:k} + \frac{1}{h_k} \begin{pmatrix}1&-1\\-1&1\end{pmatrix}$
        \State Set $M_{k-1:k,k-1:k} \gets  M_{k-1:k,k-1:k} + \frac{h_k}{6} \begin{pmatrix}2&1\\1&2\end{pmatrix}$
        \EndFor
        \State $K_{0,1:n} = 0$, $K_{0,0}=1$, $M_{0,0:n} = 0$
        \State Solve $KU = MF$
        \Ensure{$U$}
    \end{algorithmic}
\end{algorithm}

\section{A posteriori error estimates and adaptivity}

The a priori estimate \eqref{eq:bvp1d:apriori} is important for proving convergence as the mesh size $h\to0$, but often pessimistic in practice since it depends on the global regularity of $u''$. If $u''(x)$ is large only in some parts of the domain, it would be preferable to reduce the mesh size locally. For this, \emph{a posteriori estimates} are useful, which are localized error estimates for each element but involve the computed solution $u_S$. This gives information on which elements should be refined (i.e., replaced by a larger number of smaller elements).

We consider again the space $S$ of piecewise linear finite elements on the nodes $x_0,\dots,x_n$ with mesh size $h$, as defined in \cref{sec:overview:ppoly}. We once more apply a duality trick: Let $w$ be the solution of
\begin{equation}\label{eq:bvp1d:dualz}
    \left\{ \begin{aligned}
            -w''(x) &= u(x)-u_S(x) \quad \text{for }x\in(0,1),\\
            w(0) &= 0, \qquad w'(1) = 0,
    \end{aligned} \right.
\end{equation}
and proceed as before, yielding
\begin{equation*}
    \norm{l2}{u-u_S}^2 = a(u-u_S,w-v_S)
\end{equation*}
for all $v_S\in S$. We now choose $v_S=w_I\in S$, the interpolant of $w$. Then we have
\begin{equation*}
    \begin{aligned}
        \norm{l2}{u-u_S}^2 &= a(u-u_S,w-w_I) = a(u,w-w_I) - a(u_S,w-w_I) \\
        &= \inner{f,w-w_I} - a(u_S,w-w_I).
    \end{aligned}
\end{equation*}
Note that the unknown solution $u$ of \eqref{eq:bvp1d:weak} no longer appears on the right-hand side. We now use the specific choice of $v_S$ to localize the error inside each element $[x_{i-1},x_{i}]$: Writing the integrals over $[0,1]$ as sums of integrals over the elements, we can integrate by parts on each element and use the fact that $(w-w_I)(x_i)=0$ to obtain
\begin{equation*}
    \begin{aligned}
        \norm{l2}{u-u_S}^2&= \sum_{i=1}^n \int_{x_{i-1}}^{x_i} f(x)(w-w_I)(x)\,dx -  \sum_{i=1}^n \int_{x_{i-1}}^{x_i} u_S'(x)(w-w_I)'(x)\,dx \\
        &=  \sum_{i=1}^n \int_{x_{i-1}}^{x_i} (f+u_S'')(x)(w-w_I)(x)\,dx\\
        &\leq  \sum_{i=1}^n \left(\int_{x_{i-1}}^{x_i} (f+u_S'')(x)^2\,dx\right)^{\half}\left(\int_{x_{i-1}}^{x_i} (w-w_I)(x)^2\,dx\right)^{\half}
    \end{aligned}
\end{equation*}
by the Cauchy--Schwarz inequality. The first term contains the \emph{finite element residual}
\begin{equation*}
    R_h:= f + u_S'',
\end{equation*}
which we can evaluate after computing $u_S$. For the second term, one can show (similarly as in the proof of the a priori error estimate \eqref{eq:bvp1d:apriori}) that
\begin{equation*}
    \left(\int_{x_{i-1}}^{x_i} (w-w_I)(x)^2\,dx\right)^\half  \leq \frac{h_i^2}{2}\norm{l2}{w''}
\end{equation*}
holds, from which we obtain
\begin{equation*}
    \begin{aligned}
        \norm{l2}{u-u_S}^2 &\leq \frac1{2}\norm{l2}{w''}\sum_{i=1}^n h_i^2  \norm*{\m{l2}(x_{i-1},x_i)}{R_h}\\
        &=  \frac1{2}\norm{l2}{u-u_S}\sum_{i=1}^n h_i^2  \norm*{\m{l2}(x_{i-1},x_i)}{R_h}
    \end{aligned}
\end{equation*}
by the definition of $w$. This yields the \emph{a posteriori estimate}
\begin{equation*}
    \norm{l2}{u-u_S} \leq \frac1{2}\sum_{i=1}^n h_i^2  \norm*{\m{l2}(x_{i-1},x_i)}{R_h}.
\end{equation*}

\newpage
This estimate can be used for an adaptive procedure: Given a tolerance $\tau>0$,
\begin{algorithmic}[1]
    \State choose initial mesh $0=x_0^{(0)}< \dots x_{n^{(0)}}^{(0)} = 1$, compute corresponding solution $u_{S^{(0)}}$, evaluate $R_{h^{(0)}}$, set $m=0$
    \While{$\sum_{i=1}^{n^{m+1}} (h^{(m)}_i)^2  \norm*{\m{l2}(x^{(m)}_{i-1},x^{(m)}_i)}{R_{h^{(m)}}}\geq \tau$}
    \State choose new mesh $0=x_0^{(m+1)}< \dots x_{n^{(m+1)}}^{(m+1)} = 1$
    \State compute corresponding solution $u_{S^{(m+1)}}$
    \State evaluate $R_{h^{(m+1)}}$
    \State set $m\gets m+1$
    \EndWhile
\end{algorithmic}

There are different strategies to choose the new mesh. A common requirement is that the strategy should be \emph{reliable}, meaning that the error on the new mesh in a certain norm can be guaranteed to be less than a given tolerance, as well as \emph{efficient}, meaning that the number of new nodes should not be larger than necessary. One (simple) possibility is to refine those elements where $\norm*{}{R_h}$ is largest (or larger than a given threshold) by replacing them with two elements of half size.

 \chapter{Variational theory of elliptic PDEs}\label{chap:weak_elliptic}

 In this chapter, we collect -- for the most part without proof -- some necessary results from functional analysis and the weak theory of (elliptic) partial differential equations.
 Details and proofs can be found in, e.g., \cite{Adams:2003a},  \cite{Evans:2010} and \cite{Zeidler:1995a}.

 \section{Function spaces}\label{sec:sobolev}

 As we have seen, the regularity of the solution of partial differential equations plays a crucial role in how well it can be approximated numerically. This regularity can be described by the two properties of (Lebesgue-)\emph{integrability} and \emph{differentiability}.

 \paragraph{Lebesgue spaces}
 Let $\Omega$ be an open subset of $\R^n$, $n\in\N$. We recall that for $1\leq p \leq \infty$,
 \begin{equation*}
     \m{Lp} := \setof{f \text{ measurable}}{\norm{Lp}{f}<\infty}
 \end{equation*}
 with
 \begin{align*}
     \norm{Lp}{f} &:= \left(\int_\Omega |f(x)|^p\,dx\right)^\frac1p \quad\text{ for }1\leq p<\infty,\\
     \norm{Linf}{f} &:= \mathrm{ess}\sup_{x\in\Omega} |f(x)|,
 \end{align*}
 are Banach spaces of (equivalence classes up to equality apart from a set of zero measure of) Lebesgue-integrable functions. The corresponding norms satisfy \emph{Hölder's inequality}
 \begin{equation*}
     \norm{L1}{fg} \leq \norm{Lp}{f}\norm{Lq}{g}
 \end{equation*}
 if $p^{-1} + q^{-1} = 1$ (with $\infty^{-1}:=0$). For bounded $\Omega$, this implies  that $\m{Lp}\hookrightarrow\m{Lq}$ for $p\geq q$. We will also use the space
 \begin{equation*}
     \m{L1loc} := \setof{f}{f|_K \in{L}^1(K) \text{ for all compact } K\subset \Omega}.
 \end{equation*}

 For $p=2$, $\m{Lp}$ is a Hilbert space with inner product
 \begin{equation*}
     \inner{f,g}:=\scalprod{L2}{f}{g} = \int_{\Omega} f(x)g(x)\,dx,
 \end{equation*}
 and Hölder's inequality for $p=q=2$ reduces to the \emph{Cauchy--Schwarz inequality}.

 \paragraph{Hölder spaces}
 We now consider functions which are continuously differentiable. It will be convenient to use a \emph{multi-index}
 \begin{equation*}
     \alpha := (\alpha_1,\dots,\alpha_n)\in\N^n,
 \end{equation*}
 for which we define its \emph{length} $|\alpha| := \sum_{i=1}^n \alpha_i$, to describe the (partial) \emph{derivative of order~$|\alpha|$},
 \begin{equation*}
     D^\alpha f(x_1,\dots,x_n):= \frac{\partial^{|\alpha|}f(x_1,\dots,x_n)}{\partial x_1^{\alpha_1}\cdots\partial x_n^{\alpha_n}}.
 \end{equation*}
 For brevity, we will often write $\partial_i:=\frac{\partial}{\partial x_i}$. We denote by $\m{Ck}$ the set of all continuous functions $f$ for which $D^\alpha f$ is continuous for all $|\alpha|\leq k$. If $\Omega$ is bounded, $\m{Ckb}$ is the set of all functions in $\m{Ck}$ for which all $D^\alpha f$ can be extended to a continous function on $\bar\Omega$, the closure of $\Omega$. These spaces are Banach spaces if equipped with the norm
 \begin{equation*}
     \norm{Ckb}{f} = \sum_{|\alpha|\leq k} \sup_{x\in\bar\Omega} |D^{\alpha} f(x)|.
 \end{equation*}
 Finally, we define $\m{Ck0}$ as the space of all $f\in\m{Ckb}$ whose support (the closure of $\setof{x\in\Omega}{f(x) \neq 0}$) is a compact subset of $\Omega$, as well as
 \begin{equation*}
     \m{Cinf0} = \bigcap_{k\geq 0} \m{Ck0}
 \end{equation*}
 (and similarly $\m{Cinf}$).

 \paragraph{Sobolev spaces}

 If we are interested in weak solutions, it is clear that the Hölder spaces entail a too strong notion of (pointwise) differentiability. All we required is that the derivative is integrable, and that an integration by parts is meaningful. This motivates the following definition: A function $f\in\m{L1loc}$ has a \emph{weak derivative} if there exists $g\in\m{L1loc}$ such that
 \begin{equation}\label{eq:weak:derivative}
     \int_\Omega g(x)\phi(x) \,dx = (-1)^{|\alpha|}\int_\Omega f(x) D^{\alpha}\phi(x)\,dx
 \end{equation}
 for all $\phi\in\m{Cinf0}$. In this case, the weak derivative is (uniquely) defined as $D^\alpha f := g$. For $f\in \m{Ck}$, the weak derivative coincides with the usual (pointwise) derivative (justifying the abuse of notation), but the weak derivative exists for a larger class of functions such as continuous and piecewise smooth functions. For example, $f(x) = |x|$, $x\in\Omega = (-1, 1)$, has the weak derivative $Df(x) = \mathop{\mathrm{sign}}(x)$, while $Df(x)$ itself does not have any weak derivative.

 We can now define the \emph{Sobolev spaces} $\m{Wkp}$ for $k\in\N_0$ and $1\leq p \leq \infty$:
 \begin{equation*}
     \m{Wkp} := \setof{f\in\m{Lp}}{D^\alpha f\in\m{Lp} \text{ for all } |\alpha|\leq k},
 \end{equation*}
 which are Banach spaces when endowed with the norm
 \begin{align*}
     \norm{Wkp}{f} &:= \left(\sum_{|\alpha|\leq k}\norm{Lp}{D^\alpha f}^p\right)^\frac1p \quad\text{ for } 1\leq p <\infty,\\
     \norm*{{W}^{k,\infty}(\Omega)}{f} &:= \sum_{|\alpha|\leq k}\norm{Linf}{D^\alpha f}.
 \end{align*}
 We shall also use the corresponding semi-norms
 \begin{align*}
     |f|_{\m{Wkp}} &:= \left(\sum_{|\alpha| = k}\norm{Lp}{D^\alpha f}^p\right)^\frac1p \quad\text{ for } 1\leq p <\infty,\\
     |f|_{{W}^{k,\infty}(\Omega)} &:= \sum_{|\alpha| = k}\norm{Linf}{D^\alpha f}.
 \end{align*}

 We are now concerned with the relation between the different norms introduced so far. For many of these results to hold, we require that the boundary $\partial\Omega$ of $\Omega$ is sufficiently smooth. We shall henceforth assume -- if not otherwise stated -- that $\Omega\subset\R^n$ has a \emph{Lipschitz boundary}, meaning that $\partial\Omega$ can be parametrized by a finite set of functions which are uniformly Lipschitz continuous. (This condition is satisfied, for example, by polygons for $n=2$ and polyhedra for $n=3$.) Similarly, a \emph{$C^m$ boundary} can be parametrized by a finite set of $m$ times continuously differentiable functions.
 A fundamental result is then the following approximation property (which does not hold for arbitrary domains).
 \begin{theorem}[density\footnote{The key result was shown by Meyers and Serrin in a paper rightfully celebrated both for its content and the brevity of its title, \enquote{$H=W$}. For the proof, see, e.g., \cite[\S\,5.3.3, Theorem~3]{Evans:2010}, \cite[Theorem~3.17]{Adams:2003a}}]\label{thm:weak:dense}
     For $1\leq p <\infty$ and any $k\in\N_0$, $\m{Cinf}$ is dense in $\m{Wkp}$.
 \end{theorem}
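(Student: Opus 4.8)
The plan is to approximate an arbitrary $f\in\m{Wkp}$ by functions smooth up to the boundary in two stages: first in the interior by mollification, then near $\partial\Omega$ by exploiting the Lipschitz structure, gluing the pieces together with a partition of unity. Fix a standard mollifier $\eta\in C_0^\infty(\R^n)$ with $\eta\geq 0$, $\int\eta=1$ and support in the unit ball, and set $\eta_\eps(x):=\eps^{-n}\eta(x/\eps)$. For $f\in\m{L1loc}$ the mollification $f_\eps:=f*\eta_\eps$ is smooth on the inner region $\setof{x\in\Omega}{\operatorname{dist}(x,\partial\Omega)>\eps}$. The crucial computation is that there mollification commutes with weak differentiation, $D^\alpha f_\eps=(D^\alpha f)*\eta_\eps$ for $|\alpha|\leq k$; this follows directly from the definition \eqref{eq:weak:derivative} of the weak derivative, since for fixed $x$ the map $y\mapsto\eta_\eps(x-y)$ is an admissible test function in $\m{Cinf0}$. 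Standard properties of mollifiers then give $D^\alpha f_\eps\to D^\alpha f$ in $\m{Lp}$ on relatively compact subsets, so $f_\eps\to f$ in $\m{Wkp}$ locally. It is precisely here that the hypothesis $p<\infty$ enters, since $g*\eta_\eps\to g$ in $\m{Lp}$ holds for $p<\infty$ but fails in general for $p=\infty$.

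This already yields interior approximation (the Meyers--Serrin result): covering $\Omega$ by relatively compact open sets, mollifying each localized piece at a sufficiently small scale, and summing against a locally finite partition of unity produces a function in $C^\infty(\Omega)\cap\m{Wkp}$ within any prescribed tolerance of $f$. However, this construction gives no control up to $\partial\Omega$, and obtaining smoothness on the \emph{closure} $\bar\Omega$ is the substantive part of the theorem.

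The main obstacle is therefore the behavior near the boundary, where $f*\eta_\eps$ is not even defined because it would require values of $f$ outside $\Omega$. Here the Lipschitz hypothesis is used. Fix a boundary point; by the Lipschitz graph representation there is a neighborhood $U$ in which $\Omega$ lies on one side of the graph of a Lipschitz function, and one can choose a fixed direction $e$ (a cone axis) so that the set $U\cap\Omega$ translated by $te$ lies strictly inside $\Omega$ for small $t>0$, uniformly. I would define the shifted function $f^t(x):=f(x+te)$, which is then a $W^{k,p}$ function on a neighborhood that now contains the relevant piece of $\bar\Omega$, and mollify at a scale $\eps\ll t$ so that $f^t*\eta_\eps$ is genuinely smooth on a neighborhood of that piece. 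Continuity of translation in $\m{Lp}$ (again requiring $p<\infty$) shows $f^t\to f$ in $W^{k,p}(U\cap\Omega)$ as $t\to 0$, while for each fixed $t$ the mollification converges as $\eps\to 0$; a diagonal choice of $(t,\eps)$ produces a function smooth up to this boundary patch and arbitrarily close to $f$ there.

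Finally I would patch the local approximations together. Using compactness of $\partial\Omega$, cover $\bar\Omega$ by one interior set relatively compact in $\Omega$ together with finitely many boundary patches $U_1,\dots,U_N$ of the above type, and take a subordinate smooth partition of unity $\zeta_0,\dots,\zeta_N$ with $\sum_i\zeta_i\equiv 1$ on $\bar\Omega$. On each patch choose, by the previous steps, some $g_i\in\m{Cinf}$ with $f-g_i$ small in $W^{k,p}(U_i\cap\Omega)$; then $g:=\sum_{i=0}^N\zeta_i g_i$ lies in $\m{Cinf}$, and writing $f=\sum_i\zeta_i f$ the Leibniz rule bounds the global error $\norm{Wkp}{f-g}$ by a constant (depending on the $\zeta_i$ and on $k$) times the sum of the local errors, which is arbitrarily small. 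I expect the delicate point to be not this gluing, which is routine once the pieces exist, but the boundary step: verifying that the translate-then-mollify construction is smooth on all of $\bar\Omega\cap U$ and converges in the full $W^{k,p}$ norm, which is exactly where the geometry of the Lipschitz boundary is indispensable.
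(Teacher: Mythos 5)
The paper states this theorem without proof, deferring to the cited references (Evans \S\,5.3.3, Theorem~3; Adams, Theorem~3.22), and your argument --- interior mollification using that $D^\alpha(f*\eta_\eps)=(D^\alpha f)*\eta_\eps$, translation along a cone direction to push the boundary patch into $\Omega$ before mollifying, and gluing with a partition of unity via the Leibniz rule --- is precisely the standard proof given there. It is correct, and you rightly identify both where $p<\infty$ enters and that the passage from $C^\infty(\Omega)$ (Meyers--Serrin) to $\m{Cinf}$ is the step that genuinely requires the Lipschitz boundary.
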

 This theorem allows us to prove results for Sobolev spaces -- such as chain rules -- by showing them for smooth functions (in effect, transferring results for usual derivatives to their weak counterparts). This is called a \emph{density argument}.

 Using a density argument, one can show that Sobolev spaces behave well under sufficiently smooth coordinate transformations.
 \begin{theorem}[coordinate transformation\footnote{e.g.,\cite[Theorem 3.41]{Adams:2003a}}]\label{thm:weak:transform}
     Let $\Omega,\Omega'\subset\R^n$ be two domains, and $T:\Omega\to\Omega'$ be a $k$-diffeomorphism (i.e., $T$ is a bijection, $T$ and its inverse $T^{-1}$ are continuous with $k$ bounded and continuous derivatives on $\overline \Omega$ and $\overline \Omega'$, and the determinant of the Jacobian of $T$ is uniformly bounded from above and below). Then the mapping $v\mapsto v\circ T$ is bounded from $\m{Wkp}$ to $\mathrm{W}^{k,p}(\Omega')$ and has a bounded inverse.
 \end{theorem}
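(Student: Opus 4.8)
The plan is to reduce everything to the case of smooth functions and there apply the classical chain rule together with the change-of-variables formula. By the density theorem \cref{thm:weak:dense}, $\m{Cinf}$ is dense in $\m{Wkp}$, and the analogous statement holds on $\Omega'$; since $v\mapsto v\circ T$ is linear, it therefore suffices to establish a norm estimate of the form
\begin{equation}
    \norm*{\mathrm{W}^{k,p}(\Omega')}{v\circ T}\leq C\,\norm{Wkp}{v}
\end{equation}
for all smooth $v$, with $C$ depending only on $T$ (through the bounds on its derivatives and on its Jacobian) and on $k,p,n$. The operator then extends uniquely to a bounded linear map on all of $\m{Wkp}$, and because the bound is uniform the extension is still realized as composition with $T$: on the dense subspace it agrees with pointwise composition, and both sides are continuous in the Sobolev topology.

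For smooth $v$, the composite $w:=v\circ T$ is again smooth, so its derivatives up to order $k$ may be computed pointwise. First I would expand these derivatives: by the higher-order (Faà di Bruno) chain rule, each $D^\alpha w$ with $|\alpha|\leq k$ is a finite sum of terms, each of which is a product of one factor $(D^\beta v)\circ T$ with $|\beta|\leq|\alpha|$ and a finite product of derivatives of the components of $T$ of orders between $1$ and $|\alpha|$. Since $T$ is a $k$-diffeomorphism, all of these derivatives of $T$ are bounded on $\bar\Omega$, yielding a pointwise bound
\begin{equation}
    |D^\alpha w(x)|\leq C_1\sum_{|\beta|\leq|\alpha|}\bigl|(D^\beta v)(T(x))\bigr|,
\end{equation}
with $C_1$ depending only on the derivative bounds of $T$ and on $k,n$.

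Next I would pass to $L^p$-norms and change variables: raising the pointwise bound to the $p$-th power, integrating, and performing the substitution induced by $T$ introduces the Jacobian $|\det DT|$ as a weight. Since this determinant is bounded above and below by hypothesis, it is absorbed into the constant, and each integral is reduced to $\int |D^\beta v|^p$ over the corresponding domain. Summing over $|\alpha|\leq k$ and taking $p$-th roots gives the desired estimate for smooth $v$, hence, by density, for all of $\m{Wkp}$. The boundedness of the inverse is then immediate: $T^{-1}$ is again a $k$-diffeomorphism (its derivatives are bounded because $DT$ is invertible with uniformly bounded inverse, which follows from the two-sided Jacobian bound together with the bound on $DT$), so the identical argument applied to $T^{-1}$ bounds the inverse map $w\mapsto w\circ T^{-1}$.

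The step I expect to be the main obstacle is the higher-order chain rule: organizing the combinatorial expansion of $D^\alpha(v\circ T)$ so that every term is visibly a bounded coefficient times a pullback of a derivative of $v$ of order at most $|\alpha|$, and checking that the resulting constant depends only on $T$ and not on $v$. For $k=1$ this is the ordinary chain rule and is transparent; the bookkeeping for general $k$ -- together with the care needed to invoke the chain rule only for smooth functions, where it is classical, before transferring the conclusion to weak derivatives by density -- is where the real work lies.
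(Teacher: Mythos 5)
Your argument is correct and follows exactly the route the paper indicates: the text introduces this theorem with the remark that it can be shown \enquote{using a density argument} and otherwise defers to the literature, so there is no written proof to compare against beyond that hint, and your sketch (Fa\`a di Bruno for smooth $v$, absorption of the derivative and Jacobian bounds into the constant, identification of the extension via $L^p$-convergence of the compositions, and symmetry in $T\leftrightarrow T^{-1}$ for the inverse) fills it in soundly. The one caveat is that \cref{thm:weak:dense} requires $p<\infty$, so the case $p=\infty$ needs a small supplement (e.g., compute the weak derivatives locally using finite exponents and then read off the $L^\infty$ bounds pointwise almost everywhere); also note that the boundedness of the higher derivatives of $T^{-1}$ is already part of the hypothesis, so your parenthetical derivation of it from the Jacobian bound is unnecessary.
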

 Corresponding chain rules for weak derivatives can be obtained from the classical ones using a density argument as well. \Cref{thm:weak:transform} can also be used to define Sobolev spaces on (sufficiently smooth) manifolds via a local coordinate charts. In particular, if $\Omega$ has a $\mathrm{C}^k$ boundary, $k\geq 1$, we can define $\mathrm{W}^{k,p}(\partial\Omega)$ by (local) transformation to $\mathrm{W}^{k,p}(D)$, where $D\subset \R^{n-1}$.

 The next theorem states that, within limits determined by the spatial dimension, we can trade differentiability for integrability for Sobolev space functions.
 \begin{theorem}[Sobolev\footnote{e.g., \cite[\S\,5.6]{Evans:2010}, \cite[Theorem~4.12]{Adams:2003a}}, Rellich--Kondrachov\footnote{e.g., \cite[\S\,5.7]{Evans:2010}, \cite[Theorem~6.3]{Adams:2003a}} embedding]\label{thm:weak:sobolev}
     Let $1\leq p,q<\infty$ and $\Omega\subset\R^n$ be a bounded open set with Lipschitz boundary. Then the following embeddings are continuous:
     \begin{equation*}
         \m{Wkp} \hookrightarrow \begin{cases}
             \m{Lq} & \text{ if } p < \frac{n}{k}\text{ and } p\leq q \leq \frac{np}{n-p},\\
             \m{Lq} & \text{ if } p = \frac{n}{k}\text{ and } p\leq q < \infty,\\
             {C}^0(\bar\Omega) &\text{ if } p> \frac{n}{k}.
         \end{cases}
     \end{equation*}
     Moreover, the following embeddings are compact:
     \begin{equation*}
         \m{Wkp}\hookrightarrow  \begin{cases} \m{Lq} & \text{ if } p \leq \frac{n}{k}\text{ and } 1\leq q < \frac{n-pk}{np},\\
             {C}^0(\bar\Omega) &\text{ if } p> \frac{n}{k}.
         \end{cases}
     \end{equation*}
     In particular, the embedding $\m{Wkp}\hookrightarrow{W}^{k-1,p}(\Omega)$ is compact for all $k$ and $1\leq p\leq \infty$.
 \end{theorem}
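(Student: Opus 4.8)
The plan is to reduce every claim to the case $k=1$ on all of $\R^n$ and then transfer back to $\Omega$. First I would construct a bounded \emph{extension operator} $E\colon\m{W1p}\to\mathrm{W}^{1,p}(\R^n)$ with $(Eu)|_\Omega=u$; this is precisely where the Lipschitz hypothesis on $\partial\Omega$ is used, since $E$ is built by locally flattening the boundary and reflecting, using that bi-Lipschitz changes of variables preserve $W^{1,p}$ (cf.\ \cref{thm:weak:transform}). Together with the density of $\m{Cinf}$ (\cref{thm:weak:dense}), it then suffices to prove each inequality for $u\in\mathrm{C}_0^\infty(\R^n)$ and pass to the limit. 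The case $k>1$ follows by iteration: if $u\in\m{Wkp}$, then $u$ and all of its derivatives up to order $k-1$ lie in $\mathrm{W}^{1,p}$, so applying the $k=1$ embedding repeatedly trades one derivative for extra integrability (or, past the critical threshold, for continuity) until the stated target is reached.

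The analytic heart consists of two one-step estimates. For $1\le p<n$ I would prove the Gagliardo--Nirenberg--Sobolev inequality $\|u\|_{{L}^{p^*}(\R^n)}\le C\,\|\nabla u\|_{{L}^{p}(\R^n)}$ with $p^*=\tfrac{np}{n-p}$. The base case is $p=1$: the representation $u(x)=\int_{-\infty}^{x_i}\partial_i u\,dt$ bounds $|u(x)|$ by the full line integral of $|\partial_i u|$ in each of the $n$ coordinate directions; forming the product of these $n$ bounds raised to the power $\tfrac{1}{n-1}$, integrating in turn over each variable, and applying the generalized Hölder inequality at every step yields $\|u\|_{{L}^{n/(n-1)}(\R^n)}\le\prod_{i=1}^n\|\partial_i u\|_{{L}^1(\R^n)}^{1/n}$. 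The general exponent is recovered by applying this to $|u|^{\gamma}$ for a suitable $\gamma>1$ and using Hölder once more. For $p>n$ I would instead prove Morrey's inequality, estimating $|u(x)-u(y)|$ via averages of $\nabla u$ over balls to obtain Hölder continuity with exponent $1-\tfrac np$, hence the embedding into ${C}^0(\bar\Omega)$. The borderline exponent $p=n/k$ is treated separately as a limiting case, and the intermediate targets $p\le q\le p^*$ are then obtained on the bounded set $\Omega$ by interpolating between $\m{Lp}$ and ${L}^{p^*}(\Omega)$.

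For the compact embeddings I would use the Fr\'echet--Kolmogorov (Kolmogorov--Riesz) criterion: a bounded family in $\m{Lq}$ is precompact as soon as its translates converge uniformly, $\sup_u\|\tau_h u-u\|_{{L}^q}\to 0$ as $h\to0$, the boundedness of $\Omega$ making the remaining tightness and uniform-integrability conditions automatic. For a family bounded in $\m{W1p}$, the elementary translation estimate $\|\tau_h u-u\|_{{L}^1}\le |h|\,\|\nabla u\|_{{L}^1}$ combined with the uniform bound in ${L}^{p^*}(\Omega)$ from the continuous embedding controls $\|\tau_h u-u\|_{{L}^q}$ by interpolation, \emph{provided} $q$ is strictly below the critical exponent; this gives compactness into $\m{Lq}$ for such $q$. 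For $p>n$, Morrey's inequality renders a $\m{W1p}$-bounded family uniformly bounded and equicontinuous, and Arzel\`a--Ascoli yields compactness in ${C}^0(\bar\Omega)$. Finally, the \enquote{in particular} statement follows by applying the compact embedding $\m{W1p}\hookrightarrow\m{Lp}$ to each derivative $D^\alpha u$, $|\alpha|\le k-1$, of a bounded sequence in $\m{Wkp}$ and extracting a diagonal subsequence that converges in every such component, hence in $\mathrm{W}^{k-1,p}(\Omega)$.

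I expect the main obstacle to be twofold. Geometrically, the bounded extension operator on a merely Lipschitz domain is the delicate prerequisite that allows the whole-space inequalities to be used on $\Omega$ at all; analytically, the $p=1$ Gagliardo--Nirenberg--Sobolev inequality, with its iterated application of the generalized Hölder inequality across coordinate directions, is the step from which every continuous embedding is bootstrapped. For the compactness half, the subtle point is securing the translation estimate \emph{uniformly} over the bounded family while remaining strictly below the critical exponent, since at the endpoint $q=p^*$ the embedding is still continuous but is no longer compact.
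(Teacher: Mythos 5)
The paper deliberately states this theorem without proof --- the chapter opens by announcing that its results are collected \enquote{for the most part without proof}, and the theorem carries footnotes pointing to Evans and Adams --- so there is no internal argument to compare against. Your sketch is, in substance, exactly the proof found in those cited references: extension from a Lipschitz domain, the Gagliardo--Nirenberg--Sobolev inequality bootstrapped from the $p=1$ case via iterated generalized H\"older, Morrey's inequality for $p>n$, interpolation on the bounded domain for the intermediate exponents, Fr\'echet--Kolmogorov plus the translation estimate for compactness below the critical exponent, and Arzel\`a--Ascoli in the supercritical regime. The outline is correct. Two small points deserve attention if you were to write this out in full. First, for $k\geq 2$ and $\frac{n}{k}<p<n$ the embedding into ${C}^0(\bar\Omega)$ genuinely requires the iteration you allude to (raise integrability step by step until the effective exponent exceeds $n$, then apply Morrey); the single-step Morrey inequality does not apply directly since $p<n$ there. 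Second, the final \enquote{in particular} claim is asserted for $1\leq p\leq\infty$, and your reduction to the compact embedding $\m{W1p}\hookrightarrow\m{Lp}$ only covers $p<\infty$; the case $p=\infty$ needs the separate observation that a ${W}^{1,\infty}$-bounded family is uniformly Lipschitz, so Arzel\`a--Ascoli gives compactness in ${L}^{\infty}(\Omega)$. (You may also note that the exponent $\frac{n-pk}{np}$ in the paper's statement of the compact embedding is an evident typo for its reciprocal $\frac{np}{n-pk}$, which is the threshold your argument correctly targets.)
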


 We can also ask if conversely, continuous functions are weakly differentiable. Intuitively, this is the case if the points of (classical) non-differentiability form a set of Lebesgue measure zero. Indeed, continuous and piecewise differentiable functions are weakly differentiable.
 \begin{theorem}\label{thm:weak:piecewise}
     Let $\Omega\subset\R^n$ be a bounded Lipschitz domain which can be partitioned into $N\in \N$  Lipschitz subdomains $\Omega_j$ (i.e., $\bar\Omega = \bigcup_{j=1}^N \bar\Omega_j$ and $\Omega_i\cap\Omega_j = \emptyset$ for all $i\neq j$). Then for every $k\geq 1$ and $1\leq p \leq \infty$,
     \begin{equation*}
         \setof{v\in{C}^{k-1}(\bar\Omega)}{v|_{\Omega_j} \in {C}^{k}(\bar\Omega_j), 1\leq j\leq N}
         \hookrightarrow \m{Wkp}.
     \end{equation*}
 \end{theorem}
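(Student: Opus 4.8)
The plan is to establish the weak derivatives up to order $k$ one order at a time, reducing the whole statement to the first-order case. Since $v \in C^{k-1}(\bar\Omega)$, for every multi-index $\beta$ with $|\beta| \le k-1$ the classical derivative $D^\beta v$ exists and is continuous on the compact set $\bar\Omega$, hence bounded and---because $\Omega$ is bounded---contained in $L^p(\Omega)$ for every $p$; ordinary integration by parts against $\phi \in C_0^\infty(\bar\Omega)$ shows that this classical derivative is also the weak derivative. The only real work is therefore to produce the weak derivatives of order exactly $k$ and to show they lie in $L^p(\Omega)$.

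First I would settle the case $k = 1$. Given $v \in C^0(\bar\Omega)$ with $v|_{\Omega_j} \in C^1(\bar\Omega_j)$, define a candidate derivative $g_i$ on $\Omega$ by letting it equal the classical derivative $\partial_i(v|_{\Omega_j})$ on each $\Omega_j$; this is well defined off the interfaces $\bar\Omega_i \cap \bar\Omega_j$ (which have Lebesgue measure zero), is bounded, and so lies in $L^p(\Omega)$. To check that $g_i$ is the weak derivative $\partial_i v$, I fix a test function $\phi \in C_0^\infty(\bar\Omega)$, write $\int_\Omega v\,\partial_i\phi\,dx$ as the sum over $j$ of the integrals over $\Omega_j$, and integrate by parts on each subdomain using the Gauss--Green formula, which is valid on Lipschitz domains. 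This produces
\[
  \int_\Omega g_i\,\phi\,dx + \int_\Omega v\,\partial_i\phi\,dx = \sum_{j=1}^N \int_{\partial\Omega_j} v\,\phi\,\nu_i^{(j)}\,ds,
\]
where $\nu^{(j)}$ is the almost-everywhere-defined outer unit normal of $\Omega_j$.

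The decisive step---and the main obstacle---is showing that the right-hand side vanishes. Each portion of $\partial\Omega_j$ lies either on the outer boundary $\partial\Omega$, where $\phi \equiv 0$ because $\phi$ has compact support in $\Omega$, or on an interface shared with a neighbouring subdomain $\Omega_l$. On such a shared face the two outer normals are opposite, $\nu^{(j)} = -\nu^{(l)}$, while both $v$ (being in $C^0(\bar\Omega)$) and $\phi$ are continuous across it, so the two contributions cancel exactly. The lower-dimensional set where three or more subdomains meet carries no surface measure and can be discarded. Hence every boundary term cancels, giving $\int_\Omega g_i\,\phi\,dx = -\int_\Omega v\,\partial_i\phi\,dx$, i.e.\ $g_i = \partial_i v$ weakly; making this pairing of faces and the discarding of the measure-zero skeleton precise is the delicate bookkeeping at the heart of the argument.

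For general $k$ I would bootstrap. For a multi-index $\alpha$ with $|\alpha| = k$, write $\alpha = \beta + e_i$ with $|\beta| = k-1$ ($e_i$ the $i$-th unit multi-index) and put $w := D^\beta v$. Then $w \in C^0(\bar\Omega)$ and $w|_{\Omega_j} \in C^1(\bar\Omega_j)$, so the case $k=1$ gives that $\partial_i w$ exists weakly and lies in $L^p(\Omega)$. Since $D^\beta v$ already coincides with the (order $k-1$) weak derivative, a short test-function computation shows $\partial_i(D^\beta v) = D^\alpha v$ weakly; thus all weak derivatives of order $k$ exist in $L^p(\Omega)$ and $v \in W^{k,p}(\Omega)$. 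Continuity of the embedding follows at once, as each $\lVert D^\alpha v\rVert_{L^p(\Omega)}$ is dominated by the corresponding supremum norm times $|\Omega|^{1/p}$ (and directly by the supremum norm when $p = \infty$), which in turn is controlled by the $C^{k-1}(\bar\Omega)$- and $C^k(\bar\Omega_j)$-norms defining the left-hand space.
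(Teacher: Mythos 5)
Your proof is correct and follows essentially the same route as the paper's: define the piecewise classical derivative as the candidate, integrate by parts on each Lipschitz subdomain, and observe that the interface terms cancel by continuity of $v$ and opposition of the outer normals while the outer-boundary terms vanish because $\phi$ has compact support. The only difference is that you spell out the reduction to $k=1$ (the bootstrap via $\alpha=\beta+e_i$), which the paper simply asserts with ``it suffices to show the inclusion for $k=1$.''
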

 \begin{proof}
     It suffices to show the inclusion for $k=1$. Let $v\in{C}^0(\bar\Omega)$ such that $v|_{\Omega_j} \in {C}^{1}(\bar\Omega_j)$ for all $1\leq j\leq N$. We need to show that $\partial_i v$ exists as a weak derivative for all $1\leq i\leq n$ and that $\partial_i v\in\m{Lp}$. An obvious candidate is
     \begin{equation*}
         w_i := \begin{cases} \partial_i v|_{\Omega_j}(x) &\text{if } x\in\Omega_j \text{ for some } j\in\{1,\dots,N\},\\
             c &\text{else}
         \end{cases}
     \end{equation*}
     for arbitrary $c\in\R$.
     By the embedding ${C}^0(\bar\Omega_j)\hookrightarrow\m{linf}(\Omega_j)$ and the boundedness of $\Omega$, we have that $w_i\in\m{Lp}$ for any $1\leq p\leq \infty$. It remains to verify \eqref{eq:weak:derivative}. By splitting the integration into a sum over the $\Omega_j$ and integrating by parts on each subdomain (where $v$ is continuously differentiable), we obtain for any $\phi\in\m{Cinf0}$
     \begin{equation*}
         \begin{aligned}
             \int_\Omega w_i(x)\phi(x)\,dx &= \sum_{j=1}^N\int_{\Omega_j}\partial_i (v|_{\Omega_j})(x)\phi(x)\,dx\\
             &=  \sum_{j=1}^N\int_{\partial\Omega_j} v|_{\Omega_j}(x) \phi(x)\, [\nu_j(x)]_i\,dx -  \sum_{j=1}^N\int_{\Omega_j} v|_{\Omega_j}(x) \partial_i\phi(x) \,dx\\
             &=  \sum_{j=1}^N\int_{\partial\Omega_j} v|_{\Omega_j}(x) \phi(x) \,[\nu_j(x)]_i\,dx -\int_{\Omega} v(x)\partial_i\phi(x) \,dx,
         \end{aligned}
     \end{equation*}
     where $\nu_j = ((\nu_j)_1,\dots,(\nu_j)_n)$ is the outer normal vector to $\Omega_j$, which exists almost everywhere since $\Omega_j$ is a Lipschitz domain. Now the sum over the boundary integrals vanishes since either $\phi(x) = 0$ if $x\in\partial\Omega_j\subset\partial\Omega$ or  $ v|_{\Omega_j}(x) \phi(x) (\nu_j)_i(x) = -v|_{\Omega_k}(x) \phi(x) (\nu_k)_i(x)$ if $x\in\partial\Omega_j \cap \partial\Omega_k$ due to the continuity of $v$. This implies $\partial_i v = w_i$ by definition.
 \end{proof}

 Next, we would like to see how Dirichlet boundary conditions make sense for weak solutions. For this, we define a \emph{trace operator} $T$ (via limits of approximating continous functions) which maps a function $f$ on a bounded domain $\Omega\subset\R^n$ to a function $Tf$ on $\partial\Omega$.
 \begin{theorem}[trace theorem\footnote{e.g., \cite[\S\,5.5]{Evans:2010}, \cite[Theorem~5.36]{Adams:2003a}, \cite[Theorem~1.5.2.8]{Grisvard:1985}}]
     \label{thm:weak:trace}
     Let $kp<n$ and $p\leq q \leq (n-1)p/(n-kp)$, and $\Omega\subset\R^n$ be a bounded open set with $C^m$ boundary or a polygon in $\R^2$. Then $T:\m{Wkp}\to{L}^q(\partial\Omega)$ is a bounded linear operator, i.e., there exists a constant $C>0$ depending only on $p$ and $\Omega$ such that for all $f\in\m{Wkp}$,
     \begin{equation*}
         \norm*{{L}^q(\partial\Omega)}{Tf} \leq  C \norm{Wkp}{f}.
     \end{equation*}
     If $kp=n$, this holds for any $p\leq q<\infty$.
 \end{theorem}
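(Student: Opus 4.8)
The plan is to combine a \emph{reduction in the order of differentiation}, a \emph{localization} that flattens the boundary, a \emph{density argument}, and finally an elementary \emph{one-dimensional estimate} on the half-space that is bootstrapped up to the sharp integrability exponent. I would first reduce to the case $k=1$. Let $f\in\m{Wkp}$ with $kp<n$. Since then $(k-1)p<n$, the Sobolev embedding (\cref{thm:weak:sobolev}) applied to $f$ and to each first derivative $\partial_i f\in{W}^{k-1,p}(\Omega)$ shows $f\in{W}^{1,r}(\Omega)$ with $r=\frac{np}{n-(k-1)p}$, and a short computation gives
\begin{equation}
    \frac{(n-1)r}{n-r}=\frac{(n-1)p}{n-kp},\qquad r<n.
\end{equation}
Hence a trace estimate for ${W}^{1,r}(\Omega)\to{L}^q(\partial\Omega)$ with $q\le (n-1)r/(n-r)$ yields exactly the asserted range, and it suffices to treat $k=1$.

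Next I would localize. Using the $C^m$ (or polygonal) structure of $\partial\Omega$, I cover $\partial\Omega$ by finitely many open charts in each of which, after a $C^m$-diffeomorphism, the domain becomes the half-space $\R^n_+=\setof{x\in\R^n}{x_n>0}$ and $\partial\Omega$ becomes $\{x_n=0\}$; \cref{thm:weak:transform} guarantees that this change of variables preserves ${W}^{1,p}$ with equivalent norms, and a partition of unity subordinate to the cover reduces everything to bounding the trace of a compactly supported $f\in{W}^{1,p}(\R^n_+)$ on $\{x_n=0\}$. On this model I invoke \cref{thm:weak:dense} to assume first that $f\in\m{Cinf}$, prove the estimate, and afterwards define $Tf$ for general $f$ as the $L^q$-limit of traces of an approximating smooth sequence, using completeness of $L^q(\partial\Omega)$ and the estimate to see that the limit exists and is independent of the sequence.

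For smooth $f$ the fundamental theorem of calculus gives the elementary bound
\begin{equation}
    |f(x',0)|\le \int_0^\infty |\partial_{x_n} f(x',x_n)|\,dx_n,
\end{equation}
which, integrated over $x'\in\R^{n-1}$ and reassembled over the charts, yields the $p=1$ trace inequality $\norm*{{L}^1(\partial\Omega)}{g}\le C\norm*{{W}^{1,1}(\Omega)}{g}$. To reach the sharp exponent $q=\frac{(n-1)p}{n-p}$ for general $p$, I apply this $p=1$ bound to $g=|f|^{q}$: since $\nabla g=q\,|f|^{q-1}\nabla f$, Hölder's inequality with exponents $p',p$ gives
\begin{equation}
    \norm*{{L}^q(\partial\Omega)}{f}^{q}=\norm*{{L}^1(\partial\Omega)}{|f|^{q}}\le C\Bigl(\norm*{{L}^q(\Omega)}{f}^{q}+q\,\norm*{{L}^{(q-1)p'}(\Omega)}{f}^{q-1}\norm{Lp}{\nabla f}\Bigr).
\end{equation}
The exponents close exactly, because $(q-1)p'=\frac{np}{n-p}$ is the critical Sobolev exponent; bounding both $\norm*{{L}^{(q-1)p'}(\Omega)}{f}$ and $\norm*{{L}^q(\Omega)}{f}$ by $\norm{W1p}{f}$ via the Sobolev embedding (\cref{thm:weak:sobolev}) then delivers $\norm*{{L}^q(\partial\Omega)}{f}\le C\norm{W1p}{f}$.

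The main obstacle is precisely this last bootstrap: the bare fundamental-theorem-of-calculus argument controls the trace only in $L^p(\partial\Omega)$, and recovering the sharp exponent requires the nonlinear substitution $g=|f|^{q}$ together with the exact matching of the Hölder and Sobolev exponents (plus a small regularization to ensure $|f|^{q}\in{W}^{1,1}(\Omega)$ across the zero set of $f$). Finally, the borderline case $kp=n$ reduces in the same manner to a ${W}^{1,n}(\Omega)$ trace, where the limiting embedding ${W}^{1,n}(\Omega)\hookrightarrow{L}^s(\Omega)$ for every $s<\infty$ lets the same substitution run with an arbitrary exponent $q$, producing traces in every $L^q(\partial\Omega)$, $q<\infty$.
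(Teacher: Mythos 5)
The paper states this trace theorem without proof, deferring to the references in the footnote, so there is no in-text argument to compare against; your proposal has to stand on its own, and it does. It is the standard route (essentially Gagliardo's proof as presented in Adams; Evans's version of the same localization-plus-fundamental-theorem argument stops at $q=p$). The three ingredients — reduction to $k=1$ via the Sobolev embedding, localization/flattening plus a density argument, and the substitution $g=|f|^q$ fed into the elementary ${W}^{1,1}$ trace bound — fit together, and the exponent arithmetic genuinely closes: with $q=(n-1)p/(n-p)$ one has $(q-1)p'=np/(n-p)=p^*$, so Hölder pairs $\|\,|f|^{q-1}\|_{{L}^{p'}(\Omega)}=\|f\|_{{L}^{p^*}(\Omega)}^{q-1}$ with $\|\nabla f\|_{{L}^{p}(\Omega)}$ and the Sobolev embedding absorbs everything into $\|f\|_{{W}^{1,p}(\Omega)}^{q}$; likewise $r=np/(n-(k-1)p)$ satisfies $(n-1)r/(n-r)=(n-1)p/(n-kp)$ and $r<n$ exactly when $kp<n$, so the reduction loses nothing. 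Two small points you gloss over: first, for a polygon the flattening map is only bi-Lipschitz, so \cref{thm:weak:transform} as stated (which requires a $C^k$-diffeomorphism) does not apply verbatim — but each edge is already flat, so the half-space argument can be run edge by edge, the corners contributing a null set of $\partial\Omega$; second, the sub-critical range $q<(n-1)p/(n-kp)$ follows from the critical case by Hölder on the bounded set $\partial\Omega$, which deserves an explicit sentence. Neither affects the validity of the argument. (Also, for $q>1$ and smooth $f$ the function $|f|^q$ is already $C^1$, so the regularization you mention is not actually needed at that stage.)
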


 This implies (although it is not obvious)\footnote{e.g., \cite[\S\,5.5, Theorem~2]{Evans:2010}, \cite[Theorem~5.37]{Adams:2003a}} that
 \begin{equation*}
     \m{Wkp0} := \setof{f\in\m{Wkp}}{T(D^\alpha f) = 0\in{L}^p(\partial\Omega) \text{ for all } |\alpha| < k}
 \end{equation*}
 is well-defined, and that $\m{Wkp}\cap\m{Cinf0}$ is dense in $\m{Wkp0}$.

 For functions in $\m{W1p0}$, the semi-norm $|\cdot|_{\m{W1p}}$ is equivalent to the full norm $\norm{W1p}{\cdot}$.
 \begin{theorem}[Poincaré's inequality\footnote{e.g, \cite[Corollary~6.31]{Adams:2003a}}]\label{thm:weak:poincare}
     Let $1\leq p<\infty $ and let $\Omega$ be a bounded open set. Then there exists a constant $c_\Omega>0$ depending only on $\Omega$ and $p$ such that for all $f\in\m{W1p0}$,
     \begin{equation*}
         \norm{W1p}{f}\leq c_\Omega |f|_{\m{W1p}}.
     \end{equation*}
 \end{theorem}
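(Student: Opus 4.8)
The plan is to reduce to smooth, compactly supported functions, establish the estimate there by a one-dimensional fundamental-theorem-of-calculus argument, and then pass to the limit. Since $\m{W1p}\cap\m{Cinf0}$ is dense in $\m{W1p0}$ (noted after \cref{thm:weak:trace}) and both $\norm{W1p}{\cdot}$ and $|\cdot|_{\m{W1p}}$ are continuous with respect to the $\m{W1p}$ norm, it suffices to prove the inequality for $f\in\m{Cinf0}$; the general case then follows by approximating $f\in\m{W1p0}$ in the $\m{W1p}$ norm and passing to the limit on both sides.

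First I would use boundedness of $\Omega$ to enclose it in a slab: there is $d>0$ with $\Omega\subset\setof{x\in\R^n}{|x_1|<d}$. For $f\in\m{Cinf0}$, extended by zero outside $\Omega$, the fundamental theorem of calculus in the first coordinate gives, writing $x=(x_1,x')$ and using $f(-d,x')=0$,
\begin{equation}
    f(x_1,x') = \int_{-d}^{x_1}\partial_1 f(t,x')\,dt .
\end{equation}
Bounding the upper limit by $d$ and applying Hölder's inequality with exponents $p$ and $p'=p/(p-1)$ (the case $p=1$ being immediate, with prefactor $1$) then gives the pointwise bound $|f(x)|^p\leq (2d)^{p-1}\int_{-d}^{d}|\partial_1 f(t,x')|^p\,dt$, an expression independent of $x_1$.

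Next I would integrate this estimate. Integrating over $x_1\in(-d,d)$ contributes a further factor $2d$, and then integrating over $x'\in\R^{n-1}$ collects the full $\m{Lp}$ norm of $\partial_1 f$, yielding
\begin{equation}
    \norm{Lp}{f}^p \leq (2d)^p\,\norm{Lp}{\partial_1 f}^p \leq (2d)^p\,|f|_{\m{W1p}}^p ,
\end{equation}
where the last step uses $\norm{Lp}{\partial_1 f}\leq |f|_{\m{W1p}}$. Since $\norm{W1p}{f}^p = \norm{Lp}{f}^p + |f|_{\m{W1p}}^p$, this gives $\norm{W1p}{f}^p\leq\bigl((2d)^p+1\bigr)|f|_{\m{W1p}}^p$, i.e.\ the claim with $c_\Omega=\bigl((2d)^p+1\bigr)^{1/p}$, a constant depending only on $\Omega$ (through $d$) and on $p$.

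The argument is essentially routine; the only bookkeeping requiring care is the conjugate exponent in the Hölder step and the separate (trivial) treatment of $p=1$, together with checking that the limiting argument is legitimate — but both sides are continuous in the $\m{W1p}$ norm, so this is unproblematic. The genuinely substantive step is the extension by zero combined with the integral representation of $f$: this is exactly where compact support (hence $\m{W1p0}$ rather than $\m{W1p}$) enters, and without it the representation fails and the inequality is false, as already nonzero constants show.
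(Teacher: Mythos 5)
Your proof is correct and follows essentially the route the paper itself indicates: the paper does not write out a proof but remarks that it follows by density of $\m{Cinf0}$ in $\m{W1p0}$ combined with a one-dimensional fundamental-theorem-of-calculus argument as in \cref{chap:overview}, which is exactly what you carry out (with the slab enclosure and Hölder step supplying the multidimensional bookkeeping). The density reduction, the conjugate-exponent computation, and the final assembly of the constant $c_\Omega=\bigl((2d)^p+1\bigr)^{1/p}$ are all in order.
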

 The proof is very similar to the argumentation in \cref{chap:overview}, using the density of $\m{Cinf0}$ in $\m{W1p0}$; in particular, it is sufficient that $Tf$ is zero on a part of the boundary $\partial\Omega$ of non-zero measure.
 In general, we have that any $f\in\m{W1p}$, $1\leq p \leq \infty$, for which $D^\alpha f = 0 $ almost everywhere in $\Omega$ for all $|\alpha|= 1$ must be constant (cf.~\cref{lem:bramble:kernel}).

 Again, $\m{Wkp}$ is a Hilbert space for $p=2$, with inner product
 \begin{equation*}
     \scalprod*{{W}^{k,2}(\Omega)}{f}{g} = \sum_{|\alpha| \leq k} \inner{D^\alpha f,D^\alpha g}.
 \end{equation*}
 For this reason, one usually writes ${H}^k(\Omega) := {W}^{k,2}(\Omega)$. In particular, we will often consider $\m{H1} := {W}^{1,2}(\Omega)$ and $\m{H10} := {W}^{1,2}_0(\Omega)$. With the usual notation $\nabla f := (\partial_1 f,\dots,\partial_n f)$ for the gradient of $f$, we can write
 \begin{equation*}
     |f|_{\m{H1}} = \norm*{\m{L2}^n}{\nabla f}
 \end{equation*}
 for the semi-norm on $\m{H1}$ (which, by the Poincar\'e inequality (\cref{thm:weak:poincare}), is equivalent to the full norm on $\m{H10}$) and
 \begin{equation*}
     \scalprod{H1}{f}{g} = \inner{f,g}+\inner{\nabla f, \nabla g}
 \end{equation*}
 for the inner product on $\m{H1}$. Finally, we denote the topological dual of $\m{H10}$ (i.e., the space of all continuous linear functionals on $\m{H10}$) by $\m{Hm1}:=(\m{H10})^*$, which is endowed with the operator norm
 \begin{equation*}
     \norm{Hm1}{f}  = \sup_{\phi\in\m{H10},\phi\neq 0} \frac{\langle f,\phi\rangle_{\m{Hm1},\m{H10}}}{\norm{H1}{\phi}},
 \end{equation*}
 where $\langle f,\phi\rangle_{V^*,V}:=f(\phi)$ denotes the \emph{duality pairing} between a Banach space $V$ and its dual $V^*$.

 We can now tie together some loose ends from \cref{chap:overview}. The space $V$ can be rigorously defined as
 \begin{equation*}
     V:= \setof{v\in{H}^1(0,1)}{ v(0) = 0},
 \end{equation*}
 which makes sense due to the embedding (for $n=1$) of ${H}^1(0,1)$ in ${C}([0,1])$.
 Due to Poincar\'e's inequality, $|v|_{\m{H1}}^2 = a(v,v) = 0$ implies $\norm{H1}{v} = 0$ and hence $v=0$. Similarly, the existence of a unique weak solution $u\in V$ follows from the Riesz representation theorem.
 Finally, \cref{thm:weak:piecewise} guarantees that $S\subset V$.

 \section{Weak solution of elliptic PDEs}

 In the first two parts, we consider \emph{boundary value problems} of the form
 \begin{equation}\label{eq:weak:bvp}
     -\sum_{j,k=1}^n \partial_j(a_{jk}(x)\partial_k u) + \sum_{j=1}^n b_j(x)\partial_ju + c (x) u = f
 \end{equation}
 on a bounded open set $\Omega\subset\R^n$, where $a_{jk}$, $b_j$, $c$ and $f$ are given functions on $\Omega$. We do not fix boundary conditions at this time. This problem is called \emph{elliptic} if there exists a constant $\alpha >0$ such that
 \begin{equation}\label{eq:weak:elliptic}
     \sum_{j,k=1}^n a_{jk}(x)\xi_j\xi_k \geq \alpha \sum_{j=1}^n \xi_j^2 \quad\text{ for all }\xi\in\R^n, x\in\Omega.
 \end{equation}

 Assuming all functions and the domain are sufficiently smooth, we can multiply by a smooth function $v$, integrate over $x\in\Omega$ and integrate by parts to obtain
 \begin{equation}\label{eq:weak:weakbvp}
     \sum_{j,k=1}^n \inner{a_{jk}\partial_j u,\partial_k v} + \sum_{j=1}^n \inner{b_j \partial_j u,v} + \inner{c u,v} - \sum_{j,k=1}^n \inner{a_{jk} \partial_k u \nu_j, v}_{\partial\Omega}= \inner{f,v},
 \end{equation}
 where $\nu:=(\nu_1,\dots,\nu_n)^T$ is the outward unit normal on $\partial\Omega$ and
 \begin{equation*}
     \inner{f,g}_{\partial\Omega} :=   \int_{\partial\Omega} f(x)g(x)\,dx,
 \end{equation*}
 where $g$ should be understood in the sense of traces, i.e., as $Tg$.
 Note that this formulation only requires $a_{jk},b_j,c\in\m{Linf}$ and $f\in\m{L2}$ in order to be well-defined.
 We then search for $u\in V$ -- for a suitably chosen function space $V$ -- satisfying \eqref{eq:weak:weakbvp} for all $v\in V$ including boundary conditions which we will discuss next. We will consider the following three conditions:

 \paragraph{Dirichlet conditions} We require $u = g$ on $\partial\Omega$ (in the sense of traces) for given $g\in\m{l2}(\partial\Omega)$. If $g=0$ (a \emph{homogeneous} Dirichlet condition), we take $V=\m{H10}$, in which case the boundary integrals in \eqref{eq:weak:weakbvp} vanish since $v=0$ on $\partial\Omega$. The weak formulation is thus: Find $u\in\m{H10}$ satisfying
 \begin{equation}\label{eq:weak:dirichlet}
     a(u,v):= \sum_{j,k=1}^n \inner{a_{jk}\partial_j u,\partial_k v} + \sum_{j=1}^n \inner{b_j \partial_j u,v} + \inner{c u,v}  = \inner{f,v}
 \end{equation}
 for all $v\in\m{H10}$.

 If $g\neq 0$, and $g$ and $\partial\Omega$ are sufficiently smooth (e.g., $g\in\m{h1}(\partial\Omega)$ with $\partial\Omega$ of class ${C}^1$),\footnote{\cite[Theorem 7.40]{Renardy:2004}} we can find a function $u_g\in\m{H1}$ such that $Tu_g=g$. We then set $u = \tilde u + u_g$, where $\tilde u\in\m{H10}$ satisfies
 \begin{equation*}
     a(\tilde u,v) = \inner{f,v} - a(u_g,v)
 \end{equation*}
 for all $v\in\m{H10}$.

 \paragraph{Neumann conditions} We require $\sum_{j,k=1}^n a_{jk} \partial_k u \nu_j  = g$ on $\partial\Omega$ for given $g\in\m{l2}(\partial\Omega)$. In this case, we can substitute this equation in the boundary integral in \eqref{eq:weak:weakbvp} and take $V=\m{H1}$. We then look for $u\in\m{H1}$ satisfying
 \begin{equation}\label{eq:weak:neumann}
     a(u,v) = \inner{f,v} + \inner{g,v}_{\partial\Omega}
 \end{equation}
 for all $v\in\m{H1}$.

 \paragraph{Robin conditions}  We require $d u + \sum_{j,k=1}^n a_{jk} \partial_k u \nu_j  = g$ on $\partial\Omega$ for given $g\in\m{l2}(\partial\Omega)$ and $d\in{L}^\infty(\partial\Omega)$. Again we can substitute this in the boundary integral and take $V=\m{H1}$. The weak form is then: Find $u\in\m{H1}$ satisfying
 \begin{equation}\label{eq:weak:robin}
     a_R(u,v):= a(u,v) + \inner{d u,v}_{\partial\Omega} = \inner{f,v} + \inner{g,v}_{\partial\Omega}
 \end{equation}
 for all $v\in\m{H1}$.

 \bigskip

 These problems have a common form: For a given Hilbert space $V$, a bilinear form $a:V\times V\to \R$ and a linear functional $F:V\to \R$ (e.g., $F:v\mapsto\inner{f,v}$ in the case of Dirichlet conditions), find $u\in V$ such that
 \begin{equation}\label{eq:weak:variational}
     a(u,v) = F(v),\qquad \text{for all } v\in V.
 \end{equation}
 The existence and uniqueness of a solution can be guaranteed by the Lax--Milgram theorem, which is a generalization of the Riesz representation theorem (note that $a$ is in general not symmetric).
 \begin{theorem}[Lax--Milgram theorem]\label{thm:weak:laxmilgram}
     Let a Hilbert space $V$,  a bilinear form $a:V\times V\to \R$ and a linear functional $F:V\to\R$ be given satisfying the following conditions:
     \begin{enumerate}[(i)]
         \item \emph{Coercivity}: There exists $c_1>0$ such that
             \begin{equation*}
                 a(v,v) \geq c_1 \norm*{V}{v}^2
             \end{equation*}
             for all $v\in V$.
         \item \emph{Continuity}: There exist $c_2,c_3>0$ such that
             \begin{align*}
                 a(v,w) &\leq c_2 \norm*{V}{v}\norm*{V}{w},\\
                 F(v) &\leq c_3 \norm*{V}{v}
             \end{align*}
             for all $v,w\in V$.
     \end{enumerate}
     Then there exists a unique solution $u\in V$ to \eqref{eq:weak:variational}, and
     \begin{equation}\label{eq:weak:laxmilgram}
         \norm*{V}{u}\leq \frac{1}{c_1} \norm*{V^*}{F}.
     \end{equation}
 \end{theorem}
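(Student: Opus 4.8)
The plan is to prove existence and uniqueness via the Riesz representation theorem applied to a suitably defined linear operator. The key idea is that for each fixed $v \in V$, the continuity assumption makes $w \mapsto a(v,w)$ a bounded linear functional, so by Riesz representation there is a unique element of $V$ representing it; this defines a bounded linear operator $A: V \to V$ through $\scalprod{V}{Av}{w} = a(v,w)$. Similarly, $F$ is represented by some $f_F \in V$ with $\scalprod{V}{f_F}{w} = F(w)$. The variational problem \eqref{eq:weak:variational} is then equivalent to solving the operator equation $Au = f_F$, so the entire task reduces to showing that $A$ is a bijection.

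First I would verify that $A$ is well-defined, linear, and bounded with $\norm*{V}{Av} \leq c_2 \norm*{V}{v}$, which follows directly from the continuity bound on $a$. Next, I would establish that $A$ is bounded below: coercivity gives
\begin{equation}
    c_1 \norm*{V}{v}^2 \leq a(v,v) = \scalprod{V}{Av}{v} \leq \norm*{V}{Av}\,\norm*{V}{v},
\end{equation}
so that $\norm*{V}{Av} \geq c_1 \norm*{V}{v}$ for all $v$. This single inequality does most of the work: it shows $A$ is injective and, moreover, that the range of $A$ is closed (a Cauchy sequence $Av_n$ forces $v_n$ to be Cauchy, and then continuity of $A$ identifies the limit as lying in the range). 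To conclude surjectivity, I would show the range is also dense: if $w \perp \mathrm{ran}(A)$, then in particular $0 = \scalprod{V}{Aw}{w} = a(w,w) \geq c_1\norm*{V}{w}^2$, forcing $w = 0$. A closed dense subspace of a Hilbert space is the whole space, so $A$ is surjective, hence bijective.

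The main obstacle is the surjectivity argument, since this is where the non-symmetry of $a$ prevents a direct minimization-of-energy proof; the combination \emph{closed range plus dense range} is the clean way around this and is the crux of the theorem. Finally, the stability estimate \eqref{eq:weak:laxmilgram} follows by taking $v = u$ in the lower bound and using the definition of $F$: coercivity gives $c_1\norm*{V}{u}^2 \leq a(u,u) = F(u) \leq \norm*{V^*}{F}\,\norm*{V}{u}$, and dividing by $\norm*{V}{u}$ yields the claimed bound (the case $u=0$ being trivial).
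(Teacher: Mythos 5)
Your proof is correct, but it takes a genuinely different route from the one in the text. Both arguments begin the same way, using the Riesz representation theorem to convert the variational problem into an operator equation $Au=f_F$ in $V$. From there, the text solves this equation with the Banach fixed point theorem: it shows that $T_\delta(v) = v-\delta(\phi_v-\phi_F)$ is a contraction for $0<\delta$ small enough, using coercivity and continuity to bound the contraction constant. You instead prove that $A$ is bijective by the classical \enquote{bounded below $\Rightarrow$ injective with closed range} plus \enquote{orthogonal complement of the range is trivial by coercivity $\Rightarrow$ dense range} argument. Both are standard and complete; your dense-range step and the stability estimate are carried out correctly, and uniqueness follows from the injectivity you establish. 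What each buys: the fixed-point proof is constructive --- the map $T_\delta$ is essentially a damped Richardson iteration, so it simultaneously yields a convergent numerical scheme, which is a natural bonus in a finite element context. Your closed-range argument is shorter and, more importantly, is exactly the template that the text later uses (via the closed range theorem) to prove the Banach--Ne\v{c}as--Babu\v{s}ka theorem, of which Lax--Milgram is the special case $U=V$ with coercivity; in that sense your proof foreshadows the generalization better than the one given here.
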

 \begin{proof}
     For every fixed $u\in V$, the mapping $v\mapsto a(u,v)$ is a linear functional on $V$, which is continuous by assumption (ii), and so is $F$. By the Riesz--Fr\'echet representation theorem,\footnote{e.g., \cite[Theorem~2.E]{Zeidler:1995a}} there exist unique $\phi_u,\phi_F \in V$ such that
     \begin{equation*}
         \scalprod*{V}{\phi_u}{v} = a(u,v) \quad \text{ and } \quad \scalprod*{V}{\phi_F}{v} = F(v)
     \end{equation*}
     for all $v\in V$. We recall that  $w\mapsto \phi_w$ is a continuous linear mapping from $V^*$ to $V$ with operator norm $1$. Thus, a solution $u\in V$ satisfies
     \begin{equation*}
         0 = a(u,v) - F(v) = \scalprod*{V}{\phi_u-\phi_F}{v}
     \end{equation*}
     for all $v\in V$, which holds if and only if $\phi_u = \phi_F$ in $V$.

     We now wish to solve this equation using the Banach fixed point theorem.\footnote{e.g., \cite[Theorem~1.A]{Zeidler:1995a}} For $\delta >0$, consider the mapping
     \begin{equation*}
         T_\delta:V\to V,\qquad        T_\delta (v) = v - \delta (\phi_v -\phi_F).
     \end{equation*}
     If $T_\delta$ is a contraction, then there exists a unique fixed point $u$ such that $T_\delta(u) = u$ and hence $\phi_u - \phi_F =0$. It remains to show that there exists a $\delta >0$ such that $T_\delta$ is a contraction, i.e., there exists $0<L<1$ with $\norm*{V}{T_\delta v_1-T_\delta v_2} \leq L \norm*{V}{v_1-v_2}$. Let $v_1,v_2 \in V$ be arbitrary and set $v = v_1-v_2$. Then we have
     \begin{equation*}
         \begin{aligned}
             \norm*{V}{T_\delta v_1-T_\delta v_2}^2 &= \norm*{V}{v_1-v_2 - \delta (\phi_{v_1}-\phi_{v_2})}^2\\
             &= \norm*{V}{v - \delta \phi_v}^2 \\
             &= \norm*{V}{v}^2 - 2 \delta \scalprod*{V}{v}{\phi_v} + \delta^2 \scalprod*{V}{\phi_v}{\phi_v}\\
             &= \norm*{V}{v}^2 - 2\delta a(v,v) + \delta^2 a(v,\phi_v)\\
             &\leq \norm*{V}{v}^2 - 2\delta c_1 \norm*{V}{v}^2 + \delta^2 c_2 \norm*{V}{v}\norm*{V}{\phi_v}\\
             &\leq (1-2\delta c_1 +\delta^2 c_2)\norm*{V}{v_1-v_2}^2.
         \end{aligned}
     \end{equation*}
     We can thus choose $0<\delta<2\frac{c_1}{c_2}$ such that $L^2:=(1-2\delta c_1 +\delta^2 c_2)<1$, and the Banach fixed point theorem yields existence and uniqueness of the solution $u\in V$.

     To show the estimate \eqref{eq:weak:laxmilgram}, assume $u\neq 0$ (otherwise the inequality holds trivially). Note that $F$ is a bounded linear functional by assumption (ii), hence $F\in V^*$. We can then apply the coercivity of $a$ and divide by $\norm*{V}{u}\neq 0$ to obtain
     \begin{equation*}
         c_1 \norm*{V}{u} \leq \frac{a(u,u)}{\norm*{V}{u}} \leq \sup_{v\in V} \frac{a(u,v)}{\norm*{V}{v}} =  \sup_{v\in V} \frac{F(v)}{\norm*{V}{v}} = \norm*{V^*}{F}.
         \qedhere
     \end{equation*}
 \end{proof}

 We can now give sufficient conditions on the coefficients $a_{jk}$, $b_j$, $c$ and $d$ such that the boundary value problems defined above have a unique solution.
 \begin{theorem}[well-posedness]\label{thm:weak:well-posed}
     Let $a_{jk}\in\m{Linf}$ satisfy the ellipticity condition \eqref{eq:weak:elliptic} with constant $\alpha>0$, let $b_j,c\in\m{Linf}$ and $f\in\m{L2}$ and $g\in\m{l2}(\partial\Omega)$ be given, and set $\beta =  \alpha^{-1} \sum_{j=1}^n \norm{Linf}{b_j}^2$.
     \begin{enumerate}[a)]
         \item The homogeneous Dirichlet problem has a unique solution $u\in\m{H10}$ if
             \begin{equation*}
                 c(x) - \frac{\beta}{2}\geq 0 \quad\text{ for almost all }x \in\Omega.
             \end{equation*}
             In this case, there exists a $C>0$ such that
             \begin{equation*}
                 \norm{H1}{u} \leq C \norm{L2}{f}.
             \end{equation*}
             Consequently, the inhomogeneous Dirichlet problem for $g\in{H}^1(\partial\Omega)$ has a unique solution satisfying
             \begin{equation*}
                 \norm{H1}{u} \leq C (\norm{L2}{f}+\norm*{{H}^1(\partial\Omega)}{g}).
             \end{equation*}
         \item The Neumann problem for $g\in\m{l2}(\partial\Omega)$ has a unique solution $u\in\m{H1}$ if
             \begin{equation*}
                 c(x)- \frac{\beta}{2} \geq \gamma >0 \quad\text{ for almost all }x \in\Omega.
             \end{equation*}
             In this case, there exists a $C>0$ such that
             \begin{equation*}
                 \norm{H1}{u} \leq C (\norm{L2}{f}+\norm*{\m{l2}(\partial\Omega)}{g}).
             \end{equation*}
         \item The Robin problem for $g\in\m{l2}(\partial\Omega)$ and $d\in{L}^\infty(\partial\Omega)$ has a unique solution if
             \begin{align*}
                 c(x) - \frac{\beta}{2}&\geq \gamma \geq 0 \quad\text{ for almost all }x \in\Omega,\\
                 d(x) &\geq \delta \geq 0\quad\text{ for almost all }x \in\partial\Omega,
             \end{align*}
             and either $\gamma>0$ or $\delta>0$. In this case, there exists a $C>0$ such that
             \begin{equation*}
                 \norm{H1}{u} \leq C (\norm{L2}{f}+\norm*{\m{l2}(\partial\Omega)}{g}).
             \end{equation*}
     \end{enumerate}
 \end{theorem}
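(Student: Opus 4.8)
The plan is to apply the Lax--Milgram theorem (\cref{thm:weak:laxmilgram}) to each of the three problems on the appropriate space $V$ (namely $V=\m{H10}$ for the homogeneous Dirichlet problem and $V=\m{H1}$ for the Neumann and Robin problems), so that the work reduces to verifying the three hypotheses of that theorem: continuity of the bilinear form, continuity of the right-hand side functional, and -- the crux -- coercivity. Continuity of $a$ (and of $a_R$) is routine: every term is bounded by Cauchy--Schwarz together with the $\m{Linf}$ bounds on the coefficients, e.g.\ $|\inner{a_{jk}\partial_j v,\partial_k w}|\leq\norm{Linf}{a_{jk}}\norm{l2}{\partial_j v}\norm{l2}{\partial_k w}$, and the extra boundary term $\inner{dv,w}_{\partial\Omega}$ in the Robin case is controlled using the trace theorem (\cref{thm:weak:trace}). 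Continuity of $F$ is handled the same way, its boundary contribution $\inner{g,v}_{\partial\Omega}$ being bounded via the trace estimate $\norm*{\m{l2}(\partial\Omega)}{Tv}\leq C\norm{H1}{v}$.

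The key computation, common to all three cases, is a lower bound for $a(v,v)$. The ellipticity condition \eqref{eq:weak:elliptic} gives $\sum_{j,k}\inner{a_{jk}\partial_j v,\partial_k v}\geq\alpha|v|_{\m{H1}}^2$ for the principal part. The first-order term is the only one that can carry the wrong sign, so I would absorb it into the principal part by Young's inequality, estimating $\norm{Linf}{b_j}\norm{l2}{\partial_j v}\norm{l2}{v}\leq\frac{\alpha}{2}\norm{l2}{\partial_j v}^2+\frac{\norm{Linf}{b_j}^2}{2\alpha}\norm{l2}{v}^2$ and summing over $j$; this is exactly where the constant $\beta=\alpha^{-1}\sum_j\norm{Linf}{b_j}^2$ enters. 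Combining with the zeroth-order term yields the clean bound
\[
a(v,v)\geq\frac{\alpha}{2}|v|_{\m{H1}}^2+\int_\Omega\bigl(c-\frac{\beta}{2}\bigr)v^2\,dx.
\]
From here the three cases diverge according to how the missing $\norm{l2}{v}^2$ needed for full $\m{H1}$-coercivity is recovered. In case (a) the hypothesis $c-\beta/2\geq 0$ leaves only $\frac{\alpha}{2}|v|_{\m{H1}}^2$, and since $v\in\m{H10}$ I close the gap with Poincaré's inequality (\cref{thm:weak:poincare}) to pass from the seminorm to the full norm, giving coercivity with $c_1=\alpha/(2c_\Omega^2)$. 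In case (b) the strict bound $c-\beta/2\geq\gamma>0$ supplies $\gamma\norm{l2}{v}^2$ directly, so $c_1=\min(\alpha/2,\gamma)$ yields $a(v,v)\geq c_1\norm{H1}{v}^2$ with no further input.

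The main obstacle is the borderline Robin case (c) when $\gamma=0$ but $\delta>0$: here the displayed bound controls only $|v|_{\m{H1}}^2$, while the boundary term contributes $a_R(v,v)-a(v,v)=\inner{dv,v}_{\partial\Omega}\geq\delta\norm*{\m{l2}(\partial\Omega)}{Tv}^2$, so coercivity reduces to a \emph{generalized Poincaré inequality}
\[
\norm{H1}{v}^2\leq C\bigl(|v|_{\m{H1}}^2+\norm*{\m{l2}(\partial\Omega)}{Tv}^2\bigr)\qquad\text{for all }v\in\m{H1}.
\]
I would prove this by contradiction using the compactness of the embedding $\m{H1}\hookrightarrow\m{l2}$ (\cref{thm:weak:sobolev}): a normalized sequence violating the inequality has $\nabla v_n\to0$ in $\m{l2}$ and $Tv_n\to0$, and after extracting an $\m{l2}$-convergent subsequence its limit is a constant (its gradient vanishes) with vanishing trace, hence zero, contradicting normalization. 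With this in hand, $c_1=\min(\alpha/2,\delta)/C$ gives coercivity, and the subcase $\gamma>0$ is subsumed in case (b). Finally, in every case the stability estimate is read off from the Lax--Milgram bound $\norm*{V}{u}\leq c_1^{-1}\norm*{V^*}{F}$ together with the continuity bounds on $F$ already established; the inhomogeneous Dirichlet estimate then follows by writing $u=\tilde u+u_g$ with a trace lifting $u_g\in\m{H1}$ satisfying $\norm{H1}{u_g}\leq C\norm*{{H}^1(\partial\Omega)}{g}$, applying part (a) to $\tilde u$ with modified right-hand side $F(v)-a(u_g,v)$, and using the triangle inequality.
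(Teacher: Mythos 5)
Your proposal is correct and follows essentially the same route as the paper: Lax--Milgram on the appropriate space, continuity via H\"older and the trace theorem, and coercivity obtained by combining ellipticity with Young's inequality to absorb the first-order term, which is exactly how the constant $\beta$ arises in the paper's computation for the homogeneous Dirichlet case. The paper dismisses the Neumann and Robin cases as ``similar'' without detail, whereas you correctly supply the one genuinely non-routine ingredient they need, namely the generalized Poincar\'e (Friedrichs-type) inequality for the borderline Robin case $\gamma=0$, $\delta>0$, proved by the same compactness-and-contradiction argument the paper itself uses later for Lemma~\ref{lem:bramble:poincare}; this is a worthwhile completion rather than a deviation.
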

 \begin{proof}
     We apply the \nameref{thm:weak:laxmilgram}. Continuity of $a$ and $F$ follow by the Hölder inequality and the boundedness of the coefficients. It thus remains to verify the coercivity of $a$, which we only do for the case of homogeneous Dirichlet conditions (the other cases being similar). Let $v\in\m{H10}$ be given. First, the ellipticity of $a_{jk}$ implies that
     \begin{equation*}
         \int_\Omega \sum_{j,k=1}^n a_{jk} \partial_j v(x) \partial_k v(x)\,dx \geq \alpha \int_\Omega \sum_{j=1}^n \partial_j v(x)^2 \,dx  = \alpha\sum_{j=1}^n \norm{L2}{\partial_j v}^2 = \alpha |v|_{\m{H1}}^2.
     \end{equation*}
     We then have by Young's inequality $ab\leq \frac{\alpha}2 a^2+\frac1{2\alpha} b^2$ for $a=|v|_{\m{H1}}$, $b=\norm{L2}{v}$ and $\alpha>0$ as well as repeated application of Hölder's inequality that
     \begin{equation*}
         \begin{aligned}
             a(v,v) &\geq \alpha |v|_{\m{H1}}^2 - \left(\sum_{j=1}^n \norm{Linf}{b_j}^2\right)^{\frac12} |v|_{\m{H1}} \norm{L2}{v} + \int_\Omega c(x) v(x)^2\,dx\\
             & \geq \frac{\alpha}2 |v|_{\m{H1}}^2 + \int_\Omega \left(c(x) - \frac{1}{2\alpha}\sum_{j=1}^n \norm{Linf}{b_j}^2\right) |v|^2\,dx.
         \end{aligned}
     \end{equation*}
     Under the assumption that $c-\frac\beta{2}\geq 0$, the second term is non-negative and we deduce using Poincaré's inequality that
     \begin{equation*}
         a(v,v) \geq \frac\alpha{2} |v|_{\m{H1}}^2 \geq  \frac\alpha{4} |v|_{\m{H1}}^2 + \frac{\alpha}{4c_\Omega^2}\norm{L2}{v}^2 \geq C \norm{H1}{v}^2
     \end{equation*}
     for $C:= \alpha / (4+4c_\Omega^2)$, where $c_\Omega$ is the constant from Poincaré's inequality.
 \end{proof}
 Note that these conditions are not sharp; different ways of estimating the first-order terms in $a$ give different conditions. For example, if $b_j\in{W}^{1,\infty}(\Omega)$, we can take $\beta = \sum_{j=1}^n \norm{Linf}{\partial_j b_j}$.

 Naturally, if the data has higher regularity, we can expect more regularity of the solution as well. The corresponding theory is quite involved, and we give only two results which will be relevant in the following.
 \begin{theorem}[higher regularity\footnote{\cite[Theorem~2.24]{Troianiello:1987a}}]\label{thm:weak:regularity:smooth}
     Let $\Omega\subset\R^n$ be a bounded domain with ${C}^{k+1}$ boundary, $k\geq 0$, $a_{jk}\in{C}^k(\bar\Omega)$ and $b_j,c\in{W}^{k,\infty}(\Omega)$. Then for any $f\in{H}^k(\Omega)$, the solution of the homogeneous Dirichlet problem is in ${H}^{k+2}(\Omega)\cap\m{H10}$, and there exists a $C>0$ such that
     \begin{equation*}
         \norm*{{H}^{k+2}(\Omega)}{u} \leq C (\norm*{{H}^k(\Omega)}{f}+\norm{H1}{u}).
     \end{equation*}
 \end{theorem}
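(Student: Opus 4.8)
The plan is to prove the result by localization combined with the method of difference quotients, and then to bootstrap in $k$ by induction. I will first treat the base case $k=0$, establishing $u\in\m{H2}$ together with $\norm{H2}{u}\le C(\norm{L2}{f}+\norm{H1}{u})$, by patching an interior estimate and a boundary estimate via a partition of unity subordinate to a finite cover of $\bar\Omega$ (one interior patch and finitely many boundary patches). Throughout I write $D^h_s v(x):=h^{-1}(v(x+he_s)-v(x))$ for the difference quotient in direction $e_s$, and I will use repeatedly the two standard facts that $\norm{L2}{D^h_s v}\le\norm{L2}{\partial_s v}$ when $\partial_s v\in\m{L2}$, and conversely that uniform boundedness of $\norm*{\m{l2}(\omega)}{D^h_s v}$ in $h$ forces $\partial_s v\in\m{L2}$ on any $\omega'$ compactly contained in $\omega$, with the same bound.

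For the interior estimate I fix a cutoff $\zeta\in\m{Cinf0}$ and test the weak formulation \eqref{eq:weak:dirichlet} with $v:=-D^{-h}_s(\zeta^2 D^h_s u)$, which is admissible for $|h|$ small. Moving the difference quotient onto the coefficients by a discrete product rule, the principal part produces, via ellipticity \eqref{eq:weak:elliptic}, a term bounded below by a constant times $\norm{L2}{\zeta D^h_s\nabla u}^2$, while the remaining contributions — those involving $\nabla\zeta$, the difference quotients of the (Lipschitz, hence a.e.\ differentiable) coefficients, and $\inner{f,v}$ — are dominated by $\eps\norm{L2}{\zeta D^h_s\nabla u}^2$ plus a constant times $\bigl(\norm{L2}{f}^2+\norm{H1}{u}^2\bigr)$. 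Absorbing the $\eps$-term and letting $h\to0$ gives $\zeta\nabla u\in\m{H1}$, hence $u\in{H}^{2}_{\mathrm{loc}}(\Omega)$ with the required local bound.

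The boundary estimate is the crux and the main obstacle. Near a boundary point I use the $C^{k+1}$ regularity of $\partial\Omega$ to flatten it by a local diffeomorphism; by \cref{thm:weak:transform} this preserves the relevant Sobolev spaces and, since $\partial\Omega$ is $C^{k+1}$, keeps the transformed coefficients in $\m{Ck}$ and ${W}^{k,\infty}(\Omega)$, so the transformed problem is again elliptic of the same type on a half-ball. The key observation is that for a direction $e_s$ \emph{tangential} to the flattened boundary the translated point $x+he_s$ stays in the half-ball, so the interior difference-quotient argument runs verbatim with a cutoff that need not vanish on the flat face: the boundary term drops because, for tangential $s$, the difference quotient of $u\in\m{H10}$ is again in the space and the test function remains admissible. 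This controls every second derivative $\partial_i\partial_j u$ with at least one tangential index. The only remaining one is the purely normal derivative $\partial_n\partial_n u$, which I recover \emph{algebraically} from the equation: writing the strong form a.e.\ and solving $-a_{nn}\partial_n\partial_n u=f+(\text{lower-order and already-controlled terms})$, then dividing by $a_{nn}\ge\alpha>0$. Transforming back and summing the local estimates against the partition of unity yields $u\in\m{H2}$ and the stated estimate for $k=0$.

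Finally, I induct on $k$. Assuming the statement through order $k-1$, I differentiate the flattened, localized equation in a tangential direction $e_s$: then $\partial_s u$ (which still vanishes on the flat face, since $u$ does) solves an elliptic problem of the same structure whose right-hand side is $\partial_s f$ minus terms built from $D^\alpha a_{jk}$, $D^\alpha b_j$, $D^\alpha c$ with $|\alpha|\le1$ paired with lower derivatives of $u$, all lying in ${H}^{k-1}$ precisely because $a_{jk}\in\m{Ck}$ and $b_j,c\in{W}^{k,\infty}(\Omega)$. The induction hypothesis gives $\partial_s u\in{H}^{k+1}$ for tangential $s$, controlling all top-order derivatives except the purely normal one, which is again recovered from the equation as above. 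Patching the local estimates then delivers $u\in{H}^{k+2}(\Omega)\cap\m{H10}$ together with the asserted bound, completing the proof.
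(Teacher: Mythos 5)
The paper itself offers no proof of this theorem---it is quoted from the literature (Troianiello, Theorem~2.24)---so there is no internal argument to compare against. Your proof is the classical Nirenberg translation method: interior estimate by difference quotients, boundary estimate by flattening with a $C^{k+1}$ chart, tangential difference quotients (admissible because tangential translations preserve both the half-ball and the zero trace on the flat face), algebraic recovery of $\partial_n\partial_n u$ from the equation using $a_{nn}\geq\alpha>0$, and a bootstrap in $k$. This is exactly the route taken in the cited reference and in Evans \S 6.3, and the outline is sound.

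One point deserves attention: your argument consumes more regularity of the principal coefficients than the statement grants. For $k=0$ the hypothesis is only $a_{jk}\in C^0(\bar\Omega)$, yet the difference-quotient step needs $D^h_s a_{jk}$ uniformly bounded and the recovery of $\partial_n\partial_n u$ needs $\partial_j a_{jk}\in L^\infty$---that is, Lipschitz coefficients, which you indeed assume parenthetically but which $C^0$ does not supply. Likewise, in the induction step the commutator term $\partial_j\bigl((\partial_s a_{jk})\partial_k u\bigr)$ lands in $H^{k-1}(\Omega)$ only if $a_{jk}\in W^{k+1,\infty}(\Omega)$, one derivative more than $C^k(\bar\Omega)$ provides; the claim that this works \emph{precisely because} $a_{jk}\in C^k(\bar\Omega)$ does not check out at top order. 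This is really an imprecision of the theorem as stated (the cited result assumes $a_{jk}\in C^{k,1}(\bar\Omega)$ and a correspondingly smoother boundary), so your proof establishes the result under the standard hypotheses rather than the literal ones; it would be worth saying so explicitly. A second, purely presentational looseness: the induction hypothesis is a statement about the global homogeneous Dirichlet problem, whereas what you apply it to is a localized problem on a half-ball with zero data only on the flat face. The clean fix is to run the induction over local a priori estimates with cutoffs rather than over the theorem itself.
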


 \begin{theorem}[higher regularity\footnote{\cite[Theorem~5.2.2]{Grisvard:1985}, \cite[pp.~169--189]{Ladyzhenskaya:1968}}]\label{thm:weak:regularity:convex}
     Let $\Omega$ be a convex polygon in $\R^2$ or a parallelepiped in $\R^3$, $a_{jk}\in{C}^1(\bar\Omega)$ and $b_j,c\in\m{c}(\bar\Omega)$. If $f\in L^(\Omega)$, then the solution of the homogeneous Dirichlet problem is in $\m{H2}$, and there exists a $C>0$ such that
     \begin{equation*}
         \norm{H2}{u} \leq C \norm{L2}{f}.
     \end{equation*}
 \end{theorem}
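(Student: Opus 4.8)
The plan is to reduce to the principal part of the operator, localize by a partition of unity so that only the corners remain, and then exploit convexity to control the second derivatives near the corners, which is the only genuine obstruction to full $\m{H2}$ regularity.

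First I would absorb the lower-order terms into the right-hand side. By \cref{thm:weak:well-posed} the solution satisfies $u\in\m{H10}$ with $\norm{H1}{u}\le C\norm{L2}{f}$, and since $b_j,c\in\m{c}(\bar\Omega)$ are bounded, the function $\tilde f := f-\sum_j b_j\partial_j u-c u$ lies in $\m{L2}$ with $\norm{L2}{\tilde f}\le C\norm{L2}{f}$. It therefore suffices to prove the estimate for the divergence-form problem $-\sum_{j,k}\partial_j(a_{jk}\partial_k u)=\tilde f$, and since Poincaré's inequality together with the $\m{H1}$ bound already controls the lower-order part of the $\m{H2}$ norm, the whole task is to bound $\norm{L2}{D^2u}$ by $C\norm{L2}{\tilde f}$.

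Next I would fix a partition of unity subordinate to a cover of $\bar\Omega$ by interior balls, neighborhoods of the relatively open smooth boundary pieces (the open edges in $\R^2$, the open faces in $\R^3$), and neighborhoods of the vertices (and, in $\R^3$, the edges). On the interior pieces the standard difference-quotient estimate gives $\m{H2}$ control; on the smooth boundary pieces the boundary is locally a hyperplane, so the flat-boundary argument underlying \cref{thm:weak:regularity:smooth} applies. In both cases the localized $\m{H2}$ norm is bounded by $C(\norm{L2}{\tilde f}+\norm{H1}{u})$, so all the difficulty is concentrated in the corner pieces.

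The hard part is precisely this corner estimate, where convexity is indispensable. I would approximate $\Omega$ from the inside by a sequence of smooth bounded convex domains $\Omega_m\uparrow\Omega$; on each $\Omega_m$ the smooth theory (\cref{thm:weak:regularity:smooth}) qualitatively guarantees a solution $u_m\in\m{H2}\cap\m{H10}$, while the uniform bound comes instead from the Rellich–Nečas identity, which in the Laplacian model case reads
\[
\int_{\Omega_m} |D^2 u_m|^2\,dx = \int_{\Omega_m} |\Delta u_m|^2\,dx - \int_{\partial\Omega_m} \kappa\,|\partial_\nu u_m|^2\,ds,
\]
with boundary curvature $\kappa\ge 0$ by convexity. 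The boundary term thus has a favorable sign and is discarded, giving $\norm*{{L}^2(\Omega_m)}{D^2 u_m}\le\norm{L2}{\tilde f}$ uniformly in $m$; extending each $u_m$ by zero, extracting a weak $\m{H2}$ limit, and identifying it with $u$ by the uniqueness in \cref{thm:weak:well-posed} yields $u\in\m{H2}$ with the claimed bound. For the general $C^1$ principal part I would freeze $a_{jk}$ at each corner, reduce to constant coefficients and then by a linear change of variables to the Laplacian, absorbing the resulting first-order error terms — controlled by the modulus of continuity of the coefficients on a shrinking neighborhood — into the left-hand side. Equivalently, and this is what underlies the cited references, one analyses the corner singularities directly: near a vertex of interior angle $\omega$ a Kondratiev/Mellin decomposition writes $u$ as a regular $\m{H2}$ part plus singular terms behaving like $r^{\pi/\omega}$, which lie in $\m{H2}$ exactly when $\pi/\omega>1$, i.e. $\omega<\pi$. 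The parallelepiped in $\R^3$ is handled in the same spirit, the edge singularities being governed by the two-dimensional transverse cross-section and the vertex singularities by an auxiliary problem on a spherical cap. The genuine obstacle throughout is the corner contribution: it is precisely the sign of the curvature term (equivalently, the threshold $\pi/\omega>1$) that fails at a nonconvex corner and destroys $\m{H2}$ regularity there.
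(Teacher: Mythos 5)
The paper does not prove this theorem: it is stated as a cited background result (Grisvard; Ladyzhenskaya--Uraltseva), consistent with the announcement at the start of \cref{chap:weak_elliptic} that the results there are collected \enquote{for the most part without proof}. So there is no in-paper argument to compare against; what you have written is a sketch of the standard proof from the cited literature, and its architecture is correct: reduction to the principal part using the a priori $\m{H1}$ bound from \cref{thm:weak:well-posed}, localization separating interior, flat-boundary and corner contributions, and the convexity-driven corner estimate via approximation by smooth convex domains together with the Rellich--Ne\v{c}as identity, whose boundary term carries the second fundamental form and is nonnegative precisely for convex domains. The Kondratiev $r^{\pi/\omega}$ analysis is correctly identified as the alternative route and as the explanation for the failure of $\m{H2}$ regularity at reentrant corners.

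Two points in your limit passage would need repair in a written-out proof. First, the extension by zero of $u_m\in H^2(\Omega_m)\cap H^1_0(\Omega_m)$ to $\Omega$ lies in $\m{H10}$ but \emph{not} in $\m{H2}$ (the normal derivative jumps across $\partial\Omega_m$), so you cannot extract a weak $\m{H2}$ limit of the extensions as stated; instead one bounds $\norm*{H^2(K)}{u_m}$ uniformly on each compact $K\subset\subset\Omega$, extracts a limit in $H^2_{\mathrm{loc}}(\Omega)$ by a diagonal argument, and recovers the global bound by weak lower semicontinuity on an exhausting sequence of compacts. Second, identifying the limit with $u$ is not a direct appeal to uniqueness on $\Omega$, since each $u_m$ solves a problem on a different domain; one must pass to the limit in the variational equation against test functions in $C_0^\infty(\Omega)$ (which are eventually supported in $\Omega_m$) and verify that the limit lies in $\m{H10}$ --- here the zero extensions do help, as $\m{H10}$ is weakly closed. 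Both repairs are routine, so I would classify this as a correct sketch with standard technical details suppressed rather than as containing a genuine gap.
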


 For non-convex polygons, $u\in{H}^2(\Omega)$ is not possible. This is due to the presence of so-called \emph{corner singularities} at reentrant corners, which severely limits the accuracy of finite element approximations. This requires special treatment, and is a topic of extensive current research.

\part{Conforming Finite Elements}\label{part:conforming}

\chapter{Galerkin approach for elliptic problems}\label{chap:galerkin}

We have seen that elliptic partial differential equations can be cast into the following form: Given a Hilbert space $V$, a bilinear form $a:V\times V\to \R$ and a continuous linear functional $F:V\to\R$, find $u\in V$ satisfying
\begin{equation}\label{eq:galerkin:variational}
    a(u,v) = F(v) \quad \text{ for all } v\in V.\tag{W}
\end{equation}
According to the \nameref{thm:weak:laxmilgram}, this problem has a unique solution if there exist $c_1,c_2>0$ such that
\begin{align}
    a(v,v) &\geq c_1 \norm*{V}{v}^2,\label{eq:galerkin:coercive}\\
    a(u,v) &\leq c_2 \norm*{V}{u}\norm*{V}{v},\label{eq:galerkin:continuous}
\end{align}
hold for all $u,v\in V$ (which we will assume from here on).

The \emph{conforming Galerkin approach} consists in choosing a (finite-dimensional) closed subspace $V_h\subset V$ and looking for $u_h\in V_h$ satisfying\footnote{The subscript $h$ stands for a \emph{discretization parameter}, and indicates that we expect convergence of $u_h$ to the solution of \eqref{eq:galerkin:variational} as $h\to 0$.}
\begin{equation}\label{eq:galerkin:galerkin}
    a(u_h,v_h) = F(v_h) \quad \text{ for all } v_h\in V_h.\tag{W$_h$}
\end{equation}
Since we have chosen a closed $V_h\subset V$, the subspace $V_h$ is a Hilbert space with inner product $\scalprod*{V}{\cdot}{\cdot}$ and norm $\norm*{V}{\cdot}$. Furthermore, the conditions \eqref{eq:galerkin:coercive} and \eqref{eq:galerkin:continuous} are satisfied for all $u_h,v_h\in V_h$ as well. The \nameref{thm:weak:laxmilgram} thus immediately yields the well-posedness of \eqref{eq:galerkin:galerkin}.
\begin{theorem}
    Under the assumptions of \cref{thm:weak:laxmilgram}, for any closed subspace $V_h\subset V$, there exists a unique solution $u_h\in V_h$ of \eqref{eq:galerkin:galerkin} satisfying
    \begin{equation*}
        \norm*{V}{u_h}\leq \frac{1}{c_1}\norm*{V^*}{F}.
    \end{equation*}
\end{theorem}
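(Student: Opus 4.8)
The plan is to apply the \nameref{thm:weak:laxmilgram} (\cref{thm:weak:laxmilgram}) directly to the restricted data on $V_h$, since essentially all the required hypotheses have already been verified for $V_h$ in the discussion preceding the statement. The only work is to assemble these observations and to track the constant in the resulting estimate.

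First I would observe that a closed subspace $V_h$ of the Hilbert space $V$ is itself complete (being closed in a complete space) and hence a Hilbert space under the restricted inner product $\scalprod*{V}{\cdot}{\cdot}$, with the norm $\norm*{V}{\cdot}$ inherited from $V$. Next, the coercivity \eqref{eq:galerkin:coercive} and continuity \eqref{eq:galerkin:continuous} of $a$ were assumed to hold for all $u,v\in V$, so in particular they hold for all $u_h,v_h\in V_h$; the continuity bound $F(v)\leq c_3\norm*{V}{v}$ restricts to $V_h$ in the same way. Thus the triple $(V_h,\, a|_{V_h\times V_h},\, F|_{V_h})$ satisfies every assumption of \cref{thm:weak:laxmilgram}, which yields a unique $u_h\in V_h$ solving \eqref{eq:galerkin:galerkin}.

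For the norm estimate, \cref{thm:weak:laxmilgram} applied on $V_h$ gives $\norm*{V}{u_h}\leq \frac{1}{c_1}\norm*{V_h^*}{F|_{V_h}}$, where the left-hand side already uses the inherited norm. It then remains only to compare the two dual norms, and here I would note that
\begin{equation}
    \norm*{V_h^*}{F|_{V_h}} = \sup_{v_h\in V_h,\,v_h\neq 0}\frac{F(v_h)}{\norm*{V}{v_h}} \leq \sup_{v\in V,\,v\neq 0}\frac{F(v)}{\norm*{V}{v}} = \norm*{V^*}{F},
\end{equation}
since the supremum on the left is taken over the smaller set $V_h\setminus\{0\}\subset V\setminus\{0\}$. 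Combining the two inequalities yields the claimed bound $\norm*{V}{u_h}\leq \frac{1}{c_1}\norm*{V^*}{F}$.

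There is no genuine obstacle here: the result is a direct corollary of \cref{thm:weak:laxmilgram}. The one point that warrants a moment's care is that the constant in the estimate is the full dual norm $\norm*{V^*}{F}$ rather than the (potentially smaller) restricted norm $\norm*{V_h^*}{F|_{V_h}}$ — this is exactly what makes the bound uniform over all choices of $V_h$, a stability property that will be essential for the convergence analysis to follow.
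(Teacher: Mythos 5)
Your argument is correct and is essentially the paper's own: the paper likewise notes that a closed subspace of a Hilbert space is itself a Hilbert space under the inherited inner product, that coercivity and continuity restrict to $V_h$, and then invokes the \nameref{thm:weak:laxmilgram} directly. Your explicit comparison of the dual norms $\norm*{V_h^*}{F|_{V_h}}\leq\norm*{V^*}{F}$ is a detail the paper leaves implicit, but it does not change the route.
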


The following result is essential for all error estimates of Galerkin approximations.
\begin{lemma}[Céa's lemma]\label{thm:galerkin:cea}
    Let $u_h$ be the solution of \eqref{eq:galerkin:galerkin} for given $V_h\subset V$ and $u$ be the solution of \eqref{eq:galerkin:variational}. Then,
    \begin{equation*}
        \norm*{V}{u-u_h} \leq \frac{c_2}{c_1} \inf_{v_h\in V_h}\norm*{V}{u-v_h},
    \end{equation*}
    where $c_1$ and $c_2$ are the constants from \eqref{eq:galerkin:coercive} and \eqref{eq:galerkin:continuous}.
\end{lemma}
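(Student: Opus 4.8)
The plan is to reproduce, in this abstract setting, the energy-norm quasi-optimality argument already carried out for the one-dimensional model problem in \cref{chap:overview}. The one structural fact I would extract first is \emph{Galerkin orthogonality}: since $V_h\subset V$, every $v_h\in V_h$ is an admissible test function in \eqref{eq:galerkin:variational}, so subtracting \eqref{eq:galerkin:galerkin} from \eqref{eq:galerkin:variational} for the same $v_h$ gives $a(u-u_h,v_h)=0$ for all $v_h\in V_h$. This is the only place where the two equations are used together; everything afterwards is a consequence of coercivity and continuity alone.

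From there I would estimate the error through the quadratic quantity $a(u-u_h,u-u_h)$. Starting from the coercivity bound \eqref{eq:galerkin:coercive}, namely $c_1\norm*{V}{u-u_h}^2 \leq a(u-u_h,u-u_h)$, I would split the second argument as $u-u_h = (u-v_h)+(v_h-u_h)$ for an arbitrary $v_h\in V_h$. The key observation is that $v_h-u_h\in V_h$, being a difference of elements of the subspace, so Galerkin orthogonality annihilates the term $a(u-u_h,v_h-u_h)$ and leaves $a(u-u_h,u-u_h)=a(u-u_h,u-v_h)$. Applying continuity \eqref{eq:galerkin:continuous} to the right-hand side then yields $c_1\norm*{V}{u-u_h}^2 \leq c_2\norm*{V}{u-u_h}\norm*{V}{u-v_h}$.

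The conclusion is routine: if $u=u_h$ the claimed bound holds trivially, and otherwise I would divide by $\norm*{V}{u-u_h}\neq 0$ to obtain $\norm*{V}{u-u_h}\leq (c_2/c_1)\norm*{V}{u-v_h}$. Since $v_h\in V_h$ was arbitrary, taking the infimum over $V_h$ delivers the stated estimate.

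I expect no genuine obstacle here; the argument is essentially the identity \eqref{eq:bvp1d:cea} with the symmetric energy norm replaced by the (generally nonsymmetric) form $a$, which is why a ratio $c_2/c_1\geq 1$ of the continuity and coercivity constants appears in place of the clean equality. The only point demanding a moment's care is insisting that the split element $v_h-u_h$ lie in $V_h$ so that orthogonality applies — this is exactly where the conformity $V_h\subset V$ is used, and it is precisely the hypothesis that fails for the nonconforming methods alluded to in \cref{sec:overview:galerkin}.
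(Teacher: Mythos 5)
Your argument is correct and coincides with the paper's proof: the same Galerkin orthogonality, the same splitting $u-u_h=(u-v_h)+(v_h-u_h)$ with $v_h-u_h\in V_h$, and the same coercivity--continuity chain followed by division and an infimum. Your explicit treatment of the trivial case $u=u_h$ before dividing is a minor tidying-up that the paper leaves implicit.
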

\begin{proof}
    Since $V_h\subset V$, we deduce (by subtracting \eqref{eq:galerkin:variational} and \eqref{eq:galerkin:galerkin} with the same $v=v_h\in V_h$) the \emph{Galerkin orthogonality}
    \begin{equation}\label{eq:galerkin:orthogonal}
        a(u-u_h,v_h) = 0 \quad \text{ for all } v_h\in V_h.
    \end{equation}
    Hence, for arbitrary $v_h\in V_h$, we have $v_h-u_h\in V_h$ and therefore $a(u-u_h,v_h-u_h)=0$. Using \eqref{eq:galerkin:coercive} and \eqref{eq:galerkin:continuous}, we obtain
    \begin{equation*}
        \begin{aligned}
            c_1 \norm*{V}{u-u_h}^2& \leq a(u-u_h,u-u_h)\\
            &=a(u-u_h,u-v_h) + a(u-u_h,v_h-u_h)\\
            &\leq c_2 \norm*{V}{u-u_h}\norm*{V}{u-v_h}.
        \end{aligned}
    \end{equation*}
    Dividing by $\norm*{V}{u-u_h}$, rearranging, and taking the infimum over all $v_h\in V_h$ yields the desired estimate.
\end{proof}
This implies that the error of any (conforming) Galerkin approach is determined by the approximation error of the exact solution in $V_h$. The derivation of such error estimates will be the topic of the next chapters.

\paragraph{The symmetric case}

The estimate in \nameref{thm:galerkin:cea} is weaker than the corresponding estimate \eqref{eq:bvp1d:cea} for the model problem in \cref{chap:overview}. This is due to the symmetry of the bilinear form in the latter case, which allows characterizing solutions of \eqref{eq:galerkin:variational} as minimizers of a functional.
\begin{theorem}\label{thm:galerkin:symmetric}
    If $a$ is coercive and symmetric, $u\in V$ satisfies \eqref{eq:galerkin:variational} if and only if
    $u$ is the minimizer of
    \begin{equation*}
        J(v) := \tfrac12 a(v,v)-F(v)
    \end{equation*}
    over all $v\in V$.
\end{theorem}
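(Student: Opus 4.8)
The plan is to exploit the purely quadratic structure of $J$: once $a$ is symmetric, $J$ behaves like a generalized parabola whose vertex is precisely the point satisfying the variational equation. First I would fix the candidate $u$ and an arbitrary $w\in V$ and expand
\[
    J(u+w) = \half\, a(u+w,u+w) - F(u+w).
\]
Using bilinearity and, crucially, the symmetry $a(u,w)=a(w,u)$ to merge the two cross terms, this collapses to the clean identity
\[
    J(u+w) = J(u) + \big(a(u,w) - F(w)\big) + \half\, a(w,w).
\]
This single identity will drive both implications, so the first task is just to carry out the expansion carefully.

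For the \emph{only if} direction, suppose $u$ solves \eqref{eq:galerkin:variational}. Then the middle term $a(u,w)-F(w)$ vanishes for every $w\in V$, so the identity gives $J(u+w) = J(u) + \half\, a(w,w) \geq J(u)$ by coercivity \eqref{eq:galerkin:coercive}. Writing an arbitrary $v\in V$ as $v = u + (v-u)$ then shows that $u$ minimizes $J$. Coercivity in fact yields strict inequality whenever $v\neq u$ (since $a(w,w)\geq c_1\norm*{V}{w}^2>0$), which is what justifies speaking of \enquote{the} minimizer.

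For the converse, suppose $u$ minimizes $J$. Replacing $w$ by $tw$ for $t\in\R$ in the identity above produces the scalar function
\[
    g(t) := J(u+tw) = J(u) + t\big(a(u,w)-F(w)\big) + \frac{t^2}{2}\, a(w,w),
\]
a quadratic in $t$ which, by minimality of $u$, attains its minimum at $t=0$. Differentiating and imposing $g'(0)=0$ gives $a(u,w)-F(w)=0$; since $w\in V$ was arbitrary, $u$ satisfies \eqref{eq:galerkin:variational}. The only point requiring genuine care is the systematic use of symmetry in the expansion — without it the two cross terms would not combine into the single functional $a(u,w)-F(w)$, and the parabola structure that makes both directions transparent would be lost. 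Beyond that there is no real obstacle: the argument is entirely elementary and uses only bilinearity, symmetry, and coercivity.
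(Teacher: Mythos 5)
Your proof is correct and follows essentially the same route as the paper's: expand $J(u+tv)$ using symmetry to obtain the quadratic identity, use coercivity to get strict minimality in one direction, and set the directional derivative at $t=0$ to zero for the converse. The only cosmetic difference is that you make the role of coercivity in the strict inequality explicit, which the paper leaves implicit.
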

\begin{proof}
    For any $u,v\in V$ and $t\in \R$,
    \begin{equation*}
        J(u+tv) = J(u) + t (a(u,v)-F(v)) + \frac{t^2}{2} a(v,v)
    \end{equation*}
    due to the bilinearity and symmetry of $a$. Assume now that $u$ satisfies $a(u,v)-F(v) = 0$ for all $v\in V$. Then setting $t=1$, we deduce that for all $v\neq 0$,
    \begin{equation*}
        J(u+v) = J(u) + \tfrac12 a(v,v) \geq J(u) + \frac{c_1}{2} \norm*{V}{v}^2 > J(u).
    \end{equation*}
    Hence, $u$ is the unique minimizer of $J$. Conversely, if $u$ is the (unique) minimizer of $J$, every directional derivative of $J$ at $u$ must vanish, which implies that
    \begin{equation*}
        0 = \frac{d}{dt} J(u+tv)|_{t=0} = a(u,v) - F(v)
    \end{equation*}
    for all $v\in V$.
\end{proof}

Together with coercivity and continuity, the symmetry of $a$ implies that $a(u,v)$ is an inner product on $V$ that induces an \emph{energy norm} $\norm*{a}{u} := a(u,u)^\frac12$. (In fact, in many applications, the functional $J$ represents an energy which is minimized in a physical system. For example in continuum mechanics, $\tfrac12\norm*{a}{u}^2 = \tfrac12 a(u,u)$ represents the elastic deformation energy of a body, and $-F(v)$ represents its potential energy under external load.)

Arguing as in \cref{sec:overview:galerkin}, we see that the solution $u_h \in V_h$ of \eqref{eq:galerkin:galerkin} -- which is called \emph{Ritz--Galerkin approximation} in this context -- satisfies
\begin{equation*}
    \norm*{a}{u-u_h} = \min_{v_h\in V_h}\norm*{a}{u-v_h},
\end{equation*}
i.e., $u_h$ is the best approximation of $u$ in $V_h$ in the energy norm. Using the equivalence of norms, this implies that the infimum in \cref{thm:galerkin:cea} is attained for symmetric bilinear forms.
Equivalently, one can say that the error $u-u_h$ is orthogonal to $V_h$ in the inner product defined by $a$.

\bigskip

Often it is more useful to estimate the error in a weaker norm. This requires a \emph{duality argument}. Let $H$ be a Hilbert space with inner product $(\cdot,\cdot)_H$ and $V$ be a closed subspace satisfying the conditions of the \nameref{thm:weak:laxmilgram} theorem such that the embedding $V\hookrightarrow H$ is continuous (e.g., $V=\m{H1} \hookrightarrow \m{L2} = H$). Then we have the following estimate.
\begin{lemma}[Aubin--Nitsche lemma]\label{thm:galerkin:aubin}
    Let $u_h$ be the solution of \eqref{eq:galerkin:galerkin} for given $V_h\subset V$ and $u$ be the solution of \eqref{eq:galerkin:variational}.
    For any $g\in H$, let $\phi_g$ be the unique solution of the \emph{adjoint problem}
    \begin{equation}\label{eq:galerkin:adjoint}
        a(w,\phi_g) = (g,w)_H \quad \text{ for all } w \in V.
    \end{equation}
    Then there exists a $C>0$ such that
    \begin{equation*}
        \norm*{H}{u-u_h} \leq C \norm*{V}{u-u_h} \sup_{g\in H\setminus\{0\}}\left(\frac{1}{\norm*{H}{g}} \inf_{v_h\in V_h} \norm*{V}{\phi_g-v_h}\right).
    \end{equation*}
\end{lemma}
\begin{proof}
    We make use of the dual representation of the norm in any Hilbert space,
    \begin{equation}\label{eq:galerkin:dualnorm}
        \norm*{H}{w} = \sup_{g\in H}\frac{(g,w)_H}{\norm*{H}{g}}.
    \end{equation}

    Now, inserting $w=u-u_h$ in the adjoint problem, we obtain for any $v_h\in V_h$ using the Galerkin orthogonality and continuity of $a$ that
    \begin{equation*}
        \begin{aligned}
            (g,u-u_h)_H &= a(u-u_h,\phi_g) \\
            &=  a(u-u_h,\phi_g - v_h) \\
            &\leq  c_2\norm*{V}{u-u_h}\norm*{V}{\phi_g-v_h}.
        \end{aligned}
    \end{equation*}
    Inserting $w=u-u_h$ into \eqref{eq:galerkin:dualnorm}, we thus obtain
    \begin{equation*}
        \begin{aligned}
            \norm*{H}{u-u_h} &= \sup_{g\in H}\frac{(g,u-u_h)_H}{\norm*{H}{g}}\\
            &\leq  c_2\norm*{V}{u-u_h} \sup_{g\in H\setminus\{0\}}\frac{\norm*{V}{\phi_g-v_h}}{\norm*{H}{g}}
        \end{aligned}
    \end{equation*}
    for arbitrary $v_h\in V_h$, and taking the infimum over all $v_h$ yields the desired estimate.
\end{proof}
Note that the existence of a unique solution of the adjoint problem is an assumption here that needs to be verified.
If $a$ is symmetric, this is guaranteed by the \nameref{thm:weak:laxmilgram}.
Otherwise, both the original and the adjoint problem need to satisfy the conditions of the Lax--Milgram theorem (which is the case, e.g., for constant coefficients $b_j$).

\chapter{Finite element spaces}\label{chap:elements}

Finite element methods are a special case of Galerkin methods, where the finite-dimensional subspace consists of piecewise polynomials. To construct these subspaces, we proceed in two steps:
\begin{enumerate}
    \item We define a \emph{reference element} and study polynomial interpolation on this element.
    \item We use suitably transformed copies of the reference element to partition the given domain and discuss how to construct a global interpolant from local interpolants on each element.
\end{enumerate}
We then follow the same steps in proving interpolation error estimates for functions in Sobolev spaces.

\section{Construction of finite element spaces}

To allow a unified study of the zoo of finite elements proposed in the literature,\footnote{For a -- far from complete -- list of elements, see, e.g., \cite[Chapter 3]{Brenner:2008}, \cite[Section 2.2]{Ciarlet:2002}} we define a finite element in an abstract way.

\begin{defn}
    A \emph{finite element} is a triple  $(K,\calP,\calN)$ where
    \begin{enumerate}[(i)]
        \item $K\subset \R^n$ is a simply connected bounded open set with piecewise smooth boundary (the \emph{element domain}, or simply \emph{element} if there is no possibility of confusion);
        \item $\calP$ is a finite-dimensional space of functions defined on $K$ (the \emph{space of shape functions});
        \item $\calN = \{N_1,\dots,N_d\}$ is a basis of $\calP^*$ (the \emph{set of nodal variables} or \emph{degrees of freedom}).
    \end{enumerate}
\end{defn}
Here $\calP^*$ denotes the algebraic dual of $\calP$, i.e., the space of linear functionals on $\calP$.
As we will see, condition (iii) guarantees that the interpolation problem on $K$ using functions in $\calP$ -- and hence the Galerkin approximation -- is well-posed. The nodal variables will play the role of interpolation conditions. This is a somewhat backwards definition compared to our introduction in Chapter 1 (where we have directly specified a basis for the shape functions). However, it leads to an equivalent characterization that allows much greater freedom in defining finite elements. The connection is given in the next definition.
\begin{defn}
    Let $(K,\calP,\calN)$ be a finite element. A basis $\{\psi_1,\dots,\psi_d\}$ of $\calP$ is called \emph{dual basis} or \emph{nodal basis} to $\calN$ if $N_i(\psi_j) = \delta_{ij}$.
\end{defn}
For example, for the linear finite elements in one dimension, $K=(0,1)$, $\calP=P_1$ is the space of linear polynomials, and $\calN=\{N_1,N_2\}$ are the \emph{point evaluations} $\calN_1(v) = v(0)$, $\calN_2(v)=v(1)$ for every $v\in\calP$. The nodal basis is given by $\psi_1(x) = 1-x$ and $\psi_2(x) = x$.

Condition (iii) is the only one that is difficult to verify. The following lemma simplifies this task.
\begin{lemma}\label{thm:elements:basis}
    Let $\calP$ be a $d$-dimensional vector space and let $\{N_1,\dots,N_d\}$ be a subset of $\calP^*$. Then the following statements are equivalent:
    \begin{enumerate}[a)]
        \item $\{N_1,\dots,N_d\}$ is a basis of $\calP^*$;
        \item if  $v\in\calP$ satisfies $N_i(v)=0$ for all $1\leq i \leq d$, then $v=0$.
    \end{enumerate}
\end{lemma}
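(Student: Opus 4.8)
The plan is to prove the equivalence of (a) and (b) using elementary linear algebra, exploiting the finite dimensionality of $\calP$. The key fact is that $\calP^*$ has the same dimension $d$ as $\calP$, so a subset of $d$ elements of $\calP^*$ is a basis if and only if it is linearly independent, and equivalently if and only if it spans. I would establish both implications by relating the properties of the set $\{N_1,\dots,N_d\}$ to the map $\Phi:\calP\to\R^d$ defined by $\Phi(v) = (N_1(v),\dots,N_d(v))$.

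First I would observe that statement (b) is precisely the assertion that $\Phi$ is injective. Since $\dim\calP = d = \dim\R^d$, injectivity of the linear map $\Phi$ is equivalent to its bijectivity, hence to surjectivity. So the heart of the argument is to show that (a) is also equivalent to $\Phi$ being injective (equivalently bijective).

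For the direction (a) $\Rightarrow$ (b): if $\{N_1,\dots,N_d\}$ is a basis of $\calP^*$ and $v\in\calP$ satisfies $N_i(v)=0$ for all $i$, then $N(v)=0$ for every $N\in\calP^*$, since every such $N$ is a linear combination of the $N_i$. But if $v\neq 0$, one can always find a functional $N\in\calP^*$ with $N(v)\neq 0$ (extend $v$ to a basis of $\calP$ and take the corresponding coordinate functional), a contradiction; hence $v=0$. For the converse (b) $\Rightarrow$ (a): assuming (b), the map $\Phi$ is injective and therefore, by the dimension count, an isomorphism onto $\R^d$. I would then argue that the $N_i$ are linearly independent in $\calP^*$: a relation $\sum_i c_i N_i = 0$ means $\sum_i c_i [\Phi(v)]_i = 0$ for all $v\in\calP$, i.e. the vector $c=(c_1,\dots,c_d)$ annihilates the image $\Phi(\calP)=\R^d$, forcing $c=0$. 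Since $d$ linearly independent functionals in the $d$-dimensional space $\calP^*$ form a basis, (a) follows.

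The argument is essentially routine once the finite-dimensionality and the equality $\dim\calP = \dim\calP^*$ are invoked. The only mild subtlety -- the point I would be most careful about -- is the step in (a) $\Rightarrow$ (b) asserting the existence of a separating functional for a nonzero $v$; this relies on the standard fact that for a nonzero vector in a finite-dimensional space there is a dual vector not vanishing on it, which I would justify by completing $\{v\}$ to a basis and taking the dual coordinate functional. Everything else reduces to the equivalence of injectivity, surjectivity, and bijectivity for an endomorphism-like map between spaces of equal finite dimension.
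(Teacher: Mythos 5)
Your proof is correct. It takes a slightly different route from the paper's: the paper fixes a basis $\{\psi_1,\dots,\psi_d\}$ of $\calP$, forms the single matrix $\mathbf{B}=(N_j(\psi_i))_{i,j}$, and shows that (a) is equivalent to invertibility of $\mathbf{B}$ (unique solvability of the coefficient system for an arbitrary $L\in\calP^*$) while (b) is equivalent to injectivity of $\mathbf{B}^T$, so both reduce to the same invertibility statement. You instead work coordinate-free with the evaluation map $\Phi:\calP\to\R^d$, proving (a)$\Rightarrow$(b) via a separating functional and (b)$\Rightarrow$(a) via injectivity-implies-surjectivity plus a linear-independence check. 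The two arguments are essentially transposes of one another --- your $\Phi$ is the map whose matrix is $\mathbf{B}^T$ --- but yours treats the two directions asymmetrically with different tools, while the paper's buys both directions at once from the single fact that a square matrix is invertible iff its transpose is. Your version avoids choosing a basis of $\calP$ at the cost of invoking the existence of a separating functional (which you correctly justify by completing $v$ to a basis, so a basis choice sneaks back in there). Both are complete and elementary; no gaps.
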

\begin{proof}
    Let $\{\psi_1,\dots,\psi_d\}$ be a basis of $\calP$. Then $\{N_1,\dots,N_d\}$ is a basis of $\calP^*$ if and only if for any $L\in\calP^*$, there exist (unique) $\alpha_i$, $1\leq i\leq d$, such that
    \begin{equation*}
        L = \sum_{j=1}^d \alpha_j N_j.
    \end{equation*}
    Using the basis of $\calP$, this is equivalent to $L(\psi_i)= \sum_{j=1}^d\alpha_j N_j(\psi_i)$ for all $1\leq i \leq d$. Define the (square) \emph{Vandermonde matrix} $\mathbf{B}=(N_j(\psi_i))_{i,j=1}^d$ and the vectors
    \begin{equation*}
        \mathbf{L}= (L(\psi_1),\dots,L(\psi_d))^T,\qquad \mathbf{a} = (\alpha_1,\dots,\alpha_d)^T.
    \end{equation*}
    Then (a) is equivalent to $\mathbf{Ba}=\mathbf{L}$ being uniquely solvable, i.e., $\mathbf{B}$ being invertible.

    On the other hand, given any $v\in\calP$, we can write $v =\sum_{j=1}^d\beta_j \psi_j$. The condition (b) can be expressed as
    \begin{equation*}
        \sum_{j=1}^n \beta_j N_i(\psi_j) = N_i(v)=0\quad\text{ for all } 1\leq i\leq d
    \end{equation*}
    implying $v=0$, or, in matrix form, that $\mathbf{B}^T\mathbf{b} = 0$ implies $0=\mathbf{b} := (\beta_1,\dots,\beta_d)^T$, i.e., that $B^T$ is injective.  But this too is equivalent to the fact that $\mathbf{B}$ is invertible (since any square matrix is invertible if and only if it is surjective).
\end{proof}
Note that (b) in particular implies that the interpolation problem using functions in $\calP$ with interpolation conditions $\calN$ is uniquely solvable.
To construct a finite element, one usually proceeds in the following way:
\begin{enumerate}
    \item choose an element domain $K$ (e.g., a triangle),
    \item choose a polynomial space $\calP$ of a given degree $k$ (e.g., linear functions),
    \item choose $d$ degrees of freedom $\calN = \{N_1,\dots,N_d\}$, where $d$ is the dimension of $\calP$, such that the corresponding interpolation problem has a unique solution,
    \item compute the nodal basis of $\calP$ with respect to $\calN$.
\end{enumerate}
The last step amounts to solving for $1\leq j\leq d$ the concrete interpolation problems $N_i(\psi_j)=\delta_{ij}$, e.g., using the Vandermonde matrix.
A useful tool to verify the unique solvability of the interpolation problem for polynomials is the following lemma, which is a multidimensional form of polynomial division. Recall that for multivariate polynomials, the \emph{(total) degree} is the maximal sum of all occuring powers in a term (e.g., $p(x) = x_1 x_2^2$ has degree $3$). It is convenient to write such a polynomial $p$ of degree $k$ on $\R^n$ as $p(x) = \sum_{|\alpha|\leq k} c_\alpha x^\alpha$ using a \emph{multi-index} $\alpha\in \N_0^{n-1}$ with the convention that $x^\alpha:=x_1^{\alpha_1}\cdot x_n^{\alpha_n}$ and $|\alpha| := \sum_{i=1}^n\alpha_i$.
\begin{lemma}\label{thm:elements:factor}
    Let $L\neq 0$ be a linear-affine functional on $\R^n$ and $P$ be a polynomial of total degree $d\geq 1$ with $P(x) = 0$ for all $x$ with $L(x)=0$. Then there exists a polynomial $Q$ of total degree $d-1$ such that $P=LQ$.
\end{lemma}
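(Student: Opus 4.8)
The plan is to reduce the general statement to a normalized situation by an affine change of coordinates, prove the one-variable version of polynomial division, and then transfer the result back. The key observation is that the hypothesis $L(x) = 0 \implies P(x) = 0$ says that $P$ vanishes identically on the affine hyperplane $\{L = 0\}$, and we want to extract $L$ as a factor.

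First I would use the fact that $L \neq 0$ is linear-affine to choose an invertible affine map $T:\R^n\to\R^n$ that straightens out the hyperplane $\{L=0\}$ into a coordinate hyperplane. Concretely, since $L(x) = a\cdot x + b$ with $a\neq 0$, I can pick new coordinates $y = (y_1,\dots,y_n)$ in which $L$ becomes (up to a nonzero scalar) the single coordinate $y_1$; that is, $L = c\, y_1$ for some $c\neq 0$ after substituting $x = T(y)$. Because $T$ is affine and invertible, composition with $T$ maps polynomials of degree $d$ bijectively to polynomials of degree $d$, and it preserves the vanishing hypothesis: the transformed polynomial $\tilde P(y) := P(T(y))$ vanishes whenever $y_1 = 0$.

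Next I would prove the normalized claim: if a polynomial $\tilde P(y_1,\dots,y_n)$ of degree $d$ satisfies $\tilde P(0,y_2,\dots,y_n)=0$ identically, then $y_1$ divides $\tilde P$, with quotient of degree $d-1$. This is the heart of the argument and the step I expect to require the most care, though it is elementary: I would expand $\tilde P$ in powers of $y_1$, writing $\tilde P = \sum_{m=0}^{d} y_1^m\, c_m(y_2,\dots,y_n)$ where each coefficient $c_m$ is a polynomial in the remaining variables. Setting $y_1 = 0$ gives $c_0 \equiv 0$, so $\tilde P = y_1 \bigl(\sum_{m=1}^{d} y_1^{m-1} c_m\bigr) =: y_1\,\tilde Q$, and a degree count shows $\deg \tilde Q = d-1$.

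Finally I would transfer back: set $Q(x) := c^{-1}\,\tilde Q(T^{-1}(x))$, which is again a polynomial of degree $d-1$ since $T^{-1}$ is affine. Then $L(x)\,Q(x) = c\,y_1 \cdot c^{-1}\tilde Q(y) = \tilde P(y) = P(x)$ after undoing the substitution, giving the claimed factorization $P = LQ$. The main obstacle is purely bookkeeping --- verifying that the affine straightening of $\{L=0\}$ can be chosen so that $L$ becomes a scalar multiple of a single coordinate, and confirming that degree is preserved under the affine substitution and its inverse; once coordinates are chosen the divisibility step is immediate.
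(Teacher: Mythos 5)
Your proposal is correct and follows essentially the same route as the paper's proof: normalize by an invertible affine transformation so that $L$ becomes (a scalar multiple of) a single coordinate, expand $P$ in powers of that coordinate, observe that the zeroth-order coefficient vanishes identically, and factor. The only cosmetic difference is that the paper arranges $L(x)=x_n$ exactly while you carry along a nonzero scalar $c$ and divide it out at the end.
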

\begin{proof}
    First, we note that affine transformations map the space of polynomials of degree $d$ to itself. Thus, we can assume without loss of generality that $P$ vanishes on the hyperplane orthogonal to the $x_n$ axis, i.e. $L(x)=x_n$ and  $P(\hat x,0) = 0$, where $\hat x = (x_1,\dots,x_{n-1})$. Since the degree of $P$ is $d$, we can write
    \begin{equation*}
        P(\hat x,x_n) = \sum_{j=0}^d \left[ \sum_{|\alpha|\leq d-j} c_{\alpha,j}\hat x^\alpha\right] x^j_n.
    \end{equation*}
    For $x_n = 0$, this implies that
    \begin{equation*}
        0=P(\hat x,0) = \sum_{|\alpha|\leq d} c_{\alpha,0}\hat x^\alpha,
    \end{equation*}
    and therefore $c_{\alpha,0}=0$ for all $|\alpha|\leq d$. Hence,
    \begin{equation*}
        \begin{aligned}
            P(\hat x,x_n) &= \sum_{j=1}^d\left[ \sum_{|\alpha|\leq d-j} c_{\alpha,j}\hat x^\alpha\right] x^j_n\\
            &=x_n \sum_{j=1}^d \sum_{|\alpha|\leq d-j} c_{\alpha,j}\hat x^\alpha x_n^{j-1}\\
            &=: x_nQ = LQ,
        \end{aligned}
    \end{equation*}
    where $Q$ is of degree $d-1$.
\end{proof}

\section{Examples of finite elements}

We restrict ourselves to the case $n=2$ (higher dimensions being similar) and the most common examples.

\paragraph{Triangular elements}
Let $K$ be a triangle and
\begin{equation*}\textstyle
    P_k=\setof{\sum_{|\alpha|\leq k}c_\alpha x^\alpha}{c_\alpha\in\R}
\end{equation*}
denote the space of all bivariate polynomials of total degree less than or equal $k$, e.g., $P_2=\mathrm{span}\,\{1,x_1,x_2,x_1^2,x_2^2,x_1x_2\}$. It is straightforward to verify that $P_k$ (and hence $P_k^*$) is a vector space of dimension $\half(k+1)(k+2)$. We consider two types of interpolation conditions: function values (\emph{Lagrange interpolation}) and gradient values (\emph{Hermite interpolation}). The following examples define valid finite elements. Note that the argumentation is essentially the same as for the well-posedness of the corresponding one-dimensional polynomial interpolation problems.
\begin{figure}
    \centering
    \begin{subfigure}{0.32\linewidth}
        \centering
        \begin{tikzpicture}[scale=1.8]
            \coordinate (z1) at (0,0);
            \coordinate (z2) at (2,0);
            \coordinate (z3) at (1,1.73);
            \draw (z1) -- node[below]{$L_3$} (z2);
            \draw (z2) -- node[right]{$L_1$} (z3);
            \draw (z3) -- node[left]{$L_2$} (z1);
            \node[below=2pt] at (z1) {$z_1$}; \node[below=2pt] at (z2) {$z_2$}; \node[above=2pt] at (z3) {$z_3$};
            \foreach \x in {1,2,3} {\fill (z\x) circle (2pt);}
            \end{tikzpicture}
            \caption{linear Lagrange element\label{fig:elements:tri:lin}}
        \end{subfigure}
        \hfill
        \begin{subfigure}{0.32\linewidth}
            \centering
            \begin{tikzpicture}[scale=1.8]
                \coordinate (z1) at (0,0);
                \coordinate (z2) at (2,0);
                \coordinate (z3) at (1,1.73);
                \coordinate (z4) at ($1/2*(z2)+1/2*(z3)$);
                \coordinate (z5) at ($1/2*(z1)+1/2*(z3)$);
                \coordinate (z6) at ($1/2*(z2)+1/2*(z1)$);
                \draw (z1) -- (z2);
                \draw (z2) -- (z3);
                \draw (z3) -- (z1);
                \node[below=2pt] at (z1) {$z_1$}; \node[below=2pt] at (z2) {$z_2$}; \node[above=2pt] at (z3) {$z_3$};
                \node[right=2pt] at (z4) {$z_4$}; \node[left=2pt] at (z5) {$z_5$}; \node[below=2pt] at (z6) {$z_6$};
                \foreach \x in {1,...,6} {\fill (z\x) circle (2pt);}
                \end{tikzpicture}
                \caption{quadratic Lagrange element\label{fig:elements:tri:quad}}
            \end{subfigure}
            \hfill
            \begin{subfigure}{0.32\linewidth}
                \centering
                \begin{tikzpicture}[scale=1.8]
                    \coordinate (z1) at (0,0);
                    \coordinate (z2) at (2,0);
                    \coordinate (z3) at (1,1.73);
                    \coordinate (z4) at ($1/3*(z1)+1/3*(z2)+1/3*(z3)$);
                    \draw (z1) -- (z2);
                    \draw (z2) -- (z3);
                    \draw (z3) -- (z1);
                    \node[below=5pt] at (z1) {$z_1$}; \node[below=5pt] at (z2) {$z_2$}; \node[above=5pt] at (z3) {$z_3$};
                    \node[above] at (z4) {$z_4$};
                    \foreach \x in {1,...,4} {\fill (z\x) circle (2pt);}
                        \foreach \x in {1,...,3} {\draw (z\x) circle (4pt);}
                        \end{tikzpicture}
                        \caption{cubic Hermite element\label{fig:elements:tri:cub}}
                    \end{subfigure}
                    \caption{Triangular finite elements. Filled circles denote point evaluation, open circles gradient evaluations.}\label{fig:elements:tri}
                \end{figure}
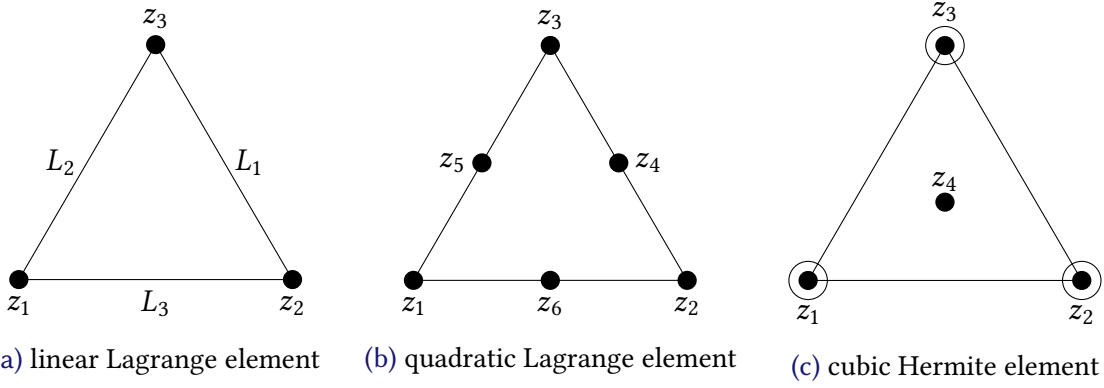
                \begin{itemize}
                    \item\emph{Linear Lagrange elements}: Let $k=1$ and take $\calP = P_1$ (hence the dimension of $\calP$ and $\calP^*$ is $3$) and $\calN = \{N_1,N_2,N_3\}$ with $N_i(v) = v(z_i)$, where $z_1,z_2,z_3$ are the vertices of $K$ (see \cref{fig:elements:tri:lin}). We need to show that condition (iii) holds, which we will do by way of \cref{thm:elements:basis}. Suppose that $v\in P_1$ satisfies $v(z_1)=v(z_2)=v(z_3)=0$. Since $v$ is linear, it must also vanish on each line connecting the vertices, which can be defined as the zero-sets of the (non-constant) linear functions $L_1,L_2,L_3$. Hence, by \cref{thm:elements:factor}, there exists a constant (i.e., polynomial of degree $0$) $c$ such that, e.g., $v=cL_1$. Now let $z_1$ be the vertex not on the edge defined by $L_1$. Then
                        \begin{equation*}
                            0 = v(z_1) = cL_1(z_1).
                        \end{equation*}
                        Since $L_1(z_1)\neq 0$ (otherwise the linear functional $L_1$ would be identically zero), this implies $c=0$ and thus $v=0$.

                    \item \emph{Quadratic Lagrange elements}: Let $k=2$ and take $\calP =  P_2$ (hence the dimension of $\calP$ and $\calP^*$ is $6$). Set $\calN = \{N_1,N_2,N_3,N_4,N_5,N_6\}$ with $N_i(v) = v(z_i)$, where $z_1,z_2,z_3$ are again the vertices of $K$ and $z_4,z_5,z_6$ are the midpoints of the edges described by the linear functions $L_1,L_2,L_3$, respectively (see \cref{fig:elements:tri:quad}). To show that condition (iii) holds, we argue as above. Let $v\in P_2$ vanish at $z_i$, $1\leq i \leq 6$. On each edge, $v$ is a quadratic function that vanishes at three points (say, $z_2, z_3, z_4$) and thus must be identically zero. If $L_1$ is the functional vanishing on the edge containing $z_2, z_3, z_4$, then by \cref{thm:elements:factor}, there exists a linear polynomial $Q_1$ such that $v=L_1Q_1$. Now consider one of the remaining edges with corresponding functional, e.g., $L_2$. Since $v(z_5) = v(z_6) = 0$ by assumption and $L_2$ cannot be zero there (otherwise it would be constant), we have that $Q_1(z_5) = Q_1(z_6)=0$, i.e., $Q_1$ is a linear polynomial  on this edge with two roots and hence vanishes. Applying \cref{thm:elements:factor} to $Q_1$, we thus obtain a constant $c$ such that $v=L_1Q_1=cL_1L_2$. Taking the midpoint of the remaining edge, $z_6$, we have
                        \begin{equation*}
                            0 = v(z_6) = cL_1(z_6)L_2(z_6),
                        \end{equation*}
                        and since neither $L_1$ nor $L_2$ are zero in $z_6$, we deduce $c=0$ and hence $v=0$.

                    \item\emph{Cubic Hermite elements}: Let $k=3$ and take $\calP =  P_3$ (hence the dimension of $\calP$ and $\calP^*$ is $10$). Instead of taking $\calN$ as function evaluations at ten suitable points, we take $N_i$, $1\leq i\leq 4$ as the point evaluation at the vertices $z_1,z_2,z_3$ and the barycenter $z_4 = \frac13(z_1+z_2+z_3)$ (see \cref{fig:elements:tri:cub}) and take the remaining nodal variables as gradient evaluations:
                        \begin{equation*}
                            N_{i+4}(v) = \partial_1 v(z_i), \qquad N_{i+7} = \partial_2 v(z_i),\quad 1\leq i \leq 3.
                        \end{equation*}
                        Now we again consider $v\in P_3$ with $N_i(v)=0$ for all $1\leq i\leq 10$. On each edge, $v$ is a cubic polynomial with double roots at each vertex, and hence must vanish. By considering successively each edge, we find that $v=cL_1L_2L_3$ which implies that
                        \begin{equation*}
                            0=v(z_4) =cL_1(z_4)L_2(z_4)L_3(z_4)
                        \end{equation*}
                        and hence $c=0$ since the barycenter $z_4$ lies on neither of the edges. Therefore, $v=0$.
                \end{itemize}
                The interpolation points $z_i$ are called \emph{nodes} (not to be confused with the \emph{vertices} defining the element domain).
                Both types of elements can be defined for arbitrary degree $k$.
                It should be clear from the above that our definition of finite elements gives us a blueprint for constructing elements with desired properties. This should be contrasted with, e.g., the choice of finite difference stencils.

                \paragraph{Rectangular elements}
                For rectangular elements, we can follow a tensor-product approach. We consider the vector space
                \begin{equation*}
                    Q_k = \setof{\sum_j c_jp_j(x_1)q_j(x_2)}{c_j\in\R, p_j,q_j\in P_k}
                \end{equation*}
                of products of univariate polynomials of degree up to $k$, which has dimension $(k+1)^2$ (e.g., $Q_2 = \mathrm{span}\,\{1,x_1,x_2,x_1x_2,x_1^2x_2,x_1x_2^2,x_1^2,x_2^2\}$). By the same arguments as in the triangular case, we can show that the following examples are finite elements:
                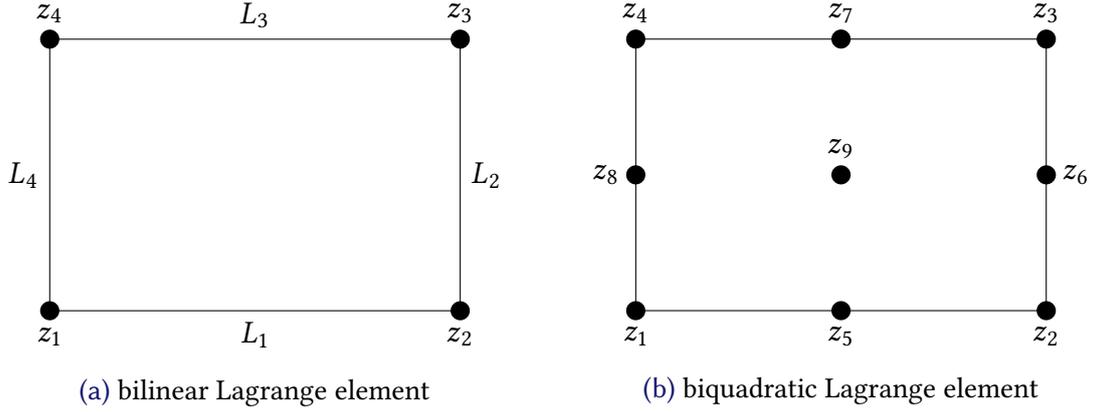
\begin{figure}
                    \centering
                    \begin{subfigure}{0.495\textwidth}
                        \centering
                        \begin{tikzpicture}[scale=1.8]
                            \coordinate (z1) at (0,0);
                            \coordinate (z2) at (3,0);
                            \coordinate (z3) at (3,2);
                            \coordinate (z4) at (0,2);
                            \draw (z1) -- node[below]{$L_1$} (z2);
                            \draw (z2) -- node[right]{$L_2$} (z3);
                            \draw (z3) -- node[above]{$L_3$} (z4);
                            \draw (z4) -- node[left]{$L_4$} (z1);
                            \node[below=2pt] at (z1) {$z_1$}; \node[below=2pt] at (z2) {$z_2$}; \node[above=2pt] at (z3) {$z_3$}; \node[above=2pt] at (z4) {$z_4$};
                            \foreach \x in {1,...,4} {\fill (z\x) circle (2pt);}
                            \end{tikzpicture}
                            \caption{bilinear Lagrange element\label{fig:elements:quad:lin}}
                        \end{subfigure}
                        \hfill
                        \begin{subfigure}{0.495\textwidth}
                            \centering
                            \begin{tikzpicture}[scale=1.8]
                                \coordinate (z1) at (0,0);
                                \coordinate (z2) at (3,0);
                                \coordinate (z3) at (3,2);
                                \coordinate (z4) at (0,2);
                                \coordinate (z5) at ($1/2*(z1)+1/2*(z2)$);
                                \coordinate (z6) at ($1/2*(z2)+1/2*(z3)$);
                                \coordinate (z7) at ($1/2*(z3)+1/2*(z4)$);
                                \coordinate (z8) at ($1/2*(z4)+1/2*(z1)$);
                                \coordinate (z9) at ($1/4*(z1)+1/4*(z2)+1/4*(z3)+1/4*(z4)$);
                                \draw (z1) --  (z2);
                                \draw (z2) --  (z3);
                                \draw (z3) --  (z4);
                                \draw (z4) --  (z1);

                                \node[below=2pt] at (z1) {$z_1$}; \node[below=2pt] at (z2) {$z_2$}; \node[above=2pt] at (z3) {$z_3$}; \node[above=2pt] at (z4) {$z_4$};
                                \node[below=2pt] at (z5) {$z_5$}; \node[right=2pt] at (z6) {$z_6$}; \node[above=2pt] at (z7) {$z_7$}; \node[left=2pt] at (z8) {$z_8$};
                                \node[above=2pt] at (z9) {$z_9$};

                                \foreach \x in {1,...,9} {\fill (z\x) circle (2pt);}
                                \end{tikzpicture}
                                \caption{biquadratic Lagrange element\label{fig:elements:quad:quad}}
                            \end{subfigure}
                            \caption{Rectangular finite elements. Filled circles denote point evaluation.}\label{fig:elements:quad}
                        \end{figure}
                        \begin{itemize}
                            \item\emph{Bilinear Lagrange elements}: Let $k=1$ and take $\calP =  Q_1$  (hence the dimension of $\calP$ and $\calP^*$ is $4$) and  $\calN = \{N_1,N_2,N_3,N_4\}$ with $N_i(v) = v(z_i)$, where $z_1,z_2,z_3,z_4$ are the vertices of $K$ (see \cref{fig:elements:quad:lin}).

                            \item\emph{Biquadratic Lagrange elements}: Let $k=2$ and take $\calP =  Q_2$  (hence the dimension of $\calP$ and $\calP^*$ is $9$) and  $\calN = \{N_1,\dots,N_9\}$ with $N_i(v) = v(z_i)$, where $z_1,z_2,z_3,z_4$ are the vertices of $K$, $z_5,z_6,z_7,z_8$ are the edge midpoints and $z_9$ is the centroid of $K$ (see \cref{fig:elements:quad:quad}).
                        \end{itemize}

                        The above construction is easy to generalize for arbitrary $k$ and $n$: Let $t_1,\dots,t_{k+1}$ be distinct points on (say) $[0,1]$ with $t_1=0$ and $t_{k+1}=1$. Then the nodes $z_1,\dots,z_d$ for the rectangular Lagrange element on $K=[0,1]^n$ are given by the \emph{tensor product}
                        \begin{equation*}
                            \setof{(t_{i_1},\dots,t_{i_n})}{i_j=1,\dots,k+1 \text{ for }j=1,\dots,n}.
                        \end{equation*}
                        This straightforward construction is the main advantage of rectangular elements; on the other hand, triangular elements give more flexibility for handling complicated domains.

                        \section{The interpolant}

                        We wish to estimate the error of the best approximation of a function in a finite element space. An upper bound for this approximation is given by stitching together interpolating polynomials on each element.
                        \begin{defn}
                            Let $(K,\calP,\calN)$ be a finite element and let $\{\psi_1,\dots,\psi_d\}$ be the corresponding nodal basis of $\calP$. For a given function $v$ such that $N_i(v)$ is defined for all $1\leq i\leq d$, the \emph{local interpolant} of $v$ is defined as
                            \begin{equation*}
                                \calI_Kv = \sum_{i=1}^d N_i(v)\psi_i.
                            \end{equation*}
                        \end{defn}
                        The local interpolant can be explicitly constructed once the nodal basis is known. This can be simplified significantly if the reference element domain is chosen as, e.g., the unit simplex.

                        Useful properties of the local interpolant are given next.
                        \begin{lemma}\label{thm:elements:localinterp}
                            Let $(K,\calP,\calN)$ be a finite element and $\calI_K$ the local interpolant. Then
                            \begin{enumerate}
                                \item the mapping $v\mapsto \calI_K$ is linear;
                                \item $N_i(\calI_Kv)) = N_i(v)$, $1\leq i \leq d$;
                                \item $\calI_K(v) = v$ for all $v\in\calP$, i.e., $\calI_K$ is a projection.
                            \end{enumerate}
                        \end{lemma}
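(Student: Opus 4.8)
The plan is to establish the three properties in order, each reducing to the defining relation $N_i(\psi_j)=\delta_{ij}$ of the nodal basis together with the linearity of the degrees of freedom. For part (a), I would observe that $\calI_K v = \sum_{i=1}^d N_i(v)\psi_i$ depends on $v$ only through the scalars $N_i(v)$, while the $\psi_i$ are fixed. Each $N_i$ is a linear functional on the space of admissible $v$ for which $N_i(v)$ is defined, so for admissible $v,w$ and scalars $\alpha,\beta$ we have $N_i(\alpha v+\beta w)=\alpha N_i(v)+\beta N_i(w)$; substituting into the definition and collecting the $\psi_i$ gives $\calI_K(\alpha v+\beta w)=\alpha\calI_K v+\beta\calI_K w$ at once.

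For part (b), I would apply the functional $N_j$ to $\calI_K v$. Since $\calI_K v\in\calP$ is a finite linear combination of shape functions and $N_j\in\calP^*$ is linear on $\calP$, we get
\[
N_j(\calI_K v)=N_j\Bigl(\sum_{i=1}^d N_i(v)\psi_i\Bigr)=\sum_{i=1}^d N_i(v)\,N_j(\psi_i).
\]
The nodal basis property $N_j(\psi_i)=\delta_{ji}$ then collapses the sum to the single surviving term $N_j(v)$, which is exactly the claim.

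For part (c), I would combine (b) with \cref{thm:elements:basis}. Given $v\in\calP$, both $v$ and $\calI_K v$ lie in $\calP$, so their difference $w:=\calI_K v-v\in\calP$. By the linearity of the $N_j$ and part (b), $N_j(w)=N_j(\calI_K v)-N_j(v)=0$ for every $1\leq j\leq d$. Because $\calN=\{N_1,\dots,N_d\}$ is by definition a basis of $\calP^*$, the equivalence in \cref{thm:elements:basis} forces $w=0$, i.e.\ $\calI_K v=v$. Since $\calI_K$ maps into $\calP$ and fixes every element of $\calP$, it is idempotent, hence a projection.

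The one point requiring care is implicit in part (a): the assertion that each $N_i$ is linear on the (generally larger) class of admissible functions $v$, not merely on $\calP$. This is part of the standing convention that the nodal variables are restrictions of genuine linear functionals (point evaluations, gradient evaluations, and the like), and it is what makes the linearity statement meaningful. Beyond this bookkeeping there is no real obstacle; the lemma is essentially a direct restatement of the nodal basis duality, with (c) the only claim that genuinely invokes the earlier unisolvence result \cref{thm:elements:basis}.
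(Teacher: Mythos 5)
Your proof is correct and follows essentially the same route as the paper: linearity of the $N_i$ for (a), the duality relation $N_i(\psi_j)=\delta_{ij}$ for (b), and an application of \cref{thm:elements:basis} to $v-\calI_K v\in\calP$ for (c). Your explicit remark that one must check $v-\calI_K v$ lies in $\calP$ before invoking the unisolvence lemma is a careful touch that the paper leaves implicit, but it is the same argument.
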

                        \begin{proof}
                            The claim (i) follows directly from the linearity of the $N_i$. For (ii), we use the definition of $\calI_K$ and $\psi_i$ to obtain
                            \begin{equation*}
                                \begin{aligned}
                                    N_i(\calI_K v) &= N_i\left(\sum_{j=1}^dN_j(v)\psi_j\right)
                                    = \sum_{j=1}^d N_j(v)N_i(\psi_j)
                                    =\sum_{j=1}^d N_j(v)\delta_{ij}\\ &=  N_i(v)
                                \end{aligned}
                            \end{equation*}
                            for all $1\le i\leq d$ and arbitrary $v$. This implies that $N_i(v-\calI_Kv)=0$ for all $1\leq i\leq d$, and hence by \cref{thm:elements:basis} that $\calI_Kv = v$ and hence (iii) holds.
                        \end{proof}

                        We now use the local interpolant on each element to define a global interpolant on a union of elements.
                        \begin{defn}
                            A \emph{subdivision} of a bounded open set $\Omega\subset\R^n$ is a finite collection $\calT$ of open sets $K_i$ such that
                            \begin{enumerate}[(i)]
                                \item $K_i\cap K_j  = \emptyset $ if $i\neq j$;
                                \item $\bigcup_i \bar K_i = \bar\Omega$.
                            \end{enumerate}
                        \end{defn}
                        \begin{defn}
                            Let $\calT$ be a subdivision of $\Omega$ such that for each $K_i$ there is a finite element $(K_i,\calP_i,\calN_i)$ with local interpolant $\calI_{K_i}$, and let $m$ be the order of the highest partial derivative appearing in any nodal variable. Then the \emph{global interpolant} $\calI_\calT v$ of $v\in{C}^m(\bar\Omega)$ on $\calT$ is defined by
                            \begin{equation*}
                                (\calI_\calT v)|_{K_i} = \calI_{K_i} v \quad \text{ for all } K_i\in\calT.
                            \end{equation*}
                        \end{defn}

                        To obtain some regularity of the global interpolant, we need additional assumptions on the subdivision. Roughly speaking, where two elements meet, the corresponding nodal variables have to match as well. For triangular elements, this can be expressed concisely.
                        \begin{defn}
                            A \emph{triangulation}  of a bounded open set $\Omega\subset\R^2$ is a subdivision $\calT$ of $\Omega$ such that
                            \begin{enumerate}[(i)]
                                \item every $K_i\in\calT$ is a triangle;
                                \item no vertex of any triangle lies on an edge of another triangle (i.e., no \emph{hanging nodes}).
                            \end{enumerate}
                        \end{defn}
                        Similar conditions can be given for $n\geq 3$ (tetrahedra, simplices), in which case one usually also speaks of triangulations. Note that this supposes that $\Omega$ is polyhedral itself. (For non-polyhedral domains, it is possible to use curved elements near the boundary.)

                        \begin{defn}
                            A global interpolant $\calI_\calT$ has \emph{continuity order} $m$ (in short, \enquote{is ${C}^m$}) if $\calI_\calT v\in{C}^m(\bar\Omega)$ for all $v\in{C}^m(\bar\Omega)$ (for which the interpolation is well-defined). In this case, the space
                            \begin{equation*}
                                V_\calT = \setof{\calI_\calT v}{v\in{C}^m(\bar\Omega)}
                            \end{equation*}
                            is called a \emph{${C}^m$ finite element space}.
                        \end{defn}

                        In particular, to obtain global continuity of the interpolant, we need to make sure that the local interpolants coincide where two element domains meet. This requires that the corresponding nodal variables are compatible. For Lagrange and Hermite elements, where each nodal variable is taken as the evaluation of a function or its derivative at a point $z_i$, this reduces to a geometric condition on the placement of nodes on edges.
                        \begin{figure}
                            \centering
                            \begin{subfigure}{0.33\textwidth}
                                \centering
                                \begin{tikzpicture}[scale=1.8]
                                    \coordinate (z1) at (0,0);
                                    \coordinate (z2) at (2,0);
                                    \coordinate (z3) at (1,1.73);
                                    \coordinate (z4) at ($1/2*(z2)+1/2*(z3)$);
                                    \coordinate (z5) at ($1/2*(z1)+1/2*(z3)$);
                                    \coordinate (z6) at ($1/2*(z2)+1/2*(z1)$);
                                    \draw (z1) -- (z2) node(L3) {};
                                    \draw (z2) -- (z3) node(L1) {};
                                    \draw (z3) -- (z1);
                                    \node[below=8pt] at (z1) {$z_1$}; \node[below=8pt] at (z2) {$z_2$}; \node[above=8pt] at (z3) {$z_3$};
                                    \node[left=2pt] at (z4) {$z_4$}; \node[right=2pt] at (z5) {$z_5$}; \node[above=2pt] at (z6) {$z_6$};
                                    \foreach \x in {1,...,3} {
                                        \fill (z\x) circle (2pt);
                                        \draw (z\x) circle (4pt);
                                        \draw (z\x) circle (6pt);
                                    }
                                    \draw [->] (z4) -- ($ (z4)!0.25!270:(z3) $);
                                    \draw [->] (z5) -- ($ (z5)!0.25!270:(z1) $);
                                    \draw [->] (z6) -- ($ (z6)!0.25!270:(z2) $);
                                \end{tikzpicture}
                                \caption{Argyris triangle\label{fig:elements:argyris}}
                            \end{subfigure}
                            \hfill
                            \begin{subfigure}{0.66\textwidth}
                                \centering
                                \begin{tikzpicture}[scale=1.8]
                                    \coordinate (z1) at (0,0);
                                    \coordinate (z2) at (3,0);
                                    \coordinate (z3) at (3,1.7);
                                    \coordinate (z4) at (0,1.7);
                                    \draw (z1) --  (z2);
                                    \draw (z2) --  (z3);
                                    \draw (z3) --  (z4);
                                    \draw (z4) --  (z1);
                                    \node[below=5pt] at (z1) {$z_1$}; \node[below=5pt] at (z2) {$z_2$}; \node[above=5pt] at (z3) {$z_3$}; \node[above=5pt] at (z4) {$z_4$};
                                    \foreach \x in {1,...,4} {%
                                        \fill (z\x) circle (2pt);
                                        \draw (z\x) circle (4pt);
                                        \draw [double,->] (z\x) -- ($(z\x)+(0.25,0.25)$);
                                    }
                                \end{tikzpicture}
                                \caption{Bogner--Fox--Schmit rectangle\label{fig:elements:fox}}
                            \end{subfigure}
                            \caption{${C}^1$ elements. Filled circles denote point evaluation, double circles evaluation of gradients up to total order $2$, and arrows evaluation of normal derivatives. The double arrow stands for evaluation of the second mixed derivative $\partial^2_{12}$.}
                        \end{figure}
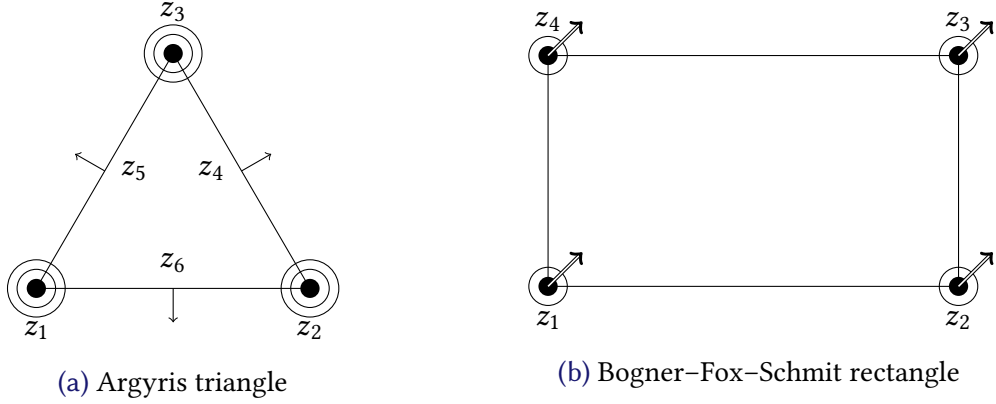
                        \begin{theorem}
                            The triangular Lagrange and Hermite elements of fixed degree are all ${C}^0$ elements (i.e., lead to $C^0$ finite element space). More precisely, given a triangulation $\calT$ of $\Omega$, it is possible to choose edge nodes for the corresponding elements $(K_i,\calP_i,\calN_i)$, $K_i\in\calT$, such that $\calI_\calT v\in{C}^0(\bar\Omega)$ for all $v\in{C}^m(\bar\Omega)$, where $m=0$ for Lagrange and $m=1$ for Hermite elements.
                        \end{theorem}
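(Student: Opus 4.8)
The plan is to reduce global continuity to a matching condition across shared edges, and then to show that the restriction of a local interpolant to an edge is a univariate polynomial determined entirely by data that the two neighbouring elements have in common. First I would observe that since each $\calI_{K_i}v$ is a polynomial on the closed triangle $\bar K_i$, the candidate function $\calI_\calT v$ is automatically continuous in the interior of every element; thus $\calI_\calT v\in{C}^0(\bar\Omega)$ holds if and only if, for every pair of triangles $K_i,K_j\in\calT$ sharing an edge $e=\bar K_i\cap\bar K_j$, the two restrictions $(\calI_{K_i}v)|_e$ and $(\calI_{K_j}v)|_e$ coincide (continuity at the isolated vertices then follows as well).

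Next I would prove the key localization: the restriction $(\calI_{K_i}v)|_e$ depends only on those nodal variables whose node lies on $e$. Parametrizing $e$ by $t\in[0,1]$, any $p\in P_k$ restricts to a univariate polynomial of degree at most $k$ on $e$. I would choose the edge nodes for a degree-$k$ element to consist of the two endpoints together with $k-1$ interior points placed by a rule depending only on $e$ (e.g., equally spaced), so that the associated edge nodal variables are unisolvent for univariate polynomials of degree $\leq k$ (for Lagrange: $k+1$ point evaluations; for Hermite: function values and tangential derivatives at the endpoints, which for the cubic give $4=\dim P_3$ Hermite conditions). For any node $z_\ell\notin e$, the nodal basis function $\psi_\ell$ restricts on $e$ to a univariate polynomial of degree $\leq k$ that is annihilated by all edge nodal variables (since $N_m(\psi_\ell)=0$ for $m\neq\ell$), hence vanishes identically on $e$ by that unisolvence. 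Therefore $(\calI_{K_i}v)|_e=\sum_{z_\ell\in e}N_\ell(v)\,\psi_\ell|_e$, involving only edge data.

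I would then match across $e$. Because the interior edge nodes are placed by a rule depending only on the segment $e$, both elements sharing $e$ carry the \emph{same} edge nodes and the same \emph{type} of edge nodal variables; and the values $N_\ell(v)$ agree because they are computed from one and the same $v$ -- point values being well-defined for $v\in{C}^0(\bar\Omega)$ (Lagrange, $m=0$) and gradient values for $v\in{C}^1(\bar\Omega)$ (Hermite, $m=1$), exactly the assumed regularity. By \cref{thm:elements:localinterp}, each restriction interpolates these common data, and by edge unisolvence the univariate interpolant of degree $\leq k$ is unique; hence $(\calI_{K_i}v)|_e=(\calI_{K_j}v)|_e$, giving $\calI_\calT v\in{C}^0(\bar\Omega)$.

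The hard part will be the edge-unisolvence bookkeeping, i.e., checking that the nodal variables \enquote{seen} by the edge are exactly $k+1$ in number and uniquely determine the univariate restriction. For Lagrange this is the elementary fact that a nonzero univariate polynomial of degree $\leq k$ cannot have $k+1$ roots. For Hermite the subtlety is that a gradient degree of freedom splits into a tangential part, which contributes to $(\calI_{K_i}v)|_e$, and a normal part, which does not affect the edge restriction but is still needed for unisolvence on the full triangle; I would treat the cubic Hermite case explicitly (value and tangential derivative at each endpoint determine the edge cubic, while the normal derivatives and the barycenter value are irrelevant on $e$) and indicate that the same tangential/normal splitting handles higher degrees.
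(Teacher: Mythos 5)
Your proposal is correct and follows essentially the same route as the paper: reduce global continuity to matching across each shared edge, and conclude from the unisolvence of the $k+1$ edge conditions (point values for Lagrange; values plus tangential derivatives, i.e.\ double roots, for Hermite) for univariate polynomials of degree $\leq k$. The paper phrases this more compactly as the difference $w:=\calI_{K_1}v-\calI_{K_2}v$ restricting on $e$ to a degree-$k$ univariate polynomial with $k+1$ roots counted with multiplicity, so your explicit localization lemma (that nodal basis functions of off-edge nodes vanish on $e$) is a correct but strictly unnecessary intermediate step.
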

                        \begin{proof}
                            It suffices to show that the global interpolant is continuous across each edge. Let $K_1$ and $K_2$ be two triangles sharing an edge $e$. Assume that the nodes on this edge are placed symmetrically with respect to rotation (i.e., the placement of the nodes should \enquote{look the same} from $K_1$ and $K_2$), and that $\calP_1$ and $\calP_2$ consist of polynomials of degree $k$.

                            Let $v\in{C}^m(\bar\Omega)$ be given and set $w:=\calI_{K_1}v - \calI_{K_2}v$, where we extend both local interpolants as polynomials outside $K_1$ and $K_2$, respectively. Hence, $w$ is a polynomial of degree $k$ whose restriction $w|_e$ to $e$ is a one-dimensional polynomial having $k+1$ roots (counted by multiplicity). This implies that $w|_e=0$, and thus the interpolant is continuous across $e$.
                        \end{proof}
                        A similar argument shows that the bilinear and biquadratic Lagrange elements are ${C}^0$ as well. Examples of ${C}^1$ elements are the Argyris triangle (of degree $5$ and  $21$ nodal variables, including normal derivatives across edges at their midpoints, \cref{fig:elements:argyris}) and the Bogner--Fox--Schmit rectangle (a bicubic Hermite element of dimension $16$, \cref{fig:elements:fox}). It is one of the strengths of the abstract formulation described here that such exotic elements can be treated by the same tools as simple Lagrange elements.

                        In order to obtain global interpolation error estimates, we need uniform bounds on the local interpolation errors. For this, we need to be able to compare the local interpolation operators on different elements. This can be done with the following notion of equivalence of elements.
                        \begin{defn}
                            Let $(\hat K, \hat \calP,\hat \calN)$ be a finite element and $T:\R^n\to\R^n$ be an affine transformation, i.e., $T:\hat x\mapsto  A\hat x+b$ for $A\in\Rnn$ invertible and $b\in\R^n$. The finite element $(K,\calP,\calN)$ is called \emph{affine equivalent} to $(\hat K,\hat \calP,\hat\calN)$ if
                            \begin{enumerate}[(i)]
                                \item $K = \setof{A\hat x+b}{\hat x\in\hat K}$,
                                \item $\calP = \setof{\hat p \circ T^{-1}}{\hat p\in\hat\calP}$,
                                \item $\calN = \setof{N_i}{N_i(p) = \hat N_i(p\circ T) \text{ for all } p\in\calP}$.
                            \end{enumerate}
                            A triangulation $\calT$ consisting of affine equivalent elements is also called \emph{affine}.
                        \end{defn}
                        It is a straightforward exercise to show that the nodal bases of $\hat\calP$ and $\calP$ are related by $\hat \psi_i = \psi_i\circ T$.
                        Hence, if the nodal variables on edges are placed symmetrically, triangular Lagrange elements of the same order are affine equivalent, as are triangular Hermite elements. The same holds true for rectangular elements. Non-affine equivalent elements (such as \emph{isoparametric elements}\footnote{see, e.g., \cite[\S\,III.2]{Braess:2007}}) are useful in treating elements with curved boundaries (for non-polyhedral domains)

                        The advantage of this construction is that affine equivalent elements are also interpolation equivalent in the following sense.
                        \begin{lemma}\label{thm:elements:equivalent}
                            Let $(\hat K,\hat \calP,\hat \calN)$ and $(K,\calP,\calN)$ be two affine equivalent finite elements related by the transformation $T_K$. Then,
                            \begin{equation*}
                                \calI_{\hat K} (v\circ T_K) = (\calI_K v)\circ T_K.
                            \end{equation*}
                        \end{lemma}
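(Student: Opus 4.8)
The plan is to reduce the identity to two bookkeeping facts and then expand both sides directly from the definition of the local interpolant. The essential observation is that affine equivalence transports the nodal basis by precomposition with $T_K$, so once the coefficients and the basis functions are matched term by term, the equality falls out.

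First I would record the two facts that drive the computation. Inverting condition (ii) of affine equivalence gives $\hat\calP = \setof{p\circ T_K}{p\in\calP}$, so each $\psi_i\circ T_K$ lies in $\hat\calP$. Combining this with condition (iii), which states $N_j(p) = \hat N_j(p\circ T_K)$ for all $p\in\calP$, I obtain $\hat N_j(\psi_i\circ T_K) = N_j(\psi_i) = \delta_{ij}$. Since the nodal (dual) basis of $\hat\calP$ with respect to $\hat\calN$ is unique — because $\hat\calN$ is a basis of $\hat\calP^*$ by \cref{thm:elements:basis} — this forces $\psi_i\circ T_K = \hat\psi_i$, which is exactly the \enquote{straightforward exercise} flagged in the text; I would include its one-line verification as above.

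The second fact is that the defining relation $N_i(w) = \hat N_i(w\circ T_K)$ persists from $\calP$ to every admissible function $w$, i.e. to all $w\in{C}^m$ for which the interpolant is defined. This is immediate for Lagrange and Hermite elements, where the nodal variables are point or derivative evaluations at corresponding nodes $z_i = T_K\hat z_i$: evaluating $w$ at $z_i$ agrees with evaluating $w\circ T_K$ at $\hat z_i$, the Jacobian factors for derivative variables being precisely what condition (iii) absorbs into the definition of $\calN$.

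With these in hand the result is a direct expansion. By the definition of the local interpolant on $\hat K$,
\[
    \calI_{\hat K}(v\circ T_K) = \sum_{i=1}^d \hat N_i(v\circ T_K)\,\hat\psi_i = \sum_{i=1}^d N_i(v)\,(\psi_i\circ T_K) = \Bigl(\sum_{i=1}^d N_i(v)\,\psi_i\Bigr)\circ T_K = (\calI_K v)\circ T_K,
\]
using the extended nodal-variable relation for the coefficients, the nodal basis identity $\hat\psi_i = \psi_i\circ T_K$ for the basis functions, and the linearity of precomposition with $T_K$. The one genuinely delicate point is justifying the extension of the nodal-variable relation from $\calP$ to a general $v$; everything else is routine manipulation of the definitions, and I would expect that extension to be the main obstacle to a fully rigorous abstract statement.
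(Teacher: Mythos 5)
Your proof is correct and follows the same route as the paper's one-line computation: expand $\calI_{\hat K}(v\circ T_K)$ in the nodal basis of $\hat\calP$, identify the coefficients via $\hat N_i(v\circ T_K)=N_i(v)$ and the basis functions via $\hat\psi_i=\psi_i\circ T_K$. You are more careful than the paper on the two points it leaves implicit --- the verification of $\hat\psi_i=\psi_i\circ T_K$ and the extension of the nodal-variable relation from $p\in\calP$ to general admissible $v$ --- and your observation that the latter is the only genuinely non-definitional step is accurate.
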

                        \begin{proof}
                            Let $\hat\psi_i$ and $\psi_i$ be the nodal basis of $\hat\calP$ and $\calP$, respectively. By definition,
                            \begin{equation*}
                                \calI_{\hat K} (v\circ T_K) = \sum_{i=1}^d \hat N_i(v\circ T_K) \hat\psi_i  = \sum_{i=1}^d N_i(v) (\psi_i\circ T_K) = (\calI_K v)\circ T_K.
                                \qedhere
                            \end{equation*}
                        \end{proof}
                        Given a reference element $(\hat K,\hat \calP,\hat \calN)$, we can thus generate a triangulation $\calT$ using affine equivalent elements.

\chapter{Polynomial interpolation in Sobolev spaces}

We now come to the heart of the mathematical theory of finite element methods. As we have seen, the distance of the finite element solution to the true solution is determined by the distance to the best approximation by piecewise polynomials, which in turn is bounded by the distance to the corresponding interpolant. It thus remains to derive estimates for the (local and global) interpolation error.

\section{The Bramble--Hilbert lemma}

We start with the error for the local interpolant. The key for deriving error estimates is the \emph{Bramble--Hilbert lemma} \cite{Bramble:1970}. The derivation here follows the original functional-analytic arguments (by way of several results which may be of independent interest); there are also constructive approaches which allow more explicit computation of the constants.\footnote{see, e.g., \cite[\S\,3.2]{Suli}, \cite[Chapter~4]{Brenner:2008}}

The first lemma characterizes the kernel of differentiation operators.
\begin{lemma}\label{lem:bramble:kernel}
    If $v\in\m{Wkp}$ satisfies $D^\alpha v = 0$ for all $|\alpha| = k$, then $v$ is almost everywhere equal to a polynomial of degree $k-1$.
\end{lemma}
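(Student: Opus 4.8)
The plan is to prove the statement by \emph{mollification}, reducing it to the elementary fact that a smooth function whose $k$-th order partial derivatives all vanish on a connected open set is a polynomial of degree at most $k-1$. Since the conclusion is local and a polynomial is determined by its values on any nonempty open set, I may assume without loss of generality that $\Omega$ is connected (otherwise I argue separately on each connected component).

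First I would fix a standard mollifier $\rho_\eps \in C_0^\infty(\R^n)$ with support in the ball of radius $\eps$, and set $v_\eps := v * \rho_\eps$ on the interior set $\Omega_\eps := \setof{x\in\Omega}{\operatorname{dist}(x,\partial\Omega)>\eps}$. On $\Omega_\eps$ the function $v_\eps$ is smooth, and mollification commutes with weak differentiation for orders up to $k$ (since $v\in\m{Wkp}$), so that $D^\alpha v_\eps = (D^\alpha v) * \rho_\eps$ for every $|\alpha|\leq k$. By hypothesis $D^\alpha v = 0$ whenever $|\alpha| = k$, hence $D^\alpha v_\eps \equiv 0$ on $\Omega_\eps$ for all $|\alpha| = k$. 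By Taylor's theorem, a smooth function on a connected open set all of whose $k$-th partial derivatives vanish is a polynomial $p_\eps$ of degree at most $k-1$; thus $v_\eps = p_\eps$ on each connected component of $\Omega_\eps$.

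The remaining step is the passage to the limit $\eps \to 0$. Fix a ball $B \subset\subset \Omega$; for $\eps$ small enough $B$ lies in a single component of $\Omega_\eps$, and $v_\eps \to v$ in $L^p(B)$ by the standard convergence of mollifications (using $v \in L^p$ locally). The crucial observation is that the space of polynomials of degree at most $k-1$, restricted to $B$, is finite-dimensional and therefore a \emph{closed} subspace of $L^p(B)$; hence the $L^p(B)$-limit of the $p_\eps$ is again a polynomial $p$ of degree at most $k-1$, and $v = p$ almost everywhere on $B$. Finally, covering $\Omega$ by such balls and using that two polynomials agreeing on the (open) overlap of two balls coincide everywhere, together with the connectedness of $\Omega$, the local polynomials patch together into a single polynomial $p$ of degree at most $k-1$ with $v = p$ almost everywhere.

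I expect the main obstacle to be this passage to the limit: one must argue that the property \enquote{being a polynomial of degree $\leq k-1$} survives the $L^p$ limit, which is exactly where the finite-dimensionality (equivalently, the closedness of the polynomial space in $L^p(B)$) is essential — without a uniform degree bound the limit of polynomials need not be a polynomial. A secondary point requiring care is the bookkeeping of the connected components of $\Omega_\eps$ as $\eps \to 0$ and the consistent patching of the local polynomials into one global polynomial, which is precisely where the connectedness of $\Omega$ is used.
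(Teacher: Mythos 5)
Your proof is correct, but it takes a genuinely different route from the paper. The paper's argument is a regularity bootstrap: since every derivative of order $k$ vanishes, so does every derivative of any higher order, whence $v\in\bigcap_{m\geq 1}W^{m,p}(\Omega)$; the Sobolev embedding (\cref{thm:weak:sobolev}) then gives $v\in C^k(\Omega)$, and the claim follows from classical Taylor expansion. Your mollification argument replaces the embedding theorem by two elementary facts: that $D^\alpha(v*\rho_\eps)=(D^\alpha v)*\rho_\eps=0$ on $\Omega_\eps$ for $|\alpha|=k$, so each $v_\eps$ is a classical polynomial of degree at most $k-1$, and that $P_{k-1}$ restricted to a ball is finite-dimensional, hence closed in $L^p$, so the property survives the limit $v_\eps\to v$ in $L^p_{\mathrm{loc}}$. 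You correctly identify this closedness as the crux — without the uniform degree bound the limit step would fail — and your patching of the local polynomials via connectedness is sound. What each approach buys: the paper's proof is shorter once the Sobolev embedding is on the table, but it silently uses the standing hypotheses (bounded $\Omega$ with Lipschitz boundary) that make the embedding valid; your argument is entirely local, requires no boundary regularity or boundedness of $\Omega$, and is arguably the more robust and self-contained of the two. The only caveat, which you flag yourself, is that for disconnected $\Omega$ one obtains a possibly different polynomial on each component, so the statement as literally phrased presupposes connectedness — which holds in the intended application, where $\Omega$ is an element domain.
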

\begin{proof}
    If $D^\alpha v = 0$ holds for all $|\alpha| = k$, then also $D^\beta D^\alpha v = 0\in L^p(\Omega)$ for any multi-index $\beta$. Hence, $v\in\bigcap_{k=1}^\infty \m{Wkp}$. The Sobolev embedding \cref{thm:weak:sobolev} thus guarantees that $v\in{C}^k(\Omega)$ for all $k\in \N$. The claim then follows using classical (pointwise) arguments, e.g., by Taylor series expansion.
\end{proof}

The next result concerns moment interpolation of Sobolev functions on polynomials.
\begin{lemma}\label{lem:bramble:projection}
    For every $v\in\m{Wkp}$ there is a unique polynomial $q\in P_{k-1}$  such that
    \begin{equation}\label{eq:bramble:projection}
        \int_\Omega D^\alpha (v-q) \,dx = 0 \qquad\text{ for all } |\alpha| \leq k-1.
    \end{equation}
\end{lemma}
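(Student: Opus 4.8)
The plan is to recognize the statement as the unique solvability of a square linear system. Writing out what is required, we seek $q\in P_{k-1}$ with
\[
\int_\Omega D^\alpha q\,dx = \int_\Omega D^\alpha v\,dx \qquad \text{for all } |\alpha|\le k-1.
\]
The right-hand sides are well-defined real numbers: since $v\in {W}^{k,p}(\Omega)$ we have $D^\alpha v\in {L}^p(\Omega)$ for $|\alpha|\le k-1$, and as $\Omega$ is bounded the embedding ${L}^p(\Omega)\hookrightarrow {L}^1(\Omega)$ makes each $\int_\Omega D^\alpha v\,dx$ finite. The crucial piece of bookkeeping is that the number of conditions equals $\dim P_{k-1}$: both the constraints (indexed by multi-indices $|\alpha|\le k-1$) and the monomial basis $\{x^\beta : |\beta|\le k-1\}$ of $P_{k-1}$ are counted by the same set of multi-indices, so both have cardinality $N := \binom{n+k-1}{n}$. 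Hence the linear map $L : P_{k-1}\to\R^N$, $q\mapsto \left(\int_\Omega D^\alpha q\,dx\right)_{|\alpha|\le k-1}$, goes between spaces of equal finite dimension, and existence and uniqueness for every right-hand side follow at once from injectivity of $L$.

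First I would therefore prove that $L$ is injective, i.e.\ that $\int_\Omega D^\alpha q\,dx = 0$ for all $|\alpha|\le k-1$ forces $q=0$. The idea is a descending induction on the degree, exploiting that the top-order derivatives of a polynomial are \emph{constant}. Write $q = \sum_{|\beta|\le k-1} c_\beta x^\beta$. For a multi-index $\alpha$ with $|\alpha| = k-1$, the only surviving term in $D^\alpha q$ is the one with $\beta = \alpha$, so $D^\alpha q \equiv \alpha!\, c_\alpha$; the condition then reads $\alpha!\, c_\alpha\,|\Omega| = 0$, which forces $c_\alpha = 0$ (using $|\Omega|>0$). Having eliminated all coefficients of degree $k-1$, we may regard $q$ as an element of $P_{k-2}$ and repeat with $|\alpha| = k-2$, and so on down to $|\alpha| = 0$. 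At each stage the leading derivatives are again constant and the integral condition kills the corresponding coefficients, so after finitely many steps every $c_\beta$ vanishes and $q=0$.

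With injectivity in hand, $L$ is a bijection, so for the particular right-hand side $b = \left(\int_\Omega D^\alpha v\,dx\right)_{|\alpha|\le k-1}$ there is a unique $q = L^{-1}b \in P_{k-1}$, which is exactly the assertion. I do not expect a genuine obstacle here: the only points requiring care are the dimension count $\#\{|\alpha|\le k-1\} = \dim P_{k-1}$ and the observation that differentiating a polynomial of degree $\le k-1$ by an order-$(k-1)$ multi-index yields a constant, which is precisely what makes the descending induction close. Alternatively, one can bypass the explicit induction by noting that, in the monomial basis ordered by increasing degree, $L$ is represented by a block-triangular matrix (since $D^\alpha x^\beta=0$ unless $\beta\ge\alpha$ componentwise) whose diagonal blocks are diagonal matrices with nonzero entries $\alpha!\,|\Omega|$, hence invertible.
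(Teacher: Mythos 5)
Your proof is correct and follows essentially the same route as the paper's: both reduce the claim to the invertibility of the square matrix $\bigl(\int_\Omega D^\alpha x^\beta\,dx\bigr)_{|\alpha|,|\beta|\leq k-1}$ and establish injectivity by working through the multi-indices in descending order (you order by total degree and note explicitly that the top-order derivatives are the constants $\alpha!\,c_\alpha$, while the paper descends lexicographically; the triangularity argument is the same). Your write-up is in fact somewhat more detailed than the paper's at the injectivity step, and the observations about the dimension count and the integrability of $D^\alpha v$ are correct but routine.
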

\begin{proof}
    Writing $q=\sum_{|\beta|\leq k-1}\xi_\beta x^\beta\in P_{k-1}$ as a linear combination of monomials, the condition \eqref{eq:bramble:projection} is equivalent to the linear system
    \begin{equation*}
        \sum_{|\beta|\leq k-1} \xi_\beta \int_\Omega D^\alpha x^\beta\, dx = \int_\Omega D^\alpha v\, dx, \qquad |\alpha|\leq k-1.
    \end{equation*}
    It thus remains to show that the quadratic matrix
    \begin{equation*}
        \mathbf{M} = \left(\int_\Omega D^\alpha x^\beta\, dx\right)_{|\alpha|,|\beta|\leq k-1}
    \end{equation*}
    is non-singular, which we do by showing injectivity. Consider $\mathbf{\xi}= (\xi_\beta)_{|\beta|\leq k-1}$ such that $\mathbf{M}\mathbf{\xi}=0$. This implies that the corresponding polynomial $q$ satisfies
    \begin{equation*}
        \int_\Omega  D^\alpha q\, dx = 0 \qquad\text{ for all } |\alpha|\leq k-1.
    \end{equation*}
    Inserting in turn for $\alpha$ all possible multi-indices in descending (lexicographical) order (such that $D^\alpha x^\beta$ is constant) yields $\xi_\beta = 0$ for all $|\beta|\leq k-1$. Thus, $\mathbf{M}\mathbf{\xi}=0$ implies $\mathbf{\xi}=0$, and therefore $\mathbf{M}$ is invertible.
\end{proof}

The last lemma is a generalization of Poincaré's inequality.
\begin{lemma}\label{lem:bramble:poincare}
    Let $v\in\m{Wkp}$ such that
    \begin{equation}\label{eq:bramble:poincare_mean}
        \int_\Omega D^\alpha v \,dx = 0 \qquad\text{ for all } |\alpha|\leq k-1.
    \end{equation}
    Then
    \begin{equation}\label{eq:bramble:poincare}
        \norm{Wkp}{v}\leq c_0 |v|_{\m{Wkp}},
    \end{equation}
    where the constant $c_0>0$ depends only on $\Omega$, $k$ and $p$.
\end{lemma}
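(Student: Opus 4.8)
The plan is to argue by contradiction using a compactness argument built on the compact embedding from \cref{thm:weak:sobolev}. Suppose the estimate \eqref{eq:bramble:poincare} fails for every constant. Then for each $n\in\N$ there is a function $v_n\in\m{Wkp}$ satisfying the moment conditions \eqref{eq:bramble:poincare_mean} but with $\norm{Wkp}{v_n} > n\,|v_n|_{\m{Wkp}}$; in particular $v_n\neq 0$. Dividing by $\norm{Wkp}{v_n}$ (which preserves both the moment conditions and the seminorm/norm ratio, since these are homogeneous), I may assume $\norm{Wkp}{v_n}=1$ for all $n$ while $|v_n|_{\m{Wkp}} < 1/n \to 0$.

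Next I would extract a limit. Since $\norm{Wkp}{v_n}=1$, the sequence is bounded in $\m{Wkp}$, so by the compactness of the embedding $\m{Wkp}\hookrightarrow{W}^{k-1,p}(\Omega)$ (\cref{thm:weak:sobolev}) there is a subsequence, not relabeled, converging to some $v$ in ${W}^{k-1,p}(\Omega)$. This yields $D^\beta v_n\to D^\beta v$ in $\m{Lp}$ for all $|\beta|\leq k-1$. On the other hand, $|v_n|_{\m{Wkp}}\to 0$ forces $D^\alpha v_n\to 0$ in $\m{Lp}$ for every $|\alpha|=k$. Combining these, every derivative $D^\gamma v_n$ with $|\gamma|\leq k$ is Cauchy in $\m{Lp}$, so $(v_n)$ is in fact Cauchy in $\m{Wkp}$ and converges there to $v$, with $D^\alpha v = 0$ for all $|\alpha|=k$. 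In particular $\norm{Wkp}{v}=\lim_n\norm{Wkp}{v_n}=1$.

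Finally I would identify the limit as zero. Since $D^\alpha v=0$ for all $|\alpha|=k$, \cref{lem:bramble:kernel} shows $v$ agrees almost everywhere with a polynomial in $P_{k-1}$. Passing to the limit in \eqref{eq:bramble:poincare_mean} (the integrals converge by the $\m{Wkp}$-convergence just established) gives $\int_\Omega D^\alpha v\,dx = 0$ for all $|\alpha|\leq k-1$. But the only polynomial in $P_{k-1}$ with vanishing moments of all derivatives up to order $k-1$ is the zero polynomial: this is precisely the uniqueness assertion of \cref{lem:bramble:projection} applied to the function $0\in\m{Wkp}$, for which $q=0$ is the unique such polynomial. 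Hence $v=0$, contradicting $\norm{Wkp}{v}=1$, and the desired $c_0$ exists, depending only on $\Omega$, $k$, $p$. The one delicate point to get right is the upgrade from the $\mathrm{W}^{k-1,p}$-convergence furnished by Rellich--Kondrachov to full $\m{Wkp}$-convergence, which is exactly where the vanishing of the top-order seminorm is used; once this is secured, membership of the limit in $P_{k-1}$ and the limiting moment conditions are routine.
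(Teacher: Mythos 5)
Your proposal is correct and follows essentially the same route as the paper's proof: a contradiction argument with a normalized sequence, the compact embedding $\m{Wkp}\hookrightarrow{W}^{k-1,p}(\Omega)$ from \cref{thm:weak:sobolev}, the upgrade to full $\m{Wkp}$-convergence via the vanishing top-order seminorm, identification of the limit as a polynomial in $P_{k-1}$ by \cref{lem:bramble:kernel}, and elimination of that polynomial through the moment conditions as in \cref{lem:bramble:projection}. Nothing is missing; your explicit remark about where the seminorm convergence enters matches the key step the paper also relies on.
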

\begin{proof}
    We argue by contradiction. Assume the claim does not hold. Then there exists a sequence $\{v_n\}_{n\in\N}\subset\m{Wkp}$ of functions satisfying \eqref{eq:bramble:poincare_mean} and
    \begin{equation}\label{eq:bramble:bound}
        |v_n|_{\m{Wkp}}\to 0  \qquad\text{ but }\qquad \norm{Wkp}{v_n} = 1 \quad\text{ as } n\to\infty.
    \end{equation}
    Since the embedding $\m{Wkp}\hookrightarrow{W}^{k-1,p}(\Omega)$ is compact by \cref{thm:weak:sobolev}, there exists a subsequence (also denoted by $\{v_n\}_{n\in\N}$) converging in ${W}^{k-1,p}(\Omega)
    $ to a $v\in{W}^{k-1,p}(\Omega)
    $, i.e.,
    \begin{equation}\label{eq:bramble:poincare1}
        \norm*{{W}^{k-1,p}(\Omega)}{v-v_n} \to 0 \quad\text{ as } n\to \infty.
    \end{equation}
    Since in addition $|v_n|_{\m{Wkp}}\to 0$ by assumption \eqref{eq:bramble:bound}, $\{v_n\}_{n\in\N}$ is a Cauchy sequence in $\m{Wkp}$ as well and thus converges in $\m{Wkp}$ to a $\tilde v\in\m{Wkp}$ which must satisfy $\tilde v = v$ (otherwise we would have a contradiction to \eqref{eq:bramble:poincare1}). By continuity, we then obtain that $|v|_{\m{Wkp}}=0$, and \cref{lem:bramble:kernel} yields that $v\in P_{k-1}$. Furthermore, $v$ satisfies
    \begin{equation*}
        \int_\Omega D^\alpha v \,dx = \lim_{n\to\infty}  \int_\Omega D^\alpha v_n \,dx = 0\quad \text{ for all } |\alpha|\leq k-1
    \end{equation*}
    by assumption \eqref{eq:bramble:poincare_mean}, which as in the proof of  \cref{lem:bramble:projection} implies that $v=0$. But this is a contradiction to
    \begin{equation*}
        \norm{Wkp}{v} = \lim_{n\to\infty}\norm{Wkp}{v_n} = 1.
        \qedhere
    \end{equation*}
\end{proof}

We are now in a position to prove our central result.
\begin{theorem}[Bramble--Hilbert lemma]\label{thm:bramble}
    Let $F:\m{Wkp}\to\R$ satisfy
    \begin{enumerate}[(i)]
        \item $|F(v)|\leq c_1 \norm{Wkp}{v}$ for all $v\in \m{Wkp}$ (boundedness),
        \item $|F(u+v)|\leq c_2 (|F(u)|+|F(v)|)$ for all $u,v\in \m{Wkp}$ (sublinearity),
        \item $F(q) = 0$ for all $q\in P_{k-1}$ (annihilation).
    \end{enumerate}
    Then there exists a constant $c>0$ such that for all $v\in\m{Wkp}$,
    \begin{equation*}
        |F(v)|\leq c|v|_{\m{Wkp}}.
    \end{equation*}
\end{theorem}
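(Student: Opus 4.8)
The plan is to combine the three preceding lemmas. The key idea is that the functional $F$, although it is only sublinear rather than linear, can be controlled by passing through the polynomial projection from \cref{lem:bramble:projection} and then applying the generalized Poincaré inequality of \cref{lem:bramble:poincare}. The annihilation property (iii) is what lets us subtract off a polynomial for free, and the sublinearity (ii) together with boundedness (i) is what lets us estimate the remaining piece.

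First I would fix an arbitrary $v\in\m{Wkp}$ and invoke \cref{lem:bramble:projection} to obtain the unique polynomial $q\in P_{k-1}$ satisfying the moment conditions \eqref{eq:bramble:projection}, i.e. $\int_\Omega D^\alpha(v-q)\,dx = 0$ for all $|\alpha|\leq k-1$. The point of this choice is that $v-q$ is exactly the kind of function to which \cref{lem:bramble:poincare} applies.

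Next I would use the sublinearity (ii) together with the annihilation (iii) to write
\begin{equation}
    |F(v)| = |F((v-q)+q)| \leq c_2\bigl(|F(v-q)| + |F(q)|\bigr) = c_2\,|F(v-q)|,
\end{equation}
since $F(q)=0$ because $q\in P_{k-1}$. Then boundedness (i) gives $|F(v-q)|\leq c_1\norm{Wkp}{v-q}$, and the generalized Poincaré inequality \eqref{eq:bramble:poincare} applied to $v-q$ yields $\norm{Wkp}{v-q}\leq c_0\,|v-q|_{\m{Wkp}}$. Finally, since $q\in P_{k-1}$ has vanishing derivatives of order $k$, the top-order seminorm is unchanged by subtracting $q$, i.e. $|v-q|_{\m{Wkp}} = |v|_{\m{Wkp}}$. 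Chaining these inequalities gives
\begin{equation}
    |F(v)| \leq c_2 c_1 c_0\,|v|_{\m{Wkp}},
\end{equation}
so the claim holds with $c := c_0 c_1 c_2$.

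I do not expect a serious obstacle here, since the heavy lifting is done by the earlier lemmas; the only points requiring care are the bookkeeping with the sublinearity constant $c_2$ (one must make sure (iii) kills the polynomial term \emph{after} applying (ii), not before) and the observation that $v-q$ genuinely satisfies the hypothesis \eqref{eq:bramble:poincare_mean} of \cref{lem:bramble:poincare} by construction of $q$. The identity $|v-q|_{\m{Wkp}} = |v|_{\m{Wkp}}$ for $q\in P_{k-1}$ is immediate from the definition of the seminorm, which only involves derivatives of order exactly $k$.
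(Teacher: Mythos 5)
Your proposal is correct and follows exactly the paper's own argument: the same decomposition $v = (v-q)+q$ with $q$ chosen via \cref{lem:bramble:projection}, the same use of sublinearity and annihilation to reduce to $|F(v-q)|$, boundedness plus \cref{lem:bramble:poincare}, and the observation that $|v-q|_{\m{Wkp}}=|v|_{\m{Wkp}}$, yielding the same constant $c=c_0c_1c_2$. No gaps.
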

\begin{proof}
    For arbitrary $v\in\m{Wkp}$ and $q\in P_{k-1}$, we have
    \begin{equation*}
        |F(v)| = |F(v-q+q)| \leq c_2 (|F(v-q)|+|F(q)|) \leq c_1c_2 \norm{Wkp}{v-q}.
    \end{equation*}
    Given $v$, we now choose $q\in P_{k-1}$ as the polynomial from \cref{lem:bramble:projection} and apply \cref{lem:bramble:poincare} to $v-q\in\m{Wkp}$ to obtain
    \begin{equation*}
        \norm{Wkp}{v-q} \leq c_0 |v-q|_{\m{Wkp}} = c_0 |v|_{\m{Wkp}},
    \end{equation*}
    where $c_0$ is the constant appearing in \eqref{eq:bramble:poincare} and we have used that $D^\alpha q=0$ for $q\in P_{k-1}$ and all $|\alpha|=k$. This proves the claim with $c:=c_0c_1c_2$.
\end{proof}

\section{Interpolation error estimates}

We wish to apply the Bramble--Hilbert lemma to the interpolation error. We start with the error on the reference element.
\begin{theorem}\label{thm:interp:reference}
    Let $(K,\calP,\calN)$ be a finite element with $P_{k-1}\subset \calP$ for some $k\geq 1$ and all $N\in\calN$ bounded on $W^{k,p}(K)$, $1\leq p\leq \infty$. Then for any $v\in{W}^{k,p}(K)$,
    \begin{equation}\label{eq:interp:reference}
        |v-\calI_K v|_{{W}^{l,p}(K)} \leq c|v|_{{W}^{k,p}(K)} \quad\text{for all}\quad 0\leq l\leq k
    \end{equation}
    where the constant $c>0$ depends only on $n,k,p,l$ and $(K,\calP,\calN)$.
\end{theorem}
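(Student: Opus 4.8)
The plan is to apply the \nameref{thm:bramble} (\cref{thm:bramble}) for each fixed $l$ with $0\leq l\leq k$ to the functional
\begin{equation*}
    F(v) := |v-\calI_K v|_{{W}^{l,p}(K)}.
\end{equation*}
The crucial observation is that \cref{thm:bramble} does \emph{not} require $F$ to be linear: it asks only for boundedness, sublinearity and annihilation. The functional $F$ above, although nonlinear, inherits exactly these three properties from the linearity of $\calI_K$ (\cref{thm:elements:localinterp}(a)), the triangle inequality for the seminorm $|\cdot|_{{W}^{l,p}(K)}$, and the projection property $\calI_K q = q$ for $q\in\calP$ (\cref{thm:elements:localinterp}(c)). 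So the whole proof reduces to verifying the hypotheses of the Bramble--Hilbert lemma.

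First I would establish that $\calI_K$ is bounded from ${W}^{k,p}(K)$ into ${W}^{l,p}(K)$. Writing $\calI_K v = \sum_{i=1}^d N_i(v)\psi_i$ and using that each $N_i$ is bounded on ${W}^{k,p}(K)$ by hypothesis, while each nodal basis function $\psi_i\in\calP$ has a fixed, finite seminorm $|\psi_i|_{{W}^{l,p}(K)}$, one obtains
\begin{equation*}
    |\calI_K v|_{{W}^{l,p}(K)} \leq \sum_{i=1}^d |N_i(v)|\,|\psi_i|_{{W}^{l,p}(K)} \leq \Big(\sum_{i=1}^d \|N_i\|\,|\psi_i|_{{W}^{l,p}(K)}\Big)\|v\|_{{W}^{k,p}(K)} =: C_1\|v\|_{{W}^{k,p}(K)}.
\end{equation*}
This is the single place where the boundedness assumption on the nodal variables is used, and $C_1$ depends only on $n,l,p,\calP,\calN$.

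With this in hand the three conditions follow. \emph{Boundedness}: since $l\leq k$ gives $|v|_{{W}^{l,p}(K)}\leq\|v\|_{{W}^{k,p}(K)}$, we get $F(v)\leq(1+C_1)\|v\|_{{W}^{k,p}(K)}$. \emph{Sublinearity} (indeed subadditivity, $c_2=1$): by linearity of $\calI_K$, $F(u+v)=|(u-\calI_K u)+(v-\calI_K v)|_{{W}^{l,p}(K)}\leq F(u)+F(v)$ by the triangle inequality. \emph{Annihilation}: for $q\in P_{k-1}\subset\calP$ the projection property yields $\calI_K q=q$, hence $F(q)=0$. The Bramble--Hilbert lemma then produces a constant $c$ with $F(v)\leq c\,|v|_{{W}^{k,p}(K)}$, which is precisely \eqref{eq:interp:reference}; its dependence on $n,k,p,l,\calP$ is inherited from $C_1$ and the Poincaré-type constant $c_0$ of \cref{lem:bramble:poincare}.

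The main obstacle here is conceptual rather than computational: one must recognize that Bramble--Hilbert applies to the \emph{nonlinear} functional built from a seminorm composed with a linear map, and notice that the hypothesis $P_{k-1}\subset\calP$ is exactly what makes the annihilation condition hold. Without this inclusion, $\calI_K$ would fail to reproduce polynomials of degree $k-1$ and the argument would break down. Everything else is a routine application of the boundedness and projection properties already established for $\calI_K$.
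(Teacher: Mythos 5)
Your proposal is correct and follows essentially the same route as the paper: both apply the Bramble--Hilbert lemma to the sublinear functional $F(v) = |v-\calI_K v|_{{W}^{l,p}(K)}$, verifying boundedness via the boundedness of the nodal variables and the finiteness of $|\psi_i|_{{W}^{l,p}(K)}$, and annihilation via the projection property $\calI_K q = q$ for $q\in P_{k-1}\subset\calP$. No gaps.
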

\begin{proof}
    It is straightforward to verify that $F:v\mapsto |v-\calI_K v|_{{W}^{l,p}(K)}$ defines a sublinear functional on ${W}^{k,p}(K)$ for all $l\leq k$. Let $\psi_1,\dots,\psi_d$ be the nodal basis of $\calP$ to $\calN$. Since the $N_i$ in $\calN$ are bounded on ${W}^{k,p}(K)$, we have that
    \begin{equation*}
        \begin{aligned}
            |F(v)| &\leq |v|_{{W}^{l,p}(K)} +|\calI_K v|_{{W}^{l,p}(K)} \\
            &\leq \norm*{{W}^{k,p}(K)}{v} +\sum_{i=1}^d|N_i(v)| |\psi_i|_{{W}^{l,p}(K)}\\
            &\leq \norm*{{W}^{k,p}(K)}{v} +\sum_{i=1}^d C_i\norm*{{W}^{k,p}(K)}{v} |\psi_i|_{{W}^{l,p}(K)}\\
            &\leq (1 +  C \max_{1\leq i\leq d} |\psi_i|_{{W}^{l,p}(K)}) \norm*{{W}^{k,p}(K)}{v}
        \end{aligned}
    \end{equation*}
    and hence that $F$ is bounded. In addition, $\calI_K q =q $ for all $q\in\calP$ and therefore $F(q)=0$. We can now apply the Bramble--Hilbert lemma to $F$, which proves the claim.
\end{proof}

To estimate the interpolation error on an arbitrary finite element $(K,\calP,\calN)$, we assume that it is generated by the affine transformation
\begin{equation}\label{eq:interp:affine}
    T_K: \hat K \to K,\qquad \hat x\mapsto A_K\hat x+b_K
\end{equation}
from the reference element $(\hat K,\hat\calP,\hat\calN)$, i.e., $\hat v:=v\circ T_K$ is the function $v$ on $K$ expressed in local coordinates on $\hat K$. We then need to consider how the estimate \eqref{eq:interp:reference} transforms under $T_K$. For this, we recall that for sufficiently smooth $v$, the chain rule for weak derivatives is given by
\begin{equation}
    \frac{\partial \hat v}{\partial{\hat x_i}} = \sum_{j=1}^n \frac{\partial v}{\partial x_j}\frac{\partial x_j}{\partial \hat x_i}
    = \sum_{j=1}^n (A_K)_{ij} \frac{\partial v}{\partial x_j},
\end{equation}
and the transformation rule for integrals by
\begin{equation}\label{eq:interp:transform}
    \int_{T_K(\hat K)} v\,dx = \int_{\hat K}(v\circ T_K)|\det (A_K)| \,d\hat x.
\end{equation}

\begin{lemma}\label{lem:interp:transform}
    Let $k\geq 0$ and $1\leq p\leq \infty$. There exists $c>0$ such that for all $K$ and $v\in{W}^{k,p}(K)$, the function $\hat v = v \circ T_K$ satisfies
    \begin{align}
        |\hat v|_{{W}^{k,p}(\hat K)}&\leq c \norm*{}{A_K}^k |\det(A_K)|^{-\frac1p} |v|_{{W}^{k,p}(K)},\label{eq:interp:transform1}\\
        |v|_{{W}^{k,p}(K)}&\leq c \norm*{}{A_K^{-1}}^k |\det(A_K)|^{\frac1p} |\hat v|_{{W}^{k,p}(\hat K)}.\label{eq:interp:transform2}
    \end{align}
\end{lemma}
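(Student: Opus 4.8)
The plan is to reduce both estimates to a single pointwise (weak) chain-rule identity, integrate with a change of variables, and then obtain the second inequality from the first by symmetry. First I would record how derivatives transform. Since $T_K(\hat x) = A_K\hat x + b_K$ is affine, its Jacobian is the \emph{constant} matrix $A_K$ and all higher derivatives of $T_K$ vanish. Iterating the chain rule $k$ times — which is valid for weak derivatives by a density argument (cf.\ the text following \cref{thm:weak:transform}) — gives, for every multi-index $\alpha$ with $|\alpha|=k$,
\[
  D^\alpha \hat v = \sum_{|\beta|=k} c_{\alpha\beta}\,(D^\beta v)\circ T_K \qquad\text{a.e.\ on }\hat K,
\]
where each coefficient $c_{\alpha\beta}$ is a sum (of a number of terms depending only on $n$ and $k$) of products of exactly $k$ entries of $A_K$. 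Hence $|c_{\alpha\beta}| \leq C\norm*{}{A_K}^k$ with $C=C(n,k)$, and
\[
  |D^\alpha \hat v(\hat x)| \leq C\norm*{}{A_K}^k \sum_{|\beta|=k}\left|(D^\beta v)(T_K\hat x)\right| \qquad\text{for a.e.\ }\hat x\in\hat K.
\]

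Next I would integrate and substitute. For $1\leq p<\infty$, raising to the $p$-th power, summing over $|\alpha|=k$, integrating over $\hat K$, and applying the change of variables $x=T_K\hat x$ (so that $d\hat x = |\det A_K|^{-1}\,dx$) on each resulting integral, I obtain
\[
  |\hat v|_{{W}^{k,p}(\hat K)}^p \leq C\norm*{}{A_K}^{kp}\,|\det A_K|^{-1}\,|v|_{{W}^{k,p}(K)}^p ,
\]
and taking $p$-th roots yields \eqref{eq:interp:transform1}. For $p=\infty$ I take the essential supremum in the pointwise bound instead; since $T_K$ is a bijection preserving null sets, $\mathrm{ess}\sup_{\hat K}|(D^\beta v)\circ T_K| = \mathrm{ess}\sup_{K}|D^\beta v|$, so the determinant factor disappears, consistent with $|\det A_K|^{-1/p}=1$ for $p=\infty$.

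Finally, \eqref{eq:interp:transform2} follows from \eqref{eq:interp:transform1} applied with the roles of $K$ and $\hat K$ interchanged. The inverse transformation $T_K^{-1}:K\to\hat K$, $x\mapsto A_K^{-1}(x-b_K)$, has matrix $A_K^{-1}$ with $\det(A_K^{-1})=(\det A_K)^{-1}$, and $v = \hat v\circ T_K^{-1}$, so the first estimate gives the stated bound directly. The main point requiring care is the chain-rule identity itself: one must verify that the coefficients $c_{\alpha\beta}$ are homogeneous of degree \emph{exactly} $k$ in the entries of $A_K$, so that precisely the factor $\norm*{}{A_K}^k$ appears and only $k$-th order derivatives of $v$ enter on the right. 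This is exactly where the affineness of $T_K$ is essential — the absence of any higher-order part of $T_K$ is what prevents lower-order derivatives of $v$ from appearing and lets the \emph{seminorm} (rather than the full norm) transform cleanly.
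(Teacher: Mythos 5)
Your proposal is correct and follows essentially the same route as the paper's proof: the (iterated) chain rule for the affine map gives $k$-th derivatives of $\hat v$ as combinations of $k$-th derivatives of $v$ with coefficients that are degree-$k$ products of entries of $A_K$, the change of variables produces the $|\det A_K|^{-1/p}$ factor, and the reverse estimate follows by applying the same argument to $T_K^{-1}$. Your treatment is in fact slightly more explicit than the paper's (the $p=\infty$ case and the exact homogeneity of the coefficients), but there is no substantive difference in method.
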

\begin{proof}
    First, we have by \cref{thm:weak:transform} that $\hat v \in {W}^{k,p}(\hat K)$. Let now $\alpha$ be a multi-index with $|\alpha|=k$, and let $\hat D^\alpha$ denote the corresponding weak derivative with respect to $\hat x$. Applying the chain and transformation rule, we obtain with a constant $c$ depending only on $n$, $k$, and $p$ that
    \begin{equation*}
        \begin{aligned}
            \norm*[]{{L}^p(\hat K)}{\hat D^\alpha \hat v} &\leq c  \norm*{}{A_K}^k \sum_{|\beta|= k} \norm*{{L}^p(\hat K)} {D^\beta v \circ T_K}\\
            &\leq c  \norm*{}{A_K}^k |\det(A_K)|^{-\frac1p} | v|_{{W}^{k,p}( K)}.
        \end{aligned}
    \end{equation*}
    Summing over all $|\alpha| = k$ yields \eqref{eq:interp:transform1}. Arguing similarly using $T^{-1}_K$ yields  \eqref{eq:interp:transform2}.
\end{proof}

We now derive a geometrical estimate of the quantities appearing in the right-hand side of \eqref{eq:interp:transform1} and \eqref{eq:interp:transform2}. For a given element domain $K$, we define
\begin{itemize}
    \item the \emph{diameter} $h_K := \max_{x_1,x_2\in K} \norm*{}{x_1-x_2}$,
    \item the \emph{insphere diameter} $\rho_K :=2\max \{\rho>0: B_\rho(x)\subset K \text{ for some }x\in K\}$ (i.e., the diameter of the largest ball contained in $K$).
    \item the \emph{condition number} $\sigma_K := \frac{h_K}{\rho_K}$.
\end{itemize}
\begin{lemma}\label{lem:interp:estim}
    Let $T_K$ be an affine mapping defined as in \eqref{eq:interp:affine} such that $K=T_K(\hat K)$. Then
    \begin{equation*}
        |\det (A_K)| = \frac{\mathrm{vol}(K)}{\mathrm{vol}(\hat K)},\qquad
        \norm*{}{A_K}\leq \frac{h_K}{\rho_{\hat K}},\qquad
        \norm*{}{A_K^{-1}} \leq \frac{h_{\hat K}}{\rho_K}.
    \end{equation*}
\end{lemma}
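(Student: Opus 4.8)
The plan is to treat the three claims separately: the determinant identity is an immediate consequence of the change of variables formula, while the two operator-norm estimates follow from a single geometric argument applied once to $T_K$ and once to $T_K^{-1}$.

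First, for the determinant I would invoke the transformation rule for integrals already used in the proof of \cref{lem:interp:transform}. Integrating the constant function $1$ over $K=T_K(\hat K)$ gives
\begin{equation}
    \mathrm{vol}(K) = \int_K 1\,dx = \int_{\hat K} |\det(A_K)|\,d\hat x = |\det(A_K)|\,\mathrm{vol}(\hat K),
\end{equation}
and rearranging yields the first identity directly.

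For the bound on $\norm*{}{A_K}$ (the operator norm induced by the Euclidean norm), the key is to rescale and rewrite it as $\norm*{}{A_K} = \rho_{\hat K}^{-1}\sup_{\norm*{}{z}=\rho_{\hat K}}\norm*{}{A_K z}$, using homogeneity of the norm. The main step is then to realize every such $z$ as a difference of two points of $\hat K$. By definition of the insphere diameter there is a point $\hat x_0$ with $B_{\rho_{\hat K}/2}(\hat x_0)\subset\hat K$; given $z$ with $\norm*{}{z}=\rho_{\hat K}$, I set $\hat y_1 := \hat x_0 + \tfrac12 z$ and $\hat y_2 := \hat x_0 - \tfrac12 z$, so that $\hat y_1,\hat y_2$ lie in the closed ball of radius $\rho_{\hat K}/2$ about $\hat x_0$ (hence in $\bar{\hat K}$) and $\hat y_1-\hat y_2 = z$. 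Since $A_K z = T_K(\hat y_1)-T_K(\hat y_2)$ with both images in $\bar K$, the definition of the diameter gives $\norm*{}{A_K z} = \norm*{}{T_K(\hat y_1)-T_K(\hat y_2)} \leq h_K$, and taking the supremum over all admissible $z$ produces $\norm*{}{A_K}\leq h_K/\rho_{\hat K}$. The third estimate follows by symmetry: $T_K^{-1}\colon K\to\hat K$, $x\mapsto A_K^{-1}x - A_K^{-1}b_K$, is affine with linear part $A_K^{-1}$ and maps $K$ onto $\hat K$, so the identical argument with the roles of $K$ and $\hat K$ interchanged yields $\norm*{}{A_K^{-1}}\leq h_{\hat K}/\rho_K$.

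I expect the only subtle point — minor, but the crux of the geometry — to be the passage between the insphere (phrased via balls of a given radius) and differences of points: one must use the \emph{full} insphere diameter $\rho_{\hat K}$ as the length of $z$ while placing both endpoints in a ball of \emph{half} that radius, and one must observe that the extremal pair realizing $h_K$ may lie on the boundary, so the diameter should be understood over $\bar K$ (equivalently, by continuity the diameters of $K$ and $\bar K$ coincide).
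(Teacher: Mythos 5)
Your proof is correct and follows essentially the same route as the paper: the determinant identity via the change-of-variables formula, the operator-norm bound by rescaling to vectors of length $\rho_{\hat K}$ and realizing each such vector as a difference of two points of the element, and the inverse bound by exchanging the roles of $K$ and $\hat K$. Your choice of antipodal points $\hat x_0 \pm \tfrac12 z$ on the insphere is in fact slightly more careful than the paper's parenthetical hint (which suggests taking one point on the insphere and its center, a pair whose difference only has length $\rho_{\hat K}/2$), and your closing remark about passing to closures addresses a minor point the paper glosses over.
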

\begin{proof}
    The first property follows from the transformation rule \eqref{eq:interp:transform} applied to the constant function $v\equiv 1$.
    For the second property, recall that the matrix norm of $A_K$ is given by
    \begin{equation*}
        \norm*{}{A_K} = \sup_{\norm*{}{\hat x}=1}\norm*{}{A_K \hat x} = \frac1{\rho_{\hat K}} \sup_{\norm*{}{\hat x}=\rho_{\hat K}} \norm*{}{A_K\hat x}.
    \end{equation*}
    Now for any $\hat x$ with $\norm*{}{\hat x}=\rho_{\hat K}$, there exists $\hat x_1,\hat x_2\in\hat K$ with $\hat x = \hat x_1 - \hat x_2$ (e.g., choose a suitable $\hat x_1$ on the insphere and $\hat x_2$ as its antipodal point). Then
    \begin{equation*}
        A_K\hat x = T_K \hat x_1 - T_K\hat x_2 = x_1 -x_2\quad\text{for some }x_1,x_2\in K,
    \end{equation*}
    which implies $ \norm*{}{A_K\hat x}\leq h_K$ and thus the desired inequality. The last property is obtained by exchanging the roles of $K$ and $\hat K$.
\end{proof}
Note that since the insphere of diameter $\rho_K$ is contained in $K$, which in turn is contained in the surrounding sphere of diameter $h_K$, we can further estimate (with a constant $c$ depending only on $n$)
\begin{equation*}
    c h_K^n \geq \mathrm{vol}(K) \geq c \rho_K^n = c  \frac{h_K^n}{\sigma_K^n}.
\end{equation*}

The local interpolation error can then be estimated by transforming to the reference element, bounding the error there, and transforming back (a so-called \emph{scaling argument}).
\begin{theorem}[local interpolation error]\label{thm:interp:local}
    Let $(\hat K,\hat \calP,\hat \calN)$ be a finite element with $P_{k-1}\subset\hat\calP$ for some $k\geq 1$ and $\hat \calN$ bounded on ${W}^{k,p}(\hat K)$,  $1\leq p\leq \infty$. For any element $(K,\calP,\calN)$ affine equivalent to $(\hat K,\hat \calP,\hat \calN)$ by the affine transformation $T_K$, there exists a constant $c>0$ independent of $K$ such that for any $v\in{W}^{k,p}(K)$,
    \begin{equation}\label{eq:interp:local}
        |v-\calI_K v|_{{W}^{l,p}(K)} \leq ch_K^{k-l}\sigma_K^l|v|_{{W}^{k,p}(K)}\quad\text{for all }0\leq l\leq k.
    \end{equation}
\end{theorem}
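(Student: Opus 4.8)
The plan is to run the \emph{scaling argument} indicated just before the statement: pull the error back to the reference element $\hat K$, bound it there with the reference estimate, and push the bounding seminorm forward, keeping careful track of the powers of $\|A_K\|$, $\|A_K^{-1}\|$, and $|\det A_K|$. First I would set $w := v - \calI_K v$ and note that its pullback is $\hat w = w\circ T_K$. Applying the transformation estimate \eqref{eq:interp:transform2} \emph{at order $l$} (i.e.\ with $k$ replaced by $l$) to $w$ gives
\[
    |v-\calI_K v|_{{W}^{l,p}(K)} \leq c\,\|A_K^{-1}\|^{\,l}\,|\det A_K|^{1/p}\,|\hat w|_{{W}^{l,p}(\hat K)}.
\]
The crucial structural step is now to identify $\hat w$ as the interpolation error on the reference element. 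Since $(K,\calP,\calN)$ is affine equivalent to $(\hat K,\hat\calP,\hat\calN)$ by hypothesis, \cref{thm:elements:equivalent} yields $(\calI_K v)\circ T_K = \calI_{\hat K}(v\circ T_K) = \calI_{\hat K}\hat v$, so that $\hat w = \hat v - \calI_{\hat K}\hat v$.

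Next I would invoke the reference-element estimate \cref{thm:interp:reference} (whose hypotheses $P_{k-1}\subset\hat\calP$ and boundedness of $\hat\calN$ on ${W}^{k,p}(\hat K)$ are exactly those assumed here), obtaining $|\hat v - \calI_{\hat K}\hat v|_{{W}^{l,p}(\hat K)} \leq c\,|\hat v|_{{W}^{k,p}(\hat K)}$ for every $0\le l\le k$. Transforming this bounding seminorm forward via \eqref{eq:interp:transform1} \emph{at order $k$} gives $|\hat v|_{{W}^{k,p}(\hat K)} \leq c\,\|A_K\|^{\,k}\,|\det A_K|^{-1/p}\,|v|_{{W}^{k,p}(K)}$. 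Chaining the three inequalities, the two opposite powers $|\det A_K|^{1/p}$ and $|\det A_K|^{-1/p}$ cancel, which is precisely what makes the final constant independent of $\mathrm{vol}(K)$, and leaves
\[
    |v-\calI_K v|_{{W}^{l,p}(K)} \leq c\,\|A_K^{-1}\|^{\,l}\,\|A_K\|^{\,k}\,|v|_{{W}^{k,p}(K)}.
\]

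Finally I would insert the geometric bounds from \cref{lem:interp:estim}, namely $\|A_K\|\le h_K/\rho_{\hat K}$ and $\|A_K^{-1}\|\le h_{\hat K}/\rho_K$. Because $\hat K$ is fixed, the factors $h_{\hat K}^{\,l}$ and $\rho_{\hat K}^{-k}$ are absorbed into $c$, yielding $\|A_K^{-1}\|^{\,l}\|A_K\|^{\,k}\le c\,h_K^{\,k}\rho_K^{-l}$, and rewriting $h_K^{\,k}\rho_K^{-l} = h_K^{\,k-l}(h_K/\rho_K)^l = h_K^{\,k-l}\sigma_K^{\,l}$ gives \eqref{eq:interp:local}. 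I do not expect any genuinely hard step: the argument is essentially bookkeeping. The only points that require care are (a) applying \eqref{eq:interp:transform2} at order $l$ but \eqref{eq:interp:transform1} at order $k$, so that after the reference estimate the determinant factors cancel rather than accumulate, and (b) recognizing that \cref{thm:elements:equivalent} is what legitimizes treating the pulled-back error as a genuine interpolation error on $\hat K$; without affine equivalence this identification would fail.
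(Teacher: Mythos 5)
Your proposal is correct and follows exactly the paper's own proof: pull back with \cref{lem:interp:transform} at order $l$, identify the pulled-back error via \cref{thm:elements:equivalent}, apply \cref{thm:interp:reference}, push forward at order $k$ so the determinant factors cancel, and finish with \cref{lem:interp:estim}. The only cosmetic difference is how you group the factors $\norm*{}{A_K^{-1}}^l\norm*{}{A_K}^k$ before inserting the geometric bounds, which does not affect the argument.
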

\begin{proof}
    Let $\hat v := v\circ T_K$. By \cref{thm:elements:equivalent}, $\calI_{\hat K}\hat v = (\calI_K v)\circ T_K$ (i.e., interpolating the transformed function is equivalent to transforming the interpolated function). Hence, we can apply \cref{lem:interp:transform} to $(v-\calI_Kv)$ and use \cref{thm:interp:reference} to obtain (with a generic constant $c$ that can change from line to line)
    \begin{equation*}
        \begin{aligned}
            |v-\calI_K v|_{{W}^{l,p}(K)} &\leq c  \norm*{}{A_K^{-1}}^l |\det(A_K)|^{\frac1p} |\hat v-\calI_{\hat K}\hat v |_{{W}^{l,p}(\hat K)}\\
            &\leq c  \norm*{}{A_K^{-1}}^l |\det(A_K)|^{\frac1p} |\hat v|_{{W}^{k,p}(\hat K)}\\
            &\leq c  \norm*{}{A_K^{-1}}^l  \norm*{}{A_K}^k |v|_{{W}^{k,p}(K)}\\
            &\leq c ( \norm*{}{A_K^{-1}}  \norm*{}{A_K} )^l \norm*{}{A_K}^{k-l} |v|_{{W}^{k,p}(K)}.
        \end{aligned}
    \end{equation*}
    The claim now follows from \cref{lem:interp:estim} and the fact that $h_{\hat K}$ and $\rho_{\hat K}$ are independent of~$K$.
\end{proof}

To obtain an estimate for the global interpolation error, which should converge to zero as $h\to 0$, we need to have a uniform bound (independent of $K$ and $h$) of the condition number $\sigma_K$. This requires a further assumption on the triangulation. A triangulation $\calT$ is called \emph{shape regular} if there exists a constant $\kappa$ independent of $h:=\max_{K\in\calT} h_K$ such that
\begin{equation*}
    \sigma_K \leq \kappa \qquad \text{ for all } K\in\calT.
\end{equation*}
(For triangular elements, e.g., this holds if all interior angles are bounded from below.)

Using this upper bound and summing over all elements, we obtain an estimate for the global interpolation error.
\begin{theorem}[global interpolation error]\label{thm:interp:global}
    Let $\calT$ be a shape regular affine triangulation of $\Omega\subset\R^n$ with the reference element $(\hat K,\hat \calP,\hat \calN)$ satisfying the requirements of \cref{thm:interp:local} for some $k\geq 1$. Then there exists a constant $c>0$ independent of $h$ such that for all $v\in\m{Wkp}$,
    \begin{align*}
        \norm{Lp}{v-\calI_{\calT}v} + \sum_{l=1}^k h^l \left(\sum_{K\in\calT}|v-\calI_K v|_{{W}^{l,p}(K)}^p\right)^\frac1p &\leq c h^{k} |v|_{\m{Wkp}}, \quad 1\leq p<\infty,\\[0.5ex]
        \norm{Linf}{v-\calI_{\calT}v} + \sum_{l=1}^k h^l \max_{K\in\calT}|v-\calI_K v|_{{W}^{l,\infty}(K)} &\leq c h^{k} |v|_{{W}^{k,\infty}(\Omega)}.
    \end{align*}
\end{theorem}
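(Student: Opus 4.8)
The plan is to reduce the global estimate to the local one (\cref{thm:interp:local}) applied on each element and then reassemble the contributions, using that the $W^{k,p}$ seminorm is additive over the subdivision. Throughout, $\calI_\calT v$ is understood elementwise via $(\calI_\calT v)|_K = \calI_K v$, which is well-defined for $v\in\m{Wkp}$ since the nodal variables are bounded on $W^{k,p}(\hat K)$; note that only the \emph{broken} seminorms $\sum_K|\cdot|_{{W}^{l,p}(K)}$ appear on the left-hand side, so no global smoothness of $\calI_\calT v$ is needed. Since every $K\in\calT$ is affine equivalent to the fixed reference element, \cref{thm:interp:local} provides a constant $c$ \emph{independent of $K$} with
\begin{equation}
    |v-\calI_K v|_{{W}^{l,p}(K)} \leq c\,h_K^{k-l}\sigma_K^l\,|v|_{{W}^{k,p}(K)}, \qquad 0\leq l\leq k.
\end{equation}
Shape regularity gives $\sigma_K\leq\kappa$ uniformly, and since $l\leq k$ the exponent $k-l$ is nonnegative, so $h_K\leq h$ yields $h_K^{k-l}\leq h^{k-l}$; hence the right-hand side is bounded by $c\,\kappa^l h^{k-l}|v|_{{W}^{k,p}(K)}$ with a constant no longer depending on $K$.

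For $1\leq p<\infty$ I would first handle the $L^p$ term (the case $l=0$). Because the $K\in\calT$ partition $\Omega$ up to a null set, $\norm{Lp}{v-\calI_\calT v}^p = \sum_{K\in\calT}\norm*{{L}^p(K)}{v-\calI_K v}^p$; inserting the $l=0$ estimate and using the additivity $\sum_{K}|v|_{{W}^{k,p}(K)}^p = |v|_{\m{Wkp}}^p$ gives $\norm{Lp}{v-\calI_\calT v}\leq c\,h^k|v|_{\m{Wkp}}$. For fixed $1\leq l\leq k$ I would raise the element estimate to the $p$-th power, multiply by $h^{lp}$, and sum over $K$; the exponents combine as $h^{lp}h^{(k-l)p}=h^{kp}$, so the same additivity produces
\begin{equation}
    h^l\left(\sum_{K\in\calT}|v-\calI_K v|_{{W}^{l,p}(K)}^p\right)^{\frac1p} \leq c\,\kappa^l h^k|v|_{\m{Wkp}}.
\end{equation}
Summing over $l=1,\dots,k$ absorbs the finite factor $\sum_{l=1}^k\kappa^l$ into the constant and yields the first inequality. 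The case $p=\infty$ is identical with sums replaced by maxima, using $\norm{linf}{v-\calI_\calT v}=\max_K\norm*{{L}^\infty(K)}{v-\calI_K v}$ and $|v|_{{W}^{k,\infty}(K)}\leq|v|_{{W}^{k,\infty}(\Omega)}$ (restriction cannot increase an essential supremum).

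The argument is short because \cref{thm:interp:local} does the real work; the single point that must not be glossed over is that its constant is genuinely \emph{uniform} over $\calT$. This uniformity is precisely what shape regularity secures: were $\sigma_K$ allowed to grow as the mesh is refined (e.g.\ for increasingly anisotropic elements), the factor $\sigma_K^l$ would destroy the global rate in $h$. The remaining steps — seminorm additivity over the subdivision (where the partition hypotheses enter) and the replacement of $h_K$ by $h$ — are routine, and I expect no further obstacle.
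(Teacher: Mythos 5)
Your argument is correct and is exactly the route the paper intends: the text introduces the theorem with the remark that one uses the uniform bound $\sigma_K\leq\kappa$ from shape regularity and sums the local estimates of \cref{thm:interp:local} over all elements, which is precisely what you carry out (including the additivity of the broken seminorms over the partition and the replacement of $h_K^{k-l}$ by $h^{k-l}$). Nothing further is needed.
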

\enlargethispage{1cm}
Similar estimates can be obtained for elements based on the tensor product spaces $Q_k$.\footnote{e.g., \cite[Chapter~4.6]{Brenner:2008}}

\section{Inverse estimates}

The above theorems estimated the interpolation error in a coarser norm (i.e., $l\leq k$) than than the given function to be interpolated. In general, the converse (estimating a finer norm by a coarser one) is not possible; however, for the discrete approximations $v_h\in V_h$, such so-called \emph{inverse estimates} can be established.

Local estimates follow as above from a scaling argument, using the equivalence of norms on the finite dimensional space $\hat \calP$ in place of the Bramble--Hilbert lemma.
\begin{theorem}[local inverse estimate\footnote{e.g., \cite[Lemma 1.138]{Ern:2004}}]\label{thm:interp:local_inverse}
    Let $(\hat K,\hat \calP,\hat \calN)$ be a finite element with $\hat\calP\subset W^{l,p}(\hat K)$ for an $l\geq 0$ and $1\leq p\leq \infty$. For any element $(K,\calP,\calN)$  with $h_K\leq 1$ affine equivalent to $(\hat K,\hat \calP,\hat \calN)$ by the affine transformation $T_K$, there exists a constant $c>0$ independent of $K$ such that for any $v_h\in \calP$,
    \begin{equation*}
        \|v_h\|_{{W}^{l,p}(K)} \leq ch_K^{k-l}\|v_h\|_{{W}^{k,p}(K)}
    \end{equation*}
    for all $0\leq k\leq l$.
\end{theorem}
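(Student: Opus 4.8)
The plan is to run a scaling argument, replacing the Bramble--Hilbert lemma (which supplied the annihilation-driven estimate of \cref{thm:interp:reference}) by the equivalence of \emph{seminorms} on the finite-dimensional reference space $\hat\calP$. The decisive point, and what distinguishes this from the interpolation error estimate, is that on $\hat K$ one must let the seminorm of order $k$ do the bounding; comparing full norms instead would only yield the suboptimal factor $h_K^{-l}$ rather than the sharp $h_K^{k-l}$.

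First I would record the reference-element inequality. For $\hat w\in\hat\calP$ and any $k\le i\le l$, the maps $|\cdot|_{W^{i,p}(\hat K)}$ and $|\cdot|_{W^{k,p}(\hat K)}$ are continuous seminorms on the finite-dimensional space $\hat\calP$ (finite by the hypothesis $\hat\calP\subset W^{l,p}(\hat K)$), and their kernels are the nested subspaces $\hat\calP\cap P_{k-1}\subseteq\hat\calP\cap P_{i-1}$. A standard finite-dimensional argument then produces a constant $C$, depending only on $\hat K,\hat\calP,i,k,p$, with $|\hat w|_{W^{i,p}(\hat K)}\le C|\hat w|_{W^{k,p}(\hat K)}$. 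Given $v_h\in\calP$, affine equivalence gives $\hat v_h:=v_h\circ T_K\in\hat\calP$, and for each order $i$ with $k\le i\le l$ I chain the backward transform \eqref{eq:interp:transform2} of order $i$, this reference inequality, and the forward transform \eqref{eq:interp:transform1} of order $k$:
\begin{equation*}
  |v_h|_{W^{i,p}(K)} \le c\,\|A_K^{-1}\|^i|\det A_K|^{\frac1p}|\hat v_h|_{W^{i,p}(\hat K)} \le c\,\|A_K^{-1}\|^i|\det A_K|^{\frac1p}|\hat v_h|_{W^{k,p}(\hat K)} \le c\,\|A_K^{-1}\|^i\|A_K\|^k|v_h|_{W^{k,p}(K)}.
\end{equation*}
The determinant powers cancel exactly because the two transforms carry opposite exponents, leaving the matched orders $i$ and $k$ intact. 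Inserting the geometric bounds $\|A_K\|\le h_K/\rho_{\hat K}$ and $\|A_K^{-1}\|\le h_{\hat K}/\rho_K$ from \cref{lem:interp:estim} and writing $\sigma_K=h_K/\rho_K$ converts this into $|v_h|_{W^{i,p}(K)}\le c\,\sigma_K^{\,i}h_K^{k-i}|v_h|_{W^{k,p}(K)}$, with $c$ depending only on the fixed reference data; shape regularity $\sigma_K\le\kappa$ then absorbs $\sigma_K^{\,i}$ into $c$, which is what makes the constant independent of $K$.

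To assemble the full norm $\|v_h\|_{W^{l,p}(K)}$ I sum (or take maxima, for $p=\infty$) the seminorms $|v_h|_{W^{i,p}(K)}$ over $0\le i\le l$. For $k\le i\le l$ the estimate above applies, and since $h_K\le1$ and $i\le l$ we have $h_K^{k-i}\le h_K^{k-l}$; for $i<k$ I simply bound $|v_h|_{W^{i,p}(K)}\le\|v_h\|_{W^{k,p}(K)}=h_K^{0}\|v_h\|_{W^{k,p}(K)}\le h_K^{k-l}\|v_h\|_{W^{k,p}(K)}$, once more using $h_K\le1$ together with $k\le l$. This collapses every contribution under the single factor $h_K^{k-l}\|v_h\|_{W^{k,p}(K)}$ and proves the claim.

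The main obstacle is the very first step: obtaining the exponent $k-l$ rather than $-l$ hinges on using the order-$k$ \emph{seminorm} to dominate on the reference element, so that scaling produces precisely $\|A_K^{-1}\|^i\|A_K\|^k$ with matched orders and cancelling determinants. Everything after that is bookkeeping, in which the hypothesis $h_K\le1$ serves only to merge the several powers $h_K^{k-i}$ into the worst case $h_K^{k-l}$.
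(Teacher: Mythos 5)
Your proof is correct and is precisely the argument the paper only sketches (``a scaling argument, using the equivalence of norms on the finite-dimensional space $\hat\calP$ in place of the Bramble--Hilbert lemma''); your refinement of reading ``equivalence of norms'' as domination of the order-$i$ seminorm by the order-$k$ seminorm, justified by the kernel inclusion, is exactly what is needed to obtain the sharp power $h_K^{k-l}$ rather than $h_K^{-l}$. Your appeal to shape regularity to absorb $\sigma_K^{i}$ is also the right call: the theorem's claim that $c$ is independent of $K$ holds only under that (implicitly assumed) hypothesis, as the quasi-uniformity assumption in the subsequent global version confirms.
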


For uniform global estimates, we need a lower bound on $h_K^{-1}$. A triangulation $\calT$ is called \emph{quasi-uniform} if it is shape regular and there exists a $\tau\in(0,1]$ such that $h_K \geq \tau h$ for all $K\in\calT$. By summing over the local estimates, we obtain the following global estimate.
\begin{theorem}[global inverse estimate\footnote{e.g., \cite[Corollary 1.141]{Ern:2004}}]\label{thm:interp:global_inverse}
    Let $\calT$ be a quasi-uniform affine triangulation of $\Omega\subset\R^n$ with the reference element $(\hat K,\hat \calP,\hat \calN)$ satisfying the requirements of \cref{thm:interp:local_inverse} for an $l\geq 0$. Then there exists a constant $c>0$ independent of $h$ such that for all $v_h\in V_h:=\setof{v\in\m{Lp}}{v|_K \in \calP, K\in\calT}$,
    \begin{align*}
        \left(\sum_{K\in\calT}\|v_h\|_{{W}^{l,p}(K)}^p\right)^\frac1p &\leq c h^{k-l} \left(\sum_{K\in\calT}\|v_h\|_{{W}^{k,p}(K)}^p\right)^\frac1p, \quad 1\leq p<\infty,\\[0.5ex]
        \max_{K\in\calT}\|v_h\|_{{W}^{l,\infty}(K)} &\leq c h^{k-l}\left(\max_{K\in\calT}\|v_h\|_{{W}^{k,\infty}(K)}\right) ,
    \end{align*}
    for all $0\leq k\leq l$.
\end{theorem}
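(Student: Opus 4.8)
The plan is to obtain both global estimates directly from the local inverse estimate of \cref{thm:interp:local_inverse} by summing (respectively maximizing) over all elements $K\in\calT$, with quasi-uniformity serving to replace the element-wise mesh size $h_K$ by the global mesh size $h$.

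First I would fix $v_h\in V_h$ and observe that its restriction $v_h|_K$ belongs to $\calP$ for every $K\in\calT$, so that \cref{thm:interp:local_inverse} applies on each element as soon as $h\leq 1$ (ensuring $h_K\leq h\leq 1$), which is the regime of interest as $h\to 0$. With a single constant $c$ independent of $K$ this yields
\[
    \|v_h\|_{{W}^{l,p}(K)} \leq c\,h_K^{k-l}\,\|v_h\|_{{W}^{k,p}(K)} \qquad\text{for all } K\in\calT.
\]

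The key observation is that since $0\leq k\leq l$, the exponent $k-l$ is non-positive, so $t\mapsto t^{k-l}$ is non-increasing on $(0,\infty)$. Quasi-uniformity provides the lower bound $h_K\geq\tau h$, whence $h_K^{k-l}\leq(\tau h)^{k-l}=\tau^{k-l}h^{k-l}$, with $\tau^{k-l}$ a constant depending only on $\tau,k,l$. This is precisely the point at which quasi-uniformity (as opposed to mere shape regularity) becomes indispensable: the interpolation estimate of \cref{thm:interp:global} required an \emph{upper} bound on $h_K$, while here the reversed direction of the inequality forces us to bound $h_K$ from \emph{below}.

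To finish the case $1\leq p<\infty$, I would raise the local estimate to the $p$-th power, sum over $K\in\calT$, factor out the constant $c^p\tau^{p(k-l)}h^{p(k-l)}$, and take the $p$-th root, absorbing $\tau^{k-l}$ into the constant $c$. For $p=\infty$ I would instead take the maximum over $K\in\calT$ of the local estimate and again absorb $\tau^{k-l}$ into $c$. Since the reduction to \cref{thm:interp:local_inverse} is exact and the remaining summation is routine, I do not expect any genuine difficulty; the only step demanding care is the direction of the inequality dictated by the non-positive exponent $k-l$, which is exactly what makes the lower bound from quasi-uniformity essential.
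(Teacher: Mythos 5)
Your proposal is correct and matches the paper's intended argument exactly: the text introduces quasi-uniformity precisely to obtain the lower bound $h_K\geq\tau h$ and then states that the global estimate follows ``by summing over the local estimates,'' which is what you carry out. Your observation that the non-positive exponent $k-l$ reverses the direction in which $h_K$ must be controlled is the one genuinely substantive point, and you handle it correctly.
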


\chapter{Error estimates for the finite element approximation}

We can now give error estimates for the conforming finite element approximation of elliptic boundary value problems using Lagrange elements.
Let a reference element $(\hat K,\hat \calP,\hat \calN)$ and a triangulation $\calT$ using affine equivalent elements be given. Denoting the affine transformation from the reference element to the element $(K,\calP,\calN)$ by $T_K:\hat x\mapsto A_K\hat x +b_K$, we can define the corresponding ${C}^0$ finite element space by
\begin{equation}\label{eq:elements:femspace}
    V_h := \setof{v_h\in{C}^0(\bar\Omega)}{(v_h|_K\circ T_K) \in\hat\calP \text{ for all }K\in \calT}\cap V
\end{equation}
(the intersection being necessary in case of Dirichlet conditions).

\section{A priori error estimates}\label{sec:error:apriori}

By \nameref{thm:galerkin:cea}, the discretization error is bounded by the best-approximation error, which in turn can be bounded by the interpolation error. The results of the preceding chapters therefore yield the following a priori error estimates.

\begin{theorem}\label{thm:interp:apriori}
    Let $u\in\m{H1}$ be the solution of the boundary value problem \eqref{eq:weak:bvp} together with appropriate boundary conditions. Let $\calT$ be a shape regular affine triangulation of $\Omega\subset\R^n$ with the reference element $(\hat K,\hat \calP,\hat \calN)$ satisfying $P_{k-1}\in\hat\calP$ for some $k\geq 1$, and let $u_h\in V_h$ be the corresponding Galerkin approximation. If $u\in{H}^m(\Omega)$ for $\frac{n}2< m <k$, then there exists $c>0$ independent of $h$ and $u$ such that
    \begin{equation*}
        \norm{H1}{u-u_h}\leq c h^{m-1}|u|_{{H}^m(\Omega)}.
    \end{equation*}
\end{theorem}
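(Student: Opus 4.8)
The plan is to combine \nameref{thm:galerkin:cea} with the global interpolation estimate of \cref{thm:interp:global}. The bilinear form arising from \eqref{eq:weak:bvp} satisfies coercivity \eqref{eq:galerkin:coercive} and continuity \eqref{eq:galerkin:continuous} on the relevant space $V\subset\m{H1}$ (by \cref{thm:weak:well-posed}), and $V_h\subset V$ by the construction \eqref{eq:elements:femspace}. Hence Céa's lemma reduces the entire estimate to bounding the error of a single, conveniently chosen competitor, namely the global interpolant $\calI_\calT u$:
\begin{equation}
    \norm{H1}{u-u_h}\leq \frac{c_2}{c_1}\inf_{v_h\in V_h}\norm{H1}{u-v_h}\leq \frac{c_2}{c_1}\norm{H1}{u-\calI_\calT u}.
\end{equation}

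First I would check that $\calI_\calT u$ is an admissible competitor. Because $\frac{n}{2}<m$, the Sobolev embedding \cref{thm:weak:sobolev} gives ${H}^m(\Omega)\hookrightarrow{C}^0(\bar\Omega)$, so the point-evaluation nodal variables of the Lagrange elements are defined and bounded on ${H}^m(K)$; thus $\calI_\calT u$ exists and the boundedness hypothesis on $\hat\calN$ in \cref{thm:interp:local} holds for $p=2$. Moreover, since the Lagrange interpolant reproduces the nodal values of $u$, it inherits any essential (Dirichlet) condition satisfied by $u$, so that $\calI_\calT u\in V_h$ and may legitimately be used in the infimum above.

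The core step is to invoke \cref{thm:interp:global} not with the element's full order $k$ but with the \emph{available} regularity index $m$. This is permitted because $m\leq k$ gives $P_{m-1}\subset P_{k-1}\subset\hat\calP$, so the reference element meets the requirements of \cref{thm:interp:local} with $m$ in place of $k$. Taking $p=2$, the theorem yields
\begin{equation}
    \norm{L2}{u-\calI_\calT u} + \sum_{l=1}^{m} h^l\left(\sum_{K\in\calT}|u-\calI_K u|_{{H}^l(K)}^2\right)^{\frac12}\leq c\,h^{m}|u|_{{H}^m(\Omega)}.
\end{equation}
Reading off the $l=1$ contribution on the left identifies the $\m{H1}$-seminorm (the elementwise sum equals $|u-\calI_\calT u|_{\m{H1}}$ since $\calI_\calT u$ is conforming), giving $|u-\calI_\calT u|_{\m{H1}}\leq c\,h^{m-1}|u|_{{H}^m(\Omega)}$, while the $\m{L2}$ term is of the higher order $h^m$. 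Combining both pieces, and using that $h$ is bounded so that $h^m\leq C h^{m-1}$, bounds $\norm{H1}{u-\calI_\calT u}$ by $c\,h^{m-1}|u|_{{H}^m(\Omega)}$; together with Céa's lemma this proves the claim.

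The step I expect to require the most care is the bookkeeping of the regularity index: one must confirm that applying \cref{thm:interp:global} with $m$ rather than $k$ is valid and, crucially, that the constant stays independent of $h$ — this last point rests on the shape regularity of $\calT$ (the uniform bound on $\sigma_K$) already built into \cref{thm:interp:global}. A secondary subtlety arises if $m$ is allowed to be non-integer, since the integer-order interpolation theorems do not apply verbatim; in that case one recovers the stated estimate by interpolating the integer-order bounds between $\lfloor m\rfloor$ and $\lceil m\rceil$ via a space-interpolation argument, whereas for integer $m$ the proof is exactly as sketched.
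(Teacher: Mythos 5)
Your proposal is correct and follows essentially the same route as the paper's proof: Céa's lemma reduces the problem to the interpolation error of $\calI_\calT u$ (admissible because $m>\tfrac{n}{2}$ gives continuity via the Sobolev embedding and nodal interpolation preserves the Dirichlet data), which is then bounded by applying \cref{thm:interp:global} with $p=2$, $l=1$ and the regularity index $m$ in place of $k$. Your extra bookkeeping (combining the $\m{L2}$ and seminorm pieces, and the remark on non-integer $m$) only makes explicit what the paper leaves implicit.
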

\begin{proof}
    Since $m > \frac{n}2$, the Sobolev embedding \cref{thm:weak:sobolev} implies that $u\in\m{c}(\bar\Omega)$ and hence that the local (pointwise) interpolant is well defined. In addition, the nodal interpolation preserves homogeneous Dirichlet boundary conditions. Hence $\calI_\calT u\in V_h$, and \nameref{thm:galerkin:cea} yields
    \begin{equation*}
        \norm{H1}{u-u_h} \leq c \inf_{v_h\in V_h}\norm{H1}{u-v_h}\leq c \norm{H1}{u-\calI_\calT u}.
    \end{equation*}
    \Cref{thm:interp:global} for $p=2$, $l=1$, and $k=m$ further implies
    \begin{equation*}
        \norm{H1}{u-\calI_\calT u}\leq c  h^{m-1}|u|_{{H}^m(\Omega)},
    \end{equation*}
    and the claim follows by combining these estimates.
\end{proof}

If the bilinear form $a$ is symmetric, or if the adjoint problem to \eqref{eq:weak:bvp} is well-posed, we can apply the \nameref{thm:galerkin:aubin} to obtain better estimates in the $\m{l2}$ norm.
\begin{theorem}\label{thm:interp:aprioril2}
    Under the assumptions of \cref{thm:interp:apriori}, there exists $c>0$ such that
    \begin{equation*}
        \norm{L2}{u-u_h}\leq c h^{m}|u|_{{H}^m(\Omega)}.
    \end{equation*}
\end{theorem}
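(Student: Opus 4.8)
The plan is to invoke the \nameref{thm:galerkin:aubin} with $H=\m{L2}$ and $V=\m{H1}$ (respectively $\m{H10}$ for Dirichlet conditions), using the continuous embedding $V\hookrightarrow H$. This lemma bounds the error as
\[
\norm{L2}{u-u_h}\leq C\norm{H1}{u-u_h}\sup_{g\in\m{L2}}\left(\frac{1}{\norm{L2}{g}}\inf_{v_h\in V_h}\norm{H1}{\phi_g-v_h}\right),
\]
where $\phi_g\in V$ solves the adjoint problem $a(w,\phi_g)=(g,w)_{\m{L2}}$ for all $w\in V$. The first factor is already controlled by \cref{thm:interp:apriori}, namely $\norm{H1}{u-u_h}\leq ch^{m-1}|u|_{{H}^m(\Omega)}$, so the whole argument reduces to showing that the supremum factor is bounded by $ch$; multiplying the two estimates then supplies the extra power of $h$.

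To bound the supremum factor I would exploit the elliptic regularity of the adjoint problem. Since its right-hand side $g$ lies in $\m{L2}$, \cref{thm:weak:regularity:convex} (which requires $\Omega$ to be a convex polygon or parallelepiped and the coefficients to be sufficiently smooth) provides $\phi_g\in\m{H2}$ together with the a priori bound $\norm{H2}{\phi_g}\leq c\norm{L2}{g}$. For $n\leq 3$ the Sobolev embedding \cref{thm:weak:sobolev} guarantees $\phi_g\in\m{c}(\bar\Omega)$, so its nodal interpolant $\calI_\calT\phi_g\in V_h$ is well defined and (in the Dirichlet case) inherits the homogeneous boundary values, making it an admissible competitor in the infimum.

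The estimate then follows by choosing $v_h=\calI_\calT\phi_g$ and applying the global interpolation estimate \cref{thm:interp:global} with $p=2$, $l=1$, and interpolation order $2$ (which is available because $P_1\subset\hat\calP$):
\[
\inf_{v_h\in V_h}\norm{H1}{\phi_g-v_h}\leq\norm{H1}{\phi_g-\calI_\calT\phi_g}\leq ch\,|\phi_g|_{{H}^2(\Omega)}\leq ch\,\norm{L2}{g}.
\]
Dividing by $\norm{L2}{g}$ and taking the supremum over $g$ bounds the factor by $ch$ uniformly, whence $\norm{L2}{u-u_h}\leq C(ch^{m-1}|u|_{{H}^m(\Omega)})(ch)=ch^{m}|u|_{{H}^m(\Omega)}$.

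I expect the main obstacle to be the (implicit) regularity hypothesis on the adjoint problem: the entire gain of one power of $h$ rests on $\phi_g$ lying in $\m{H2}$ with a bound by $\norm{L2}{g}$. This is precisely where convexity of $\Omega$ (or smoothness of $\partial\Omega$) enters; on a non-convex polygon the corner singularities destroy the $\m{H2}$ regularity, and then both the argument and the $\m{L2}$ estimate itself break down. A secondary point to verify carefully is that $\calI_\calT\phi_g$ truly belongs to $V_h$ with the correct boundary conditions, since otherwise it would not be an admissible $v_h$ in the infimum.
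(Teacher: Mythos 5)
Your proposal is correct and follows essentially the same route as the paper: the Aubin--Nitsche lemma combined with the $H^1$ estimate of \cref{thm:interp:apriori} and the bound $\inf_{v_h}\norm{H1}{\phi_g-v_h}\leq ch\norm{L2}{g}$ obtained by interpolating the adjoint solution. Your explicit appeal to \cref{thm:weak:regularity:convex} for the $\m{H2}$ regularity of $\phi_g$ is in fact more careful than the paper's phrase \enquote{by the well-posedness of the adjoint problem}, and your closing remark correctly identifies the convexity/smoothness of $\Omega$ as the hidden hypothesis on which the extra power of $h$ rests.
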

\begin{proof}
    By the Sobolev embedding \cref{thm:weak:sobolev}, the embedding $\m{H1}\hookrightarrow\m{L2}$ is continuous. Thus, the \nameref{thm:galerkin:aubin} yields
    \begin{equation*}
        \norm{L2}{u-u_h}\leq c \norm{H1}{u-u_h} \sup_{g\in \m{L2}}\left(\frac{1}{\norm{L2}{g}} \inf_{v_h\in V_h} \norm{H1}{\phi_g-v_h}\right),
    \end{equation*}
    where $\phi_g$ is the solution of the adjoint problem with right-hand side $g$. Estimating the best approximation in $V_h$ by the interpolant and using \cref{thm:interp:global}, we obtain
    \begin{equation*}
        \inf_{v_h\in V_h}\norm{H1}{\phi_g-v_h} \leq \norm{H1}{\phi_g - \calI_\calT \phi_g} \leq c h|\phi_g|_{{H}^2(\Omega)} \leq ch \norm{L2}{g}
    \end{equation*}
    by the well-posedness of the adjoint problem. Combining this inequality with the one from \cref{thm:interp:apriori} yields the claimed estimate.
\end{proof}

Using duality arguments based on different adjoint problems, one can derive estimates in other $\m{Lp}$ spaces, including $\m{Linf}$.\footnote{e.g., \cite[Chapter 8]{Brenner:2008}}

\section{A posteriori error estimates}

It is often the case that the regularity of the solution varies over the domain $\Omega$ (for example, near corners or jumps in the right-hand side or coefficients). It is then advantageous to make the element size $h_K$ small only where it is actually needed. Such information can be obtained using \emph{a posteriori error estimates}, which can be evaluated for a computed solution $u_h$ to decide where the mesh needs to be refined. Here, we will only sketch \emph{residual-based} error estimates and simple \emph{duality-based} estimates, and refer to the literature for details.\footnote{e.g., \cite[Chapter 9]{Brenner:2008}, \cite[Chapter 10]{Ern:2004}}

For the sake of presentation, we consider a simplified boundary value problem. Let $f\in\m{L2}$ and $\alpha\in\m{Linf}$ with $\alpha_1 \geq \alpha(x) \geq\alpha_0 >0 $ for almost all $x\in\Omega$ be given. Then we search for $u\in\m{H10}$ satisfying
\begin{equation}\label{eq:adaptivity:weak}
    a(u,v) := \inner{\alpha\nabla u,\nabla v} = \inner{f,v} \qquad\text{for all }v\in\m{H10}.
\end{equation}
(The same arguments can be carried out for the general boundary value problem \eqref{eq:weak:bvp} with homogeneous Dirichlet or Neumann conditions). Let $V_h\subset \m{H10}$ be a finite element space and let $u_h\in V_h$ be the corresponding Ritz--Galerkin approximation.

\paragraph{Residual-based error estimates}\label{sec:error:residual}

Residual-based estimates give an error estimate in the $\m{h1}$ norm. We first note that the bilinear form $a$ is coercive with constant $\alpha_0$, and hence we have
\begin{equation*}
    \begin{aligned}
        \alpha_0\norm{H1}{u-u_h} &\leq \frac{a(u-u_h,u-u_h)}{\norm{H1}{u-u_h}}\\
        &\leq \sup_{w\in\m{H10}}  \frac{a(u-u_h,w)}{\norm{H1}{w}}\\
        &= \sup_{w\in\m{H10}}  \frac{a(u,w)-(\alpha \nabla u_h,\nabla w)}{\norm{H1}{w}}\\
        &= \sup_{w\in\m{H10}}  \frac{(f,w)-\scalprod*{{H}^{-1},\m{h1}}{-\nabla\cdot(\alpha \nabla u_h)}{w}}{\norm{H1}{w}}\\
        &= \sup_{w\in\m{H10}}  \frac{\scalprod*{{H}^{-1},\m{h1}}{f+\nabla\cdot(\alpha \nabla u_h)}{w}}{\norm{H1}{w}}\\
        &= \norm*{{H}^{-1}(\Omega)}{f+\nabla\cdot(\alpha \nabla u_h)}
    \end{aligned}
\end{equation*}
using integration by parts and the definition of the dual norm. For brevity, we have written $\nabla\cdot w = \sum_{j=1}^n \partial_j w_j$ for the (distributional) divergence of $w\in\m{L2}^n$. Since all terms on the right-hand side are known, this is in principle already an a posteriori estimate. However, the ${H}^{-1}$ norm cannot be localized, so we will perform the integration by parts on each element separately and insert an interpolation error to eliminate the $\m{h1}$ norm of $w$ (and hence the supremum).

This requires some notation. Let $\calT_h$ be the triangulation corresponding to $V_h$ and $\partial\calT_h$ the set of faces of all $K\in\calT_h$. The set of all interior faces will be denoted by $\Gamma_h$, i.e.,
\begin{equation*}
    \Gamma_h = \setof{F\in \partial\calT_h}{F\cap\partial\Omega = \emptyset}.
\end{equation*}
For $F\in\Gamma_h$ with $F=\bar K_1\cap \bar K_2$, let $\nu_1$ and $\nu_2$ denote the unit outward normal to $K_1$ and $K_2$, respectively. We define the jump in normal derivative for $w_h\in V_h$ across $F$ (noting that $\nu_1=-\nu_2$) as
\begin{equation*}
    \jump{\alpha\nabla w_h}_F := (\alpha\nabla w_h)|_{K_1}\cdot \nu_1 + (\alpha\nabla w_h)|_{K_2}\cdot \nu_2\in L^2(F).
\end{equation*}

Assume now that $\alpha$ is piecewise smooth and that the triangulation is chosen such that $\alpha|_K \in C^1(\overline K)$ for every $K\in \calT_h$.
We can then perform the above integration by parts elementwise to obtain for $w\in\m{H10}$
\begin{equation*}
    \begin{aligned}
        a(u-u_h,w) &= (f,w) - a(u_h,w)\\
        &=(f,w) - \sum_{K\in\calT_h}\int_K  \alpha \nabla (u-u_h) \cdot \nabla w \,dx\\
        &= \sum_{K\in\calT_h} \left(\int_K (f + \nabla\cdot(\alpha\nabla u_h))\, w\,dx - \sum_{F\in\partial K} \int_F(\alpha\nabla u_h\cdot \nu)\, w\,ds\right)\\
        &=  \sum_{K\in\calT_h} \int_K (f + \nabla\cdot(\alpha\nabla u_h))\, w\,dx -\sum_{F\in\Gamma_h}\int_F \jump{\alpha\nabla u_h}_F w\,ds
    \end{aligned}
\end{equation*}
since $w\in\m{H10}$ is continuous almost everywhere and $u_h$ is a polynomial on $K$ and hence $(\alpha\nabla u_h)|_K$ is in fact differentiable.

Our next task is to get rid of $w$ by canceling $\norm{H1}{w}$ in the definition of the dual norm. We do this by inserting (via Galerkin orthogonality) the interpolant of $w$ and applying an interpolation error estimate. The difficulty here is that $w\in\m{H10}$ is not sufficiently smooth to allow Lagrange interpolation, since pointwise evaluation is not well-defined. To circumvent this, we combine interpolation with projection.
For $K\in\calT_h$, let $\omega_K$ be the union of all elements touching $K$, i.e.,
\begin{equation*}
    \omega_K = \bigcup \setof{\bar K'\in\calT_h}{\bar K' \cap \bar K \neq \emptyset}.
\end{equation*}
Furthermore, for every node $z$ of $K$ (i.e., there is $N\in\calN$ such that $N(v) = v(z)$), denote
\begin{equation*}
    \omega_z = \bigcup \setof{\bar K' \in\calT_h}{z \in\bar K'}\subset\omega_K.
\end{equation*}
The $\m{l2}(\omega_z)$ projection of $v\in \m{H1}$ onto $P_0$ is then defined as the unique $\pi_z(v)\in P_0$ satisfying
\begin{equation*}
    \int_{\omega_z} (\pi_z(v)-v) q\,dx = 0 \quad \text{ for all } q \in P_0,
\end{equation*}
see \cref{lem:bramble:projection} for $k=0$.
For $z\in\partial\Omega$, we set $\pi_z(v) = 0$ to respect the homogeneous Dirichlet conditions.
The local \emph{Clément interpolant} $\calI_C v\in V_h$ of $v\in\m{H10}$ is then given by
\begin{equation*}
    \calI_C v  = \sum_{i=1}^d N_i(\pi_{z_i}(v))\psi_i.
\end{equation*}
Since the $L^2(\omega_z)$ projection is continuous on $H^1(\omega_K)$ for every $z\in \overline K$, we can apply the \nameref{thm:bramble} together with a scaling argument to obtain as for the standard interpolation the error estimates\footnote{e.g., \cite[Theorem II.6.9]{Braess:2007}}
\begin{align*}
    \norm*{\m{l2}(K)}{v-\calI_C v} & \leq c h_K \norm*{\m{h1}(\omega_K)}{v},\\
    \norm*{\m{l2}(F)}{v-\calI_C v} & \leq c h_K^{1/2} \norm*{\m{h1}(\omega_K)}{v},
\end{align*}
for all $v\in\m{H10}$, $K\in\calT_h$ and $F\subset\partial K$. (Note $\mathrm{vol}(F)$ scales with $h_K$, while $\mathrm{vol}(K)$ scales with $h_K^2$.)

Using the Galerkin orthogonality for the global Clément interpolant $\calI_C w\in V_h$, we can proceed as before, use the Cauchy--Schwarz inequality, these interpolation error estimates, and the fact that every $K$ appears only in a finite number of $\omega_K$, to arrive at the estimate
\begin{equation*}
    \begin{aligned}
        \norm{H1}{u-u_h} &\leq \frac{1}{\alpha_0} \sup_{w\in\m{H10}}  \frac{a(u-u_h,w-\calI_C w)}{\norm{H1}{w}}\\
        &\leq \frac{1}{\alpha_0} \sup_{w\in\m{H10}}\frac{1}{\norm{H1}{w}}
        \left(\sum_{K\in\calT_h}\norm*{\m{l2}(K)}{f + \nabla\cdot(\alpha\nabla u_h)} \norm*{\m{l2}(K)}{w-\calI_C w} \right.\\
        \MoveEqLeft[-11] \left.+ \sum_{F\in\Gamma_h} \norm*{\m{l2}(F)}{ \jump{\alpha\nabla u_h}_F} \norm*{\m{l2}(F)}{w-\calI_C w}\right)\\
        &\leq C\sup_{w\in\m{H10}}\frac{1}{\norm{H1}{w}}
        \left(\sum_{K\in\calT_h}h_K\norm*{\m{l2}(K)}{f + \nabla\cdot(\alpha\nabla u_h)} \norm*{\m{H1}}{w} \right.\\
        \MoveEqLeft[-11] \left.+ \sum_{F\in\Gamma_h} h_K^{1/2} \norm*{\m{l2}(F)}{ \jump{\alpha\nabla u_h}_F} \norm*{\m{H1}}{w}\right)\\
        &\leq C\left(\sum_{K\in\calT_h} h_K \norm*{\m{l2}(K)}{f+\nabla\cdot(\alpha\nabla u_h)} + \sum_{F\in\Gamma_h}h_K^{1/2}\norm*{\m{l2}(F)}{\jump{\alpha\nabla u_h}_F}\right).
    \end{aligned}
\end{equation*}
All terms on the right-hand side are now fully computable given a discrete solution $u_h$. To obtain a minimal upper bound (as a proxy for minimizing the error itself), we are thus lead to make $h_K$ small(er) (by subdividing $K$ into a number of smaller elements) where the \emph{finite element residual} is large or the normal derivative has a large jump.

\paragraph{Duality-based error estimates}

The use of Clément interpolation can be avoided if we are satisfied with an a posteriori error estimate in the $\m{l2}$ norm as we can then apply the Aubin--Nitsche trick. Let $w\in\m{H10}\cap\m{H2}$ solve the adjoint problem
\begin{equation*}
    a(v,w) = (u-u_h,v) \qquad \text{for all }v\in \m{H10}.
\end{equation*}
Inserting $v= u-u_h\in\m{H10}$ and applying the Galerkin orthogonality $a(u-u_h,w_h)=0$ for the global interpolant $w_h:=\calI_\calT w$ then yields
\begin{equation*}
    \begin{aligned}
        \norm{L2}{u-u_h}^2 &= (u-u_h,u-u_h) = a(u-u_h,w- w_h)\\
        &=  (f,w-w_h) - a(u_h,w- w_h).
    \end{aligned}
\end{equation*}
Now we again integrate by parts on each element and apply the Cauchy--Schwarz inequality to obtain
\begin{equation*}
    \begin{aligned}
        \norm{L2}{u-u_h}^2 &\leq \sum_{K\in\calT_h}  \norm*{\m{l2}(K)}{f+\nabla\cdot(\alpha\nabla u_h)}\norm*{\m{l2}(K)}{w-w_h}\\
        \MoveEqLeft[-1] + \sum_{F\in\Gamma_h}\norm*{\m{l2}(F)}{\jump{\alpha\nabla u_h}_F}\norm*{\m{l2}(F)}{w- w_h}.
    \end{aligned}
\end{equation*}

By the symmetry of $a$ and the well-posedness of \eqref{eq:adaptivity:weak}, we have $w\in\m{H2}$ due to \cref{thm:weak:regularity:convex}. We can thus estimate the local interpolation error for $w$ using \cref{thm:interp:local} for $k=2$,  $l=0$ and  $p=2$ to obtain
\begin{equation*}
    \norm*{\m{l2}(K)}{w-w_h}\leq c h_K^2 \norm{H2}{w}.
\end{equation*}
Similarly, using the \nameref{thm:bramble} and a scaling argument yields
\begin{equation*}
    \norm*{\m{l2}(F)}{w-w_h}\leq c h_K^{3/2} \norm{H2}{w}.
\end{equation*}
Finally, we have from \cref{thm:weak:regularity:convex} the estimate
\begin{equation*}
    \norm{H2}{w}\leq C \norm{L2}{u-u_h}.
\end{equation*}
Combining these inequalities, we obtain the desired a posteriori error estimate
\begin{equation*}
    \norm{L2}{u-u_h} \leq C\left(\sum_{K\in\calT_h} h_K^2 \norm*{\m{l2}(K)}{f+\nabla\cdot(\alpha\nabla u_h)} + \sum_{F\in\Gamma_h}h_K^{3/2}\norm*{\m{l2}(F)}{\jump{\alpha\nabla u_h}_F}\right).
\end{equation*}

\bigskip

Such a posteriori estimates can be used to locally decrease the mesh size in order to reduce the discretization error. This leads to \emph{adaptive finite element methods}, which is a very active area of current research. For details, we refer to, e.g., \cite[Chapter 9]{Brenner:2008}, \cite{Verfuerth:2013}.

\chapter{Implementation}\label{chap:implementation}

This chapter discusses some of the issues involved in the implementation of the finite element method on a computer. It should only serve as a guide for solving model problems and understanding the structure of professional software packages; due to the availability of high-quality free and open source frameworks such as \texttt{deal.II}\footnote{\cite{dealII}, \url{http://www.dealii.org}} and \texttt{FEniCS}\footnote{\cite{fenics}, \url{http://fenicsproject.org}}, there is usually no need to write a finite element solver from scratch.

In the following, we focus on triangular Lagrange and Hermite elements on polygonal domains; the extension to higher-dimensional and quadrilateral elements is fairly straightforward.

\section{Triangulation}

The geometric information on a triangulation is described by a \emph{mesh}, a cloud of connected points in $\R^2$. This information is usually stored in a collection of two-dimensional arrays, the most fundamental of which are
\begin{itemize}
    \item \emph{the list of nodes}, which contains the coordinates $z_i=(x_i,y_i)$ of each node corresponding to a degree of freedom:
        \begin{center}
            \verb!nodes(i) = (x_i,y_i)!;
        \end{center}
    \item \emph{the list of elements}, which contains for every element in the triangulation the corresponding entries in \texttt{nodes} of the nodal variables:
        \begin{center}
            \verb!elements(i) = (i_1,i_2,i_3)!,
        \end{center}
        where $z_{i_1} = $\verb!nodes(i_1)!.
        Care must be taken that the ordering is consistent for each element. Points for which both function and gradient evaluation are given appear twice and are discerned by position in the list (usually function values first, then gradient).
\end{itemize}
The array \verb!elements! serves as the \emph{local-to-global index}.
Depending on the boundary conditions, the following are also required:
\begin{itemize}
    \item for Dirichlet conditions, a \emph{list of boundary points} \verb!bdy_nodes!;
    \item for Neumann conditions, a \emph{list of boundary faces} \verb!bdy_faces! which contain the (consistently ordered) entries in \texttt{nodes} of the nodes on each face.
\end{itemize}

The generation of a good (quasi-uniform) mesh for a given complicated domain is an active research area in itself. For uniform meshes on simple geometries (such as rectangles), it is possible to create the needed data structures by hand. An alternative are \emph{Delaunay triangulations}, which can be constructed (e.g., by the \textsc{matlab} command \verb!delaunay!) given a list of nodes. More complicated generators can create meshes from a geometric description of the boundary; an example is the \textsc{matlab} package \verb!distmesh!.\footnote{\url{http://persson.berkeley.edu/distmesh}; an almost exhaustive list of mesh generators can be found at \url{http://www.robertschneiders.de/meshgeneration/software.html}.}

\section{Assembly}\label{sec:implementation:assembly}

The main effort in implementing lies in assembling the stiffness matrix $\mathbf{K}$, i.e., computing its entries $K_{ij} = a(\phi_i,\phi_j)$ for all basis elements $\phi_i$, $\phi_j$. This is most efficiently done element-wise, where the computation is performed by transformation to a reference element.

\paragraph{The reference element}

We consider the reference element domain
\begin{equation*}
    \hat K = \setof{(\xi_1,\xi_2)\in\R^2}{0\leq \xi_1,\xi_2 \leq 1, \text{ and } \xi_1+\xi_2 \leq 1},
\end{equation*}
with the vertices $z_1 = (0,0)$, $z_2 = (1,0)$, $z_3 = (0,1)$ (in this order).
For any triangle $K$ defined by the ordered set of vertices $((x_1,y_1),(x_2,y_2),(x_3,y_3))$, the affine transformation $T_K$ from $\hat K$ to $K$ is given by $T_K(\xi) = A_K\xi + b_K$ with
\begin{equation*}
    \quad
    A_K = \begin{pmatrix} x_2-x_1 & x_3 - x_1 \\ y_2 -y_1 & y_3-y_1\end{pmatrix},\qquad
    b_K = \begin{pmatrix} x_1\\y_1 \end{pmatrix}.
\end{equation*}

Given a set of nodal variables $\hat \calN=(\hat N_1,\dots,\hat N_d)$, it is straightforward (if tedious) to compute the corresponding nodal basis functions $\hat \psi_i$ from the conditions $\hat N_i(\hat\psi_j)=\delta_{ij}$, $1\leq i,j\leq d$. (For example, the nodal basis for the linear Lagrange element is $\{1-\xi_1-\xi_2, \xi_1, \xi_2\}$.)

If the coefficients in the bilinear form $a$ are constant, one can then compute the integrals on the reference element exactly, noting that due to the affine transformation, the partial derivatives of the basis functions change according to
\begin{equation*}
    \nabla \psi(x) = A_K^{-T} \nabla\hat\psi(\xi).
\end{equation*}

\paragraph{Quadrature}
If the coefficients are not given analytically, it is necessary to evaluate the integrals using numerical quadrature, i.e., to compute
\begin{equation*}
    \int_K v(x)\,dx \approx \sum_{k=1}^r w_k v(x_k)
\end{equation*}
using appropriate \emph{quadrature weights} $w_k$ and \emph{quadrature nodes} $x_k$.
Since this amounts to replacing the bilinear form $a$ by $a_h$ (a \emph{variational crime}\footnote{\cite{Strang:1972}}), care must be taken that the discrete problem is still well-posed and that the quadrature error is negligible compared to the approximation error. It is possible to show that this can be ensured if the quadrature is sufficiently exact and the weights are positive (see \cref{chap:gen_galerkin}).
\begin{theorem}[effect of quadrature\footnote{e.g., \cite[Theorems~4.1.2, 4.1.6]{Ciarlet:2002}}]\label{thm:implementation:quadrature}
    Let $\calT_h$ be a shape regular affine triangulation with $P_1\subset\hat\calP\subset P_k$ for $k\geq 1$. If the quadrature on $\hat K$ is of order $2k-2$, all weights are positive, and $h$ is small enough, then the discrete problem is well-posed.

    If in addition the surface integrals are approximated by a quadrature rule of order $2k-1$ and the conditions of \cref{thm:interp:apriori} hold, there exists a $c>0$ such that for $f\in{H}^{k-1}(\Omega)$ and $g\in{H}^k(\partial\Omega)$ and sufficiently small $h$,
    \begin{equation*}
        \norm{H1}{u-u_h}\leq c h^{k-1}(\norm*{{H}^k(\Omega)}{u}+\norm*{{H}^{k-1}(\Omega)}{f}+\norm*{{H}^k(\partial\Omega)}{g}).
    \end{equation*}
\end{theorem}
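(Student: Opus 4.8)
The plan is to treat the use of quadrature as a \emph{variational crime}: the exact forms $a$ and $F$ of the weak problem are replaced by computable approximations $a_h$ and $F_h$, and what must be controlled is the effect of this perturbation. The natural framework is the first lemma of Strang (established in \cref{chap:gen_galerkin}), which — under uniform coercivity and continuity of $a_h$ on $V_h$ — yields for the discrete solution $u_h$ of $a_h(u_h,v_h)=F_h(v_h)$ the estimate
\begin{equation}
    \norm{H1}{u-u_h} \le c\left(\inf_{v_h\in V_h}\left[\norm{H1}{u-v_h} + \sup_{w_h\in V_h}\frac{|a(v_h,w_h)-a_h(v_h,w_h)|}{\norm{H1}{w_h}}\right] + \sup_{w_h\in V_h}\frac{|F(w_h)-F_h(w_h)|}{\norm{H1}{w_h}}\right).
\end{equation}
So I would first dispose of this abstract estimate and then bound the three contributions separately: a best-approximation error, a consistency error of the bilinear form, and a consistency error of the right-hand side.

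The well-posedness part of the theorem amounts to verifying uniform coercivity of $a_h$ on $V_h$, which is exactly where the quadrature hypotheses enter. Transforming the principal part to the reference element $\hat K$, the key observation is that for $\hat v_h\in\hat\calP\subset P_k$ one has $|\nabla\hat v_h|^2\in P_{2k-2}$, so a rule \emph{exact of order} $2k-2$ integrates it \emph{exactly}. Combined with the pointwise ellipticity \eqref{eq:weak:elliptic} and the positivity of the weights $\omega_l$, the discrete principal part therefore satisfies
\begin{equation}
    \sum_l \omega_l \sum_{j,k}\hat a_{jk}(b_l)\,\partial_j\hat v_h(b_l)\,\partial_k\hat v_h(b_l) \ge \alpha \sum_l \omega_l |\nabla\hat v_h(b_l)|^2 = \alpha\,|\hat v_h|^2_{{H}^1(\hat K)},
\end{equation}
and summing over elements reproduces the exact principal-part coercivity. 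The lower-order terms (from $b_j$ and $c$) contribute quadrature errors scaling with a positive power of $h$, which are absorbed for $h$ small; Poincaré's inequality (\cref{thm:weak:poincare}) then upgrades the seminorm bound to a full-norm bound exactly as in \cref{thm:weak:well-posed}. Continuity of $a_h$ follows from boundedness of the coefficients together with positivity of the weights, and the \nameref{thm:weak:laxmilgram} gives a unique $u_h$.

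For the error estimate I would bound the best-approximation term by the global interpolation error, $\inf_{v_h}\norm{H1}{u-v_h}\le\norm{H1}{u-\calI_\calT u}\le c\,h^{k-1}|u|_{{H}^k(\Omega)}$, via \cref{thm:interp:global} just as in \cref{thm:interp:apriori}. The data-consistency term is handled elementwise: the quadrature error of $\inner{f,w_h}$ with $f\in{H}^{k-1}(\Omega)$ and $w_h\in P_k$, together with the order-$(2k-1)$ surface rule applied to $\inner{g,w_h}_{\partial\Omega}$ with $g\in{H}^k(\partial\Omega)$, gives after a scaling argument a bound of order $h^{k-1}(\norm*{{H}^{k-1}(\Omega)}{f}+\norm*{{H}^k(\partial\Omega)}{g})$; the higher order $2k-1$ on the boundary compensates the codimension-one scaling.

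The main obstacle is the bilinear-form consistency term. Here I would study, on each $K$, the quadrature-error functional $E_K(\phi):=\int_K\phi\,dx-\sum_l w_l\phi(x_l)$ applied to the products $\phi=a_{jk}\,\partial_j v_h\,\partial_k w_h$. Transforming to $\hat K$, the corresponding functional annihilates $P_{2k-2}$, so \cref{thm:bramble} applies and bounds it by a top-order seminorm of the integrand $\hat a_{jk}\,\partial_j\hat v_h\,\partial_k\hat w_h$. The delicate point is the Leibniz bookkeeping: since $\partial_j\hat v_h$ and $\partial_k\hat w_h$ lie in $P_{k-1}$ and so carry at most $2k-2$ nonvanishing derivatives between them, every surviving top-order derivative must fall on $\hat a_{jk}$ — which is precisely why one needs coefficient regularity $a_{jk}\in{W}^{k,\infty}(\Omega)$ and why the bound emerges as a product of derivatives of $\hat a_{jk}$ with controlled polynomial seminorms. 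Tracking the powers of $h_K$ through \cref{lem:interp:transform} and \cref{lem:interp:estim} and summing over the shape-regular triangulation must then return exactly the rate $h^{k-1}$; obtaining this power sharply is the crux of the argument, and is the reason the order $2k-2$ (and not less) is assumed both here and in the coercivity step.
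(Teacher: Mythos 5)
Your proposal follows essentially the same route as the paper's own treatment (carried out at the end of \cref{chap:gen_galerkin} for the model case of linear elements and the principal-part form $(\alpha\nabla u,\nabla v)$): the first Strang lemma, coercivity of $a_h$ from exact integration of $|\nabla\hat v_h|^2\in P_{2k-2}$ together with positivity of the weights and Poincar\'e's inequality, the global interpolation estimate for the best-approximation term, and the Bramble--Hilbert lemma plus a scaling argument for the consistency errors of $a_h$ and $F_h$. The only difference is that you sketch the general-degree bookkeeping (the Leibniz distribution of derivatives onto $\hat a_{jk}$, the lower-order terms absorbed for small $h$, and the surface quadrature) that the paper delegates to the cited reference, and your outline of those steps is consistent with the standard argument.
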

The rule of thumb is that the quadrature should be exact for the integrals involving second-order derivatives if the coefficients were constant. For linear elements (where the gradients are constant), order $0$ (i.e., the midpoint rule) is therefore sufficient to obtain an error estimate of order $h$.

For higher order elements, Gauß quadrature is usually employed. This is simplified by using \emph{barycentric coordinates}: If the vertices of $K$ are $(x_1,y_1)$, $(x_2,y_2)$, and $(x_3,y_3)$, the barycentric coordinates $(\zeta_1,\zeta_2,\zeta_3)$ of $(x,y)\in K$ are defined by
\begin{enumerate}[(i)]
    \item $\zeta_1,\zeta_2,\zeta_3\in[0,1]$,
    \item $\zeta_1+\zeta_2+\zeta_3=1$,
    \item $(x,y) = \zeta_1(x_1,y_1)+\zeta_2(x_2,y_2)+\zeta_3(x_3,y_3)$.
\end{enumerate}
Barycentric coordinates are invariant under affine transformations: If $\xi\in\hat K$ has the barycentric coordinates $(\zeta_1,\zeta_2,\zeta_3)$ with respect to the vertices of $\hat K$, then $x=T_K\xi$ has the same coordinates with respect to the vertices of $K$.
The Gauß nodes in barycentric coordinates and the corresponding weights for quadrature of order up to $5$ are given in~\cref{tab:implement:gauss}. The element contributions of the local basis functions can then be computed as, e.g., in
\begin{equation*}
    \int_K \scalprod*{}{A(x)\nabla \phi_i(x)}{\nabla\phi_j(x)} \,dx \approx
    \det(A_K) \sum_{k=1}^{n_l} w_k \scalprod*{}{A(x_k) A_K^{-T}\nabla\hat\psi_i(\xi_k)}{A_K^{-T}\nabla\hat\psi_j(\xi_k)},
\end{equation*}
where $A(x)=(a_{ij}(x))_{i,j=1}^2$ is the matrix of coefficients for the second-order derivatives, $n_l$ is the number of Gauss nodes, $x_k$ and $\xi_k$ are the Gauß nodes on the element and reference element, respectively, and $\hat\psi_i$, $\hat\psi_j$ are the basis functions on the reference element corresponding to $\phi_i$, $\phi_j$. The other integrals in $a$ and $F$ are calculated similarly.

\begin{SCtable}
    \caption{Gauß nodes $x_k$ (in barycentric coordinates) and weights $w_k$ on the reference triangle. The quadrature is exact up to order $l$ and uses $n_l$ nodes. For starred nodes, all possible permutations appear with identical weights.}\label{tab:implement:gauss}
    \centering
    \renewcommand{\arraystretch}{1.3}
    \begin{tabular}{cccc}
        \toprule
        $l$ & $n_l$ & $x_k$ & $w_k$\\
        \midrule
        $1$ & $1$ & $(\tfrac13,\tfrac13,\tfrac13)$ & $\frac12$\\
        $2$ & $3$ & $(\tfrac16,\tfrac16,\tfrac23)^\star$ & $\frac16$\\
        $3$ & $7$ & $(\tfrac13,\tfrac13,\tfrac13)$& $\frac{9}{40}$\\
        && $(\tfrac12,\tfrac12,0)^\star$ & $\frac{2}{30}$\\
        &&$(0,0,1)^\star$ &  $\frac{1}{40}$\\
        $5$ & $7$ & $(\tfrac13,\tfrac13,\tfrac13)$ &$\frac{9}{80}$\\
        && $(\frac{6-\sqrt{15}}{21},\frac{6-\sqrt{15}}{21},\frac{9+2\sqrt{15}}{21})^\star$& $\frac{155-\sqrt{15}}{2400}$\\
        &&$(\frac{6+\sqrt{15}}{21},\frac{6+\sqrt{15}}{21},\frac{9-2\sqrt{15}}{21})^\star$ & $\frac{155+\sqrt{15}}{2400}$\\
        \bottomrule
    \end{tabular}
\end{SCtable}

The complete procedure for the assembly of the stiffness matrix $\mathbf{K}$ and right-hand side $\mathbf{F}$ is sketched in \cref{alg:implementation}.

\paragraph{Boundary conditions}

It remains to incorporate the boundary conditions. For Dirichlet conditions $u=g$ on $\partial\Omega$, it is most efficient to assemble the stiffness matrices and right-hand side as above, and replace each row in $\mathbf{K}$ and entry in $\mathbf{F}$ corresponding to a node in \verb!bdy_nodes! with the equation for the prescribed nodal value:
\begin{algorithmic}[1]
    \For{$i=1,\dots,{\texttt{length(bdy\_nodes)}}$}
    \State Set $k = $ \verb!bdy_nodes!$(i)$
    \State Set $K_{k,j} = 0$ for all $j$
    \State Set $K_{k,k} = 1$,  ${F}_k = g(\texttt{nodes}(k))$
    \EndFor
\end{algorithmic}

For inhomogeneous Neumann or for Robin boundary conditions, one assembles the contributions to the boundary integrals from each face similarly to \cref{alg:implementation}, where the loop over \texttt{elements} is replaced by a loop over \verb!bdy_faces! (and one-dimensional Gauß quadrature is used).
\begin{algorithm}[ht!]
    \caption{Finite element method for Lagrange triangles}\label{alg:implementation}
    \begin{algorithmic}[1]
        \Require{mesh \texttt{nodes}, \texttt{elements}, data $a_{ij}$,$b_j$,$c$,$f$}
        \State Compute Gauß nodes $\xi_l$ and weights $w_l$ on reference element
        \State Compute values of nodal basis elements and their gradients at Gauß nodes on reference element
        \State Set $K_{ij} = F_{j} = 0$
        \For{$k=1,\dots,\texttt{length(elements)}$}
        \State Compute transformation $T_K$, Jacobian $\det(A_K)$ for element $K=\texttt{elements}(k)$
        \State Evaluate coefficients and right-hand side at transformed Gauß nodes $T_K(\xi_l)$
        \State Compute $a(\phi_i,\phi_j)$, $\inner{f,\phi_j}$ for all nodal basis elements $\phi_i,\phi_j$ using transformation rule and Gauß quadrature on reference element
        \For{$i,j=1,\dots,d$}
        \State Set $r=\texttt{elements}(k,i)$, $s=\texttt{elements}(k,j)$
        \State Set $K_{r,s} \gets  K_{r,s} + a(\phi_i,\phi_j)$, $F_{s} \gets  F_{s} + \inner{f,\phi_j}$
        \EndFor
        \EndFor
        \Ensure{$K_{ij},F_{j}$}
    \end{algorithmic}
\end{algorithm}

\part{Nonconforming Finite Elements}\label{part:nonconforming}

\chapter{Generalized Galerkin approach}\label{chap:gen_galerkin}

The results of the preceding chapters depended on the conformity of the Galerkin approach: the discrete problem is obtained by restricting the continuous problem to suitable subspaces. This is too restrictive for many applications beyond standard second order elliptic problems, where it would be necessary to consider
\begin{itemize}
    \item \emph{Petrov--Galerkin} approaches, where the function $u$ satisfying $a(u,v)$ for all $v\in V$ is an element of $U\neq V$;
    \item \emph{non-conforming} approaches, where the discrete spaces $U_h$ and $V_h$ are not subspaces of $U$ and $V$, respectively; and
    \item \emph{non-consistent} approaches, where the discrete problem involves a bilinear form $a_h \neq a$ (and $a_h$ might not be well-defined for all $u\in U$).
\end{itemize}
We thus need a more general framework that covers these cases as well. Let $U$, $V$ be Banach spaces, where $V$ is reflexive, and let $U^*$, $V^*$ denote their topological duals. Given a bilinear form $a:U\times V\to\R$ and a continuous linear functional $F\in V^*$, we are looking for $u\in U$ satisfying
\begin{equation}\label{eq:gen_galerkin}
    a(u,v) = F(v) \quad \text{ for all } v\in V.\tag{$\mathcal{W}$}
\end{equation}
The following generalization of the Lax--Milgram theorem gives sufficient (and, as can be shown, necessary) conditions for the well-posedness of \eqref{eq:gen_galerkin}.
\begin{theorem}[Banach--Ne\v{c}as--Babu\v{s}ka]\label{thm:BNB}
    Let $U$ and $V$ be Banach spaces and $V$ be reflexive. Let a bilinear form $a: U\times V\to\R$ and a linear functional $F:V\to\R$ be given satisfying the following assumptions:
    \begin{enumerate}[(i)]
        \item\emph{Inf--sup condition}: there exists a $c_1>0$ such that
            \begin{equation*}
                \inf_{u\in U}\sup_{v\in V} \frac{a(u,v)}{\norm*{U}{u}\norm*{V}{v}} \geq c_1 .
            \end{equation*}
        \item\emph{Continuity}: there exist $c_2,c_3$ such that
            \begin{align*}
                |a(u,v)|&\leq c_2\norm*{U}{u}\norm*{V}{v},\\
                |F(v)|&\leq c_3 \norm*{V}{v}
            \end{align*}
            for all $u\in U$, $v\in V$.
        \item\emph{Injectivity}: for any $v\in V$,
            \begin{equation*}
                a(u,v) = 0 \text{ for all } u\in U \quad\text{implies} \quad v=0.
            \end{equation*}
    \end{enumerate}
    Then there exists a unique solution $u\in U$ to \eqref{eq:gen_galerkin}, which satisfies
    \begin{equation*}
        \norm*{U}{u}\leq \frac{1}{c_1} \norm*{V^*}{F}.
    \end{equation*}
\end{theorem}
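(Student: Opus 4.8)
The plan is to recast the problem in operator form and reduce the claim to the bijectivity of a single bounded linear operator. First I would define $A\colon U\to V^*$ by $\langle Au,v\rangle_{V^*,V}:=a(u,v)$; the continuity bound in (ii) shows that $A$ is well-defined and bounded with $\|A\|\le c_2$, and solving \eqref{eq:gen_galerkin} is exactly solving $Au=F$ in $V^*$. The inf-sup condition (i) rewrites as the \emph{lower bound}
\[
    \|Au\|_{V^*}=\sup_{v\neq 0}\frac{a(u,v)}{\|v\|_V}\ge c_1\|u\|_U \qquad\text{for all }u\in U,
\]
so the norm estimate $\|u\|_U\le c_1^{-1}\|F\|_{V^*}$ will drop out automatically once $u=A^{-1}F$ is known to exist.

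The key consequence of this lower bound is that $A$ is bounded below, from which I would extract two facts: $A$ is injective, and its range $A(U)$ is closed in $V^*$. Closedness follows by a standard Cauchy-sequence argument: if $Au_n\to g$ in $V^*$, then $\|A(u_n-u_m)\|_{V^*}\ge c_1\|u_n-u_m\|_U$ forces $(u_n)$ to be Cauchy in the Banach space $U$, hence convergent to some $u$, and the continuity of $A$ yields $Au=g\in A(U)$.

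It remains to prove surjectivity, and this is where the reflexivity of $V$ together with hypothesis (iii) enter; I expect this to be the main obstacle. I would argue by contradiction: if $A(U)\neq V^*$, then, since $A(U)$ is closed, the Hahn--Banach theorem provides a nonzero functional $\Phi\in(V^*)^*$ vanishing on $A(U)$. Reflexivity of $V$ identifies $\Phi$ with some $v\in V$, $v\neq 0$, via the canonical embedding $J\colon V\to V^{**}$, so that $0=\Phi(Au)=\langle Au,v\rangle_{V^*,V}=a(u,v)$ for all $u\in U$. Hypothesis (iii) then forces $v=0$, a contradiction; hence $A(U)=V^*$.

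Combining injectivity with surjectivity, $A$ is a bijection, so $u:=A^{-1}F$ is the unique solution of \eqref{eq:gen_galerkin}, and applying the lower bound to this $u$ gives $c_1\|u\|_U\le\|Au\|_{V^*}=\|F\|_{V^*}$. The only delicate point to handle carefully is the use of reflexivity, which is precisely what guarantees that the annihilator of the closed range lives in $V$ itself (rather than only in $V^{**}$) and can thus be tested against the injectivity condition; everything else is routine once $A$ and its lower bound are in place.
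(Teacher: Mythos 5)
Your proposal is correct and follows essentially the same route as the paper: both recast the problem as $Au=F$ for $A\colon U\to V^*$, derive the lower bound $\norm*{V^*}{Au}\geq c_1\norm*{U}{u}$ from the inf-sup condition to get injectivity, continuity of the inverse, and closedness of the range, and then use reflexivity together with hypothesis (iii) to obtain surjectivity. The only (minor) difference is that where the paper invokes the closed range theorem to identify $A(U)$ with $(\ker A^*)^0$, you separate the proper closed subspace $A(U)$ directly with Hahn--Banach and pull the annihilating functional back into $V$ via reflexivity -- a slightly more self-contained packaging of the same argument.
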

\begin{proof}
    The proof is essentially an application of the closed range theorem:\footnote{e.g., \cite[Theorem 3.E]{Zeidler:1995b}}
    For a bounded linear operator $A$ between two Banach spaces $X$ and $Y$, the range $\ran A$ of $A$ is closed in $Y$ if and only if $\ran A =(\ker A^*)^\bot$, where $A^*:Y^*\to X^*$ is the adjoint of $A$, $\ker A :=\setof{x\in X}{Ax = 0}$ is the null space of an operator $A:X\to Y$, and for $V\subset X$,
    \begin{equation*}
        V^\bot :=\setof{x\in X^*}{\dual{X}{x}{v}=0 \text{ for all } v\in V}
    \end{equation*}
    is the polar of $V$. We apply this theorem to the operator $A: U \to V^*$ defined by
    \begin{equation*}
        \dual{V}{Au}{v} = a(u,v) \quad \text{ for all } v\in V
    \end{equation*}
    to show that $A$ is an isomorphism (i.e., that $A$ is bijective and $A$ and $A^{-1}$ are continuous), which is equivalent to the claim since \eqref{eq:gen_galerkin} can be expressed as $Au=F$.

    Continuity of $A$ easily follows from continuity of $a$ and the definition of the norm on $V^*$.
    We next show injectivity of $A$. Let $u_1,u_2\in U$ be given with $Au_1 = Au_2$. By definition, this implies $a(u_1,v)=a(u_2,v)$ and hence $a(u_1-u_2,v) =0$ for all $v\in V$. Hence, the inf--sup condition implies that
    \begin{equation*}
        c_1\norm*{U}{u_1-u_2} \leq \sup_{v\in V}\frac{a(u_1-u_2,v)}{\norm*{V}{v}}=0
    \end{equation*}
    and therefore $u_1 = u_2$.

    Due to the injectivity of $A$, for any $v^*\in \ran A\subset V^*$ we have a unique $u=:A^{-1}v^*\in U$, and the inf--sup condition yields
    \begin{equation}\label{eq:bnb:continuity}
        c_1\norm*{U}{u}\leq \sup_{v\in V}\frac{a(u,v)}{\norm*{V}{v}}=  \sup_{v\in V}\frac{\dual{V}{Au}{v}}{\norm*{V}{v}} = \sup_{v\in V}\frac{\dual{V}{v^*}{v}}{\norm*{V}{v}} = \norm*{V^*}{v^*}.
    \end{equation}
    Any preimage thus satisfies the claimed inequality; it remains to show that every $v^*\in V^*$ has a preimage.
    We next show that $\ran A$ is closed. Let $\{v^*_n\}_{n\in\N}\subset \ran A\subset V^*$ be a sequence converging to a $v^*\in V^*$, i.e., there exists $u_n\in U$ such that $v^*_n=Au_n$, and the $v^*_n$ form a Cauchy sequence. From \eqref{eq:bnb:continuity}, we deduce for all $n,m\in\N$ that
    \begin{equation*}
        \norm*{U}{u_n-u_m} \leq \frac1{c_1}\norm*{V^*}{A(u_n - u_m)} = \frac1{c_1}\norm*{V^*}{v^*_n-v^*_m},
    \end{equation*}
    which implies that $\{u_n\}_{n\in\N}$ is a Cauchy sequence as well and thus converges to a $u\in U$. The continuity of $A$ then yields
    \begin{equation*}
        v^* = \lim_{n\to\infty} v^*_n = \lim_{n\to\infty} A u_n = Au,
    \end{equation*}
    and we obtain $v^*\in \ran A$.
    We can therefore apply the closed range theorem. By the reflexivity of $V$, we have $A^*:V\to U^*$ and
    \begin{equation*}
        \begin{aligned}
            \ker A^*  &= \setof{v\in V}{A^*v = 0}\\
            &= \setof{v\in V}{\dual{U}{A^*v}{u}=0 \text{ for all }u\in U}\\
            &=\setof{v\in V}{\dual{V}{Au}{v}=0  \text{ for all }u\in U}\\
            &=\setof{v\in V}{a(u,v)=0 \text{ for all }u\in U}\\
            &=\{0\}
        \end{aligned}
    \end{equation*}
    due to the injectivity condition (iii). Hence the closed range theorem and the reflexivity of $V$ yields
    \begin{equation*}
        \ran A = (\{0\})^\bot = \setof{v^*\in V^*}{\dual{V}{v^*}{0}=0} = V^*,
    \end{equation*}
    and therefore surjectivity of $A$. Thus, $A$ is an isomorphism and the claimed estimate follows from \eqref{eq:bnb:continuity} applied to $v^*=F\in V^*$.
\end{proof}

The term \enquote{injectivity condition} is due to the fact that it implies injectivity of the adjoint operator $A^*$ and hence (due to the closed range of $A$) surjectivity of $A$. Note that in the symmetric case where $U=V$ is a Hilbert space, coercivity of $a$ implies both the inf--sup condition and (via contraposition) the injectivity condition, and we recover the Lax--Milgram lemma.

\bigskip

For the \emph{non-conforming} Galerkin approach, we replace $U$ by $U_h$ and $V$ by $V_h$, where $U_h$ and $V_h$ are finite-dimensional spaces, and introduce a bilinear form $a_h:U_h\times V_h\to\R$ and a linear functional $F_h:V_h\to\R$. We then search for $u_h\in U_h$ satisfying
\begin{equation}\label{eq:gen_galerkin_h}
    a_h(u_h,v_h) = F_h(v_h) \quad \text{ for all } v_h\in V_h.\tag{$\mathcal{W}_h$}
\end{equation}
In contrast to the conforming setting, the well-posedness of \eqref{eq:gen_galerkin_h} cannot be deduced from the well-posedness of \eqref{eq:gen_galerkin} but needs to be proved independently. This is somewhat simpler due to the finite-dimensionality of the spaces, where injectivity of a square matrix already implies surjectivity.
\begin{theorem}\label{thm:bnb_h}
    Let $U_h$ and $V_h$ be finite-dimensional vector spaces with norms $\|\cdot\|_{U_h}$ and $\|\cdot\|_{V_h}$, respectively.
    Let a bilinear form $a_h: U_h\times V_h\to\R$ and a linear functional $F_h:V_h\to\R$ be given satisfying the following assumptions:
    \begin{enumerate}[(i)]
        \item\emph{discrete Inf--sup condition}: there exists a $c_1>0$ such that
            \begin{equation*}
                \inf_{u_h\in U_h}\sup_{v_h\in V_h} \frac{a_h(u_h,v_h)}{\norm*{U_h}{u_h}\norm*{V_h}{v_h}} \geq c_1 .
            \end{equation*}
        \item\emph{Continuity}: there exist $c_2,c_3$ such that
            \begin{align*}
                |a_h(u_h,v_h)|&\leq c_2\norm*{U_h}{u_h}\norm*{V_h}{v_h},\\
                |F_h(v_h)|&\leq c_3 \norm*{V_h}{v_h}
            \end{align*}
            for all $u_h\in U_h$, $v_h\in V_h$.
    \end{enumerate}
    Assume further that $\dim U_h = \dim V_h$.
    Then there exists a unique solution $u_h\in U_h$ to \eqref{eq:gen_galerkin_h}, which satisfies
    \begin{equation*}
        \norm*{U_h}{u_h}\leq \frac{1}{c_1} \norm*{V_h^*}{F_h}.
    \end{equation*}
\end{theorem}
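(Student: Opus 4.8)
The plan is to follow the proof of the \nameref{thm:BNB} theorem, but to replace the closed range theorem by an elementary dimension-counting argument made possible by the finite-dimensionality. First I would introduce the linear operator $A_h : U_h \to V_h^*$ defined by
\begin{equation}
    \dual{V_h}{A_h u_h}{v_h} = a_h(u_h,v_h) \quad \text{ for all } v_h \in V_h,
\end{equation}
so that \eqref{eq:gen_galerkin_h} is equivalent to the equation $A_h u_h = F_h$ in $V_h^*$. Continuity of $A_h$ is immediate from the continuity assumption (ii), so the whole problem reduces to showing that $A_h$ is an isomorphism.

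To show injectivity, I would suppose $A_h u_h = 0$, i.e.\ $a_h(u_h,v_h)=0$ for all $v_h\in V_h$, in which case the inf-sup-condition (i) gives
\begin{equation}
    c_1 \norm*{U_h}{u_h} \leq \sup_{v_h\in V_h} \frac{a_h(u_h,v_h)}{\norm*{V_h}{v_h}} = 0,
\end{equation}
whence $u_h=0$. The decisive step — and the only place where the hypothesis $\dim U_h = \dim V_h$ enters — is to upgrade this to bijectivity: since $\dim V_h^* = \dim V_h = \dim U_h$, an injective linear map $A_h$ between finite-dimensional spaces of equal dimension is automatically surjective (by the rank--nullity theorem). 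This is exactly what lets us dispense with the injectivity condition (iii) of \cref{thm:BNB}, which in the infinite-dimensional setting was needed to force surjectivity through the closed range theorem. Hence $A_h u_h = F_h$ admits a unique solution $u_h\in U_h$.

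Finally, the stability bound follows directly by inserting $A_h u_h = F_h$ into the inf-sup-condition:
\begin{equation}
    c_1 \norm*{U_h}{u_h} \leq \sup_{v_h\in V_h} \frac{a_h(u_h,v_h)}{\norm*{V_h}{v_h}} = \sup_{v_h\in V_h} \frac{F_h(v_h)}{\norm*{V_h}{v_h}} = \norm*{V_h^*}{F_h}.
\end{equation}
I expect no genuine obstacle here; the one point deserving care is the passage from injectivity to bijectivity, where the equal-dimension assumption silently performs the role played by the closed range theorem in the general case.
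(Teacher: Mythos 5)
Your proposal is correct and is essentially the paper's own argument: the paper phrases it in terms of the matrix $\mathbf{K}$ with $K_{ij}=a_h(\phi_i,\psi_j)$ rather than the operator $A_h:U_h\to V_h^*$, but in both cases injectivity comes from the inf-sup-condition, surjectivity from the rank theorem together with $\dim U_h=\dim V_h$, and the stability bound from inserting the equation back into the inf-sup-condition. No gap.
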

\begin{proof}
    Consider a basis $\{\phi_1,\dots,\phi_n\}$ of $U_h$ and $\{\psi_1,\dots,\psi_n\}$ of $V_h$ and define the matrix $\mathbf{K}\in\Rnn$, $K_{ij} = a_h(\phi_i,\psi_j)$. Then the claim is equivalent to the invertibility of $\mathbf{K}$. From the inf--sup condition, we obtain injectivity of $\mathbf{K}$ by arguing as in the continuous case. By the rank theorem and the condition $\dim U_h = \dim V_h$, this implies surjectivity of $\mathbf{K}$ and hence invertibility. The estimate follows again from the inf--sup condition.
\end{proof}
Note that since $U_h$ and $V_h$ are no longer assumed to be a subspaces of $U$ and $V$, respectively, they can't simply inherit the norms of the latter, and we thus have to choose new one. These \emph{discrete norms} will have to be specifically adapted to the discrete bilinear form $a_h$ in order to ensure that both the inf--sup and the continuity condition are satisfied.
Note also the difference between \cref{thm:bnb_h} and the \nameref{thm:weak:laxmilgram} in the discrete case: In the latter, the coercivity condition amounts to the assumption that the matrix $\mathbf{K}$ is positive definite, while the inf-sup- and injectivity condition only amounts to requiring injectivity and surjectivity.

\bigskip

The error estimates for non-conforming methods are based on the following two generalization of Céa's lemma.
Although we do not require $U_h\subset U$ and $V_h\subset V$, we need to have some way of comparing elements of $U$ and $U_h$ in order to obtain error estimates for the solution $u_h$. We therefore assume that there exists a subspace $U_* \subset U$ containing the exact solution such that
\begin{equation*}
    U(h) := U_* + U_h = \setof{w+w_h}{w\in U_*, w_h\in U_h}
\end{equation*}
can be endowed with an ``error norm'' $\norm*{U(h)}{u}$ satisfying
\begin{enumerate}[(i)]
    \item $\norm*{U(h)}{u_h} = \norm*{U_h}{u_h}$ for all $u_h\in U_h$,
    \item $\norm*{U(h)}{u}\leq c \norm*{U}{u}$ for all $u\in U_*$.
\end{enumerate}

The first result concerns non-consistent but conforming approaches and can be used to prove estimates for the error arising from numerical integration; see \cref{thm:implementation:quadrature}. Note that here we do not assume that the discrete bilinear form $a_h$ is well-defined for the continuous solution $u\in U$.
\begin{theorem}[first Strang lemma]\label{thm:strang1}
    Assume that the conditions of \cref{thm:bnb_h} hold and that
    \begin{enumerate}[(i)]
        \item $U_h\subset U=U(h)$ and $V_h\subset V$;
        \item there exists a constant $c_4>0$ independent of $h$ such that
            \begin{equation*}
                |a(u,v_h)| \leq c_4 \norm*{U(h)}{u}\norm*{V_h}{v_h} \qquad\text{for all }u\in U,v_h\in V_h.
            \end{equation*}
    \end{enumerate}
    Then the solutions $u$ and $u_h$ to \eqref{eq:gen_galerkin} and \eqref{eq:gen_galerkin_h}, respectively, satisfy
    \begin{equation*}
        \begin{aligned}
            \norm*{U(h)}{u-u_h}&\leq \frac{1}{c_1} \sup_{v_h\in V_h}\frac{|F(v_h)-F_h(v_h)|}{\norm*{V_h}{v_h}} \\
            \MoveEqLeft[-1]    + \inf_{w_h\in U_h}\left[ \left(1+\frac{c_4}{c_1}\right)\norm*{U(h)}{u-w_h} + \frac{1}{c_1}\sup_{v_h\in V_h}\frac{|a(w_h,v_h)-a_h(w_h,v_h)|}{\norm*{V_h}{v_h}}\right].
        \end{aligned}
    \end{equation*}
\end{theorem}
\begin{proof}
    Let $w_h\in U_h$ be arbitrary. By the triangle inequality and the assumption on the error norm, we have
    \begin{equation*}
        \norm*{U(h)}{u-u_h}\leq  \norm*{U(h)}{u-w_h} +  \norm*{U_h}{u_h-w_h}
    \end{equation*}
    For the second term, we can apply the discrete inf--sup condition to obtain
    \begin{equation*}
        c_1 \norm*{U_h}{u_h-w_h} \leq \sup_{v_h\in V_h} \frac{a_h(u_h-w_h,v_h)}{\norm*{V_h}{v_h}}.
    \end{equation*}
    Using \eqref{eq:gen_galerkin} and \eqref{eq:gen_galerkin_h}, by assumption (i) we can write
    \begin{equation*}
        a_h(u_h-w_h,v_h) = a(u-w_h,v_h) + a(w_h,v_h) - a_h(w_h,v_h) + F_h(v_h) - F(v_h).
    \end{equation*}
    Inserting this into the last estimate and applying the assumption (ii) yields
    \begin{equation*}
        \begin{aligned}
            c_1 \norm*{U(h)}{u_h-w_h} \leq c_4 \norm*{U(h)}{u-w_h} &+ \sup_{v_h\in V_h}\frac{|a(w_h,v_h)-a_h(w_h,v_h)|}{\norm*{V_h}{v_h}}\\
            &+ \sup_{v_h\in V_h}\frac{|F(v_h)-F_h(v_h)|}{\norm*{V_h}{v_h}}.
        \end{aligned}
    \end{equation*}
    The claim now follows by taking the infimum over all $w_h\in U_h$.
\end{proof}

If the bilinear form $a_h$ can be extended to $U(h)\times V_h$ (such that $a_h(u,v_h)$ makes sense), we can dispense with the assumption of conformity.
\begin{theorem}[second Strang lemma]\label{thm:strang2}
    Assume that the conditions of \cref{thm:bnb_h} hold and that
    there exists a constant $c_4>0$ independent of $h$ such that
    \begin{equation*}
        |a_h(u,v_h)| \leq c_4 \norm*{U(h)}{u}\norm*{V_h}{v_h}\qquad\text{for all }u\in U(h), v_h\in V_h.
    \end{equation*}
    Then the solutions $u$ and $u_h$ to \eqref{eq:gen_galerkin} and \eqref{eq:gen_galerkin_h}, respectively, satisfy
    \begin{equation*}
        \norm*{U(h)}{u-u_h}\leq \left(1+\frac{c_4}{c_1}\right)\inf_{w_h\in U_h} \norm*{U(h)}{u-w_h}+ \frac{1}{c_1} \sup_{v_h\in V_h}\frac{|F_h(v_h)-a_h(u,v_h)|}{\norm*{V_h}{v_h}}.
    \end{equation*}
\end{theorem}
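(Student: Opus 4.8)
The plan is to mimic the proof of the first Strang lemma (\cref{thm:strang1}), but to exploit the fact that $a_h$ now makes sense on all of $U(h)\times V_h$, which lets the two consistency contributions collapse into a single residual term. First I would fix an arbitrary $w_h\in U_h$ and apply the discrete inf-sup condition from \cref{thm:bnb_h} to $u_h-w_h\in U_h$. Since $u_h-w_h\in U_h$, property (i) of the norm on $U(h)$ gives $\norm*{U(h)}{u_h-w_h}=\norm*{U_h}{u_h-w_h}$, so that
\begin{equation}
    c_1\norm*{U(h)}{u_h-w_h}\leq \sup_{v_h\in V_h}\frac{a_h(u_h-w_h,v_h)}{\norm*{V_h}{v_h}}.
\end{equation}

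Next I would rewrite the numerator. Using the discrete equation \eqref{eq:gen_galerkin_h}, namely $a_h(u_h,v_h)=F_h(v_h)$, and inserting the productive zero $\pm a_h(u,v_h)$ (which is legitimate precisely because $a_h$ extends to $U(h)\times V_h$ and $u\in U_*\subset U(h)$), bilinearity yields
\begin{equation}
    a_h(u_h-w_h,v_h)= \bigl(F_h(v_h)-a_h(u,v_h)\bigr) + a_h(u-w_h,v_h).
\end{equation}
The first bracket is exactly the \emph{residual} (the consistency error), and the second term is a pure approximation term, since $u-w_h\in U(h)$. Applying the extended continuity assumption $|a_h(u-w_h,v_h)|\leq c_4\norm*{U(h)}{u-w_h}\norm*{V_h}{v_h}$, then dividing by $\norm*{V_h}{v_h}$ before taking the supremum, gives
\begin{equation}
    c_1\norm*{U(h)}{u_h-w_h}\leq \sup_{v_h\in V_h}\frac{|F_h(v_h)-a_h(u,v_h)|}{\norm*{V_h}{v_h}} + c_4\norm*{U(h)}{u-w_h}.
\end{equation}

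Finally I would combine this with the triangle inequality $\norm*{U(h)}{u-u_h}\leq \norm*{U(h)}{u-w_h}+\norm*{U(h)}{u_h-w_h}$, divide the inf-sup estimate by $c_1$, collect the two multiples of $\norm*{U(h)}{u-w_h}$ into the factor $(1+c_4/c_1)$, and take the infimum over all $w_h\in U_h$ to obtain the claim. I do not expect a serious obstacle here: the only subtlety — and the one genuine point where the hypotheses of this lemma (as opposed to \cref{thm:strang1}) are used — is that evaluating $a_h(u,v_h)$ on the exact solution requires the extension of $a_h$ to $U(h)\times V_h$, and this is what allows the consistency error to appear as a single residual $F_h(v_h)-a_h(u,v_h)$ rather than as the separate difference $a(w_h,v_h)-a_h(w_h,v_h)$ seen in the first Strang lemma. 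One should also verify that every norm is the $U(h)$-norm, using property (i) to pass between $\norm*{U_h}{\cdot}$ and $\norm*{U(h)}{\cdot}$ on $U_h$, but this is routine.
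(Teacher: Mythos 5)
Your proposal is correct and follows essentially the same route as the paper's proof: the same splitting $a_h(u_h-w_h,v_h)=a_h(u_h-u,v_h)+a_h(u-w_h,v_h)$ with $a_h(u_h-u,v_h)=F_h(v_h)-a_h(u,v_h)$, followed by the discrete inf-sup condition, the extended continuity of $a_h$, and the triangle inequality. Your extra remark on using property (i) of the $U(h)$-norm to identify $\norm*{U(h)}{u_h-w_h}$ with $\norm*{U_h}{u_h-w_h}$ is a detail the paper leaves implicit, but it changes nothing of substance.
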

\begin{proof}
    We proceed as before. Let $w_h\in U_h$ be given. Then
    \begin{equation*}
        \begin{aligned}
            a_h(u_h-w_h,v_h) &= a_h(u_h-u,v_h) + a_h(u-w_h,v_h)\\
            &= F_h(v_h) - a_h(u,v_h) + a_h(u-w_h,v_h).
        \end{aligned}
    \end{equation*}
    The discrete inf--sup condition and the assumption on $a_h$ then imply
    \begin{equation*}
        c_1 \norm*{U_h}{u_h-w_h} \leq \sup_{v_h\in V_h} \frac{|F_h(v_h)-a_h(u-w_h,v_h)|}{\norm*{V_h}{v_h}} + c_4 \norm*{U(h)}{u-w_h},
    \end{equation*}
    and we again conclude using the triangle inequality and taking the infimum over all $w_h\in U_h$.
\end{proof}

\bigskip

To illustrate the application of the \nameref{thm:strang1}, we consider the effect of quadrature on the Galerkin approximation. For simplicity, we consider for $u,v\in\m{H10}=U=V$ the continuous bilinear form
\begin{equation*}
    a(u,v) = (\alpha\nabla u,\nabla v)
\end{equation*}
with $\alpha\in{W}^{1,\infty}(\Omega)\hookrightarrow\m{c}(\overline\Omega)$, $\alpha_1\geq \alpha(x)\geq \alpha_0 >0$. Let $U_h=V_h\subset \m{H10}=U(h)$ be constructed from triangular Lagrange elements of degree $m$ on an affine-equivalent triangulation $\calT_h$. The discrete bilinear form is then
\begin{equation*}
    a_h(u_h,v_h) = \sum_{K\in\calT_h} \sum_{k=1}^{l_m} w_k \alpha(x_k)\nabla u_h(x_k)\cdot\nabla v_h(x_k),
\end{equation*}
where $w_k>0$ and $x_k$ are Gauß quadrature weights and nodes on each element and $l_m$ is chosen sufficiently large that the quadrature is exact for polynomials of degree up to $2m-1$. Since $\nabla u_h$ is a vector of polynomials of degree $m-1$, this implies
\begin{equation*}
    \begin{aligned}
        \left(\sum_{k=1}^{l_m} w_k \alpha(x_k)\nabla u_h(x_k)\cdot\nabla v_h(x_k)\right)^2 &\leq  \alpha_1^2\left(\sum_{k=1}^{l_m} w_k| \nabla u_h(x_k)|^2\right)\left( \sum_{k=1}^{l_m} w_k |\nabla v_h(x_k)|^2 \right)\\
        &= \alpha_1^2 |\nabla u_h|^2_{\m{h1}(K)}|\nabla v_h|^2_{\m{h1}(K)}
    \end{aligned}
\end{equation*}
since the quadrature is exact for $|\nabla u_h|^2,|\nabla u_h|^2\in P_{2m-2}$. Hence, $a_h$ is continuous on $U_h\times V_h$, since
\begin{equation*}
    |a_h(u_h,v_h)| \leq C \norm{H1}{u_h}\norm{H1}{v_h}.
\end{equation*}
Similarly, $a_h$ is coercive since
\begin{equation*}
    \begin{aligned}
        a_h(u_h,u_h) &\geq \alpha_0  \sum_{K\in\calT_h} \sum_{k=1}^{l_m} w_k |\nabla u_h(x_k)|^2=\alpha_0 |u_h|^2_{\m{H1}}\\ &\geq C \norm{H1}{u_h}^2
    \end{aligned}
\end{equation*}
by positivity of the weights and Poincaré's inequality (\cref{thm:weak:poincare}). As coercivity implies the inf--sup condition, the discrete problem is well-posed by \cref{thm:bnb_h}.

We next derive error estimates for $m=1$ (linear Lagrange elements). Using the \nameref{thm:strang1}, we find that the discretization error is bounded by the approximation error and the quadrature error. For the former, \cref{thm:interp:global} yields
\begin{equation*}
    \inf_{w_h\in V_h}\norm{H1}{u-w_h} \leq C h |u|_{\m{H2}}.
\end{equation*}
For the quadrature error in the bilinear form, we use that for $w_h,v_h\in V_h$, the gradients $\nabla w_h$ and $\nabla v_h$ are constant on each element to write
\begin{equation*}
    \begin{aligned}
        a(w_h,v_h)-a_h(w_h,v_h) &= \sum_{K\in\calT_h} \left(\int_K \alpha\nabla w_h\cdot \nabla v_h\,dx - \sum_{k=1}^{l_m} w_k \alpha(x_k)\nabla w_h(x_k)\cdot \nabla v_h(x_k)\right)\\
        &=  \sum_{K\in\calT_h} \nabla w_h\cdot \nabla v_h\left(\int_K \alpha\,dx - \sum_{k=1}^{l_m}w_k\alpha(x_k)\right).
    \end{aligned}
\end{equation*}
Since for any $m\geq 1$,
\begin{equation*}
    E_K(v) := \int_K v(x)\,dx - \sum_{k=1}^{l_m} w_k v(x_k)
\end{equation*}
is a bounded, sublinear functional on ${W}^{m,\infty}(K)$ which vanishes for all $v\in P_{m-1}\subset P_{2m-1}$, we can apply the \nameref{thm:bramble} on the reference element $\hat K$ to obtain
\begin{equation*}
    |E_{\hat K}(\hat v)| \leq C |\hat v|_{{W}^{m,\infty}(\hat K)}.
\end{equation*}
A scaling argument then yields (noting that the right-hand norm involves the essential supremum over $K$ and thus doesn't scale with $\mathop{\mathrm{vol}}(K)$)
\begin{equation*}
    |E_K(v)| \leq C h_K^{m} \,\mathop{\mathrm{vol}}(K)\, |v|_{{W}^{m,\infty}(K)}.
\end{equation*}
Inserting this for $m=1$ and using again that $\nabla u_h,\nabla v_h$ are constant on each element, we obtain
\begin{equation*}
    \begin{aligned}
        |a(w_h,v_h)-a_h(w_h,v_h)| &\leq \sum_{K\in\calT_h} |\nabla w_h\cdot \nabla v_h|\, |E_K(\alpha)|\\
        &\leq C \sum_{K\in\calT_h} h_K  |\alpha|_{{W}^{1,\infty}(K)} (\mathrm{vol}(K) |\nabla w_h\cdot \nabla v_h|)\\
        &= C \sum_{K\in\calT_h} h_K  |\alpha|_{{W}^{1,\infty}(K)} \int_K |\nabla w_h\cdot \nabla v_h|\,dx\\
        &\leq C h  |\alpha|_{{W}^{1,\infty}(\Omega)} \norm{H1}{w_h}\norm{H1}{v_h}.
    \end{aligned}
\end{equation*}

For the quadrature error on the right-hand side $F_h(v_h)=\sum_{k=1}^{l_m}w_k f(x_k)v_h(x_k)$ for given $f\in{W}^{1,\infty}(\Omega)$, we can proceed similarly (applying the Bramble--Hilbert lemma to $E_K(fv_h)$ and using the product rule and equivalence of norms on $V_h$, followed by a scaling argument) to obtain
\begin{equation*}
    |F(v_h) - F_h(v_h)| \leq C h \, \|f\|_{{W}^{1,\infty}(\Omega)} \norm{H1}{v_h}.
\end{equation*}
Combining these estimates with the \nameref{thm:strang1} yields (with a generic constant $C$ independent of $h$ and using $\norm{H1}{w_h}\leq \norm{H1}{u-w_h}+\norm{H1}{u}$) that
\begin{equation*}
    \begin{aligned}
        \norm{H1}{u-u_h} &\leq C h  \|f\|_{{W}^{1,\infty}(\Omega)} + \inf_{w_h\in V_h}
        \left(C \norm{H1}{u-w_h} + C h  |\alpha|_{{W}^{1,\infty}(\Omega)} \norm{H1}{w_h}\right)\\
        &\leq  C h  \|f\|_{{W}^{1,\infty}(\Omega)} + C \inf_{w_h\in V_h}
        \left(C\norm{H1}{u-w_h} + Ch   \norm{H1}{u-w_h}\right)\\
        \MoveEqLeft[-8] + C h\norm{H1}{u}\\
        &\leq  C h  \|f\|_{{W}^{1,\infty}(\Omega)} + C h^2 |u|_{\m{H2}} + C h \norm{H2}{u}\\
        &\leq C h \left(\|f\|_{{W}^{1,\infty}(\Omega)}+ \|u\|_{\m{H2}}\right),
    \end{aligned}
\end{equation*}
for $h<1$, as claimed in \cref{thm:implementation:quadrature}.

\chapter{Mixed methods}

We now consider variational problems with constraints. Such problems arise, e.g., in the variational formulation of incompressible flow problems (where incompressibility of the solution $u$ can be expressed as the condition $\nabla\cdot u = 0$) or when explicitly enforcing boundary conditions in the weak formulation. To motivate the general problem we will study in this chapter, consider two reflexive Banach spaces $V$ and $M$ and the symmetric and coercive bilinear form $a:V\times V\to \R$. We know (cf. \cref{thm:galerkin:symmetric}) that the solution $u\in V$ to $a(u,v) = F(v)$ for all $v\in V$ is the unique minimizer of $J(v) = \frac12 a(v,v)-F(v)$. If we want $u$ to satisfy the additional condition $b(u,\mu) = 0$ for all $\mu \in M$ and a bilinear form $b:V\times M\to \R$ (e.g., $b(u,\mu) = (\nabla\cdot u,\mu)$), we can introduce the Lagrangian
\begin{equation*}
    L(u,\lambda) = J(u) + b(u,\lambda)
\end{equation*}
and consider the saddle point problem
\begin{equation*}
    \inf_{v\in V}\sup_{\mu\in M} L(v,\mu).
\end{equation*}
Taking the derivative with respect to $v$ and $\mu$, we obtain the (formal) first-order optimality conditions for the saddle point $(u,\lambda)\in V\times M$:
\begin{equation*}
    \left\{\begin{aligned}
            a(u,v) + b(v,\lambda) &= F(v) && \text{ for all } v\in V,\\
            b(u,\mu) &=0  && \text{ for all } \mu\in M.\\
    \end{aligned}\right.
\end{equation*}
This can be made rigorous; the existence of a Lagrange multiplier $\lambda$ however requires some assumptions on $b$. In the next section, we will see that these can be expressed in the form of an inf--sup condition.

\section{Abstract saddle point problems}\label{sec:mixed:saddle}

Let $V$ and $M$ be two reflexive Banach spaces,
\begin{equation*}
    a:V\times V\to \R, \qquad b:V\times M\to\R
\end{equation*}
be two continuous (not necessarily symmetric) bilinear forms, and $f\in V^*$ and $g\in M^*$ be given. Then we search for $(u,\lambda)\in V\times M$ satisfying the saddle point conditions
\begin{equation}\label{eq:mixed:saddle}
    \left\{\begin{aligned}
            a(u,v) + b(v,\lambda) &= \dual{V}{f}{v} && \text{ for all } v\in V,\\
            b(u,\mu) &= \dual{M}{g}{\mu}  && \text{ for all } \mu\in M.\\
    \end{aligned}\right.\tag{$\mathcal{S}$}
\end{equation}

In principle, we can obtain existence and uniqueness of $(u,\lambda)$ by considering \eqref{eq:mixed:saddle} as a variational problem for a bilinear form $c:(V\times M)\times(V\times M)\to \R$ and verifying a suitable inf--sup condition. It is, however, more convenient to express this condition in terms of the original bilinear forms $a$ and $b$. For this purpose, we first reformulate \eqref{eq:mixed:saddle} as an operator equation by introducing the operators
\begin{equation*}
    \begin{aligned}
        &A: V\to V^*,\qquad & \dual{V}{Au}{v} &= a(u,v) \quad \text{ for all } v\in V,\\
        &B: V\to M^*,\qquad & \dual{M}{Bu}{\mu} &= b(u,\mu) \quad \text{ for all } \mu\in M,\\
        &B^*: M\to V^*,\qquad & \dual{V}{B^*\lambda}{v} &= b(v,\lambda) \quad \text{ for all } v\in V.
    \end{aligned}
\end{equation*}
Then \eqref{eq:mixed:saddle} is equivalent to
\begin{equation}\label{eq:mixed:saddle_op}
    \left\{    \begin{aligned}
            Au + B^*\lambda &= f \quad\text{ in } V^*,\\
            Bu &= g \quad\text{ in } M^*.
    \end{aligned}\right.
\end{equation}
From this, we can see the following: If $B$ were invertible, the existence and uniqueness first of $u$ and then of $\lambda$ would follow immediately. In the (more realistic) case that $B$ has a nontrivial null space
\begin{equation*}
    \ker B = \setof{x\in V}{b(x,\mu) = 0 \text{ for all } \mu\in M}
\end{equation*}
(e.g., constant functions in the case $Bu = \nabla\cdot u$), we have to require that $A$ is injective on it to obtain a unique $u$. Existence of $\lambda$ then follows if $B^*$ is surjective. To verify these conditions, we follow the general approach of the \nameref{thm:BNB} theorem.

\begin{theorem}[Brezzi splitting theorem]\label{thm:brezzi}
    Assume that
    \begin{enumerate}[(i)]
        \item $a:V\times V\to\R$ satisfies the conditions of \cref{thm:BNB} for $U=V=\ker B$ and
        \item $b:V\times M\to\R$ satisfies for $\beta >0$ the condition
            \begin{equation}\label{eq:LBB}
                \inf_{\mu\in M}\sup_{v\in V} \frac{b(v,\mu)}{\norm*{V}{v}\norm*{M}{\mu}} \geq \beta.
            \end{equation}
    \end{enumerate}
    Then there exists a unique solution $(u,\lambda)\in V\times M$ to \eqref{eq:mixed:saddle} satisfying
    \begin{equation*}
        \norm*{V}{u}+\norm*{M}{\lambda}\leq C(\norm*{V^*}{f}+\norm*{M^*}{g}).
    \end{equation*}
\end{theorem}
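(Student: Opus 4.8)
The plan is to work with the operator formulation \eqref{eq:mixed:saddle_op} and to decouple the two unknowns by splitting $V$ along $\ker B$. The inf-sup condition \eqref{eq:LBB} says precisely that $\norm*{V^*}{B^*\mu} \ge \beta\norm*{M}{\mu}$ for all $\mu\in M$, so $B^*$ is bounded below, hence injective with closed range. Applying the closed range theorem (as in the proof of \cref{thm:BNB}) to $B^*$, and using the reflexivity of $V$ to identify $(B^*)^* = B$, I would obtain $\operatorname{range}(B^*) = (\ker B)^0$; applying it instead to $B$ and using $\ker B^* = \{0\}$ together with the reflexivity of $M$ would give $\operatorname{range}(B) = (\ker B^*)^0 = M^*$, i.e. $B$ is surjective. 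The dual form of \eqref{eq:LBB} moreover provides a bounded lift: every $g\in M^*$ has a preimage $u_g\in V$ with $Bu_g = g$ and $\norm*{V}{u_g}\le \beta^{-1}\norm*{M^*}{g}$.

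With these structural facts, I would build the solution in two stages. First, fix such a lift $u_g$ and look for $u$ of the form $u = u_0 + u_g$ with $u_0\in\ker B$, so that the second equation $Bu = g$ holds automatically. Testing the first equation against $v\in\ker B$ annihilates the multiplier term, since $b(v,\lambda) = \dual{M}{Bv}{\lambda} = 0$, leaving the reduced problem
\begin{equation}
    a(u_0,v) = \dual{V}{f}{v} - a(u_g,v)\qquad\text{for all } v\in\ker B.
\end{equation}
Its right-hand side is a bounded functional on $\ker B$, so assumption (i) — that $a$ satisfies the hypotheses of \cref{thm:BNB} on $U=V=\ker B$ — furnishes a unique $u_0\in\ker B$ with $\norm*{V}{u_0}\le c_1^{-1}(\norm*{V^*}{f}+c_2\norm*{V}{u_g})$, where $c_1,c_2$ are the inf-sup and continuity constants of $a$ on $\ker B$. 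This fixes $u=u_0+u_g$ uniquely.

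Second, I would recover $\lambda$ from the residual of the first equation. By construction $a(u,v)=\dual{V}{f}{v}$ for all $v\in\ker B$, so $f-Au\in(\ker B)^0=\operatorname{range}(B^*)$; hence there is $\lambda\in M$ with $B^*\lambda = f-Au$, which is exactly the first line of \eqref{eq:mixed:saddle_op}. Injectivity of $B^*$ makes $\lambda$ unique, and the lower bound for $B^*$ gives $\norm*{M}{\lambda}\le\beta^{-1}\norm*{V^*}{f-Au}\le\beta^{-1}(\norm*{V^*}{f}+c_2\norm*{V}{u})$. Chaining the three bounds and absorbing constants yields the claimed estimate $\norm*{V}{u}+\norm*{M}{\lambda}\le C(\norm*{V^*}{f}+\norm*{M^*}{g})$.

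The step I expect to be the main obstacle is the clean passage from the single scalar inequality \eqref{eq:LBB} to the two structural facts used above: the range identity $\operatorname{range}(B^*)=(\ker B)^0$ and the existence of a \emph{uniformly} bounded lift of $g$. The bounded-below estimate for $B^*$ is immediate, but converting it into surjectivity of $B$ and the range characterization requires the closed range theorem together with careful bookkeeping of biduals under the reflexivity of both $V$ and $M$; and extracting the explicit constant $\beta^{-1}$ in the lift (rather than merely qualitative surjectivity) relies on the dual characterization of the inf-sup condition via the induced isomorphism $V/\ker B\cong M^*$. Everything downstream of these facts is a routine application of \cref{thm:BNB} and the triangle inequality.
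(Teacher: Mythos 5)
Your proposal is correct and follows essentially the same route as the paper's proof: split $V$ along $\ker B$, use the closed range theorem to get $\operatorname{range}(B^*)=(\ker B)^0$ and surjectivity of $B$, lift $g$ to a norm-controlled $u_g$, solve the reduced problem on $\ker B$ via \cref{thm:BNB}, and recover $\lambda$ from the residual $f-Au\in(\ker B)^0$. The only cosmetic difference is in producing the bounded lift, where the paper extends $\tilde u_g$ as a functional on $(\ker B)^0$ by Hahn--Banach while you invoke the induced isomorphism $V/\ker B\cong M^*$ -- two equivalent manifestations of the duality $(V/\ker B)^*\cong(\ker B)^0$ under reflexivity.
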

Condition (ii) is an inf--sup condition for $B^*$ (since the infimum is taken over the test functions $\mu$) and is known as the \emph{Ladyzhenskaya--Babu\v{s}ka--Brezzi} (LBB) condition.
Note that $a$ only has to satisfy an inf--sup condition on the null space of $B$, not on all of $V$, which is crucial in many applications.
\begin{proof}
    First, by following the proof of \cref{thm:BNB}, we deduce that the LBB condition implies that $B^*$ has closed range and therefore
    \begin{equation*}
        \ran B^* = (\ker B)^\bot = \setof{v^*\in V^*}{\dual{V}{v^*}{v} = 0 \text{ for all } v\in \ker B}.
    \end{equation*}
    In addition, $B^*$ is injective on $M$ with
    \begin{equation}\label{eq:LBB:bound_badj}
        \beta \norm*{M}{\mu} \leq \norm*{V^*}{B^*\mu}
    \end{equation}
    holds for all $\mu\in M$.
    By the closed range theorem, $B$ has closed range as well and hence is surjective on $\ran B = (\ker B^*)^\bot = (\{0\})^\bot = M^*$.
    Thus for any $g\in M^*$ there exists a $\tilde u_g\in V$ satisfying $B\tilde u_g = g$. Since $B$ is not injective, $\tilde u_g$ is not unique, nor can its norm necessarily be bounded by that of $g$ (since one can add to $\tilde u_g$ any element in $\ker B$). However, among the possible solutions, we can find (at least) one that is bounded by applying the Hahn--Banach extension theorem.
    Let $v^*\in (\ker B)^\bot = \ran B^* \subset V^*$ be given. By the above, there then exists a unique $\lambda\in M$ such that $B^*\lambda = v^*$ and $\norm*{M}{\lambda}\leq\frac1\beta \norm*{V^*}{v^*}$.
    Since $V\subset (V^*)^*$, we can write
    \begin{equation*}
        \langle \tilde u_g, v^*\rangle_{(V^*)^*,V^*} = \dual{V}{B^*\lambda}{\tilde u_g} = \dual{M}{g}{\lambda}
        \leq \norm*{M^*}{g}\norm*{M}{\lambda}
        \leq \frac1\beta \norm*{M^*}{g} \norm*{V^*}{v^*}.
    \end{equation*}
    This implies that $\tilde u_g$ is bounded as a linear functional on $(\ker B)^\bot\subset V^*$, and in particular that $\norm*{((\ker B)^\bot)^*}{\tilde u_g}\leq \frac1\beta\norm*{M^*}{g}$. The Hahn--Banach extension theorem thus yields existence of a $u_g\in (V^*)^* = V$ with $u_g = \tilde u_g$ on $(\ker B)^\bot=\ran B^*$ and
    \begin{equation}\label{eq:LBB:est1}
        \norm*{V}{u_g} = \norm*{((\ker B)^\bot)^*}{\tilde u_g} \leq \frac1\beta\norm*{M^*}{g}.
    \end{equation}
    In addition, $Bu_g = g$ as well, since for all $\mu\in M$, we have that $B^*\mu \in \ran B^*=(\ker B)^\bot$ and hence by the extension property that
    \begin{equation*}
        \dual{M}{Bu_g}{\mu} = \dual{V}{B^*\mu}{u_g} = \dual{V}{B^*\mu}{\tilde u_g} =
        \dual{M}{g}{\mu}.
    \end{equation*}

    \bigskip

    Due to condition (i), $A$ is an isomorphism from $\ker B$ to $(\ker B)^*$. Considering $f-Au_g\in V^*$ as a bounded linear form on $\ker B\subset V$, we can thus find a unique $u_f \in \ker B$ satisfying
    \begin{equation}\label{eq:LBB:eq1}
        Au_f = f - Au_g \quad\text{in }(\ker B)^*
    \end{equation}
    (but not necessarily in $V^*$!) and
    \begin{equation}\label{eq:LBB:est2}
        \norm*{V}{u_f} \leq \frac1\alpha \norm*{(\ker B)^*}{f-Au_g} \leq \frac{1}\alpha(\norm*{V^*}{f} + C\norm*{V}{u_g}),
    \end{equation}
    where $\alpha>0$ and $C>0$ are the constants in the inf--sup and continuity conditions for $a$, respectively, and we have used that $f\in V^*$ such that $\norm*{(\ker B)^*}{f}\leq \norm*{V^*}{f}$ by definition of the dual norm and the fact that $\ker B\subset V$ is endowed with the same norm as $V$.

    Now set $u = u_f + u_g \in V$ and consider $f-Au\in V^*$, which due to \eqref{eq:LBB:eq1} satisfies for all $v\in\ker B$ that
    $\dual{(\ker B)}{f - Au}{v} = 0$.
    This implies that $f-Au \in(\ker B)^\bot$, and the surjectivity of $B^*$ on $(\ker B)^\bot=\ran B^*$ yields the existence of a $\lambda\in M$ satisfying
    \begin{equation*}
        B^* \lambda = f - Au  \quad\text{in } V^*
    \end{equation*}
    and
    \begin{equation}\label{eq:LBB:est3}
        \norm*{M}{\lambda} \leq \frac1\beta(\norm*{V^*}{f} + C \norm*{V}{u}).
    \end{equation}
    Since $u_f\in \ker B$ and hence
    \begin{equation*}
        Bu = Bu_g = g \quad\text{in } M^*,
    \end{equation*}
    we have thus found $(u,\lambda)\in V\times M$ satisfying \eqref{eq:mixed:saddle}, and the claimed estimate follows by combining \eqref{eq:LBB:est1}, \eqref{eq:LBB:est2} and \eqref{eq:LBB:est3}.

    To show uniqueness, consider the difference $(\bar u,\bar\lambda)$ of two solutions $(u_1,\lambda_1)$ and $(u_2,\lambda_2)$, which solves the homogeneous problem \eqref{eq:mixed:saddle_op} with $f =0 $ and $g=0$, i.e.,
    \begin{equation*}
        \left\{
            \begin{aligned}
                A\bar u + B^*\bar \lambda &= 0 \quad\text{ in } V^*,\\
                B\bar u &= 0 \quad\text{ in } M^*.
            \end{aligned}
        \right.
    \end{equation*}
    The second equation yields $\bar u\in\ker B$, and the inf--sup condition for $A$ on $\ker B$ implies
    \begin{equation*}
        \alpha \norm*{V}{\bar u }^2 \leq  a(\bar u, \bar u) = a(\bar u,\bar u) + b(\bar u,\bar\lambda) = 0 .
    \end{equation*}
    Since $\bar u = 0$, it follows from the first equation that $B^*\bar\lambda = 0$ and thus from the injectivity of $B^*$ that $\bar \lambda = 0$.
\end{proof}

\section{Galerkin approximation of saddle point problems}

For the Galerkin approximation of \eqref{eq:mixed:saddle}, we again choose subspaces $V_h\subset V$ and $M_h \subset M$ and look for $(u_h,\lambda_h)\in V_h\times M_h$ satisfying
\begin{equation}\label{eq:mixed:saddle_h}
    \left\{\begin{aligned}
            a(u_h,v_h) + b(v_h,\lambda_h) &= \dual{V}{f}{v_h} && \text{ for all } v_h\in V_h,\\
            b(u_h,\mu_h) &= \dual{M}{g}{\mu_h}  && \text{ for all } \mu_h\in M_h.\\
    \end{aligned}\right.\tag{$\mathcal{S}_h$}
\end{equation}
This approach is called a \emph{mixed finite element method}. It is clear that the choice of $V_h$ and of $M_h$ cannot be independent of each other but must satisfy a compatibility condition similar to that in \cref{thm:brezzi}. For its statement, we define the operator $B_h:V_h\to M_h^*$ analogously to $B$.
\begin{theorem}\label{thm:brezzi_h}
    Assume there exist constants $\alpha_h,\beta_h>0$ such that
    \begin{align}
        \inf_{u_h\in \ker B_h}\sup_{v_h\in \ker B_h} \frac{a(u_h,v_h)}{\norm*{V}{u_h}\norm*{V}{v_h}} &\geq \alpha_h\label{eq:LBB_A_h},\\
        \inf_{\mu_h\in M_h}\sup_{v_h\in V_h} \frac{b(v_h,\mu_h)}{\norm*{V}{v_h}\norm*{M}{\mu_h}} &\geq \beta_h.\label{eq:LBB_h}
    \end{align}
    Then there exists a unique solution $(u_h,\lambda_h)\in V_h\times M_h$ to \eqref{eq:mixed:saddle_h} satisfying
    \begin{equation*}
        \norm*{V}{u_h}+\norm*{M}{\lambda_h}\leq C(h)(\norm*{V^*}{f}+\norm*{M^*}{g}).
    \end{equation*}
\end{theorem}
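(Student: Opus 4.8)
The plan is to reproduce the construction used in the proof of \cref{thm:brezzi} at the discrete level, exploiting the fact that in finite dimensions the closed-range theorem and the Hahn--Banach extension become automatic. I would first introduce the discrete operators $A_h:V_h\to V_h^*$, $B_h:V_h\to M_h^*$ and $B_h^*:M_h\to V_h^*$ defined exactly as $A$, $B$, $B^*$ but with trial and test functions restricted to $V_h$ and $M_h$. Then \eqref{eq:mixed:saddle_h} is equivalent to the operator system $A_hu_h+B_h^*\lambda_h=f$ in $V_h^*$ and $B_hu_h=g$ in $M_h^*$, where $f$ and $g$ are understood as their restrictions to $V_h$ and $M_h$, and the goal becomes solving this system with a norm bound.

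For existence I would follow the splitting. The LBB condition \eqref{eq:LBB_h} is exactly the statement $\beta_h\norm*{M}{\mu_h}\leq\norm*{V_h^*}{B_h^*\mu_h}$ for all $\mu_h\in M_h$, so $B_h^*$ is injective and bounded below; by the rank theorem this forces $B_h$ to be surjective onto $M_h^*$ and provides a right inverse of norm at most $\beta_h^{-1}$. Hence I can choose $u_g\in V_h$ with $B_hu_g=g$ and $\norm*{V}{u_g}\leq\beta_h^{-1}\norm*{M^*}{g}$. Next, \eqref{eq:LBB_A_h} is an inf-sup condition for $a$ on the \emph{square} space $\ker B_h\times\ker B_h$, so $A_h$ restricted to $\ker B_h$ is an isomorphism onto $(\ker B_h)^*$; solving $A_hu_f=f-A_hu_g$ on $\ker B_h$ yields a unique $u_f\in\ker B_h$ with $\norm*{V}{u_f}\leq\alpha_h^{-1}(\norm*{V^*}{f}+c_2\norm*{V}{u_g})$, where $c_2$ is the continuity constant of $a$. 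Setting $u_h:=u_f+u_g$, the second equation holds since $B_hu_h=B_hu_g=g$, and a one-line check (using the defining relation for $u_f$) shows that $f-A_hu_h$ annihilates $\ker B_h$, i.e. $f-A_hu_h\in(\ker B_h)^0$. Since \eqref{eq:LBB_h} also makes $B_h^*$ surjective onto $(\ker B_h)^0$, there is a unique $\lambda_h\in M_h$ with $B_h^*\lambda_h=f-A_hu_h$, which is precisely the first equation; chaining the three norm bounds gives the asserted estimate with a constant $C$ depending on $\alpha_h$, $\beta_h$ and $c_2$.

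Uniqueness follows exactly as in \cref{thm:brezzi}: the difference $(\bar u,\bar\lambda)$ of two solutions solves the homogeneous system, so the second equation gives $\bar u\in\ker B_h$; testing the first equation against $v\in\ker B_h$ removes the $b$-term (because $b(v,\bar\lambda)=0$ for $v\in\ker B_h$ and $\bar\lambda\in M_h$), leaving $a(\bar u,v)=0$ for all $v\in\ker B_h$, whence $\bar u=0$ by \eqref{eq:LBB_A_h}, and then $B_h^*\bar\lambda=0$ forces $\bar\lambda=0$ by injectivity of $B_h^*$.

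The step that deserves care — and the genuine conceptual content here — is that \eqref{eq:LBB_A_h} and \eqref{eq:LBB_h} cannot be inherited from the continuous hypotheses (i)--(ii) of \cref{thm:brezzi}. The discrete kernel $\ker B_h$ is in general \emph{not} contained in $\ker B$, so the continuous inf-sup of $a$ on $\ker B$ says nothing about its behaviour on $\ker B_h$, and likewise the continuous LBB constant need not descend to the pair $V_h\times M_h$. This is exactly why both conditions must be postulated separately and, in applications, verified anew for each concrete choice of $V_h$ and $M_h$ (the source of the well-known compatibility difficulties for mixed elements). By contrast, the mechanics of the proof itself are routine once the finite-dimensionality of $V_h$ and $M_h$ strips away the closed-range and Hahn--Banach technicalities of the continuous argument; I expect the only real bookkeeping to be tracking how $\alpha_h$, $\beta_h$ and $c_2$ enter the final constant.
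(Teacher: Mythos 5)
Your argument is correct and is essentially the paper's: the paper's proof simply invokes \cref{thm:brezzi} with $V_h,M_h$ in place of $V,M$, noting that in finite dimensions the inf-sup condition \eqref{eq:LBB_A_h} alone suffices for the hypotheses of \cref{thm:BNB} on $\ker B_h$ (the injectivity condition being automatic by the rank theorem), which is exactly the observation you make before unrolling the splitting construction explicitly. Your closing remark that $\ker B_h\not\subset\ker B$ prevents the discrete conditions from being inherited from the continuous ones also matches the discussion the paper places immediately after the theorem.
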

\begin{proof}
    The claim follows immediately from \cref{thm:brezzi} and the fact that in finite dimensions, the inf--sup condition for $a$ is sufficient to apply the discrete BNB \cref{thm:bnb_h}.
\end{proof}
Note that in general, this is a non-conforming approach since even for $V_h\subset V$ and $M_h\subset M$, as we do not necessarily have that $B_h$ is the restriction of $B$ to $V_h$ (i.e., $B(V_h) \not\subset M_h^*$) or that $\ker B_h$ is a subspace of $\ker B$. Hence, the discrete inf--sup conditions do not follow from their continuous counterparts. However, if the subspace $V_h$ is chosen suitably, it is possible to deduce the discrete LBB condition from the continuous one.

\begin{theorem}[Fortin criterion]\label{thm:mixed:fortin}
    Assume that the LBB condition \eqref{eq:LBB} is satisfied. Then the discrete LBB condition \eqref{eq:LBB_h} is satisfied if and only if there exists a linear operator $\Pi_h:V\to V_h$ such that
    \begin{equation*}
        b(\Pi_h v,\mu_h) = b(v,\mu_h) \quad \text{for all }\mu_h\in M_h,
    \end{equation*}
    and there exists a $\gamma_h>0$ such that
    \begin{equation*}
        \norm*{V}{\Pi_h v}\leq \gamma_h \norm*{V}{v} \quad \text{for all } v\in V.
    \end{equation*}
\end{theorem}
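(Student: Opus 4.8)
The plan is to phrase everything through the discrete operator $B_h:V_h\to M_h^*$ (and its adjoint $B_h^*:M_h\to V_h^*$) defined exactly as $B,B^*$ were but on the discrete spaces, and to prove the two implications separately. The reformulation I will use is that the discrete LBB condition \eqref{eq:LBB_h} is precisely the statement that $B_h^*$ is bounded below, $\norm*{V_h^*}{B_h^*\mu_h}\geq\beta_h\norm*{M}{\mu_h}$ for all $\mu_h\in M_h$; since $\dim V_h,\dim M_h<\infty$, an adjoint bounded below is injective, and a finite-dimensional rank argument then makes $B_h:V_h\to M_h^*$ surjective. I will also write $r_h:M^*\to M_h^*$ for the restriction of a functional on $M$ to $M_h$, so that the Fortin identity $b(\Pi_h v,\mu_h)=b(v,\mu_h)$ for all $\mu_h\in M_h$ becomes the operator equation $B_h\Pi_h v=r_h(Bv)$.

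For the \emph{if} direction (a Fortin operator implies \eqref{eq:LBB_h}), I would fix $\mu_h\in M_h$ and combine the continuous condition \eqref{eq:LBB} with the two properties of $\Pi_h$. For any $v\in V$ with $b(v,\mu_h)>0$, the identity $b(\Pi_h v,\mu_h)=b(v,\mu_h)$ forces $\Pi_h v\neq 0$, and the bound $\norm*{V}{\Pi_h v}\leq\gamma_h\norm*{V}{v}$ gives
\[
\sup_{v_h\in V_h}\frac{b(v_h,\mu_h)}{\norm*{V}{v_h}}\;\geq\;\frac{b(\Pi_h v,\mu_h)}{\norm*{V}{\Pi_h v}}\;=\;\frac{b(v,\mu_h)}{\norm*{V}{\Pi_h v}}\;\geq\;\frac{1}{\gamma_h}\,\frac{b(v,\mu_h)}{\norm*{V}{v}}.
\]
Taking the supremum over $v\in V$ (replacing $v$ by $-v$ where needed to make $b(v,\mu_h)$ positive) and invoking \eqref{eq:LBB} shows that the discrete inf-sup constant is at least $\beta_h:=\beta/\gamma_h>0$.

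For the \emph{only if} direction (discrete LBB implies a Fortin operator) I would construct $\Pi_h$ explicitly. Since \eqref{eq:LBB_h} makes $B_h$ surjective, its restriction to any algebraic complement of $\ker B_h$ in $V_h$ is an isomorphism onto $M_h^*$; let $S_h:M_h^*\to V_h$ be the inverse of that restriction, so that $B_h S_h=\mathrm{id}_{M_h^*}$ and $S_h$ is linear. Define $\Pi_h:=S_h\circ r_h\circ B$. It is linear and maps $V$ into $V_h$, and it satisfies $B_h\Pi_h v=r_h(Bv)$, which unwinds to $b(\Pi_h v,\mu_h)=b(v,\mu_h)$ for all $\mu_h\in M_h$. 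Boundedness is then automatic: $B:V\to M^*$ is bounded by continuity of $b$, $r_h$ has norm $\leq 1$, and $S_h$ is a linear map between finite-dimensional spaces, hence bounded; so $\norm*{V}{\Pi_h v}\leq\gamma_h\norm*{V}{v}$ with $\gamma_h:=c_b\,\norm*{}{S_h}$, where $c_b$ is the continuity constant of $b$. For the sharper (and more useful) constant $\gamma_h=c_b/\beta_h$ I would identify $\norm*{}{S_h}$ with the reciprocal of $\inf_{w}\norm*{M_h^*}{B_h w}/\norm*{V}{w}$, the infimum over $w$ in the chosen complement of $\ker B_h$, and note that this equals the inf-sup constant in \eqref{eq:LBB_h} because the nonzero singular values of $B_h$ and $B_h^*$ coincide.

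The main obstacle is this last point in the \emph{only if} direction: justifying that bounding $B_h^*$ below forces $B_h$ onto and that the right inverse $S_h$ inherits the quantitative bound $\norm*{}{S_h}\leq\beta_h^{-1}$. The cleanest route is to equip $V_h$ with an auxiliary inner product, take the orthogonal complement $(\ker B_h)^\perp$, observe $B_h|_{(\ker B_h)^\perp}$ is a bijection onto $M_h^*$, and translate the inf-sup testing $B_h^*$ into the equivalent lower bound $\norm*{M_h^*}{B_h w}\geq\beta_h\norm*{V}{w}$ for $w\perp\ker B_h$. Everything else — the chain of inequalities in the \emph{if} direction and the verification of the Fortin identity — is routine.
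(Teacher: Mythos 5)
Your proposal is correct and takes essentially the same route as the paper: the identical chain of inequalities $\sup_{v_h} b(v_h,\mu_h)/\|v_h\|_V \geq b(\Pi_h v,\mu_h)/\|\Pi_h v\|_V \geq \gamma_h^{-1} b(v,\mu_h)/\|v\|_V$ for the \emph{if} direction, and for the converse the definition of $\Pi_h v$ as a preimage of the restricted functional $b(v,\cdot)|_{M_h}$ under a bounded right inverse of $B_h$. You are in fact slightly more explicit than the paper about why a \emph{linear} bounded right inverse $S_h$ exists (the paper simply asserts one); the only point needing care is that identifying $\|S_h\|$ with $\beta_h^{-1}$ for an arbitrarily chosen complement of $\ker B_h$ is not automatic for non-Hilbert norms, but this sharp constant is not needed since the theorem asks merely for some $\gamma_h>0$, and in finite dimensions any linear right inverse is bounded.
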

\begin{proof}
    Assume that such a $\Pi_h$ exists. Since $\ran \Pi_h\subset V_h$, we have for all $\mu_h\in M_h\subset M$ that
    \begin{equation*}
        \sup_{v_h\in V_h} \frac{b(v_h,\mu_h)}{\norm*{V}{v_h}} \geq
        \sup_{v\in V} \frac{b(\Pi_h v,\mu_h)}{\norm*{V}{\Pi_h v}} \geq
        \sup_{v\in V} \frac{b(v,\mu_h)}{\gamma_h\norm*{V}{v}} \geq
        \frac\beta{\gamma_h}\norm*{M}{\mu_h},
    \end{equation*}
    which implies the discrete LBB condition. Conversely, if the discrete LBB condition holds, the operator $B_h:V_h\to M_h^*$ as defined above is surjective and has a continuous right inverse. Furthermore, for any $v\in V$ we can consider $Bv\in M^*$ as a linear functional on $M_h\subset M$ only. Hence for any $v\in V$, there exists a $\pi_h\in V_h$ such that $B_h (\pi_h) = B v|_{M_h} \in M_h^*$, i.e., $b(\pi_h,\mu_h) = b(v,\mu_h)$ for all $\mu_h\in M_h\subset M$, and
    \begin{equation*}
        \beta_h\norm*{V}{\pi_h} \leq \norm*{M_h^*}{B v} \leq C \norm*{V}{v}.
    \end{equation*}
    We thus obtain the desired operator by defining $\Pi_h$ as the (linear) mapping $v\mapsto \pi_h$.
\end{proof}
The operator $\Pi_h$ is called \emph{Fortin projector}. From the proof, we can see that the discrete LBB condition holds with a constant independent of $h$ if and only if the Fortin projector is uniformly bounded in $h$ (i.e., if $\gamma_h \equiv \gamma$).

A priori error estimates can be obtained using the following variant of Céa's lemma.
\begin{theorem}\label{thm:mixed:cea}
    Assume the conditions of \cref{thm:brezzi_h} are satisfied. Let $(u,\lambda)\in V\times M$ and $(u_h,\lambda_h)\in V_h\times M_h$ be the solutions to \eqref{eq:mixed:saddle} and \eqref{eq:mixed:saddle_h}, respectively. Then there exists a constant $C(h)>0$ such that
    \begin{equation*}
        \norm*{V}{u-u_h}+\norm*{M}{\lambda-\lambda_h}\leq C(h) \left(\inf_{w_h\in V_h}\norm*{V}{u-w_h}+\inf_{\mu_h\in M_h}\norm*{M}{\lambda-\mu_h}\right).
    \end{equation*}
\end{theorem}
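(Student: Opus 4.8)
The plan is to exploit the conformity $V_h\subset V$, $M_h\subset M$ to obtain Galerkin orthogonality, and then to split the error in $V$ into a component lying in the discrete kernel $\ker B_h$ (controlled by the discrete inf-sup condition \eqref{eq:LBB_A_h} for $a$) and a complementary component (controlled by the discrete LBB condition \eqref{eq:LBB_h} for $b$). Subtracting \eqref{eq:mixed:saddle_h} from \eqref{eq:mixed:saddle} tested against $v_h\in V_h$ and $\mu_h\in M_h$ yields the two orthogonality relations
\begin{align}
    a(u-u_h,v_h) + b(v_h,\lambda-\lambda_h) &= 0 \quad\text{for all }v_h\in V_h,\\
    b(u-u_h,\mu_h) &= 0 \quad\text{for all }\mu_h\in M_h.
\end{align}
I will write $\gamma_a,\gamma_b$ for the continuity constants of $a$ and $b$ and $\alpha_h,\beta_h$ for the discrete inf-sup constants from \cref{thm:brezzi_h}.

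First I would bound $\norm*{V}{u-u_h}$. Fix arbitrary $w_h\in V_h$ and $\mu_h\in M_h$ and set $e_h := u_h-w_h\in V_h$. The second orthogonality relation (together with the two balance equations) gives $b(e_h,\eta_h) = b(u-w_h,\eta_h)$ for all $\eta_h\in M_h$; since \eqref{eq:LBB_h} makes $B_h$ surjective onto $M_h^*$ with a right inverse of norm $\le\beta_h^{-1}$, there exists $z_h\in V_h$ with $b(z_h,\eta_h) = b(u-w_h,\eta_h)$ for all $\eta_h\in M_h$ and $\norm*{V}{z_h}\le\beta_h^{-1}\gamma_b\norm*{V}{u-w_h}$. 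Then $r_h := e_h-z_h\in\ker B_h$, and \eqref{eq:LBB_A_h} yields $\alpha_h\norm*{V}{r_h}\le\sup_{v_h\in\ker B_h}a(r_h,v_h)/\norm*{V}{v_h}$. For $v_h\in\ker B_h$ the first orthogonality relation lets me write $a(u_h,v_h) = a(u,v_h)+b(v_h,\lambda-\lambda_h)$, and since $b(v_h,\lambda_h)=0$ and $b(v_h,\mu_h)=0$ I may replace $b(v_h,\lambda-\lambda_h)$ by $b(v_h,\lambda-\mu_h)$; continuity of $a$ and $b$ then bounds $a(r_h,v_h)$ by $C\big(\norm*{V}{u-w_h}+\norm*{V}{z_h}+\norm*{M}{\lambda-\mu_h}\big)\norm*{V}{v_h}$. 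Collecting terms with $\norm*{V}{u-u_h}\le\norm*{V}{u-w_h}+\norm*{V}{z_h}+\norm*{V}{r_h}$ gives
\[
    \norm*{V}{u-u_h}\le C\big(\norm*{V}{u-w_h}+\norm*{M}{\lambda-\mu_h}\big).
\]

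Next I would bound $\norm*{M}{\lambda-\lambda_h}$ again via \eqref{eq:LBB_h}. For arbitrary $\mu_h\in M_h$, applying the discrete LBB to $\lambda_h-\mu_h\in M_h$ and splitting $b(v_h,\lambda_h-\mu_h) = b(v_h,\lambda_h-\lambda)+b(v_h,\lambda-\mu_h)$, the first orthogonality relation gives $b(v_h,\lambda_h-\lambda) = a(u-u_h,v_h)$, whence continuity yields $\beta_h\norm*{M}{\lambda_h-\mu_h}\le\gamma_a\norm*{V}{u-u_h}+\gamma_b\norm*{M}{\lambda-\mu_h}$. A triangle inequality together with the bound on $\norm*{V}{u-u_h}$ from the previous step controls $\norm*{M}{\lambda-\lambda_h}$ by the same right-hand side. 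Taking the infimum over $w_h\in V_h$ and $\mu_h\in M_h$ in both estimates and adding them gives the claim.

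The main obstacle is the non-conformity of the kernels: because $\ker B_h\not\subset\ker B$, the naive Galerkin-orthogonality argument of \nameref{thm:galerkin:cea} does not close, and one cannot simply test the first equation with the kernel part of the error. The device that resolves this is the discrete LBB lift $z_h$, which corrects $u_h-w_h$ back into $\ker B_h$ at a cost controlled by $\beta_h^{-1}$. The other subtlety is that the multiplier error $b(v_h,\lambda-\lambda_h)$ survives even for $v_h\in\ker B_h$ (since $\lambda\notin M_h$ in general), and must be handled by inserting a best approximation $\mu_h\in M_h$ — which is exactly what produces the term $\inf_{\mu_h\in M_h}\norm*{M}{\lambda-\mu_h}$ in the estimate.
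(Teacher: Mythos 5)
Your proof is correct and follows essentially the same route as the paper's: a discrete LBB lift of $b(u-w_h,\cdot)$ to correct $u_h-w_h$ into $\ker B_h$, the kernel inf-sup condition for $a$ combined with replacing $\lambda_h$ by an arbitrary $\mu_h$, and a second application of the discrete LBB condition for the multiplier error. The only difference is notational (your $z_h$ and $r_h$ play the roles of the paper's $r_h$ and $u_h-z_h$, respectively).
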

\begin{proof}
    For arbitrary $w_h\in V_h$, consider the restriction of $B(u-w_h)\in M^*$ to $M_h$
    Due to the discrete LBB condition, the operator $B_h:V_h\to M_h^*$ is surjective and has a continuous right inverse. Hence, there exists $r_h\in V_h$ satisfying $B_h r_h = B(u-w_h)|_{M_h}\in M_h^*$, i.e.,
    \begin{equation*}
        b(r_h,\mu_h) = b(u-w_h,\mu_h) \quad \text{ for all } \mu_h \in M_h \subset M
    \end{equation*}
    and
    \begin{equation*}
        \beta_h\norm*{V}{r_h}\leq C \norm*{V}{u-w_h}.
    \end{equation*}
    Furthermore, $z_h:=r_h+w_h$ satisfies
    \begin{equation*}
        b(z_h,\mu_h) = b(u,\mu_h) = \dual{M}{g}{\mu_h} = b(u_h,\mu_h)\quad \text{ for all } \mu_h \in M_h\subset M,
    \end{equation*}
    which implies that $u_h - z_h\in \ker B_h$. The discrete inf--sup condition \eqref{eq:LBB_A_h} thus yields
    \begin{equation}\label{eq:mixed:cea0}
        \begin{aligned}[t]
            \alpha_h\norm*{V}{u_h-z_h}&\leq \sup_{v_h\in \ker B_h} \frac{a(u_h-z_h,v_h)}{\norm*{V}{v_h}}\\
            &= \sup_{v_h\in \ker B_h} \frac{a(u_h-u,v_h)+a(u-z_h,v_h)}{\norm*{V}{v_h}}\\
            &= \sup_{v_h\in \ker B_h} \frac{b(v_h,\lambda-\lambda_h)+a(u-z_h,v_h)}{\norm*{V}{v_h}},
        \end{aligned}
    \end{equation}
    by taking the difference of the first equations of \eqref{eq:mixed:saddle} and \eqref{eq:mixed:saddle_h}. For any $v_h\in \ker B_h$ and $\mu_h\in M_h$, we have
    \begin{equation*}
        b(v_h,\lambda_h) = 0 = b(v_h,\mu_h)
    \end{equation*}
    and hence from the continuity of $a$ and $b$ that
    \begin{equation*}
        \alpha_h\norm*{V}{u_h-z_h} \leq C(\norm*{V}{u-z_h} + \norm*{M}{\lambda-\mu_h})
    \end{equation*}
    for arbitrary $\mu_h\in M_h$. Using the triangle inequality, we thus obtain
    \begin{align}
        \norm*{V}{u-u_h} &\begin{aligned}[t]
            &\leq \norm*{V}{u-z_h}+\norm*{V}{z_h-u_h} \\
            &\leq \left(1+\frac{C}{\alpha_h}\right)\norm*{V}{u-z_h} + \frac{C}{\alpha_h}\norm*{M}{\lambda-\mu_h}
        \end{aligned}\label{eq:mixed:cea1}
        \intertext{and, by definition of $z_h$}
        \norm*{V}{u-z_h} &\leq  \norm*{V}{u-w_h}+\norm*{V}{r_h} \leq  \left(1+\frac{C}{\beta_h}\right)\norm*{V}{u-w_h}.\label{eq:mixed:cea2}
    \end{align}

    To estimate $\norm*{M}{\lambda-\lambda_h}$, we again use that for all $w_h\in V_h$ and $\mu_h\in M_h$,
    \begin{equation*}
        a(u-u_h,w_h) = b(w_h,\lambda-\lambda_h) = b(w_h,\lambda-\mu_h) + b(w_h,\mu_h-\lambda_h).
    \end{equation*}
    After rearrangement, the discrete LBB condition and the continuity of $a$ and $b$ thus yield
    \begin{equation*}
        \beta_h \norm*{M}{\lambda_h - \mu_h} \leq C(\norm*{V}{u-u_h} + \norm*{M}{\lambda-\mu_h}).
    \end{equation*}
    Applying the triangle inequality again, we obtain
    \begin{equation}\label{eq:mixed:cea3}
        \begin{aligned}[t]
            \norm*{M}{\lambda-\lambda_h} &\leq \norm*{M}{\lambda-\mu_h}+\norm*{M}{\lambda_h-\mu_h} \\
            &\leq \left(1+\frac{C}{\beta_h}\right)\norm*{M}{\lambda-\mu_h} + \frac{C}{\beta_h}\norm*{V}{u-u_h}.
        \end{aligned}
    \end{equation}
    Combining \eqref{eq:mixed:cea1}, \eqref{eq:mixed:cea2}, and \eqref{eq:mixed:cea3} and taking the infimum over all $w_h\in V_h$ and $\mu_h\in M_h$ yields the claimed estimate.
\end{proof}

If $\ker B_h\subset \ker B$ (i.e., $b(v_h,\mu_h) = 0$ for all $\mu_h\in M_h$ implies $b(v_h,\mu)=0$ for all $\mu \in M$), we can obtain an independent estimate for $u$.
\begin{cor}\label{cor:mixed:cea}
    If\/ $\ker B_h\subset \ker B$, then there exists a constant $C(h)>0$ such that
    \begin{equation*}
        \norm*{V}{u-u_h}\leq C(h) \inf_{w_h\in V_h}\norm*{V}{u-w_h}.
    \end{equation*}
\end{cor}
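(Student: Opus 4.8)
The plan is to retrace the proof of \cref{thm:mixed:cea} and to show that the extra hypothesis $\ker B_h\subset\ker B$ allows us to drop the term involving $\norm*{M}{\lambda-\mu_h}$ from the bound on the $u$-error. First I would reuse verbatim the construction from that proof: for arbitrary $w_h\in V_h$ I invoke the discrete LBB condition \eqref{eq:LBB_h} to pick $r_h\in V_h$ with $b(r_h,\mu_h)=b(u-w_h,\mu_h)$ for all $\mu_h\in M_h$ and $\beta_h\norm*{V}{r_h}\leq C\norm*{V}{u-w_h}$, set $z_h:=r_h+w_h$, and observe as before that $u_h-z_h\in\ker B_h$ and that \eqref{eq:mixed:cea2} holds, i.e.\ $\norm*{V}{u-z_h}\leq(1+C/\beta_h)\norm*{V}{u-w_h}$.

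The decisive new step occurs at \eqref{eq:mixed:cea0}. Starting from the discrete inf-sup condition \eqref{eq:LBB_A_h} for $a$ on $\ker B_h$ and substituting the first equations of \eqref{eq:mixed:saddle} and \eqref{eq:mixed:saddle_h}, I arrive at
\begin{equation}
    \alpha_h\norm*{V}{u_h-z_h}\leq \sup_{v_h\in \ker B_h} \frac{b(v_h,\lambda-\lambda_h)+a(u-z_h,v_h)}{\norm*{V}{v_h}}.
\end{equation}
Here is where the hypothesis enters: since $v_h\in\ker B_h\subset\ker B$, we have $b(v_h,\mu)=0$ for all $\mu\in M$, and because $\lambda-\lambda_h\in M$ (as $\lambda\in M$ and $\lambda_h\in M_h\subset M$), the term $b(v_h,\lambda-\lambda_h)$ vanishes identically. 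The supremum therefore reduces to $\sup_{v_h}a(u-z_h,v_h)/\norm*{V}{v_h}$, which by the continuity of $a$ is bounded by $C\norm*{V}{u-z_h}$, with no appearance of $\lambda-\mu_h$.

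I would then close the estimate exactly as in \eqref{eq:mixed:cea1}, using the triangle inequality $\norm*{V}{u-u_h}\leq \norm*{V}{u-z_h}+\norm*{V}{z_h-u_h}\leq (1+C/\alpha_h)\norm*{V}{u-z_h}$, combining with \eqref{eq:mixed:cea2}, and taking the infimum over $w_h\in V_h$ to conclude $\norm*{V}{u-u_h}\leq C\inf_{w_h\in V_h}\norm*{V}{u-w_h}$. I do not expect a genuine obstacle: the one point requiring care is the justification that $\lambda-\lambda_h\in M$, so that the annihilation $b(v_h,\lambda-\lambda_h)=0$ is legitimate — this is precisely the mechanism by which the coupling $\ker B_h\subset\ker B$ decouples the $u$-error from the multiplier error $\lambda-\lambda_h$.
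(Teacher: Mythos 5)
Your argument is correct and coincides with the paper's own proof: both use the hypothesis $\ker B_h\subset\ker B$ to annihilate the term $b(v_h,\lambda-\lambda_h)$ in \eqref{eq:mixed:cea0}, then conclude via continuity of $a$, the bound \eqref{eq:mixed:cea2}, and the triangle inequality. Your added remark justifying $\lambda-\lambda_h\in M$ is a fine (if routine) point of care that the paper leaves implicit.
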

\begin{proof}
    The assumption in particular implies that $b(v_h,\lambda-\lambda_h)=0$ for all $v_h\in \ker B_h$, and hence \eqref{eq:mixed:cea0} yields
    \begin{equation*}
        \alpha_h\norm*{V}{u_h-z_h} \leq C\norm*{V}{u-z_h}.
    \end{equation*}
    Continuing as above, we obtain the claimed estimate.
\end{proof}

\section{Mixed methods for the Poisson equation}

The classical application of mixed finite element methods is the Stokes equation,\footnote{see, e.g., \cite[Chapter III.6]{Braess:2007}, \cite[Chapter 4]{Ern:2004}} which describes the flow of an incompressible fluid. Here, we want to illustrate the theory using a very simple example. Consider the Poisson equation $-\Delta u = f$ on $\Omega\subset\R^n$ with homogeneous Dirichlet conditions. If we introduce $\sigma = \nabla u\in\m{L2}^n$, we can write this equation equivalently as
\begin{equation}\label{eq:mixed:poisson}
    \left\{\begin{aligned}
            \nabla u -\sigma &= 0,\\
            - \nabla\cdot \sigma &= f.
    \end{aligned}\right.
\end{equation}
This system can be formulated in variational form in two different ways, called \emph{primal} and \emph{dual} approach, respectively.

\paragraph{Primal mixed method}

The primal approach consists in (formally) integrating by parts in the second equation of \eqref{eq:mixed:poisson} and looking for $(\sigma,u)\in\m{L2}^n\times\m{H10}$ satisfying
\begin{equation}\label{eq:mixed:poisson_primal}
    \left\{\begin{aligned}
            (\sigma,\tau) - (\tau,\nabla u) &= 0\quad &&\text{ for all } \tau \in\m{L2}^n,\\
            -(\sigma, \nabla v) &= -(f,v) \quad &&\text{ for all } v \in\m{H10}.
    \end{aligned}\right.
\end{equation}
This fits into the abstract framework of \cref{sec:mixed:saddle} by setting $V:=\m{L2}^n$, $M:=\m{H10}$,
\begin{equation*}
    a(\sigma,\tau) = (\sigma,\tau),\qquad b(\sigma,v) = -(\sigma,\nabla v).
\end{equation*}
Clearly, $a$ is coercive on the whole space $V$ with constant $\alpha =1$. To verify the LBB condition, we insert $\tau = -\nabla v\in\m{L2}^n=V$ for given $v\in\m{H10}=M$ in
\begin{equation*}
    \sup_{\tau\in V}\frac{b(\tau,v)}{\norm*{V}{\tau}}=   \sup_{\tau\in V}\frac{-(\tau,\nabla v)}{\norm*{\m{L2}^n}{\tau}} \geq
    \frac{(\nabla v,\nabla v)}{\norm*{\m{L2}^n}{\nabla v}} = |v|_{\m{H1}} \geq c_\Omega^{-1} \norm*{M}{v}
\end{equation*}
using the Poincaré inequality (\cref{thm:weak:poincare}), i.e., the LBB condition with $\beta=c_\Omega^{-1}$. \Cref{thm:brezzi} thus yields the existence and uniqueness of the solution $(\sigma, u)$ to \eqref{eq:mixed:poisson_primal}.

To obtain a stable mixed finite element method, we take a shape-regular affine triangulation $\calT_h$ of $\Omega$ and set for $k\geq 1$
\begin{align*}
    V_h &:=\setof{\tau_h\in\m{L2}^n}{\tau_h|_K \in P_{k-1}(K)^n \text{ for all } K\in\calT_h},\\
    M_h &:=\setof{v_h\in\m{c}(\Omega)}{v_h|_K \in P_k(K) \text{ for all } K\in\calT_h}\cap M.
\end{align*}
Since $V_h\subset V$, the coercivity of $a$ on $V_h$ follows as above with constant $\alpha_h = \alpha$. Furthermore, it is easy to verify that $\nabla M_h \subset V_h$, e.g., the gradient of any piecewise affine continuous function is piecewise constant.  Hence, the $\m{L2}^n$ projection from $V$ on $V_h$ (which is continuous with norm $\gamma_h=1$) verifies the Fortin criterion: If $\Pi_h \sigma\in V_h$ satisfies $(\Pi_h \sigma-\sigma, \tau_h) = 0$ for all $\tau_h\in V_h$ and given $\sigma\in V$, then
\begin{equation*}
    b(\Pi_h \sigma,v_h) = -(\Pi_h \sigma,\nabla v_h) = -(\sigma,\nabla v_h) = b(\sigma,v_h) \quad\text{ for all }v_h\in M_h
\end{equation*}
since $\nabla v_h\in V_h$.  \Cref{thm:mixed:fortin} therefore yields the discrete LBB condition with constant $\beta_h=\beta$ independent of $h$, and we obtain existence of and (from \cref{thm:mixed:cea} combined with the usual interpolation error estimates) a priori estimates for the mixed finite element discretization of \eqref{eq:mixed:poisson_primal} (which coincide with those from \cref{sec:error:apriori}).

\paragraph{Dual mixed method}

Instead of integrating by parts in the second equation, we can formally integrate by parts in the first equation of \eqref{eq:mixed:poisson}. To make this well-defined, we set
\begin{equation*}
    \m{Hdiv}:=\setof{\tau\in\m{L2}^n}{\div \tau\in\m{L2}},
\end{equation*}
endowed with the \emph{graph norm}
\begin{equation*}
    \norm{Hdiv}{\tau}^2 := \norm*{\m{L2}^n}{\tau}^2 + \norm{L2}{\div \tau}^2.
\end{equation*}
Since $\m{Cinf}{}^n$ is dense in $\m{L2}^n\supset\m{Hdiv}$, one can show that $\tau\in\m{Hdiv}$ has a well-defined \emph{normal trace} $(\tau|_{\partial\Omega}\cdot \nu) \in H^{-1/2}(\partial\Omega)$, and that for any $\tau\in\m{Hdiv}$ and $w\in\m{H1}$ the integration by parts formula
\begin{equation}\label{eq:mixed:partint}
    \int_\Omega (\div \tau) w \,dx + \int_\Omega \tau\cdot\nabla w\,dx = \int_{\partial\Omega} (\tau\cdot\nu) w\, dx
\end{equation}
holds.\footnote{e.g., \cite[Lemma 2.1.1]{Brezzi:2013}}
Similarly to \cref{thm:weak:piecewise}, one can show that for a partition $\{\Omega_j\}_{j\in J}$ of $\Omega$,
\begin{equation*}
    \setof{\tau\in\m{L2}^n}{\tau|_{\Omega_j} \in \m{h1}(\Omega_j) \text{ and } \tau|_{\Omega_j}\cdot\nu = \tau|_{\Omega_i} \cdot\nu\text{ on all } \bar\Omega_j\cap\bar\Omega_i \neq \emptyset} \subset \m{Hdiv}
\end{equation*}
holds, i.e., piecewise differentiable functions with continuous normal traces across elements are in $\m{Hdiv}$. This will be important for constructing conforming approximations of $\m{Hdiv}$.

After integrating by parts in \eqref{eq:mixed:poisson} and using that $u|_{\partial\Omega} = 0$, we are therefore looking for $(\sigma,u)\in\m{Hdiv}\times\m{L2}$ satisfying
\begin{equation}\label{eq:mixed:poisson_dual}
    \left\{\begin{aligned}
            (\sigma,\tau) + (\div\tau,u) &= 0\quad &&\text{ for all } \tau \in\m{Hdiv},\\
            (\div\sigma, v) &= -(f,v) \quad &&\text{ for all } v \in\m{L2}.
    \end{aligned}\right.
\end{equation}
(Note that in contrast to the standard -- and primal -- formulation, the Dirichlet condition appears here as the natural boundary condition.)
This formulation fits into the abstract framework of \cref{sec:mixed:saddle} by setting $V:=\m{Hdiv}$, $M:=\m{L2}$,
\begin{equation*}
    a(\sigma,\tau) = (\sigma,\tau),\qquad b(\sigma,v) = (\div\sigma, v).
\end{equation*}
Boundedness of $a$ and $b$ follows directly from the Cauchy--Schwarz inequality.
Now we note that
\begin{equation*}
    \ker B = \setof{\tau\in\m{Hdiv}}{(\div \tau,v) = 0 \text{ for all } v\in\m{L2}}.
\end{equation*}
Since $\div \tau \in\m{L2}$ and thus $\norm{L2}{\div \tau}^2 = 0$
for all $\tau \in\ker B\subset\m{Hdiv}$, this implies
\begin{equation*}
    a(\tau,\tau) = \norm*{\m{L2}^n}{\tau}^2 = \norm{Hdiv}{\tau}^2\qquad\text{for all }v\in \ker B,
\end{equation*}
yielding coercivity of $a$ with constant $\alpha=1$.
For verification of the LBB condition, we make use of the following lemma showing surjectivity of $B$ on $M$. For simplicity, we assume from here on that $\Omega$ either has a $C^1$ boundary or is convex.
\begin{lemma}\label{lem:mixed:div_surjective}
    For any $f\in\m{L2}$, there exists a function $\tau \in \m{H1}^n$ with $\div \tau = f$ and $\norm*{\m{H1}^n}{\tau}\leq C\norm{L2}{f}$.
\end{lemma}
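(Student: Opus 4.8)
The plan is to realize $\tau$ as a gradient, thereby reducing the surjectivity of the divergence onto $\m{L2}$ to the solvability of a single Poisson problem together with elliptic $\m{H2}$-regularity. Given $f\in\m{L2}$, I would first solve the auxiliary homogeneous Dirichlet problem: find $w\in\m{H10}$ satisfying
\begin{equation}
    \inner{\nabla w,\nabla v} = \inner{f,v} \qquad\text{for all }v\in\m{H10},
\end{equation}
which is the weak form of $-\Delta w = f$. This is exactly the setting of \cref{thm:weak:well-posed}a) with $a_{jk}=\delta_{jk}$ and $b_j=c=0$ (so the condition $c-\tfrac\beta2\geq 0$ holds trivially, and coercivity follows from Poincaré's inequality, \cref{thm:weak:poincare}). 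Hence a unique $w$ exists with $\norm{H1}{w}\leq C\norm{L2}{f}$.

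The second step is to upgrade $w$ from $\m{H1}$ to $\m{H2}$, and this is where the standing hypothesis on $\Omega$ is used. If $\Omega$ has a $C^1$ boundary, I would apply \cref{thm:weak:regularity:smooth} with $k=0$, obtaining $w\in\m{H2}\cap\m{H10}$ with $\norm{H2}{w}\leq C(\norm{L2}{f}+\norm{H1}{w})$; combining this with the $\m{H1}$-bound from the first step gives $\norm{H2}{w}\leq C\norm{L2}{f}$. If instead $\Omega$ is convex, I would invoke \cref{thm:weak:regularity:convex}, which directly yields $w\in\m{H2}$ with $\norm{H2}{w}\leq C\norm{L2}{f}$. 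Finally I would set $\tau:=-\nabla w$. Since $w\in\m{H2}$, each component $\partial_i w$ lies in $\m{H1}$, so $\tau\in\m{H1}^n$; and $\div\tau = -\Delta w = f$ as an identity in $\m{L2}$ (justified by testing the weak formulation against $\m{Cinf0}$). The required bound is then immediate: $\norm*{\m{H1}^n}{\tau}=\norm*{\m{H1}^n}{\nabla w}\leq \norm{H2}{w}\leq C\norm{L2}{f}$.

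The only genuine obstacle is the $\m{H2}$-regularity step. Surjectivity of $\div$ onto $\m{L2}$ with an $\m{H1}$ preimage really can fail on general domains — on non-convex polygons the corner singularities prevent $w\in\m{H2}$ — which is precisely why the hypothesis restricting $\Omega$ to be convex or to have $C^1$ boundary was imposed. Everything else (well-posedness, the gradient ansatz, and the final norm estimate) is routine.
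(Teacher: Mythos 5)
Your proof is correct and follows essentially the same route as the paper: solve the homogeneous Dirichlet Poisson problem, invoke \cref{thm:weak:regularity:smooth} or \cref{thm:weak:regularity:convex} for $\m{H2}$-regularity, and take $\tau=-\nabla w$. Your version is in fact slightly more careful than the paper's in spelling out how the $\m{H2}$ bound of \cref{thm:weak:regularity:smooth} is combined with the a priori $\m{H1}$ estimate, and in noting why the regularity hypothesis on $\Omega$ is genuinely needed.
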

\begin{proof}
    Due to the regularity of $\Omega$, we can apply \cref{thm:weak:regularity:smooth} or \cref{thm:weak:regularity:convex} to obtain for given $f\in\m{L2}$ a solution $u\in\m{H2}\cap\m{H10}$ to the Poisson equation
    \begin{equation*}
        (\nabla u,\nabla v) = (f,v) \qquad\text{for all  } v\in\m{H10}
    \end{equation*}
    satisfying $\norm{H2}{u}\leq C\norm{L2}{f}$.
    Now set $\tau := -\nabla u \in\m{H1}^n$ and observe that
    \begin{equation*}
        (f,v) = -(\tau,\nabla v) \qquad\text{for all  } v\in\m{H10},
    \end{equation*}
    and thus $f = \div\tau$ by definition of the weak derivative. The a priori bound on $\tau$ then follows from the fact that
    $ \norm*{\m{H1}^n}{\nabla u} \leq \norm{H2}{u}$.
\end{proof}
Using this lemma and the inclusion $\m{H1}^n\subset\m{Hdiv}$, we immediately obtain for any $v\in M$ and corresponding $\tau_v$ with $\div\tau_v = v$ that
\begin{equation*}
    \sup_{\tau\in V}\frac{b(\tau,v)}{\norm*{V}{\tau}} = \sup_{\tau\in V}\frac{(\div\tau,v)}{\norm{Hdiv}{\tau}}
    \geq \frac{(\div\tau_v,v)}{\norm{Hdiv}{\tau_v}}
    \geq \frac{(v,v)}{C\norm{L2}{v}} = \frac1C \norm{L2}{v},
\end{equation*}
which verifies the LBB condition for $\beta = C^{-1}$. From \cref{thm:brezzi} we thus obtain existence of a unique solution $(\sigma,u)\in V\times M$ to \eqref{eq:mixed:poisson_dual} as well as the estimate
\begin{equation*}
    \norm{Hdiv}{\sigma} +\norm{L2}{u} \leq C\norm{L2}{f}.
\end{equation*}
Although this initially yields only a solution $u\in\m{L2}$, one can then use the first equation of \eqref{eq:mixed:poisson_dual} to show that $u$ has a weak derivative and (using integration by parts) satisfies the boundary conditions; i.e., $u\in\m{H10}$ as expected.

\bigskip

We now construct conforming finite element discretizations of $V$ and $M$. Let $\calT_h$ be a shape-regular affine triangulation of $\Omega\subset\R^n$. For $M=\m{L2}$, we again take piecewise (discontinuous) polynomials of degree $k\geq 0$, i.e.,
\begin{equation*}
    M_h =\setof{v_h\in\m{L2}}{v_h|_K \in P_{k}(K) \text{ for all } K\in\calT_h}.
\end{equation*}
For $V=\m{Hdiv}$, we construct a space $V_h$ of piecewise polynomials on the same triangulation that satisfy the two key properties of $V$: Functions $\tau_h\in V_h$ have continuous normal traces across elements, and the divergence is surjective from $V_h$ to $M_h$. One possible choice is
\begin{equation*}
    V_h =\setof{\tau_h\in\m{Hdiv}}{\tau_h|_K\in RT_k(K) \text{ for all }K\in\calT_h},
\end{equation*}
with
\begin{equation*}
    \begin{aligned}
        RT_k(K) &=  P_k(K)^n + x P_k(K) :=\setof{p_1+ p_2\,x}{p_1\in P_k(K)^n, p_2\in P_k(K)}\\
        &= P_k(K)^n \oplus x P_k^0(K),
    \end{aligned}
\end{equation*}
where
\begin{equation*}
    P_k^0(K) =\setof{\sum_{|\alpha| = k}c_\alpha x^\alpha}{c_\alpha\in\R}
\end{equation*}
is the space of \emph{homogeneous} polynomials of degree $k$ (which is chosen in order to have a unique representation). This construction yields the following properties, which guarantee a conforming $\m{Hdiv}$ discretization.
\begin{lemma}\label{lem:mixed:rt_poly}
    For $\tau_h\in RT_k(K)$, we have
    \begin{enumerate}[(i)]
        \item $\div \tau_h\in P_k(K)$ and
        \item $\tau_h|_F\cdot\nu_F\in P_k(F)$ for every $F\subset\partial K$.
    \end{enumerate}
\end{lemma}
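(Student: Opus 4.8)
The plan is to exploit the direct-sum decomposition $RT_k(K) = P_k(K)^n \oplus x\,P_k^0(K)$ and write an arbitrary element as $\tau_h = p + q\,x$, where $p \in P_k(K)^n$ and $q \in P_k^0(K)$ is homogeneous of degree $k$. Both claims then reduce to a direct computation, so the real work is just careful bookkeeping of polynomial degrees.

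For (i), I would differentiate componentwise. Writing $(\tau_h)_i = p_i + q\,x_i$ and applying the product rule,
\begin{equation}
    \div\tau_h = \div p + \sum_{i=1}^n \partial_i(q\,x_i) = \div p + \sum_{i=1}^n (\partial_i q)\,x_i + n\,q.
\end{equation}
Here $\div p \in P_{k-1}(K)$ since each $p_i \in P_k(K)$, while each product $(\partial_i q)\,x_i$ has degree at most $k$; hence $\div\tau_h \in P_k(K)$ by a pure degree count. (If one prefers a cleaner identity, Euler's relation for homogeneous functions gives $\sum_i x_i\,\partial_i q = k\,q$, so that in fact $\div(q\,x) = (n+k)\,q \in P_k^0(K)$ — slightly sharper, but not needed for the stated claim.)

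For (ii), the decisive geometric fact is that $K$ is a simplex, so each face $F \subset \partial K$ is flat: its outward unit normal $\nu_F$ is a \emph{constant} vector, and $F$ lies in the hyperplane $\{x : x\cdot\nu_F = c_F\}$ for some constant $c_F$. Evaluating the normal component and using $x\cdot\nu_F = c_F$ on the face,
\begin{equation}
    \tau_h\cdot\nu_F = p\cdot\nu_F + q\,(x\cdot\nu_F) = p\cdot\nu_F + c_F\,q \quad\text{on } F.
\end{equation}
Since $\nu_F$ is constant, $p\cdot\nu_F = \sum_i p_i\,(\nu_F)_i \in P_k(K)$, and $c_F\,q \in P_k(K)$ as well, so the normal trace is the restriction of a polynomial of degree at most $k$, as asserted.

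I expect no serious obstacle in this lemma; it is essentially a verification. The only points deserving a moment's care are the degree bookkeeping (equivalently, the homogeneity of $q$) in (i), and the observation in (ii) that a flat face carries a constant normal and lies in a hyperplane on which $x\cdot\nu_F$ collapses to the constant $c_F$ — this collapse is exactly what lowers the degree from $k+1$, the generic degree of $\tau_h\cdot\nu_F$ on all of $K$, down to $k$ along $F$, which is precisely the property needed to later match normal traces across elements and obtain an $\m{Hdiv}$-conforming space.
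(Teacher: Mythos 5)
Your proof is correct and follows exactly the route the paper intends: the paper omits the details and only remarks that ``the verification is a straightforward computation (recalling that $x\cdot\nu_F$ is constant for every $x\in F$)'', which is precisely the key fact you isolate for (ii), and your use of the decomposition $\tau_h = p + qx$ with $q$ homogeneous together with Euler's identity mirrors the computation the paper itself carries out later in the proof of \cref{lem:mixed:rt_unisolvent}. Nothing to add.
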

The verification is a straightforward computation (recalling that $x\cdot \nu_F(x)$ is constant for every $x\in F$). It remains to specify the degrees of freedom, of which we need
\begin{equation*}
    \dim RT_k(K) = \begin{cases}
        (k+1)(k+3) &\text{for } n=2,\\
        \tfrac12(k+1)(k+2)(k+4) &\text{for } n=3.
    \end{cases}
\end{equation*}
In order to achieve a $\m{Hdiv}$-conforming discretization, we take
\begin{align*}
    N_{i,j}(\tau) &=  \int_{F_i} (\tau\cdot\nu_i) q_{ij}\,ds,\\
    \intertext{where the $q_{ij}$ are a basis of $P_k(F_i)$, $i=1,\dots,n+1$, and if $k\geq 1$,}
    N_{0,j}(\tau) &= \int_K \tau \cdot  q_j\,dx,
\end{align*}
where the $q_{j}$ are a basis of $P_{k-1}(K)^n$.
To show that $(K,RT_k(K),\{N_{ij}\}_{i,j})$ defines a finite element -- called the \emph{Raviart--Thomas element} -- we need to determine whether these conditions form a basis of $RT_k(K)^*$, which we can do via \cref{thm:elements:basis}.
\begin{lemma}\label{lem:mixed:rt_unisolvent}
    If $\tau_h\in RT_k(K)$ satisfies $N_{i,j}(\tau_h)=0$ for all $i,j$, then $\tau_h = 0$.
\end{lemma}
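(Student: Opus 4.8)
The plan is to verify condition (iii) of the finite element definition by way of \cref{thm:elements:basis}: since the number of nodal variables $N_{i,j}$ equals $\dim RT_k(K)$ (as recorded above), it suffices to prove the stated implication, and I would do so in three stages, peeling off first the face degrees of freedom, then the interior ones, and finally exploiting the resulting structure. For the first stage, by \cref{lem:mixed:rt_poly}(ii) the normal component $\tau_h|_{F_i}\cdot\nu_i$ lies in $P_k(F_i)$; since the $q_{ij}$ form a basis of $P_k(F_i)$, the vanishing of $N_{i,j}(\tau_h)=\int_{F_i}(\tau_h\cdot\nu_i)q_{ij}\,ds$ for all $j$ gives $\int_{F_i}(\tau_h\cdot\nu_i)q\,ds=0$ for every $q\in P_k(F_i)$, and choosing $q=\tau_h\cdot\nu_i$ forces $\tau_h\cdot\nu=0$ on all of $\partial K$.

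Next I would use the interior conditions to kill the divergence. For any $v\in P_k(K)$ the integration-by-parts formula \eqref{eq:mixed:partint} on $K$ gives $\int_K(\div\tau_h)v\,dx=-\int_K\tau_h\cdot\nabla v\,dx+\int_{\partial K}(\tau_h\cdot\nu)v\,ds$, where the boundary term vanishes by the previous stage. Since $\nabla v\in P_{k-1}(K)^n$, the conditions $N_{0,j}(\tau_h)=0$ yield $\int_K\tau_h\cdot\nabla v\,dx=0$, so $\int_K(\div\tau_h)v\,dx=0$ for all $v\in P_k(K)$; because $\div\tau_h\in P_k(K)$ by \cref{lem:mixed:rt_poly}(i), the choice $v=\div\tau_h$ gives $\div\tau_h=0$.

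The last stage is the delicate one. Writing $\tau_h=p+x\,r$ with $p\in P_k(K)^n$ and $r\in P_k^0(K)$, Euler's relation $x\cdot\nabla r=kr$ gives $\div(x\,r)=(n+k)r$, so $\div\tau_h=\div p+(n+k)r$; comparing the homogeneous parts of degree $k$ (those of $\div p$ having degree at most $k-1$) forces $r=0$, whence $\tau_h=p\in P_k(K)^n$ with $\div p=0$. It then remains to show that a divergence-free $p\in P_k(K)^n$ with $p\cdot\nu=0$ on $\partial K$ and $\int_K p\cdot q\,dx=0$ for all $q\in P_{k-1}(K)^n$ vanishes. For $n=2$ I would introduce a stream function $\phi\in P_{k+1}$ with $p=\operatorname{curl}\phi:=(\partial_2\phi,-\partial_1\phi)$; the condition $p\cdot\nu=0$ means the tangential derivative of $\phi$ vanishes along each edge, so $\phi$ is constant on the connected set $\partial K$ and may be normalized to $0$ there. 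A polynomial of $P_{k+1}$ vanishing on the three edges, which are the zero sets of affine functions $\lambda_1,\lambda_2,\lambda_3$ (the barycentric coordinates), is divisible by the bubble $\lambda_1\lambda_2\lambda_3$ (by repeated use of \cref{thm:elements:factor}), so $\phi=\lambda_1\lambda_2\lambda_3\psi$ with $\psi\in P_{k-2}$. Finally, integrating $\int_K\operatorname{curl}\phi\cdot q\,dx$ by parts (again using $\phi=0$ on $\partial K$) shows it equals $\int_K\phi\,\operatorname{rot}q\,dx$ with $\operatorname{rot}q=\partial_1 q_2-\partial_2 q_1$; as $q$ runs over $P_{k-1}(K)^2$ the scalar $\operatorname{rot}q$ runs over all of $P_{k-2}$, so the interior conditions give $\int_K\phi\,s\,dx=0$ for every $s\in P_{k-2}$, and $s=\psi$ yields $\int_K\lambda_1\lambda_2\lambda_3\,\psi^2\,dx=0$. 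Positivity of $\lambda_1\lambda_2\lambda_3$ on $\operatorname{int}K$ then forces $\psi=0$, hence $\phi=0$ and $\tau_h=0$. The case $n\ge3$ is analogous, with the stream function replaced by a suitable vector potential vanishing on $\partial K$.

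The main obstacle is precisely this final vanishing argument. The \emph{integrated} consequence of the boundary condition (orthogonality of $p$ to $\nabla P_{k+1}$) is by itself too weak: a dimension count shows $\nabla P_{k+1}+P_{k-1}(K)^n\subsetneq P_k(K)^n$, so one cannot simply conclude that $p$ is orthogonal to all of $P_k(K)^n$. One is therefore forced to use the \emph{pointwise} normal-trace condition through the potential representation together with the strict positivity of the interior bubble. Matching "normal trace zero" cleanly to the tangential/rotational structure of the potential, and carrying this through in general dimension, is where the real work lies.
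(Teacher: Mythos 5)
Your first three stages coincide exactly with the paper's proof: the face moments kill $\tau_h\cdot\nu$ on $\partial K$ via \cref{lem:mixed:rt_poly}(ii), the interior moments plus elementwise integration by parts kill $\div\tau_h$, and Euler's relation $\div(x\,p_2)=(n+k)p_2$ together with the degree count on $\div p_1$ eliminates the homogeneous part, leaving a divergence-free $p_1\in P_k(K)^n$ with vanishing normal trace and vanishing moments against $P_{k-1}(K)^n$. Your final stage is where you part ways with the paper. For $n=2$ your stream-function argument is correct and is the classical one: $p_1=\operatorname{curl}\phi$ with $\phi\in P_{k+1}$, the normal trace condition makes $\phi$ constant on the connected boundary, divisibility by the bubble $\lambda_1\lambda_2\lambda_3$ follows from \cref{thm:elements:factor}, and surjectivity of $\operatorname{rot}:P_{k-1}(K)^2\to P_{k-2}(K)$ lets you test against $\psi$ and invoke positivity of the bubble. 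Your closing remark correctly identifies why the integrated form of the boundary condition is insufficient.

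The genuine gap is the sentence ``the case $n\geq 3$ is analogous.'' It is not. In three dimensions a divergence-free polynomial field is indeed $\operatorname{curl}A$ for some polynomial potential $A$, but the condition $p_1\cdot\nu=0$ on a face $F$ only says that the \emph{surface rotation} of the tangential part of $A$ vanishes on $F$; it does not make $A$ constant or zero on $\partial K$, and arranging a single gauge in which $A$ (or its tangential trace) vanishes on all four faces simultaneously is a nontrivial statement about polynomial de Rham complexes with boundary conditions, not a routine adaptation. Moreover the bubble-divisibility and the surjectivity of the rotation onto the right moment space both need to be reproven in the vector-valued setting. The paper sidesteps all of this with a dimension-independent argument that does not even use $\div p_1=0$ at this stage: placing $K$ as the reference simplex, the face $\{x_i=0\}$ has normal $-e_i$, so $[p_1]_i$ vanishes on that hyperplane and \cref{thm:elements:factor} gives $[p_1]_i=x_i\psi_i$ with $\psi_i\in P_{k-1}(K)$; inserting $q=(\psi_1,\dots,\psi_n)^T$ into \eqref{eq:mixed:rt_uni_K} yields $\sum_i\int_K x_i\psi_i^2\,dx=0$, and since each $x_i\geq 0$ on the simplex every term vanishes, forcing $\psi_i=0$. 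If you want your proof to cover $n\geq 3$, replace your potential argument by this componentwise one; if you restrict to $n=2$, your version stands as a valid alternative.
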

\begin{proof}
    First, observe that $N_{i,j}(\tau_h)=0$ for some $i$ and all $j$ implies that
    \begin{equation*}
        \int_{F_i} (\tau_h\cdot\nu_i) q_k\,ds = 0 \quad\text{for all } q_k\in P_k(F_i),
    \end{equation*}
    and since $\tau_h|_{F_i}\cdot\nu_i\in P_k(F_i)$ by \cref{lem:mixed:rt_poly}\,(ii), $\tau_h|_F\cdot\nu_F = 0$ on each face $F$ of $K$. Similarly, we have that
    \begin{equation}\label{eq:mixed:rt_uni_K}
        \int_{K} \tau_h\cdot \tilde q_k\,dx = 0 \quad\text{for all } \tilde q_k\in P_{k-1}(K)^n,
    \end{equation}
    and hence for all $q_k\in P_k(K)$ that
    \begin{equation*}
        \int_K \div\tau_h q_{k}\,dx = -\int_K\tau_h\nabla q_k \,dx + \int_{\partial K} \tau_h\cdot\nu q_k\,ds = 0
    \end{equation*}
    since $\nabla q_k=0$ for $k=0$ and $\nabla q_k \in P_{k-1}(K)^n$ for $k\geq 1$. As $\div\tau_h\in P_{k}(K)$ by \cref{lem:mixed:rt_poly}\,(i), this yields $\div\tau_h = 0$ on $K$.

    By construction, $\tau_h = p_1 + x p_2$ for some $p_1 \in P_k(K)^n$ and $p_2\in P^0_k(K)$. First, it is straightforward to verify that a homogeneous polynomial $p\in P^0_k(K)$ satisfies $x\cdot \nabla p = k p$ (this is known as \emph{Euler's theorem for homogeneous functions}). Hence by the product rule,
    \begin{equation*}
        0 = \div(\tau_h) = \div p_1 + (n+k) p_2.
    \end{equation*}
    Since $\div p_1$ for $p_1\in P_k(K)^n$ is a polynomial of degree at most $k-1$ and $p_2$ is a homogeneous polynomial of degree $k$, this identity can only hold on $K$ if $p_2=0$. Hence, $\div p_1 = 0$ as well.

    For the remainder of the proof, we assume, without loss of generality, that $K$ is the reference unit simplex spanned by the unit vectors in $\R^n$. Consider now for $1\leq i\leq n$ the face $F_i$ aligned with the coordinate plane $\{x\in\R^n:x_i = 0\}$, which has unit normal $\nu_i=-e_i$. Then
    \begin{equation*}
        0 = \tau_h \cdot \nu_i = p_1 \cdot (-e_i) = -[p_1]_i,
    \end{equation*}
    and hence $[p_1]_i$ is a polynomial of degree $k$ that vanishes for all $x$ with $x_i=0$. By \cref{thm:elements:factor}, there thus exists a $\psi_i \in P_{k-1}(K)$ such that $[p_1]_i = x_i \psi_i$ for all $1\leq i\leq n$. We thus obtain a $\tilde q_k = (\psi_1,\dots,\psi_n)^T\in P_{k-1}(K)^n$, which we can insert into \eqref{eq:mixed:rt_uni_K} to deduce
    \begin{equation*}
        \sum_{i=1}^n\int_K x_i |\psi_i|^2\,dx = 0.
    \end{equation*}
    Since we are on the unit simplex, all terms are non-negative and thus have to vanish separately. This implies that $\psi_i = 0$ for $i=1,\dots,n$ and thus $\tau_h = p_1 = 0$.
\end{proof}

\bigskip

Our next task is to construct interpolants in $V_h$ for functions in $V$. This is complicated by the fact that functions in $\m{hdiv}(K)$ have normal traces on $H^{-1/2}(\partial K)$, which cannot be localized to single faces $F\subset \partial K$. We therefore proceed as follows. For $\tau\in\m{h1}(K)^n$ -- which does have well-defined normal traces in $\m{l2}(F)$ by \cref{thm:weak:trace} -- we define the local \emph{Raviart--Thomas projection} $\Pi_K\tau\in RT_k(K)$ by
\begin{align*}
    &\int_F (\Pi_K \tau\cdot\nu -\tau \cdot \nu) q_k\,ds = 0 &&\text{ for all } q_k\in P_k(F), F\subset \partial K,\\
    &\int_K (\Pi_K \tau -\tau)\cdot  q_k\,dx = 0 &&\text{ for all } q_k\in P_{k-1}(K)^n \text{ if } k\geq 1.
\end{align*}
From \cref{lem:mixed:rt_unisolvent}, we already know that the projection conditions imply the uniqueness (and hence existence) of $\Pi_K\tau$.
The next lemma shows that these conditions are chosen precisely in order to use the Raviart--Thomas projector $\Pi_K$ in the construction of a Fortin projector. (Since $\Pi_K$ is not continuous on $\m{Hdiv}$, it cannot be used directly.)
\begin{lemma}\label{lem:mixed:commuting}
    For any $\tau\in\m{h1}(K)^n$,
    \begin{equation*}
        \int_K \div (\Pi_K\tau) q_k\,dx = \int_K (\div \tau) q_k\,dx \qquad\text{ for all }q_k\in P_k(K).
    \end{equation*}
\end{lemma}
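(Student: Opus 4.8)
The plan is to prove the equivalent statement that $\int_K \div(\Pi_K\tau - \tau)\, q_k\,dx = 0$ for every $q_k \in P_k(K)$, where I write $w := \Pi_K\tau - \tau \in \m{h1}(K)^n$. The whole argument is a single integration by parts, with the two defining conditions \eqref{eq:mixed:rt_proj_F} and \eqref{eq:mixed:rt_proj_k} of the projection tailored precisely to kill the two resulting terms. This is the same pattern already used in the proof of \cref{lem:mixed:rt_unisolvent}, so no new machinery is needed.

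Concretely, first I would apply the integration-by-parts formula (the elementwise version of \eqref{eq:mixed:partint}) on $K$ to get
\begin{equation}
    \int_K (\div w)\, q_k\,dx = -\int_K w\cdot\nabla q_k\,dx + \int_{\partial K} (w\cdot\nu)\, q_k\,ds.
\end{equation}
Then I would treat the two terms on the right separately. For the boundary term, I split the integral into a sum over the faces $F\subset\partial K$; since $q_k\in P_k(K)$ restricts on each face to a polynomial $q_k|_F\in P_k(F)$, the face condition \eqref{eq:mixed:rt_proj_F} gives $\int_F (w\cdot\nu)\,q_k\,ds=0$ for each $F$, so the whole boundary term vanishes. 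For the volume term, I note that $\nabla q_k\in P_{k-1}(K)^n$; when $k\geq 1$ the interior condition \eqref{eq:mixed:rt_proj_k} applied to the test function $\nabla q_k$ yields $\int_K w\cdot\nabla q_k\,dx = 0$, while for $k=0$ one has $\nabla q_k = 0$ identically so the term is trivially zero (and the interior condition is correctly absent in that case). Combining the two, the right-hand side is zero, which is the claim.

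There is essentially no deep obstacle here; the only points requiring a line of care are the bookkeeping for the endpoint case $k=0$ (where \eqref{eq:mixed:rt_proj_k} does not apply but $\nabla q_k$ vanishes anyway) and the observation that a total-degree-$\le k$ polynomial restricts to a face as an element of $P_k(F)$, so that \eqref{eq:mixed:rt_proj_F} is genuinely applicable. I would also remark that $w\in\m{h1}(K)^n$ guarantees well-defined normal traces in $\m{l2}(F)$, so that every integral written above is legitimate — this is exactly why the projection is defined on $\m{h1}(K)^n$ rather than on all of $\m{hdiv}(K)$. The resulting identity is the commuting-diagram property that, as the surrounding text indicates, makes $\Pi_K$ usable in the construction of a Fortin projector via \cref{thm:mixed:fortin}.
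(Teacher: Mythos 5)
Your argument is exactly the paper's proof: integrate $\div(\Pi_K\tau-\tau)\,q_k$ by parts on $K$, kill the boundary term via \eqref{eq:mixed:rt_proj_F} face by face and the volume term via \eqref{eq:mixed:rt_proj_k} applied to $\nabla q_k\in P_{k-1}(K)^n$ (with $\nabla q_k=0$ when $k=0$). The extra remarks on traces and on $q_k|_F\in P_k(F)$ are correct and harmless; nothing is missing.
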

\begin{proof}
    Using integration by parts and the definition of the Raviart--Thomas projector, we have for any $q_k\in P_k(K)$ that
    \begin{equation*}
        \int_K \div (\Pi_K\tau-\tau) q_k\,dx = \int_{\partial K} (\Pi_K\tau\cdot\nu-\tau\cdot\nu) q_k\,ds - \int_K (\Pi_K\tau-\tau)\cdot \nabla q_k\,dx
        = 0,
    \end{equation*}
    since $\nabla q_k = 0$ for $k=0$ and $\nabla q_k\in P_{k-1}(K)^n$ for $k\geq 1$.
\end{proof}
This also yields local projection error estimates.
\begin{lemma}\label{lem:mixed:rt_local_error}
    For any $\tau\in\m{h1}(K)^n$,
    \begin{align*}
        \norm*{\m{l2}(K)^n}{\Pi_K\tau - \tau} &\leq Ch_K|\tau|_{\m{h1}(K)^n},\\
        \norm*{\m{l2}(K)}{\div(\Pi_K\tau - \tau)} &\leq C|\tau|_{\m{h1}(K)^n}.
    \end{align*}
    In addition, if $\tau\in H^2(K)^n$,
    \begin{equation*}
        \norm*{\m{l2}(K)}{\div(\Pi_K\tau - \tau)} \leq Ch_K|\tau|_{H^2(K)^n}.
    \end{equation*}
\end{lemma}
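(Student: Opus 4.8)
The plan is to handle the two divergence estimates and the vector $L^2$ estimate by different means, since only the latter requires a genuine change of variables. For the divergence estimates I would first read \cref{lem:mixed:commuting} as the statement that $\div(\Pi_K\tau)$ is the $L^2(K)$-orthogonal projection of $g:=\div\tau$ onto $P_k(K)$: indeed $\div(\Pi_K\tau)\in P_k(K)$ by \cref{lem:mixed:rt_poly}(i), and $\int_K\div(\Pi_K\tau)\,q_k = \int_K g\,q_k$ for all $q_k\in P_k(K)$ forces $\div(\Pi_K\tau)$ to equal the projection, so $\div(\Pi_K\tau-\tau)$ is exactly minus the $L^2$-projection error of $g$. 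Since that projection is a contraction, the second estimate is immediate from $\|\div(\Pi_K\tau-\tau)\|_{L^2(K)}\le\|\div\tau\|_{L^2(K)}\le C|\tau|_{H^1(K)^n}$ with $C$ depending only on $n$. For the third estimate I would use that the projection reproduces constants ($P_0\subset P_k$), so a Poincaré--Wirtinger (equivalently Bramble--Hilbert plus scaling) bound gives $\|g-\Pi^0_k g\|_{L^2(K)}\le Ch_K|g|_{H^1(K)}$; inserting $g=\div\tau$ and $|\div\tau|_{H^1(K)}\le C|\tau|_{H^2(K)^n}$ yields the claim.

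The vector estimate is the substantive one, and it cannot be reduced to the affine-equivalence machinery of \cref{chap:elements}: the Raviart--Thomas nodal basis does \emph{not} transform by plain composition $\hat\psi_i=\psi_i\circ T_K$ (that would destroy the normal-trace structure), so \cref{thm:elements:equivalent} and \cref{thm:interp:local} do not apply. The fix is the contravariant Piola transform $\mathcal{P}_K\hat\tau:=\det(A_K)^{-1}A_K(\hat\tau\circ T_K^{-1})$, which maps $RT_k(\hat K)$ onto $RT_k(K)$ and, crucially, commutes with the projection, $\Pi_K\mathcal{P}_K=\mathcal{P}_K\Pi_{\hat K}$. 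I would establish this by checking that both families of degrees of freedom transform consistently under $\mathcal{P}_K$ --- the face functionals $\int_F(\tau\cdot\nu_F)q\,ds$ and the interior functionals $\int_K\tau\cdot q\,dx$ are, up to the appropriate Jacobian rescaling of the test polynomial, invariant under the Piola transform --- so that matching the functionals on $K$ is equivalent to matching the reference functionals on $\hat K$.

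With the commuting property in hand, setting $\hat\tau:=\det(A_K)A_K^{-1}(\tau\circ T_K)$ gives $\Pi_K\tau-\tau=\mathcal{P}_K(\Pi_{\hat K}\hat\tau-\hat\tau)$. On the fixed reference element $\Pi_{\hat K}:H^1(\hat K)^n\to RT_k(\hat K)$ is bounded: the interior functionals are bounded on $L^2(\hat K)^n$, and the face functionals are bounded on $H^1(\hat K)^n$ by the trace theorem (\cref{thm:weak:trace}). Since $\Pi_{\hat K}$ reproduces $P_0^n\subset RT_k(\hat K)$, the map $v\mapsto\|v-\Pi_{\hat K}v\|_{L^2(\hat K)^n}$ is a bounded sublinear functional annihilating constants, and the \nameref{thm:bramble} with $k=1$ gives $\|\hat\tau-\Pi_{\hat K}\hat\tau\|_{L^2(\hat K)^n}\le C|\hat\tau|_{H^1(\hat K)^n}$. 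It remains to push this back to $K$: the $L^2$ scaling of $\mathcal{P}_K$ contributes a factor $\|A_K\|\,|\det A_K|^{-1/2}$ and the $H^1$ scaling of the inverse Piola transform contributes $\|A_K\|\,\|A_K^{-1}\|\,|\det A_K|^{1/2}$, whose product is $\|A_K\|^2\|A_K^{-1}\|$; bounding this by \cref{lem:interp:estim} ($\|A_K\|\le h_K/\rho_{\hat K}$, $\|A_K^{-1}\|\le h_{\hat K}/\rho_K$) gives $\|A_K\|^2\|A_K^{-1}\|\le Ch_K\sigma_K$, i.e. $Ch_K$ for shape-regular $K$.

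The main obstacle is the commuting relation $\Pi_K\mathcal{P}_K=\mathcal{P}_K\Pi_{\hat K}$, i.e. the correct bookkeeping of how the normal-trace degrees of freedom behave under the Piola transform (orientation of $\nu_F$ and the determinant factors attached to the test polynomials); everything downstream --- reference-element boundedness and the Bramble--Hilbert step --- is routine once the functionals are known to be bounded on $H^1(\hat K)^n$. A secondary point worth flagging is that, in contrast to the $H^1$-conforming estimate of \cref{thm:interp:local}, the constant here carries a factor $\sigma_K$, so shape-regularity of $K$ is implicitly required.
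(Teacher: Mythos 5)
Your proposal is correct, and for the main (vector $L^2$) estimate it follows essentially the route the paper sketches: boundedness of the Raviart--Thomas degrees of freedom on $H^1(\hat K)^n$ (trace theorem plus H\"older), the \nameref{thm:bramble} on the reference element using that $\Pi_{\hat K}$ reproduces $P_0^n\subset RT_k(\hat K)$, and a scaling argument via the Piola transform --- which is exactly the content the paper relegates to its footnote, including the commuting property $\Pi_K\mathcal{P}_K=\mathcal{P}_K\Pi_{\hat K}$ that you rightly single out as the one nontrivial piece of bookkeeping. Where you genuinely diverge is in the two divergence estimates: the paper runs these through the same Bramble--Hilbert-plus-scaling machinery (after observing via \cref{lem:mixed:commuting} that $\tau\mapsto\norm*{L^2(K)}{\div(\Pi_K\tau)}$ is bounded), whereas you exploit the sharper reading of \cref{lem:mixed:commuting} that $\div(\Pi_K\tau)$ \emph{is} the $L^2(K)$-orthogonal projection of $\div\tau$ onto $P_k(K)$. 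This gives the second estimate immediately from the contraction property, with a constant depending only on $n$ and no reference to the geometry of $K$, and reduces the third to a Poincar\'e--Wirtinger inequality for the scalar function $\div\tau$ on the convex set $K$, bypassing the Piola transform entirely for those two bounds. That is cleaner, and arguably sharper, than the paper's uniform treatment. Your closing remark is also accurate: the Piola scaling of the vector estimate produces the factor $\norm*{}{A_K}^2\norm*{}{A_K^{-1}}\leq Ch_K\sigma_K$, so the constant in the first estimate depends on the shape regularity of $K$ --- an assumption the paper imposes globally on the triangulation but leaves implicit in the statement of the lemma.
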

\begin{proof}
    Since the projection conditions define a basis of $RT_k(K)^*$, we can write
    \begin{equation*}
        \Pi_K \tau = \sum_{i=0}^{n+1}\sum_{j=0}^{d(i)}N_{i,j}(\tau) \psi_{i,j},
    \end{equation*}
    where $\{\psi_{i,j}\}_{i,j}$ is the corresponding nodal basis of $RT_k(K)$. The trace theorem and Hölder's inequality imply that for every $q_k\in P_k(F)$, the mapping $\tau\mapsto\int_F \tau\cdot \nu q_k\,ds$ is continuous on $\m{h1}(K)^n$. We argue similarly for the degrees of freedom on $K$. Furthermore, from \cref{lem:mixed:commuting} and the fact that $\div(\Pi_K\tau)\in P_k(K)$ by \cref{lem:mixed:rt_poly}\,(i), we obtain
    \begin{equation*}
        \begin{aligned}
            \norm*{\m{l2}(K)}{\div(\Pi_K\tau)}^2 &= \int_K\div(\Pi_K\tau)\div(\Pi_K\tau)\,dx = \int_K(\div\tau)\div(\Pi_K\tau)\,dx \\
            &\leq  \norm*{\m{l2}(K)}{\div\tau} \norm*{\m{l2}(K)}{\div(\Pi_K\tau)}.
        \end{aligned}
    \end{equation*}
    The projection errors thus define bounded linear functionals on $\m{h1}(K)^n$.
    The estimates then follow from the \nameref{thm:bramble} and suitable scaling arguments.\footnote{Since the local coordinate $x$ appears explicitly in the definition of $RT_k(K)$, Raviart--Thomas elements are not affine-equivalent. One thus has to use the \emph{Piola transform}: If $K$ is generated from $\hat K$ by the affine transformation $\hat x\mapsto A_K\hat x +b_K$ and $\hat p\in RT_k(\hat K)$, then $p = \det(A_K)^{-1} A_K \hat p\in RT_k(K)$. Furthermore, the transformed elements are interpolation equivalent; see \cite{Raviart:1977} and \cite{Nedelec:1980}.}
\end{proof}

The global Raviart--Thomas projector $\Pi_\calT$ for $\tau\in\m{H1}^n$ is now defined via $(\Pi_\calT\tau)_K = \Pi_K \tau|_K$ for all $K\in\calT_h$. This projector is bounded in the $\m{Hdiv}$ norm by \cref{lem:mixed:rt_local_error}. Similarly, we obtain from the definition of $M_h$ and \cref{lem:mixed:commuting} that $(\div (\Pi_\calT\tau),v_h) = (\div \tau,v_h)$ for all $v_h\in M_h$. It remains to argue that $\Pi_\calT\tau\in V_h$. Since $\Pi_\calT\tau$ is a piecewise polynomial, it suffices to show that the normal trace is continuous across elements. Let $K_1$ and $K_2$ be two elements sharing a face $F$. Then $\tau\in\m{H1}^n$ has a well-defined normal trace $\tau\cdot\nu\in\m{l2}(F)$ and thus by construction,
\begin{equation*}
    \int_F (\Pi_{K_1}\tau)\cdot \nu\, q_k\,ds = \int_F \tau\cdot\nu \,q_k\,ds = \int_F (\Pi_{K_2}\tau)\cdot \nu\, q_k\,ds\quad\text{for all } q_k\in P_k(F).
\end{equation*}
Since $(\Pi_K\tau)\cdot \nu\in P_k(F)$ by \cref{lem:mixed:rt_poly}\,(ii), we obtain as in the proof of \cref{lem:mixed:rt_unisolvent} that $(\Pi_{K_1}\tau-\Pi_{K_2}\tau)\cdot \nu = 0$ on $F$.

\bigskip

We are now in a position to apply the abstract saddle point framework to the mixed finite element discretization of \eqref{eq:mixed:poisson_dual}: Find $(\sigma_h,u_h)\in V_h\times M_h$ satisfying
\begin{equation}\label{eq:mixed:poisson_dual_h}
    \left\{\begin{aligned}
            (\sigma_h,\tau_h) + (\div\tau_h,u_h) &= 0\quad &&\text{ for all } \tau_h \in V_h,\\
            (\div\sigma_h, v_h) &= -(f,v_h) \quad &&\text{ for all } v_h \in M_h.
    \end{aligned}\right.
\end{equation}
Since $V_h\subset V$ and $M_h\subset M$, the bilinear forms $a:V_h\times V_h\to \R$ and $b:V_h\times M_h\to \R$ are continuous. Furthermore, for $\tau_h\in V_h$ we have $\div\tau_h \in M_h$ and hence the coercivity of $a$ on $\ker B_h\subset V_h$ follows exactly as in the continuous case. For the discrete LBB condition, we proceed as in the proof of the Fortin criterion: For given $v_h\in M_h\subset\m{L2}$, let $\tau_{v_h}\in \m{H1}^n$ be the function given by \cref{lem:mixed:div_surjective}. Then $\Pi_\calT \tau_{v_h}\in V_h\subset V$ and thus
\begin{equation*}
    \sup_{\tau_h\in V_h} \frac{b(\tau_h,v_h)}{\norm*{V}{\tau_h}}
    \geq \frac{(\div (\Pi_\calT \tau_{v_h}),v_h)}{\norm{Hdiv}{\Pi_\calT\tau_{v_h}}}
    \geq \frac{(\div \tau_{v_h},v_h)}{C\norm*{\m{H1}^n}{\tau_{v_h}}}
    \geq \frac{(v_h,v_h)}{C\norm{L2}{v_h}} = \frac1C \norm*{M}{v_h}
\end{equation*}
by the properties of the Raviart--Thomas projector and \cref{lem:mixed:div_surjective}. The conditions of the discrete Brezzi splitting theorem (\cref{thm:brezzi_h}) are thus satisfied, and we deduce well-posedness of \eqref{eq:mixed:poisson_dual_h}.
\begin{theorem}
    For given $f\in\m{L2}$, there exists a unique solution $(\sigma_h,u_h)\in V_h\times M_h$ to \eqref{eq:mixed:poisson_dual_h} satisfying
    \begin{equation*}
        \norm{Hdiv}{\sigma_h}+\norm{L2}{u_h}\leq C\norm{L2}{f}.
    \end{equation*}
\end{theorem}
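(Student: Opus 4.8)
The plan is to verify that the pair $(V_h,M_h)$ together with the forms $a$ and $b$ satisfies the hypotheses of the discrete Brezzi splitting theorem (\cref{thm:brezzi_h}); the conclusion, including the claimed a priori bound, is then immediate. Since $V_h\subset V=\m{Hdiv}$ and $M_h\subset M=\m{L2}$, continuity of $a$ and $b$ on the discrete spaces is inherited from the continuous setting via the Cauchy--Schwarz inequality. It therefore remains to establish the two discrete inf-sup conditions \eqref{eq:LBB_A_h} and \eqref{eq:LBB_h}, both with constants independent of $h$ (this uniformity is what makes the final estimate useful, and is the real content of the argument).

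For the inf-sup condition on $a$, I would first observe that the discrete kernel is explicit: because $\div\tau_h\in M_h$ for every $\tau_h\in V_h$, the requirement $b(\tau_h,v_h)=(\div\tau_h,v_h)=0$ for all $v_h\in M_h$ forces $\div\tau_h=0$ on $\Omega$. Hence on $\ker B_h$ the graph norm collapses, $\norm{Hdiv}{\tau_h}^2=\norm*{\m{L2}^n}{\tau_h}^2$, and $a(\tau_h,\tau_h)=\norm*{\m{L2}^n}{\tau_h}^2=\norm{Hdiv}{\tau_h}^2$, so $a$ is coercive on $\ker B_h$ with constant $\alpha_h=1$; coercivity in particular implies \eqref{eq:LBB_A_h}. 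This is exactly the continuous argument transplanted to $V_h$, and crucially uses the inclusion $\div V_h\subset M_h$.

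The substantive step -- and the one for which all the Raviart--Thomas machinery was assembled -- is the discrete LBB condition \eqref{eq:LBB_h}. Given $v_h\in M_h\subset\m{L2}$, I would invoke \cref{lem:mixed:div_surjective} to produce $\tau_{v_h}\in\m{H1}^n$ with $\div\tau_{v_h}=v_h$ and $\norm*{\m{H1}^n}{\tau_{v_h}}\leq C\norm{L2}{v_h}$, and then apply the global Raviart--Thomas projector to obtain $\Pi_\calT\tau_{v_h}\in V_h$. The two properties of $\Pi_\calT$ established earlier are decisive: the commuting relation (\cref{lem:mixed:commuting}) gives $(\div(\Pi_\calT\tau_{v_h}),v_h)=(\div\tau_{v_h},v_h)=(v_h,v_h)$, while the $\m{Hdiv}$-boundedness (\cref{lem:mixed:rt_local_error}) gives $\norm{Hdiv}{\Pi_\calT\tau_{v_h}}\leq C\norm*{\m{H1}^n}{\tau_{v_h}}$. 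Chaining these,
\[
    \sup_{\tau_h\in V_h}\frac{b(\tau_h,v_h)}{\norm{Hdiv}{\tau_h}}\geq\frac{(\div(\Pi_\calT\tau_{v_h}),v_h)}{\norm{Hdiv}{\Pi_\calT\tau_{v_h}}}\geq\frac{\norm{L2}{v_h}^2}{C\norm{L2}{v_h}}=\frac1C\norm{L2}{v_h},
\]
which is \eqref{eq:LBB_h} with an $h$-independent $\beta_h=1/C$. I expect this to be the main obstacle only in the sense of bookkeeping, since its genuine difficulty -- the uniform boundedness of $\Pi_\calT$ -- has already been discharged in the preceding lemmas.

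With continuity, kernel coercivity, and the discrete LBB condition in hand, all uniform in $h$, \cref{thm:brezzi_h} yields a unique $(\sigma_h,u_h)\in V_h\times M_h$ solving \eqref{eq:mixed:poisson_dual_h} together with the corresponding stability bound. To recover the stated form of the estimate I would simply identify the data: the right-hand side of the first equation is zero, and the second is the functional $v_h\mapsto-(f,v_h)$, whose $M^*$-norm is at most $\norm{L2}{f}$. Substituting into the abstract bound $\norm{Hdiv}{\sigma_h}+\norm{L2}{u_h}\leq C(\norm*{V^*}{0}+\norm*{M^*}{{-(f,\cdot)}})$ gives $\norm{Hdiv}{\sigma_h}+\norm{L2}{u_h}\leq C\norm{L2}{f}$, as claimed.
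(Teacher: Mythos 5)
Your proposal is correct and follows the paper's own argument essentially verbatim: continuity is inherited from the conforming inclusions $V_h\subset\m{Hdiv}$ and $M_h\subset\m{L2}$, coercivity of $a$ on $\ker B_h$ follows from $\div V_h\subset M_h$ forcing $\div\tau_h=0$ there, and the discrete LBB condition is obtained by combining \cref{lem:mixed:div_surjective} with the commuting property and $\m{Hdiv}$-boundedness of the Raviart--Thomas projector, after which \cref{thm:brezzi_h} gives the conclusion. No gaps; the only (welcome) addition is your explicit identification of the data $(0,-(f,\cdot))$ in the final estimate, which the paper leaves implicit.
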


Using \cref{thm:mixed:cea} to bound the discretization error by the projection error and applying \cref{lem:mixed:rt_local_error} yields a priori error estimates.
\begin{theorem}
    Assume the exact solution $(\sigma,u)\in \m{Hdiv}\times\m{L2}$ to \eqref{eq:mixed:poisson_dual} satisfies $u\in H^3(\Omega)$. Then the solution $(\sigma_h,u_h)\in V_h\times M_h$ satisfies
    \begin{equation*}
        \norm{Hdiv}{\sigma-\sigma_h} + \norm{L2}{u-u_h} \leq C h \norm*{H^3(\Omega)}{u}.
    \end{equation*}
\end{theorem}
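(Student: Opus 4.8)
The plan is to combine the mixed Céa lemma (\cref{thm:mixed:cea}) with the approximation properties of the Raviart--Thomas projector. Since the discrete inf-sup constants $\alpha_h,\beta_h$ have just been shown to be independent of $h$ (so that the estimate of \cref{thm:mixed:cea} is optimal), that lemma provides a constant $C$ independent of $h$ with
\[
    \norm{Hdiv}{\sigma-\sigma_h}+\norm{L2}{u-u_h}\leq C\left(\inf_{\tau_h\in V_h}\norm{Hdiv}{\sigma-\tau_h}+\inf_{v_h\in M_h}\norm{L2}{u-v_h}\right).
\]
It therefore suffices to bound the two best-approximation errors, for which I would use the concrete approximants $\Pi_\calT\sigma\in V_h$ (the global Raviart--Thomas projector) and the $\m{l2}$ projection $P_h u\in M_h$ onto piecewise constants. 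The observation that makes the regularity bookkeeping work is that, as noted after \eqref{eq:mixed:poisson_dual}, the exact solution satisfies $\sigma=\nabla u$; hence $u\in H^3(\Omega)$ yields $\sigma\in H^2(\Omega)^n$, with $|\sigma|_{\m{h1}(K)^n}$ controlled by the second derivatives of $u$ and $|\sigma|_{H^2(K)^n}$ by its third derivatives.

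For the $\m{Hdiv}$ term I would split the graph norm as $\norm{Hdiv}{\sigma-\Pi_\calT\sigma}^2=\norm*{\m{L2}^n}{\sigma-\Pi_\calT\sigma}^2+\norm{L2}{\div(\sigma-\Pi_\calT\sigma)}^2$ and estimate the two pieces separately using \cref{lem:mixed:rt_local_error}. The first estimate there gives, elementwise, $\norm*{\m{l2}(K)^n}{\sigma-\Pi_K\sigma}\leq Ch_K|\sigma|_{\m{h1}(K)^n}$; squaring, summing over $K\in\calT_h$, and bounding $h_K\leq h$ yields $\norm*{\m{L2}^n}{\sigma-\Pi_\calT\sigma}\leq Ch\,|u|_{\m{H2}}$. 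For the divergence part I would use the last (second-order) estimate of \cref{lem:mixed:rt_local_error}, $\norm*{\m{l2}(K)}{\div(\sigma-\Pi_K\sigma)}\leq Ch_K|\sigma|_{H^2(K)^n}$, whose validity rests on the commuting property (\cref{lem:mixed:commuting}) identifying $\div(\Pi_\calT\sigma)$ with the $\m{l2}$ projection of $\div\sigma$ onto $M_h$; summation then gives $\norm{L2}{\div(\sigma-\Pi_\calT\sigma)}\leq Ch\,|u|_{H^3(\Omega)}$. This is exactly the step that forces the hypothesis $u\in H^3$ rather than merely $u\in H^2$: without $\sigma\in H^2$, \cref{lem:mixed:rt_local_error} controls the divergence error only at order $O(1)$.

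For the $\m{l2}$ term I would use that $M_h$ consists of piecewise constants, so on each element the best $\m{l2}(K)$ approximation of $u$ is its mean value, and the \nameref{thm:bramble} together with a scaling argument (equivalently, \cref{thm:interp:global} with $p=2$, $l=0$, $k=1$) gives $\inf_{c\in\R}\norm*{\m{l2}(K)}{u-c}\leq Ch_K|u|_{\m{h1}(K)}$; summing yields $\inf_{v_h\in M_h}\norm{L2}{u-v_h}\leq Ch\,|u|_{\m{H1}}$. Substituting the three bounds into the Céa estimate and absorbing all seminorms into $\norm*{H^3(\Omega)}{u}$ then produces the claimed rate. I expect the only genuinely delicate point to be the divergence contribution to the $\m{Hdiv}$ error: there one cannot use a generic interpolation bound but must exploit the specially chosen Raviart--Thomas degrees of freedom through the commuting relation of \cref{lem:mixed:commuting} and \cref{lem:mixed:rt_local_error}, which is precisely where the extra derivative of $u$ enters; the remaining steps are a routine assembly of lemmas already established in this section.
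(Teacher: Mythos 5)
Your proposal is correct and follows exactly the route the paper intends: bound the discretization error by the best-approximation error via \cref{thm:mixed:cea} (with $h$-independent constants), then estimate the two infima using the global Raviart--Thomas projector together with \cref{lem:mixed:rt_local_error} (exploiting $\sigma=\nabla u$, so $u\in H^3(\Omega)$ gives $\sigma\in H^2(\Omega)^n$) and the elementwise $L^2$ projection of $u$ onto $M_h$. The paper states this only as a one-line remark, so your write-up simply supplies the details of the same argument, including the correct identification of where the extra derivative of $u$ is needed.
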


\chapter{Discontinuous Galerkin methods}

Discontinuous Galerkin methods are based on nonconforming finite element spaces consisting of piecewise polynomials that are not continuous across elements. These allow handling irregular meshes with hanging nodes and different degrees of polynomials on each element. They also provide a natural framework for first order partial differential equations and for imposing Dirichlet boundary conditions in a weak form, on which we will focus here. We consider a simple \emph{advection-reaction} equation
\begin{equation*}
    \beta\cdot\nabla u +\mu u = f,
\end{equation*}
which models the transport of a solute concentration $u$ along the vector field $\beta$. The reaction coefficient $\mu$ determines the rate with which the solute is destroyed or created due to interaction with its environment, and $f$ is a source term. This is complemented by (for simplicity) homogeneous Dirichlet conditions of a form to be specified below.

\section{Weak formulation of advection-reaction equations}

We consider $\Omega\subset\R^n$ (polyhedral) with unit outer normal $\nu$ and assume that
\begin{equation*}
    \mu\in\m{Linf},\qquad \beta\in{W}^{1,\infty}(\Omega)^n,\qquad f\in\m{L2}.
\end{equation*}
Our first task is to define the space in which we look for our solution. Let
\begin{equation*}
    \partial\Omega^- = \setof{s\in\partial\Omega}{\beta(s)\cdot\nu(s) < 0}
\end{equation*}
denote the \emph{inflow boundary} and
\begin{equation*}
    \partial\Omega^+ = \setof{s\in\partial\Omega}{\beta(s)\cdot\nu(s) > 0}
\end{equation*}
denote the \emph{outflow boundary}, and assume that they are well-separated, i.e.,
\begin{equation*}
    \inf_{s\in\partial\Omega^-,t\in\partial\Omega^+} |s-t| > 0.
\end{equation*}
Then we define the so-called \emph{graph space}
\begin{equation*}
    W = \setof{v\in\m{L2}}{\beta\cdot\nabla v \in\m{L2}},
\end{equation*}
which is a Hilbert space if endowed with the inner product
\begin{equation*}
    \scalprod*{W}{v}{w} = (v,w)+(\beta\cdot\nabla v,\beta\cdot\nabla w).
\end{equation*}
The latter induces the \emph{graph norm}
\begin{equation*}
    \norm*{W}{v}^2 = \norm{L2}{v}^2 + \norm{L2}{\beta\cdot\nabla v}^2.
\end{equation*}
One can show\footnote{e.g., \cite[Lemma 2.5]{DiPietro:2012}} that functions in $W$ have traces in the space
\begin{equation*}
    \m{l2}_\beta(\partial\Omega) = \setof{v \text{ measurable on }\partial\Omega}{\int_{\partial\Omega}|\beta\cdot\nu|\,v^2\,ds <\infty},
\end{equation*}
and that the following integration by parts formula holds:
\begin{equation}\label{eq:dg:intpart}
    \int_\Omega (\beta\cdot \nabla v)w + (\beta\cdot\nabla w)v+(\nabla\cdot \beta)vw \,dx = \int_{\partial\Omega}(\beta\cdot \nu)vw\, ds
\end{equation}
for all $v,w\in W$.

We can now define our weak formulation: set
\begin{equation*}
    U:=\setof{v\in W}{v|_{\partial\Omega^-} = 0}
\end{equation*}
and find $u\in U$ satisfying
\begin{equation}\label{eq:dg:weak}
    a(u,v) := (\beta\cdot\nabla u,v) +(\mu u,v) = (f,v)\tag{$\mathcal{W}$}
\end{equation}
for all $v\in V=\m{L2}$. Note that the test space is now different from the solution space.

Since $U$ is a closed subspace of the Hilbert space $W$, it is a Banach space. Moreover, $\m{L2}$ is a reflexive Banach space, and the right-hand side defines a continuous linear functional on $\m{L2}$. We can thus apply the \nameref{thm:BNB} Theorem to show well-posedness.
\begin{theorem}\label{thm:dg:wellposed}
    If
    \begin{equation*}
        \mu(x) - \tfrac12\nabla\cdot\beta(x) \geq \mu_0 > 0 \quad \text{for almost all } x\in\Omega,
    \end{equation*}
    then there exists a unique $u\in U$ satisfying \eqref{eq:dg:weak}. Furthermore, there exists a $C>0$ such that
    \begin{equation*}
        \norm*{W}{u}\leq C\norm{L2}{f}.
    \end{equation*}
\end{theorem}
\begin{proof}
    We begin by showing the continuity of $a$ on $U\times V$. For arbitrary $u\in U$ and $v\in V= \m{L2}$,  the Cauchy--Schwarz inequality yields
    \begin{equation*}
        \begin{aligned}
            |a(u,v)| &\leq \norm{L2}{\beta\cdot\nabla u}\norm{L2}{v} + \norm{L2}{\mu u}\norm{L2}{v}\\
            &\leq \max\{1,\norm{Linf}{\mu}\}\sqrt{2}\norm*{W}{u}\norm*{V}{v}.
        \end{aligned}
    \end{equation*}

    To verify the inf--sup condition, we first prove coercivity with respect to the $\m{L2}$ part of the graph norm. For any $u\in U\subset V$, we integrate by parts using \eqref{eq:dg:intpart} for $v=w=u$ to obtain
    \begin{equation*}
        \begin{aligned}
            a(u,u) &= \int_\Omega(\beta\cdot\nabla u)u + \mu u^2\,dx \\
            &= \int_\Omega (\mu -\tfrac12 \nabla\cdot \beta)u^2 \,dx +\int_{\partial\Omega}\tfrac12(\beta\cdot\nu)u^2\,ds\\
            &\geq \mu_0\norm{L2}{u}^2,
        \end{aligned}
    \end{equation*}
    where we have used that $u$ vanishes on $\partial\Omega^-$ due to the boundary conditions and that $\beta\cdot\nu>0$ on $\partial\Omega^+$. This implies that
    \begin{equation*}
        \norm{L2}{u} \leq \mu_0^{-1}\frac{a(u,u)}{\norm{L2}{u}} \leq \sup_{v\in\m{L2}} \mu_0^{-1}\frac{a(u,v)}{\norm{L2}{v}} .
    \end{equation*}
    For the other term in the graph norm, we use a duality trick to write
    \begin{equation*}
        \begin{aligned}
            \norm{L2}{\beta\cdot\nabla u} &= \sup_{v\in\m{L2}}\frac{(\beta\cdot\nabla u,v)}{\norm{L2}{v}} \\
            &= \sup_{v\in\m{L2}}\frac{a(u,v) - (\mu u,v)}{\norm{L2}{v}} \\
            &\leq  \sup_{v\in\m{L2}}\frac{a(u,v)}{\norm{L2}{v}} + \norm{Linf}{\mu}\norm{L2}{u}\\
            &\leq  (1+\mu_0^{-1}\norm{Linf}{\mu})\sup_{v\in\m{L2}}\frac{a(u,v)}{\norm{L2}{v}}.
        \end{aligned}
    \end{equation*}
    Summing the last two inequalities and taking the infimum over all $u\in U$ verifies the inf--sup condition.

    For the injectivity condition, we assume that $v\in\m{L2}$ is such that $a(u,v)=0$ for all $u\in U$ and show that $v=0$. Since $\m{Cinf0}\subset U$, we deduce from $a(u,v)=0$ that $\nabla \cdot(\beta v)$ exists as a weak derivative and that $\nabla \cdot(\beta v) = \mu v$. By the product rule, we furthermore have $\beta\cdot\nabla v = (\mu-\nabla\cdot\beta)v\in\m{L2}$, which implies $v\in W$. Inserting this into the integration by parts formula \eqref{eq:dg:intpart} and adding the productive zero yields for all $u\in U$
    \begin{equation}\label{eq:dg:inj1}
        \begin{aligned}[t]
            \int_{\partial\Omega}(\beta\cdot\nu)uv\,dx &=
            \int_\Omega (\beta\cdot \nabla v)u + (\beta\cdot\nabla u)v+(\nabla\cdot \beta)vu \,dx \\
            &= a(u,v) -((\mu-\nabla\cdot\beta)v-\beta\cdot\nabla v,u)\\
            &= 0.
        \end{aligned}
    \end{equation}
    Since $\partial\Omega^+$ and $\partial\Omega^-$ are well separated, there exists a smooth cut-off function $\chi\in\m{Cinf}$ with $\chi(s) = 0$ for $s\in\partial\Omega^-$ and $\chi(s) = 1$ for $s\in\partial\Omega^+$. Applying \eqref{eq:dg:inj1} to $u=\chi v\in U$ yields $\int_{\partial\Omega^+}(\beta\cdot\nu)v^2\,dx = 0$.
    Using again that $\mu v = \nabla \cdot(\beta v)$ and integrating by parts, we deduce that
    \begin{equation*}
        \begin{aligned}
            0 &= \int_\Omega \mu v^2-\nabla\cdot(\beta v)v\,dx \\
            &= \int_\Omega( \mu-\tfrac12 \nabla\cdot\beta)v^2 \,dx - \int_{\partial\Omega}\tfrac12(\beta\cdot\nu)v^2\,ds\\
            &\geq \mu_0\norm{L2}{v}
        \end{aligned}
    \end{equation*}
    since the remaining boundary integral over $\partial\Omega^-$ is non-positive. This shows that $v=0$.
\end{proof}

Note that the graph norm is the strongest norm in which we could have shown coercivity, and that $a$ would not have been bounded on $U\times U$.

\section{Galerkin approach}

The \emph{discontinuous Galerkin} approach now consists in choosing for $k\geq 0$ and a given triangulation $\calT_h$ of $\Omega$  \emph{both} of the discrete spaces as
\begin{equation*}
    U_h = V_h = \setof{v\in\m{L2}}{v|_K \in P_k, K\in\calT_h}
\end{equation*}
(no continuity across elements is assumed, hence the name). We then search for $u_h\in V_h$ satisfying
\begin{equation}\label{eq:dg:galerkin}
    a_h(u_h,v_h) = (f,v_h) \qquad \text{ for all } v_h\in V_h,\tag{$\mathcal{W}_h$}
\end{equation}
for a bilinear form $a_h$ to be specified. Here, we consider the simplest choice that leads to a convergent scheme. Recall that the set of all faces of $\calT_h$ is denoted by $\partial\calT_h$ and the set of all interior faces by $\Gamma_h$.
Let $F\in\Gamma_h$ be the face common to the elements $K_1,K_2\in\calT_h$ with exterior normal $\nu_1$ and $\nu_2$, respectively.
For a (sufficiently regular) function $v\in\m{L2}$, we denote the \emph{jump} across $F$ as
\begin{equation*}
    \jump{v}_F = v|_{K_1}\nu_1 + v|_{K_2}\nu_2 \in L^2(F)^n,
\end{equation*}
and the \emph{average} as
\begin{equation*}
    \avg{v}_F = \tfrac12(v|_{K_1}+v|_{K_2})\in L^2(F).
\end{equation*}
We will omit the subscript $F$ if it is clear which face is meant. It is also convenient to introduce for $v_h\in V_h$ the \emph{broken gradient} $\nabla_h v_h\in L^2(\Omega)$ via
\begin{equation*}
    (\nabla_h v_h)|_{K} = \nabla (v_h|_K) \qquad \text{for all } K\in\calT_h.
\end{equation*}
We then define the bilinear form
\begin{equation}\label{eq:dg:cf}
    \begin{aligned}[t]
        a_h(u_h,v_h) &= (\mu u_h+\beta\cdot\nabla_h u_h,v_h) - \int_{\partial\Omega^-}(\beta\cdot\nu)u_h v_h\,ds\\
        \MoveEqLeft[-1]  - \sum_{F\in\Gamma_h}\int_F \beta\cdot\jump{u_h}\avg{v_h}\,ds.
    \end{aligned}
\end{equation}
The second term enforces the homogeneous Dirichlet conditions in a weak sense. The last term can be thought of as weakly enforcing continuity by penalizing the jump across each face; the reason for its specific form will become apparent in the following proof of coercivity with respect to the ``discrete energy norm''
\begin{equation*}
    \norm{dg}{u_h}^2 := \mu_0\norm{L2}{u_h}^2  + \int_{\partial\Omega} \tfrac12|\beta\cdot \nu|u_h^2\,ds,
\end{equation*}
which is clearly a norm on $V_h\subset\m{L2}$.
\begin{lemma}\label{lem:dg:coercive}
    Under the assumption of \cref{thm:dg:wellposed}, there exists a constant $C>0$ independent of $h$ such that
    \begin{equation*}
        a_h(u_h,u_h) \geq C \norm{dg}{u_h}^2\qquad\text{for all }u_h\in V_h.
    \end{equation*}
\end{lemma}
\begin{proof}
    We begin by applying integration by parts on each element to the first term of \eqref{eq:dg:cf} for $v_h=u_h$ to obtain
    \begin{equation*}
        \begin{aligned}
            (\mu u_h+\beta\cdot\nabla_h u_h,u_h) &= \sum_{K\in\calT_h}\int_K \mu u_h^2 + (\beta\cdot\nabla u_h)u_h\,dx \\
            &= \sum_{K\in\calT_h}\int_K \mu u_h^2 - \tfrac12 (\nabla \cdot\beta) u_h^2\,dx +\int_{\partial K}\tfrac12 (\beta\cdot\nu)u_h^2\,ds.
        \end{aligned}
    \end{equation*}
    The last term can be reformulated as a sum over faces. Since $\beta\in{W}^{1,\infty}(\Omega)^n$ is continuous, we have
    \begin{equation*}
        \sum_{K\in\calT_h} \int_{\partial K}\tfrac12 (\beta\cdot\nu)u_h^2\,ds = \sum_{F\in\Gamma_h}  \int_{F}\tfrac12 \beta\cdot\jump{u_h^2}\,ds + \sum_{F\in\partial\calT_h\setminus\Gamma_h}  \int_{F}\tfrac12 (\beta\cdot\nu)u_h^2\,ds .
    \end{equation*}
    Using that $\nu:=\nu_1 = -\nu_2$ and therefore
    \begin{equation*}
        \tfrac12\jump{w^2}_F = \tfrac12(w|_{K_1}^2 - w|_{K_2}^2)\nu = \tfrac12(w|_{K_1}+w|_{K_2})(w|_{K_1} - w|_{K_2})\nu = \avg{w}_F\jump{w}_F,
    \end{equation*}
    and combining the terms involving integrals over $\partial\Omega$, we obtain
    \begin{equation*}
        \sum_{K\in\calT_h}    \int_{\partial K}\tfrac12 (\beta\cdot\nu)u_h^2\,ds - \int_{\partial\Omega^-}(\beta\cdot\nu)u_h^2\,ds =  \sum_{F\in\Gamma_h}  \int_{F}\beta\cdot\jump{u_h}\avg{u_h}\,ds + \int_{\partial\Omega}\tfrac12|\beta\cdot\nu| u_h^2\,ds.
    \end{equation*}
    Note that we have no control over the sign of the first term on the right-hand side, which is why we had to introduce the penalty term in $a_h$ to cancel it. Combining these equations yields
    \begin{equation*}
        \begin{aligned}[b]
            a_h(u_h,u_h) &= \sum_{K\in\calT_h}\int_K \left(\mu   - \tfrac12 (\nabla \cdot\beta)\right) u_h^2\,dx +\int_{\partial\Omega}\tfrac12|\beta\cdot\nu| u_h^2\,ds\\
            &\geq \mu_0\norm{L2}{u_h}^2 + \int_{\partial\Omega}\tfrac12|\beta\cdot\nu| u_h^2\,ds\\
            &= \norm{dg}{u_h}^2.
        \end{aligned}
        \qedhere
    \end{equation*}
\end{proof}
We will show continuity of $a$ on $V_h\times V_h$ (with respect to an equivalent norm) later (\cref{lem:dg:bounded}), from which we then obtain existence of a unique solution $u_h\in V_h$ to \eqref{eq:dg:galerkin}.

\section{Error estimates}

To derive error estimates for the discontinuous Galerkin approximation $u_h\in V_h$ to $u\in U$, we wish to apply the \nameref{thm:strang2}. Our first task is to show boundedness of $a_h$ on a sufficiently large space containing the exact solution. Since the corresponding norm will involve traces, we make the additional assumption that the exact solution satisfies
\begin{equation*}
    u\in U_* := U\cap \m{H1}.
\end{equation*}
By the trace theorem (\cref{thm:weak:trace}), $u|_F$ is then well-defined in the sense of $\m{l2}(F)$ traces. We now define on $U(h) := U_* + V_h$ the norm
\begin{equation*}
    \norm{dg}{w}_*^2 := \norm{dg}{w}^2 + \sum_{K\in\calT_h} \left(\norm*{\m{l2}(K)}{\beta\cdot\nabla w}^2 + h_K^{-1}\norm*{\m{l2}(\partial K)}{w}^2\right).
\end{equation*}
We can then show boundedness of $a_h$ on $U(h)\times V_h$ if the triangulation is shape-regular.
\begin{lemma}\label{lem:dg:bounded}
    If $\calT_h$ is a shape-regular triangulation of $\Omega\subset \R^2$, then there exists a constant $C>0$ independent of $h$ such that
    \begin{equation*}
        a_h(u,v_h) \leq C \norm{dg}{u}_* \norm{dg}{v_h}\quad\text{for all }u\in U(h),\ v_h\in V_h.
    \end{equation*}
\end{lemma}
\begin{proof}
    Using the Cauchy--Schwarz inequality and some generous upper bounds, we immediately obtain
    \begin{equation}\label{eq:dg:bound1}
        (\mu u+\beta\nabla_h u,v_h) + \int_{\partial\Omega^-}(\beta\cdot\nu)uv_h\,ds \leq C\norm{dg}{u}_*\norm{dg}{v_h},
    \end{equation}
    with a constant $C>0$ depending only on $\mu$. For the last term of $a_h(u,v_h)$, we first insert $1=(2\avg{h})(2\avg{h})^{-1}$, where in a slight abuse of notation we consider $h:\Omega\to \R$ as a function mapping $x\in K$ to $h_K$. Since this function $h$ is constant on each element, we obtain using the Cauchy--Schwarz inequality that
    \begin{equation}\label{eq:dg:cs_face}
        \sum_{F\in\Gamma_h}\int_F \beta\cdot\jump{u}\avg{v_h}\,ds\leq C \left(\sum_{F\in\Gamma_h}\tfrac12 \avg{h}^{-1}\norm*{\m{l2}(F)^n}{\jump{u}}^2\right)^{\frac12}
        \left(\sum_{F\in\Gamma_h} 2 \avg{h}\norm*{\m{l2}(F)}{\avg{v_h}}^2\right)^{\frac12},
    \end{equation}
    where $C>0$ depends only on $\beta$. Now we use that
    \begin{equation*}
        \tfrac12\jump{w}_F^2 \leq (w|_{K_1}^2+w|_{K_2}^2) ,\qquad 2\avg{w}_F^2\leq  (w|_{K_1}^2+w|_{K_2}^2),
    \end{equation*}
    and that for a shape-regular mesh, the element size $h_K$ cannot change arbitrarily between neighboring elements, i.e., there exists a $c>0$ such that
    \begin{equation*}
        c^{-1} \max(h_{K_1},h_{K_2}) \leq \avg{h}_F \leq  c \min(h_{K_1},h_{K_2}).
    \end{equation*}
    Combining the terms arising from the faces of each element and applying the discrete trace inequality (obtained in the usual way)\footnote{e.g., \cite[Lemma 1.46]{DiPietro:2012}}
    \begin{equation}\label{eq:dg:inverse}
        h_K^{1/2}\norm*{\m{l2}(\partial K)}{v_h} \leq C \norm*{\m{l2}(K)}{v_h},
    \end{equation}
    we thus obtain
    \begin{equation}\label{eq:dg:bound2}
        \begin{aligned}[t]
            \sum_{F\in\Gamma_h}\int_F \beta\cdot\jump{u}\avg{v_h}\,ds
            &\leq C \left(\sum_{K\in\calT_h} h_K^{-1} \norm*{\m{l2}(\partial K)}{u}^2\right)^{\frac12}\left(\sum_{K\in\calT_h} h_K \norm*{\m{l2}(\partial K)}{v_h}^2\right)^{\frac12}\\
            & \leq C \norm{dg}{u}_*\norm{dg}{v_h}.
        \end{aligned}
    \end{equation}
    Adding \eqref{eq:dg:bound1} and \eqref{eq:dg:bound2} yields the claim.
\end{proof}
On the finite-dimensional subspace $V_h\subset U(h)$, the norm $\norm{dg}{\cdot}_*$ is equivalent to $\norm{dg}{\cdot}$, and hence together with \cref{lem:dg:coercive} we now have verified the conditions necessary for applying \cref{thm:bnb_h} to deduce well-posedness of \eqref{eq:dg:galerkin}. However, since $\norm{dg}{\cdot}_*$ involves $h_K$, the constants of equivalence also depend on the mesh size $h$, and hence the a priori estimate is no longer uniform in $h$.
\begin{cor}\label{cor:dg:wellposed}
    If $\calT_h$ is a shape-regular triangulation of $\Omega\subset\R^2$ and
    \begin{equation*}
        \mu(x) - \tfrac12\nabla\cdot\beta(x) \geq \mu_0 > 0 \quad \text{for almost all } x\in\Omega,
    \end{equation*}
    then there exists a unique solution $u_h \in V_h$ to \eqref{eq:dg:galerkin}. Furthermore, there exists a constant $C>0$ such that
    \begin{equation*}
        \norm{dg}{u_h} \leq C \norm{L2}{f}.
    \end{equation*}
\end{cor}

Before we derive error estimates, we show that our discontinuous Galerkin approximation is consistent and hence that the consistency error in the \nameref{thm:strang2} vanishes.
\begin{lemma}\label{lem:dg:consistent}
    A solution $u\in U_*$ to \eqref{eq:dg:weak} satisfies
    \begin{equation*}
        a_h(u,v_h) = (f,v_h)
    \end{equation*}
    for all $v_h \in V_h$.
\end{lemma}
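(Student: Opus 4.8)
The plan is to exploit the extra regularity built into $U_* = U\cap\m{H1}$ to collapse the discontinuous bilinear form $a_h$ back onto the continuous form $a$, and then simply invoke the weak formulation \eqref{eq:dg:weak} with the discrete function $v_h$ as an admissible test function (which is legitimate since $V_h\subset\m{L2}=V$). Concretely, starting from the definition \eqref{eq:dg:cf} of $a_h(u,v_h)$, I would inspect each of its three terms and argue that, for $u\in U_*$, the interface and inflow-boundary contributions drop out while the volume term coincides with $a(u,v_h)$.

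First I would treat the face sum $\sum_{F\in\Gamma_h}\int_F \beta\cdot\jump{u}\avg{v_h}\,ds$. Since $u\in\m{H1}$, its trace is single-valued across every interior face $F=\bar K_1\cap\bar K_2$: the one-sided traces $u|_{K_1}$ and $u|_{K_2}$ agree on $F$ (this is where the trace theorem \cref{thm:weak:trace} for the globally-$\m{H1}$ function is used, and it is the one point that needs care — the whole consistency rests on the fact that an honest $\m{H1}$ function has no jump, as opposed to a generic broken polynomial). Because $\nu_1=-\nu_2$, it follows that $\jump{u}_F = u|_{K_1}\nu_1 + u|_{K_2}\nu_2 = 0$ on each $F\in\Gamma_h$, so the entire penalty sum vanishes. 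Next, since $u\in U$ satisfies $u|_{\partial\Omega^-}=0$ by definition of $U$, the inflow integral $\int_{\partial\Omega^-}(\beta\cdot\nu)u\,v_h\,ds$ is zero as well.

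It then remains only to identify the volume term. As $u\in\m{H1}$ is globally weakly differentiable, the broken gradient coincides with the genuine weak gradient, $\nabla_h u = \nabla u$ almost everywhere, so that
\begin{equation}
    a_h(u,v_h) = (\mu u + \beta\cdot\nabla_h u,\,v_h) = (\mu u,v_h) + (\beta\cdot\nabla u,\,v_h) = a(u,v_h).
\end{equation}
Finally, because $v_h\in V_h\subset\m{L2}=V$ is a valid test function in \eqref{eq:dg:weak}, the weak formulation gives $a(u,v_h)=(f,v_h)$, which is exactly the claim. I do not expect any genuine obstacle here beyond the trace argument noted above; consistency holds essentially by construction, the penalty term having been designed precisely so that it annihilates on continuous functions.
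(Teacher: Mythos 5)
Your proposal is correct and follows essentially the same route as the paper: identify the volume term with $a(u,v_h)$ via $\nabla_h u=\nabla u$, kill the inflow integral using the boundary condition built into $U$, and kill the interface penalty because $u$ has no jump. The only difference is in the last step: you assert single-valuedness of the interior traces of an $H^1(\Omega)$ function directly (a standard fact, though \cref{thm:weak:trace} as cited only concerns traces on $\partial\Omega$), whereas the paper derives $\int_F \beta\cdot\jump{u}\,\phi\,ds=0$ self-containedly by elementwise integration by parts against compactly supported smooth $\phi$ followed by a density argument.
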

\begin{proof}
    By definition, $u\in U_*=U\cap \m{H1}$ satisfies $\nabla_h u = \nabla u$ and thus
    \begin{equation*}
        (\mu u + \beta \cdot \nabla_h u,v_h) = (f,v_h) \quad\text{for all }v_h\in V_h\subset V.
    \end{equation*}
    Furthermore, due to the boundary conditions,
    \begin{equation*}
        \int_{\partial\Omega^-}(\beta\cdot\nu)uv_h\,ds = 0.
    \end{equation*}
    It remains to show that the penalty term $ (\beta\cdot\nu)\jump{u_h}_F\avg{v_h}_F$ vanishes on each face $F\in\Gamma_h$. Let $\phi\in\m{Cinf0}$ have support contained in $S\subset\bar K_1\cup\bar K_2 \subset\Omega$ and intersecting $F = \partial K_1\cap \partial K_2$. Then the integration by parts formula \eqref{eq:dg:intpart} yields
    \begin{equation*}
        \begin{aligned}
            0 &= \int_\Omega (\beta\cdot \nabla u)\phi + (\beta\cdot\nabla \phi)u+(\nabla\cdot \beta)u\phi \,dx \\
            & = \int_{S\cap K_1} (\beta\cdot \nabla u)\phi + (\beta\cdot\nabla \phi)u+(\nabla\cdot \beta)u\phi \,dx\\
            \MoveEqLeft[-1]+  \int_{S\cap K_2} (\beta\cdot \nabla u)\phi + (\beta\cdot\nabla \phi)u+(\nabla\cdot \beta)u\phi \,dx \\
            &= \int_{\partial K_1\cap S}(\beta\cdot \nu)u\phi\, ds + \int_{\partial K_2\cap S}(\beta\cdot \nu)u\phi\, ds\\
            & = \int_F \beta\cdot\jump{u}\phi\,ds.
        \end{aligned}
    \end{equation*}
    The claim then follows from a density argument.
\end{proof}

We thus obtain the following error estimate.
\begin{theorem}
    Assume that the solution $u\in U(h)$ to \eqref{eq:dg:weak} satisfies $u\in H^{k+1}(\Omega)$. Then there exists a $c>0$ independent of $h$ such that
    \begin{equation*}
        \norm{dg}{u-u_h} \leq c h^k |u|_{H^{k+1}(\Omega)}.
    \end{equation*}
\end{theorem}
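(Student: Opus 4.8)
The plan is to run the consistent form of the \nameref{thm:strang2}: the three ingredients have already been assembled, namely coercivity of $a_h$ in $\norm{dg}{\cdot}$, boundedness of $a_h$ in the augmented norm $\norm{dg}{\cdot}_*$ (\cref{lem:dg:bounded}), and consistency (\cref{lem:dg:consistent}). Because the consistency term in the second Strang lemma vanishes here, the whole estimate collapses to bounding the best approximation of $u$ in $V_h$ measured in $\norm{dg}{\cdot}_*$, which I then control by the elementwise interpolation estimates of \cref{thm:interp:local}.

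Concretely, I would set $w_h := \calI_\calT u$ and split $u-u_h = \eta - \xi_h$ with $\eta := u-w_h \in U(h)$ and $\xi_h := u_h - w_h \in V_h$ (note that $\calI_\calT u \in V_h$ automatically, since $V_h$ imposes no interelement continuity). Subtracting the Galerkin equation \eqref{eq:dg:galerkin} from the consistency identity of \cref{lem:dg:consistent} yields the Galerkin orthogonality $a_h(u-u_h,v_h)=0$ for all $v_h\in V_h$, so that $a_h(\xi_h,\xi_h)=a_h(u_h-u,\xi_h)+a_h(u-w_h,\xi_h)=a_h(\eta,\xi_h)$. Coercivity gives $\norm{dg}{\xi_h}^2\leq a_h(\xi_h,\xi_h)=a_h(\eta,\xi_h)$, and \cref{lem:dg:bounded} bounds the right-hand side by $C\norm{dg}{\eta}_*\norm{dg}{\xi_h}$, whence $\norm{dg}{\xi_h}\leq C\norm{dg}{\eta}_*$. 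The triangle inequality together with $\norm{dg}{\cdot}\leq\norm{dg}{\cdot}_*$ then delivers the reduction $\norm{dg}{u-u_h}\leq(1+C)\norm{dg}{u-\calI_\calT u}_*$.

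It remains to show $\norm{dg}{u-\calI_\calT u}_*\leq c\,h^{k}|u|_{H^{k+1}(\Omega)}$, which I would do element by element. Since $V_h$ uses $P_k$ shape functions, I apply \cref{thm:interp:local} with its degree parameter set to $k+1$ (so that $P_k\subset\hat\calP$) and use shape regularity to absorb $\sigma_K$, obtaining for $u\in H^{k+1}(K)$
\[
\norm*{\m{l2}(K)}{u-\calI_K u}\leq c\,h_K^{k+1}|u|_{H^{k+1}(K)},\qquad |u-\calI_K u|_{H^1(K)}\leq c\,h_K^{k}|u|_{H^{k+1}(K)}.
\]
The first estimate controls the $\m{l2}$ part of $\norm{dg}{\cdot}$, and the second controls the broken-gradient term via $\norm*{\m{l2}(K)}{\beta\cdot\nabla(u-\calI_K u)}\leq\norm{Linf}{\beta}\,|u-\calI_K u|_{H^1(K)}$. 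For the face terms $h_K^{-1}\norm*{\m{l2}(\partial K)}{u-\calI_K u}^2$ and the boundary term $\int_{\partial\Omega}\tfrac12|\beta\cdot\nu|(u-\calI_\calT u)^2\,ds$, I would invoke the scaled multiplicative trace inequality
\[
\norm*{\m{l2}(\partial K)}{w}^2\leq C\bigl(h_K^{-1}\norm*{\m{l2}(K)}{w}^2+h_K\,|w|_{H^1(K)}^2\bigr),
\]
proved by scaling to $\hat K$ and the trace theorem (\cref{thm:weak:trace}). Inserting $w=u-\calI_K u$ and the two interpolation estimates gives $\norm*{\m{l2}(\partial K)}{u-\calI_K u}^2\leq C\,h_K^{2k+1}|u|_{H^{k+1}(K)}^2$, so that each face term is $O(h_K^{2k})$ and each boundary contribution is $O(h_K^{2k+1})$. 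Summing over all $K$ (using $h_K\leq h$ and the bounded number of faces per element) yields $\norm{dg}{u-\calI_\calT u}_*^2\leq C\,h^{2k}|u|_{H^{k+1}(\Omega)}^2$, and combining with the reduction above proves the claim.

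The main obstacle is the correct bookkeeping of the face and boundary terms of $\norm{dg}{\cdot}_*$: one must establish the scaled trace inequality with exactly the $h_K$-weights appearing in the norm and then check that, after inserting the interpolation estimates, every single contribution carries at least the power $h^{2k}$ (the delicate case being the $h_K^{-1}$-weighted jump term). A secondary, more cosmetic, point is the well-definedness of the nodal interpolant $\calI_\calT u$: for $u\in H^{k+1}(\Omega)$ this relies on the Sobolev embedding $H^{k+1}(K)\hookrightarrow C^0(\bar K)$ (i.e. $k+1>\tfrac n2$), and if that fails one simply replaces $\calI_\calT u$ by the elementwise $\m{l2}$-projection onto $P_k$, for which the same Bramble--Hilbert estimates hold.
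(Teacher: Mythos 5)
Your proposal is correct and follows essentially the same route as the paper: reduce to the best-approximation error in $\norm{dg}{\cdot}_*$ via consistency, coercivity in $\norm{dg}{\cdot}$ and boundedness in $\norm{dg}{\cdot}_*$ (the second Strang lemma mechanism), then insert the nodal interpolant and use the three elementwise estimates $h_K^{k+1}$, $h_K^{k}$ and $h_K^{k+1/2}$ for the $L^2$, $H^1$ and face terms. You merely spell out the details the paper leaves implicit — the explicit Galerkin-orthogonality/triangle-inequality reduction and the scaled multiplicative trace inequality behind the $h_K^{k+1/2}$ face bound — and your bookkeeping of the $h_K^{-1}$-weighted face term is right.
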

\begin{proof}
    Since $a_h:U(h)\times V_h\to\R$ is consistent, continuous with respect to the $\norm{dg}{\cdot}_*$ norm, and coercive with respect to the $\norm{dg}{\cdot}$ norm, we deduce as in the \nameref{thm:strang2} that
    \begin{equation*}
        \norm{dg}{u-u_h} \leq c \inf_{w_h\in V_h} \norm{dg}{u-w_h}_*.
    \end{equation*}
    Assuming that $u$ is sufficiently smooth that the local interpolant $\calI_K u$ is well-defined, we can show by the usual arguments that
    \begin{align*}
        \norm*{L^2(K)}{u-\calI_K u} &\leq c h_K^{k+1}|u|_{H^{k+1}(K)},\\
        |{u-\calI_K u}|_{H^1(K)} &\leq c h_K^{k}|u|_{H^{k+1}(K)},\\
        \norm*{L^2(\partial K)}{u-\calI_K u} &\leq c h_K^{k+1/2}|u|_{H^{k+1}(K)}.
    \end{align*}
    Applying these bounds in turn to each term in $\norm{dg}{u-\calI_\calT u}_*$ yields the desired estimate.
\end{proof}
Note that since we could only show coercivity with respect to $\norm{dg}{\cdot}$ (and $u-u_h$ is not in a finite-dimensional space), we only get an error estimate in this (weaker) norm of $\m{l2}$ type, while the approximation error needs to be estimated in the (stronger) $\m{h1}$-type norm $\norm{dg}{\cdot}_*$. On the other hand, we would expect a convergence order $h^{k+1/2}$  for the discretization error in an $\m{l2}$-type norm (involving interface terms). This discrepancy is due to the simple penalty we added, which is insufficient to control oscillations. (The penalty only canceled the interface terms arising in the integration by parts, but did not contribute further in the coercivity). A more stable alternative is \emph{upwinding}: Take
\begin{equation*}
    a_h^+(u_h,v_h) = a_h(u_h,v_h) + \sum_{F\in\Gamma_h} \int_F \frac\eta{2}|\beta\cdot\nu|\jump{u_h}\cdot\jump{v_h}\,ds
\end{equation*}
for a sufficiently large penalty parameter $\eta>0$. It can be shown\footnote{e.g., \cite[Chapter 2.3]{DiPietro:2012}} that this bilinear form is consistent as well, and is coercive in the norm
\begin{equation*}
    \norm{dg}{w}_+^2 = \norm{dg}{w}^2 +  \sum_{F\in\Gamma_h} \int_F \frac\eta{2}|\beta\cdot\nu|\jump{w}^2\,ds +  \sum_{K\in\calT_h}  h_K \norm*{\m{l2}(K)}{\beta\cdot\nabla w}^2
\end{equation*}
and continuous in
\begin{equation*}
    \norm{dg}{w}_{+,*}^2 =   \norm{dg}{w}_+^2 +  \sum_{K\in\calT_h} \left(h_K^{-1}\norm*{\m{l2}(K)}{w}^2 + \norm*{\m{l2}(\partial K)}{w}^2\right),
\end{equation*}
which can be used to obtain the expected convergence order of $h^{k+1/2}$ (which is useful in the case $k=0$ as well).

\section{Discontinuous Galerkin methods for elliptic equations}

Due to their flexibility, discontinous Galerkin methods have become popular for elliptic second-order problems as well. We illustrate the approach with the simplest example, the Poisson equation $-\Delta u = f$ on $\Omega\subset \R^n$ with homogeneous Dirichlet conditions.
The basic idea is to write the second-order equation as a system of first-order equations, for which we can proceed as before via element-wise integration by parts to obtain face integrals that can be used as penalty terms in place of the dropped continuity requirement and boundary condition on the discrete solution. For $u\in H^1(\Omega)$, we thus again introduce $\sigma:=\nabla u\in L^2(\Omega)^n$ so that the Poisson equation reduces to $-\nabla \cdot \sigma = f$. We now multiply these two equations with (sufficiently smooth) test functions $\tau \in C^\infty(\overline\Omega)^n$ and $v\in C^\infty(\overline\Omega)$, respectively,
and integrate by parts separately on each element $K$ of a triangulation $\calT_h$ of $\Omega$ to obtain
\begin{equation}\label{eq:dg:poisson:flux}
    \left\{\begin{aligned}
            \sum_{K\in\calT_h} \int_K \sigma\cdot \tau\,dx + \sum_{K\in \calT_h} \int_K u \nabla\cdot\tau\,dx - \sum_{K\in \calT_h} \int_{\partial K} u\,( \tau\cdot\nu)\,ds &= 0,\\
            \sum_{K\in\calT_h} \int_K \sigma\cdot \nabla v \,dx - \sum_{K\in \calT_h} \int_{\partial K} (\sigma\cdot\nu)\, v\,ds &= (f,v).
    \end{aligned}\right.
\end{equation}
The idea is now to replace $u$ and $\sigma$ in the face integrals by a suitable approximations $\hat u_F$ of $u$ and $\hat \sigma_F$ of $\nabla u$ (sometimes called \emph{potential} and  \emph{diffusive flux}, respectively) and then eliminating $\sigma$ (but not $\hat \sigma$). Inserting $\tau=\nabla v$ in the first equation of \eqref{eq:dg:poisson:flux} and integrating by parts in the second term yields, on each element, after rearranging
\begin{equation*}
    \int_K \sigma\cdot \nabla v\,dx = \int_K \nabla u\cdot \nabla v\,dx - \int_{\partial K} u\, (\nabla v\cdot\nu)\,ds  +\int_{\partial K} \hat u_F\, (\nabla v\cdot\nu)\,ds.
\end{equation*}
Inserting this into the left-hand side of the second equation then yields (using the definition of the broken gradient)
\begin{equation}\label{eq:dg:poisson:primal}
    \begin{aligned}[t]
        a_h(u,v) &:=  (\nabla_h u, \nabla_h v) + \sum_{K\in \calT_h} \int_{\partial K} (\hat u_F-u)\, (\nabla v\cdot\nu)\,ds - \sum_{K\in \calT_h} \int_{\partial K} (\hat\sigma_F\cdot\nu)\, v\,ds \\
        &= (f,v).
    \end{aligned}
\end{equation}

The next step is to rearrange the sum over element boundary integrals into a sum over faces. A straightforward computation shows that for piecewise smooth scalar-valued $v$ and vector-valued $\tau$,
\begin{equation}\label{eq:dg:discrete_partint}
    \sum_{K\in \calT_h} \int_{\partial K} v\, (\tau\cdot\nu)\,ds = \sum_{F\in\partial\calT_h} \int_F \jump{v}\cdot\avg{\tau}\,ds + \sum_{F\in\Gamma_h} \int_F \avg{v}\jump{\tau}\,ds
\end{equation}
(recalling that the jump of a scalar function is vector-valued, while that of a vector-valued is scalar; see \cref{sec:error:residual}). Before applying this to the terms in \eqref{eq:dg:poisson:primal}, however, we first discuss the choice of fluxes, each of which leads to a different discontinous Galerkin approach. A popular choice\footnote{Other choices are discussed in \cite{Arnold:2002}.} is the \emph{symmetric interior penalty} method, which corresponds to setting
\begin{equation*}
    \hat u_F := \avg{u}_F\quad\text{for }F\in\Gamma_h,\qquad \hat u_F = 0\quad\text{for }F\in\calT_h\setminus\Gamma_h,\qquad  \hat \sigma_F := \avg{\nabla_h u}_F - \frac\eta{h_F}\jump{u}_F,
\end{equation*}
where $h_F$ is the diameter of the face $F\in \partial\calT_h$ and $\eta>0$ has to be chosen sufficiently large. (The specific form of the second term will again become clear when discussing coercivity below.)
With these choices, applying \eqref{eq:dg:discrete_partint} to \eqref{eq:dg:poisson:primal} and using that $\avg{\avg{w}}=\avg{w}$ and $\jump{\avg{w}}=\jump{\jump{w}}=0$ for all $w$, we arrive at
\begin{equation}\label{eq:dg:poisson:sipg}
    a_h(u,v) = (\nabla_h u, \nabla_h v) - \sum_{F\in\partial\calT_h}\int_F\jump{u}\cdot\avg{\nabla_h v}+\avg{\nabla_h u}\cdot\jump{v}\,ds +\int_F \frac\eta{h_F}\jump{u}\jump{v}\,ds.
\end{equation}

\bigskip

As usual in a discontinuous Galerkin method, we now choose
\begin{equation*}
    V_h = \setof{v\in\m{L2}}{v|_K \in P_k, K\in\calT_h}
\end{equation*}
and search for $u_h\in V_h$ satisfying
\begin{equation}\label{eq:dg:sip}
    a_h(u_h,v_h) = (f,v_h) \qquad\text{for all }v_h\in V_h.
\end{equation}
To show well-posedness using the \nameref{thm:BNB} theorem, we need to show continuity and coercivity of $a_h$ with respect to appropriate norms. We again postpone continuity (in an equivalent norm) to later, and address coercivity with respect to the discrete norm
\begin{equation*}
    \norm{dg}{v_h}^2 := \norm*{\m{L2}^n}{\nabla_h v}^2 + |v_h|_{\Gamma_h}^2,
\end{equation*}
with the \emph{jump seminorm}
\begin{equation*}
    |v_h|_{\Gamma_h}^2 := \sum_{F\in\partial\calT_h} h_F^{-1}\norm*{L^2(F)^n}{\jump{v_h}}^2;
\end{equation*}
for $F\subset \partial\Omega$ we use the convention that $u=0$ outside of $\Omega$.
This is indeed a norm on $V_h$ since $\norm{dg}{v_h}=0$ implies first that $v_h$ is piecewise constant; and since the function vanishes on the boundary and the interface jumps are zero, these constants are zero.

Again we postpone continuity to later and first verify the coercivity of $a_h$ with respect to $\norm{dg}{\cdot}$.
\begin{lemma}\label{lem:dg:elliptic_dg_coercive}
    For all $\eta> 0$ sufficiently large, there exists a $C>0$ independent of $h$ such that
    \begin{equation*}
        a_h(u_h,u_h) \geq C \norm{dg}{u_h}^2\qquad\text{for all }u_h\in V_h.
    \end{equation*}
\end{lemma}
\begin{proof}
    For arbitrary $u_h\in V_h$, we have using the definition of the broken gradient and the jump seminorm that
    \begin{equation*}
        a_h(u_h,u_h) = \norm*{\m{L2}^n}{\nabla_h u_h}^2 - 2\sum_{F\in \partial\calT_h} \int_F \avg{\nabla_h u_h}\cdot\jump{u_h} \,ds + \eta |u_h|_{\Gamma_h}^2.
    \end{equation*}
    Since the second term has the wrong sign, we need to absorb it into the other terms. For this, we use that for any piecewise smooth $v,w$ and every $F\in \partial\calT_h$, the Cauchy--Schwarz inequality yields
    \begin{equation*}
        \begin{aligned}
            \int_F \avg{\nabla_h v}\cdot \jump{w}\,ds &=
            \int_F \frac12\left(\nabla_h v|_{K_1}+\nabla_h v|_{K_2}\right)\cdot \jump{w}\,ds\\
            &\leq \frac{h_F^{1/2}}{2}\left(\norm*{\m{l2}(F)^n}{\nabla_h v|_{K_1}}^2+\norm*{\m{l2}(F)^n}{\nabla_h v|_{K_2}}^2\right)^{\frac12} h_F^{-1/2} \norm*{\m{l2}(F)^n}{\jump{w}}.
        \end{aligned}
    \end{equation*}
    Summing over all faces and using the fact that each interior face occurs twice and that for boundary faces we set $v=w=0$ outside of $\Omega$, we obtain
    \begin{equation}\label{eq:dg:sip:estimate1}
        \sum_{F\in\partial\calT_h}\int_F  \avg{\nabla_h v}\cdot \jump{w}\,ds \leq
        \left(\sum_{K\in\calT_h}\sum_{F\subset\partial K} h_F \norm*{\m{l2}(F)^n}{\nabla_h v}^2\right)^{\frac12} |w|_{\Gamma_h}.
    \end{equation}
    For $v_h\in V_h$, we can further use the discrete trace inequality \eqref{eq:dg:inverse} together with $h_F\leq h_K$ for all faces $F$ of $K$ to arrive at
    \begin{equation} \label{eq:dg:sip:estimate2}
        \begin{aligned}[t]
            \sum_{F\in\partial\calT_h}\int_F  \avg{\nabla_h v_h}\cdot \jump{w}\,ds &\leq
            C \left(\sum_{K\in\calT_h} \norm*{\m{l2}(K)^n}{\nabla_h v_h}^2\right)^{\frac12} |w|_{\Gamma_h}\\
            &=C\norm*{\m{L2}^n}{\nabla_h v_h}|w|_{\Gamma_h}.
        \end{aligned}
    \end{equation}
    Applying this estimate for $v_h=w=u_h$ together with the generalized Young inequality $ab \leq \frac\eps2 a^2 + \frac1{2\eps}b^2$ for arbitrary $\eps>0$ then yields that
    \begin{equation*}
        \begin{aligned}
            a_h(u_h,u_h) &\geq \norm*{\m{L2}^n}{\nabla_h u_h}^2 - 2C \norm*{\m{L2}^n}{\nabla_h u_h}|u_h|_{\Gamma_h} + \eta |u_h|_{\Gamma_h}^2\\
            &\geq (1-C\eps) \norm*{\m{L2}^n}{\nabla_h u_h}^2 + (\eta - C\eps^{-1})|u_h|_{\Gamma_h}^2.
        \end{aligned}
    \end{equation*}
    We can now first choose $\eps>0$ sufficiently small that the first term is positive, and then $\eta>0$ sufficiently large that the second term is positive, which implies coercivity in the desired norm.
\end{proof}

\bigskip

For error estimates, we again need to show boundedness of $a_h$ on a space containing both discrete and exact solutions. Here we assume that the exact solution of the Poisson equation satisfies
\begin{equation*}
    u\in U_* := \m{H10}\cap\m{H2},
\end{equation*}
see \cref{thm:weak:regularity:smooth} or \cref{thm:weak:regularity:convex}, and endow $U(h):=U_* + V_h$ with the norm
\begin{equation*}
    \norm{dg}{w}_*^2 := \norm{dg}{w}^2 + \sum_{K\in\calT_h} h_K \norm*{\m{l2}(\partial K)^n}{\nabla_h w}^2.
\end{equation*}
With respect to this norm, $a_h$ is bounded in $u$.
\begin{lemma}\label{lem:dg:elliptic_bounded}
    If $\calT_h$ is a shape-regular triangulation of $\Omega$, then there exists a constant $C>0$ independent of $h$ such that
    \begin{equation*}
        a_h(u,v_h) \leq C \norm{dg}{u}_* \norm{dg}{v_h}\quad\text{for all }u\in U(h),\ v_h\in V_h.
    \end{equation*}
\end{lemma}
\begin{proof}
    We estimate for $u\in U_*$ and $v_h\in V_h$ each term in $a_h(u,v_h)$ separately.
    \begin{enumerate}[(i)]
        \item For the first term, the Cauchy--Schwarz inequality immediately yields
            \begin{equation*}
                (\nabla_h u,\nabla_h v_h) \leq \norm*{\m{L2}^n}{\nabla_h u}\norm*{\m{L2}^n}{\nabla_h v_h}.
            \end{equation*}

        \item For the second term, we apply the estimate \eqref{eq:dg:sip:estimate2} for $v= v_h$ and $w=u$ to obtain
            \begin{equation*}
                \sum_{F\in\partial\calT_h}\int_F\jump{u}\cdot\avg{\nabla_h v_h} \leq
                C \norm*{\m{L2}^n}{\nabla_h v_h}|u|_{\Gamma_h}.
            \end{equation*}

        \item For the third term, we apply the estimate \eqref{eq:dg:sip:estimate1} for $v= u$ and $w=v_h$ to obtain
            \begin{equation*}
                \sum_{F\in\partial\calT_h}\int_F\jump{v_h}\cdot\avg{\nabla_h u} \leq
                \left(\sum_{K\in\calT_h} h_K \norm*{\m{l2}(\partial K)^n}{\nabla_h u}^2\right)^{\frac12} |v_h|_{\Gamma_h},
            \end{equation*}
            using again that $h_F \leq h_K$ for all faces $F$ of $K$.

        \item For the last term, we again obtain from the Cauchy--Schwarz inequality that
            \begin{equation*}
                \sum_{F\in\partial\calT_h}\int_F \frac\eta{h_F}\jump{u}\jump{v_h}\,ds \leq \eta |u|_{\Gamma_h} |v_h|_{\Gamma_h}.
            \end{equation*}
    \end{enumerate}
    Since all of the terms appearing on the right-hand sides are parts of the definition of $\norm{dg}{u}_*$ and $\norm{dg}{v_h}$, respectively, we conclude the desired estimate.
\end{proof}
Note that for $u_h\in V_h$, we could have used in step (iii) the estimate \eqref{eq:dg:sip:estimate2} as well to avoid the extra term in the definition of $\norm{dg}{u}_*$. From this, we have for $u_h\in V_h$ that
\begin{equation*}
    a_h(u_h,v_h) \leq C \norm{dg}{u} \norm{dg}{v_h},
\end{equation*}
i.e., the continuity necessary to apply the \nameref{thm:BNB} theorem.
\begin{cor}\label{cor:dg:elliptic_wellposed}
    If $\calT_h$ is a shape-regular triangulation of $\Omega$ and $\eta$ is sufficiently large,
    then there exists a unique solution $u_h \in V_h$ to \eqref{eq:dg:sip}. Furthermore, there exists a constant $C>0$ such that
    \begin{equation*}
        \norm{dg}{u_h} \leq C \norm{L2}{f}.
    \end{equation*}
\end{cor}

With the same arguments as in \cref{lem:dg:consistent}, one can show that any $u\in \m{H2}$ satisfies $\jump{u}_F=0$ and $\jump{\nabla u}_F = 0$. Hence the exact  solution $u\in U_*$ satisfies \eqref{eq:dg:sip}, and we can apply the
\nameref{thm:strang2} to obtain
\begin{equation*}
    \norm{dg}{u-u_h} \leq C \inf_{w_h\in V_h}\norm{dg}{u-w_h}_*.
\end{equation*}
Estimating the best approximation error by the interpolation error and applying the usual estimates for each term in $\norm{dg}{\cdot}_*$ (noting that the appearance of $h_K$ in the gradient term compensates for the lower power $h_K^{k-1}$ in the corresponding estimate), we obtain for a solution $u\in H^{k+1}(\Omega)$ the a priori error estimate
\begin{equation*}
    \norm{dg}{u-u_h} \leq C h^k |u|_{H^{k+1}(\Omega)}.
\end{equation*}
Due to the face term in $\norm{dg}{\cdot}$, this estimate is optimal; a duality trick then yields a convergence rate of $\mathcal{O}(h^{k+1})$ for the discretization error in the $L^2$ norm (which is useful even for $k=0$).

\section{Implementation}

As in the standard Galerkin approach, the assembly of the stiffness matrix is carried out by choosing a suitable nodal basis $\phi_1,\dots,\phi_N$ of $V_h$ and computing the entries  $a_h(\phi_i,\phi_j)$ element-wise by transformation to a reference element. For discontinous Galerkin methods, there are two important differences:
\begin{enumerate}
    \item Since the functions in $V_h$ can be discontinuous across elements, the degrees of freedom of each element decouple from the remaining elements.
    \item There are terms arising from integration over interior as well as boundary faces.
\end{enumerate}
These require some modifications to the assembly procedure described in \cref{sec:implementation:assembly}.

Due to the first point, we can take each basis function $\phi_i$ to have support on only one element. Our set of global basis functions is thus just the union of the sets of local basis functions on each element $K\in\calT_h$ (extended to zero outside $K$), which are constructed as in \cref{chap:elements}.
Note that this implies that nodes (the interpolation points for each degree of freedom) common to multiple element domains have to be treated as distinct (e.g., a node on a vertex where $m$ elements meet corresponds to $m$ degrees of freedom, one for each element). The dimension of $V_h$ is thus equal to the sum of the local degrees of freedom over all elements, and thus greater than for standard finite elements.

In particular, if the global basis functions are enumerated such that the local basis functions in each element are numbered contiguously, the mass matrix $\mathbf{M}$ with elements $M_{ij}=(\phi_i,\phi_j)$ is then \emph{block diagonal}, where each block corresponds to one element. For the stiffness matrix $\mathbf{K}$, the terms arising from volume integrals are similarly block diagonal, but they are coupled via the terms arising from the integrals over interior faces. It is thus convenient to separately assemble the contributions to the bilinear form $a_h$ from volume integrals, interior face integrals and boundary face integrals:
\begin{itemize}
    \item The \emph{volume terms} are assembled as described in \cref{sec:implementation:assembly}, making use of the simple form of the local-to-global index.

    \item  For the \emph{interior face terms}, one needs a list \texttt{interfaces} of interior faces, which contains for each face $F$ the two elements $K_1,K_2$ sharing it, as well as the location of the face relative to each element. For each pair of basis functions from the two elements (obtained via the list \texttt{elements}), one can then (by transformation to the reference element and, if necessary, numerical quadrature) compute the corresponding integrals, recalling for the computation of jumps and averages that each local basis function is zero outside its element, and that the unit normals can be obtained from the reference element (where they are known) by transformation.

    \item The \emph{boundary terms} are similarly assembled using the list \verb!bdy_faces!, where for advection-reaction equations, one has to check on each face the sign of $\beta(x)\cdot \nu_F$ to decide whether it is part of the inflow boundary $\partial\Omega^-$ where the boundary condition has to be prescribed.
\end{itemize}

\part{Time-dependent problems}

\chapter{Variational theory of parabolic PDEs}

In this chapter, we study time-dependent partial differential equations. For example, if $-\Delta u = f$ (together with appropriate boundary conditions) describes the temperature distribution $u$ in a body due to the heat source $f$ at equilibrium, the \emph{heat equation}
\begin{equation*}
    \left\{\begin{aligned}
            \partial_t u(t,x) - \Delta u(t,x) &= f(t,x),\\
            u(0,x) &= u_0(x),
    \end{aligned}\right.
\end{equation*}
describes the evolution in time of $u$ starting from the given initial temperature distribution $u_0$ (called \emph{initial condition} in this context). This is a \emph{parabolic} equation, since the spatial partial differential operator $-\Delta$ is elliptic and only the first time derivative of $u$ appears.

\section{Function spaces}

To specify the weak formulation of parabolic problems, we first need to fix the proper functional-analytic framework. Let $T>0$ be a fixed time and $\Omega\subset\R^n$ be a domain, and set $Q:=(0,T)\times\Omega$. To respect the special role of the time variable, we consider a real-valued function $u(t,x)$ on $Q$ as a function of $t$ with values in a Banach space $V$ consisting of functions depending on $x$ only:
\begin{equation*}
    u:(0,T) \to V, \qquad t\mapsto u(t) \in V.
\end{equation*}
Similarly to the real-valued case, we define the following function spaces:
\begin{itemize}
    \item \emph{Hölder spaces}: For $k\geq 0$, define $\m{CkT}{V}$ as the space of all $V$-valued functions on $[0,T]$ which are $k$ times continuously differentiable with respect to $t$. Denote by $d^j_t u$ the $j$th derivative of $u$. Then $\m{CkT}{V}$ is a Banach space when equipped with the norm
        \begin{equation*}
            \norm*{\m{CkT}{V}}{u} := \sum_{j=0}^k\sup_{t\in[0,T]}  \norm*{V}{d^j_t u(t)}
        \end{equation*}

    \item \emph{Lebesgue spaces} (also called \emph{Bochner spaces}):\footnote{For a rigorous definition, see \cite[\S\,24]{Wloka:1987}} For $1\leq p\leq \infty$, define $\m{LpT}{V}$ as the space of all $V$-valued functions on $(0,T)$ for which $t\mapsto\norm*{V}{u(t)}$ is a function in $L^p(0,T)$. This is a Banach space if equipped with the norm
        \begin{equation*}
            \norm*{\m{LpT}{V}}{u} := \begin{cases}
                \left(\int_0^T \norm*{V}{u(t)}^p\,dt\right)^{\frac1p}  & \text{ if }p<\infty,\\
                \mathrm{ess}\sup_{t\in(0,T)} \norm*{V}{u(t)} & \text{ if } p = \infty.
            \end{cases}
        \end{equation*}

    \item \emph{Sobolev spaces}: If $u\in\m{LpT}{V}$ has a weak derivative $d_t u$ (defined in the usual fashion via the integration-by-parts formula \eqref{eq:weak:derivative} with \emph{scalar} test functions) in $\m{LpT}{V}$, we say that $u\in W^{1,p}(0,T;V)$. This is a Banach space if equipped with the norm
        \begin{equation*}
            \norm*{ W^{1,p}(0,T;V)}{u} := \norm*{\m{LpT}{V}}{u} +  \norm*{\m{LpT}{V}}{d_t u}.
        \end{equation*}
        More generally, for $1 < p,q <  \infty$ and two reflexive Banach spaces $V_0,V_1$ with continuous embedding $V_0\hookrightarrow V_1$, we set        \begin{equation*}
            W^{1,p,q}(0,T;V_0,V_1) := \setof{v\in \m{LpT}{V_0}}{d_tv \in \m{LqT}{V_1}}.
        \end{equation*}
        This is a Banach space if equipped with the norm
        \begin{equation*}
            \norm*{W(0,T;V_0,V_1)}{u} := \norm*{\m{LpT}{V_0}}{u} +  \norm*{\m{LqT}{V_1}}{d_t u}.
        \end{equation*}
\end{itemize}

Of particular importance is the case $q = p/(p-1)$ (i.e., $1/p+1/q=1$) and $V_1 = V_0^*$, since in this case $\m{LpT}{V}^*$ can be identified with $\m{LqT}{V^*}$;\footnote{see, e.g., \cite[Theorem 8.20.3]{Edwards1965}} this is relevant because we later want to test $d_j u(t)$ with $v\in \m{LpT}{V}$.
We can then transfer (via mollifiers)\footnote{For proofs of this and the following result, see, e.g., \cite[Proposition III.1.2, Corollary III.1.1]{Showalter:1997}, \cite[Theorem 25.5 (with obvious modifications)]{Wloka:1987}} the usual calculus rules to $W^{p}(0,T;V):=W^{1,p,q}(0,T;V,V^*)$.

Similarly to the Rellich--Kondrachov theorem, we can now ask whether we can use the integrability of $d_tu$ to obtain more regularity for $u$ itself and, in particular, to deduce that $u$ is continuous in time.
This requires an additional assumption linking $V$ and $V^*$.
Let $V$ be a reflexive Banach space with continuous and dense embedding into a Hilbert space $H$. Identifying $H^*$ with $H$ using the Riesz representation theorem, we have
\begin{equation*}
    V\hookrightarrow H \cong H^* \hookrightarrow V^*
\end{equation*}
with dense embeddings. We call $(V,H,V^*)$ \emph{Gelfand} or \emph{evolution triple}.
\begin{theorem}\label{thm:aubin}
    Let $1<p<\infty$ and $(V,H,V^*)$ be a Gelfand triple. Then the embedding
    \begin{equation*}
        W^{p}(0,T;V) \hookrightarrow \m{CT}{H}
    \end{equation*}
    is continuous.
\end{theorem}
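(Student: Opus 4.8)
The plan is to deduce the embedding from a single a priori estimate on smooth functions, which is then propagated to all of $W^{1,p}(V,V^*)$ by density. The one nontrivial tool I would invoke is the chain rule granted just above the statement (``transfer via mollifiers the usual calculus rules''): for $u\in W^{1,p}(V,V^*)$ the scalar map $t\mapsto\norm*{H}{u(t)}^2$ is absolutely continuous and
\[
    \norm*{H}{u(t)}^2 - \norm*{H}{u(s)}^2 = 2\int_s^t \dual{V}{d_t u(\tau)}{u(\tau)}\,d\tau
    \qquad\text{for all } 0\le s\le t\le T.
\]
Here I use that the Gelfand-triple identification $H\equiv H^*$ makes the pairing $\dual{V}{d_t u}{u}$ coincide with $(d_t u,u)_H$ whenever both sit in $H$, and that this pairing is integrable by Hölder (see below). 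This identity is the heart of the matter and is exactly what the excerpt lets me assume.

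First I would derive the uniform bound. Fix a smooth $V$-valued $u$. Estimating the right-hand side above and applying Hölder's inequality in time with $1/p+1/q=1$,
\[
    \norm*{H}{u(t)}^2 \le \norm*{H}{u(s)}^2 + 2\int_0^T\bigl|\dual{V}{d_t u}{u}\bigr|\,d\tau
    \le \norm*{H}{u(s)}^2 + 2\,\norm*{\m{LqT}{V^*}}{d_t u}\,\norm*{\m{LpT}{V}}{u}.
\]
To control the first term I exploit the continuous embedding $V\hookrightarrow H$, which gives $\norm*{L^p(0,T)}{\norm*{H}{u(\cdot)}}\le C\norm*{\m{LpT}{V}}{u}$; hence there is some $s_0\in(0,T)$ with $\norm*{H}{u(s_0)}\le T^{-1/p}\norm*{L^p(0,T)}{\norm*{H}{u(\cdot)}}$. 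Choosing $s=s_0$, taking the supremum over $t$, and absorbing the cross term with Young's inequality $2ab\le a^2+b^2$, I obtain
\[
    \norm*{\m{CT}{H}}{u}^2 = \sup_{t\in[0,T]}\norm*{H}{u(t)}^2
    \le C\Bigl(\norm*{\m{LpT}{V}}{u}^2 + \norm*{\m{LqT}{V^*}}{d_t u}^2\Bigr)
    \le C\,\norm*{W^{1,p}(V,V^*)}{u}^2 .
\]
Note the averaging step replacing $s$ by a good point $s_0$ is what makes the argument work uniformly in $1<p<\infty$, rather than only for $p\ge 2$ where one could bound $\norm*{\m{L2T}{V}}{u}$ directly.

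Finally I would run the density argument. Smooth functions are dense in $W^{1,p}(V,V^*)$, so given $u$ I take smooth $u_n\to u$ in $W^{1,p}(V,V^*)$; applying the estimate to $u_n-u_m$ shows $(u_n)$ is Cauchy in $\m{CT}{H}$ and hence converges to some $\tilde u\in\m{CT}{H}$. Since $u_n\to u$ also in $\m{LpT}{H}$ (again via $V\hookrightarrow H$), uniqueness of limits gives $\tilde u=u$ almost everywhere, so $u$ admits a representative in $\m{CT}{H}$; passing to the limit in the estimate yields the continuity of the embedding. The main obstacle is genuinely the chain-rule identity (needing mollification in time and the evolution-triple structure, plus the integrability of $\dual{V}{d_t u}{u}$), but as this is supplied by the preceding discussion, the remaining work is the bookkeeping above and the observation that the density/Cauchy argument delivers \emph{strong} continuity in $H$, not merely the pointwise bound.
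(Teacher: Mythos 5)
The paper offers no proof of this theorem -- it defers to \cite[Proposition III.1.2]{Showalter:1997} and \cite[Theorem 25.5]{Wloka:1987} -- and your argument is exactly the standard one found in those references: the identity $\|u(t)\|_H^2-\|u(s)\|_H^2=2\int_s^t\langle d_t u(\tau),u(\tau)\rangle\,d\tau$ for smooth $u$, H\"older and Young to obtain the a priori bound $\sup_{t}\|u(t)\|_H\le C\|u\|_{W^{1,p}(V,V^*)}$ (with the mean-value choice of $s_0$ handling the initial term uniformly in $1<p<\infty$), and density of smooth functions to conclude. The proof is correct; in particular, you avoid the one potential circularity by using the chain-rule identity in the estimate only for smooth functions, where it is elementary calculus, rather than relying on \cref{lem:evolution:intpart} for general $u$, whose statement already presupposes the well-defined traces $u(0),u(T)\in H$ that this theorem is meant to provide.
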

This result guarantees that functions in $W^{p}(0,T;V)$ have well-defined traces $u(0),u(T)\in H$, which is important to make sense of the initial condition $u(0)=u_0$.

We also need the following integration by parts formula (where now the test function is Banach-space valued).
\begin{lemma}\label{lem:evolution:intpart}
    Let $(V,H,V^*)$ be a Gelfand triple. Then for every $u,v\in W^{p}(0,T;V)$,
    \begin{equation*}
        \frac{d}{dt}\scalprod*{H}{u(t)}{v(t)} = \dual{V}{d_t u(t)}{v(t)} + \dual{V}{d_t v(t)}{u(t)}\quad\text{for a.e. } t\in (0,T),
    \end{equation*}
    and hence
    \begin{equation*}
        \int_0^T\dual{V}{d_t u(t)}{v(t)}\,dt = \scalprod*{H}{u(T)}{v(T)}- \scalprod*{H}{u(0)}{v(0)} -  \int_0^T\dual{V}{d_t v(t)}{u(t)}\,dt.
    \end{equation*}
\end{lemma}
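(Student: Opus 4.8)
The plan is to prove the identity first for smooth functions and then pass to the general case by a density argument, exactly the way the calculus rules for $W^{1,p}(V,V^*)$ are transferred via mollifiers. First I would establish the formula for $u,v$ in a dense class of smooth functions, say $C^1([0,T];V)$. For such functions $d_tu(t)$ and $d_tv(t)$ lie in $V\hookrightarrow H$, so the classical product rule in the Hilbert space $H$ gives
\[
\frac{d}{dt}\scalprod*{H}{u(t)}{v(t)} = \scalprod*{H}{d_tu(t)}{v(t)} + \scalprod*{H}{u(t)}{d_tv(t)}.
\]
The key point is the identification coming from the Gelfand triple $V\hookrightarrow H\equiv H^*\hookrightarrow V^*$: whenever $w\in H$ and $\phi\in V$, the duality pairing restricts to the inner product, $\dual{V}{w}{\phi} = \scalprod*{H}{w}{\phi}$. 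Applying this with $w=d_tu(t)\in H$, $\phi=v(t)\in V$ (and symmetrically) rewrites the right-hand side as $\dual{V}{d_tu(t)}{v(t)} + \dual{V}{d_tv(t)}{u(t)}$, which is the pointwise formula; integrating over $(0,T)$ and using the fundamental theorem of calculus yields the integrated version with the boundary terms $\scalprod*{H}{u(T)}{v(T)}-\scalprod*{H}{u(0)}{v(0)}$.

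Next I would remove the smoothness assumption. Choosing sequences $u_n\to u$ and $v_n\to v$ in $W^{1,p}(V,V^*)$ with $u_n,v_n$ smooth, I would pass to the limit in the integrated identity. The two bulk integrals converge because each is a continuous bilinear pairing of $\m{LqT}{V^*}$ with $\m{LpT}{V}$ (Hölder's inequality in time combined with the $V^*\times V$ pairing), while $d_tu_n\to d_tu$ in $\m{LqT}{V^*}$ and $v_n\to v$ in $\m{LpT}{V}$ (and symmetrically). The boundary terms converge because, by the continuous embedding $W^{1,p}(V,V^*)\hookrightarrow\m{CT}{H}$ of \cref{thm:aubin}, we have $u_n(t)\to u(t)$ and $v_n(t)\to v(t)$ in $H$ uniformly in $t$, in particular at $t=0$ and $t=T$, so the $H$-inner products converge. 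This establishes the integrated formula for all $u,v\in W^{1,p}(V,V^*)$. Running the same argument with $T$ replaced by an arbitrary $t\in[0,T]$ shows that $t\mapsto\scalprod*{H}{u(t)}{v(t)}$ agrees with an absolutely continuous function whose derivative is the claimed right-hand side, which yields the pointwise a.e.\ statement upon differentiation.

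The main obstacle is the density of smooth $V$-valued functions in $W^{1,p}(V,V^*)$, which underlies the whole scheme and is precisely the step the text defers to mollifier arguments. The delicate part there is that naive mollification in time is not defined near the endpoints $0$ and $T$; one must first extend $u$ (e.g.\ by reflection or via a bounded extension operator) to a slightly larger interval, mollify, and restrict, checking that the regularizations converge to $u$ in $\m{LpT}{V}$ and their time derivatives to $d_tu$ in $\m{LqT}{V^*}$. A secondary technical point is ensuring that the continuous representative furnished by \cref{thm:aubin} is the one for which the boundary values $u(0),u(T)$ in the statement are read off; once one works consistently with this representative, the limiting argument above is routine and everything else reduces to the continuity estimates already recorded.
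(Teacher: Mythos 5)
Your proposal is correct and follows exactly the route the paper itself indicates: the text gives no proof of this lemma, deferring to the literature with the remark that the calculus rules are ``transferred via mollifiers,'' which is precisely your smooth-approximation-plus-density scheme (classical product rule in $H$ for $C^1([0,T];V)$ functions, the Gelfand-triple identification of the $H$-inner product with the $V^*$--$V$ pairing, passage to the limit using H\"older in time and the embedding $W^{1,p}(V,V^*)\hookrightarrow C(0,T;H)$). The two technical points you flag --- extension before mollification near the endpoints, and working with the continuous representative --- are indeed the only delicate steps, and your treatment of them is the standard one.
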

In the following, we focus for simplicity only the case $p=q=2$, for which we set $W(0,T;V) := W^{2}(0,T;V)$.

\section{Weak solution of parabolic PDEs}

We can now formulate our parabolic evolution problem. Let for almost every $t\in(0,T)$ a bilinear form $a(t;\cdot,\cdot):V\times V\to \R$ and a linear form $f\in \m{L2T}{V^*}$ as well as a $u_0\in H$ be given.
The problem in strong form (in time) is then to find $u\in W(0,T;V)$ such that
\begin{equation}\label{eq:evolution:parabolic}
    \left\{\begin{aligned}
            \dual{V}{d_t u(t)}{v} + a(t;u(t),v) &= \dual{V}{f(t)}{v} \text{ for all } v\in V \text{ and a.e. } t\in(0,T),\\
            u(0) &= u_0.
    \end{aligned}\right.
\end{equation}
(For, e.g., the heat equation, we have $V=\m{H10}\hookrightarrow \m{L2}=H$ and $a(t;u,v) = (\nabla u,\nabla v)$.)
Just as in the stationary case, we now formulate this in fully variational or weak form. For simplicity, assume $u_0 = 0$ (the inhomogeneous case can be treated in the same fashion as inhomogeneous Dirichlet conditions) and consider the Banach spaces
\begin{equation*}
    X=\setof{w\in W(0,T;V)}{w(0) = 0}, \qquad Y=\m{L2T}{V},
\end{equation*}
such that $Y^* =\m{L2T}{V^*}$.
Setting
\begin{equation*}
    b:X\times Y\to\R,\qquad b(u,y)  = \int_0^T \dual{V}{d_t u(t)}{y(t)} + a(t;u(t),y(t))\,dt
\end{equation*}
and
\begin{equation*}
    \dual{Y}{f}{y} = \int_0^T \dual{V}{f(t)}{y(t)}\,dt,
\end{equation*}
we look for $u\in X$ such that
\begin{equation}\label{eq:evolution:weak}
    b(u,y) = \dual{Y}{f}{y} \quad \text{ for all } y\in Y.
\end{equation}
The equivalence to \eqref{eq:evolution:parabolic} follows from considering $y(t)=\phi(t)v$ for arbitrary $\phi\in C_0^\infty(0,T)$ and $v\in V$ and using the fundamental theorem of the calculus of variations.\footnote{see, e.g., \cite[Lemma 65.4]{Ern:2021c}.}

Well-posedness of \eqref{eq:evolution:parabolic} can then be shown using the \nameref{thm:BNB} theorem.
\begin{theorem}\label{thm:evolution:well-posed}
    Assume that the bilinear form $a(t;\cdot,\cdot):V\times V\to\R$ satisfies the following properties:
    \begin{enumerate}[(i)]
        \item The mapping $t\mapsto a(t;u,v)$ is measurable for all $u,v\in V$.
        \item There exists $M>0$ such that $|a(t;u,v)| \leq M \norm*{V}{u}\norm*{V}{v}$ for almost every $t\in(0,T)$ and all $u,v\in V$.
        \item There exists $\alpha >0 $ such that $a(t;u,u)\geq \alpha \norm*{V}{u}^2$ for almost every $t\in(0,T)$ and all $u\in V$.
    \end{enumerate}
    Then \eqref{eq:evolution:weak} has a unique solution $u\in W(0,T;V)$ satisfying
    \begin{equation*}
        \norm*{W(0,T;V)}{u}\leq C\norm*{Y^*}{f}.
    \end{equation*}
\end{theorem}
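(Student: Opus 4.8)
The plan is to recognize \eqref{eq:evolution:weak} as the abstract problem $b(u,y)=\langle f,y\rangle_Y$ with trial space $X$ and test space $Y=\m{L2T}{V}$, and to verify the three hypotheses of the \nameref{thm:BNB} theorem (with $U=X$ and the reflexive space $Y$ playing the role of $V$ there; $Y$ is reflexive because $V$ is, and $Y^*=\m{L2T}{V^*}$ as stated). Continuity is routine: writing $b(u,y)=\int_0^T\langle d_t u(t),y(t)\rangle\,dt+\int_0^T a(t;u(t),y(t))\,dt$, Cauchy--Schwarz in time together with the boundedness assumption (ii) gives $|b(u,y)|\le\max\{1,M\}\,\norm*{W(V,V^*)}{u}\,\norm*{Y}{y}$, and $\langle f,\cdot\rangle_Y$ is bounded by $\norm*{Y^*}{f}$ by definition.

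For the inf-sup condition I would argue in two steps, writing $S:=\sup_{y\in Y}b(u,y)/\norm*{Y}{y}$ for fixed $u\in X$. First, testing with $y=u$ and applying the integration-by-parts formula \cref{lem:evolution:intpart} (whose boundary term at $t=0$ vanishes since $u(0)=0$) gives, by coercivity (iii),
\[
    b(u,u)=\tfrac12\norm*{H}{u(T)}^2+\int_0^T a(t;u(t),u(t))\,dt\ge\alpha\,\norm*{\m{L2T}{V}}{u}^2 ,
\]
so that $\norm*{\m{L2T}{V}}{u}\le S/\alpha$. Second, to recover the time-derivative part I would use the dual-norm representation of $\norm*{\m{L2T}{V^*}}{d_t u}$ together with the identity $\int_0^T\langle d_t u,y\rangle\,dt=b(u,y)-\int_0^T a(t;u,y)\,dt$; bounding the last integral by (ii) yields $\norm*{\m{L2T}{V^*}}{d_t u}\le S+M\norm*{\m{L2T}{V}}{u}\le(1+M/\alpha)S$. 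Adding the two estimates bounds $\norm*{W(V,V^*)}{u}$ by a fixed multiple of $S$, which is exactly the required inf-sup bound.

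The step I expect to be hardest is the injectivity condition: if $y\in Y$ satisfies $b(u,y)=0$ for all $u\in X$, I must show $y=0$, which amounts to solving a homogeneous adjoint problem backward in time. First, inserting $u(t)=\phi(t)v$ with $\phi\in C_c^\infty(0,T)$ and $v\in V$ shows that $t\mapsto(y(t),v)_H$ is weakly differentiable with derivative $a(t;v,y(t))\in L^2(0,T)$; a regularity argument then gives $y\in W(V,V^*)$ with $\langle d_t y(t),v\rangle=a(t;v,y(t))$ for all $v\in V$. Next, for general $u\in X$, integration by parts via \cref{lem:evolution:intpart} makes the two resulting integrals cancel and leaves $b(u,y)=(u(T),y(T))_H$; since $u(T)$ ranges over the dense subspace $V\subset H$ (take $u(t)=(t/T)w$), the vanishing of $b$ forces the terminal condition $y(T)=0$. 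Finally, testing the adjoint equation with $y$ itself and using \cref{lem:evolution:intpart} gives $\int_0^T a(t;y,y)\,dt=-\tfrac12\norm*{H}{y(0)}^2\le0$, while coercivity (iii) forces the same integral to be $\ge\alpha\norm*{\m{L2T}{V}}{y}^2\ge0$; hence both vanish and $y=0$.

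With all three hypotheses verified, the \nameref{thm:BNB} theorem yields a unique solution $u\in X\subset W(V,V^*)$ together with an a priori bound of the form $\norm*{W(V,V^*)}{u}\le c_1^{-1}\norm*{Y^*}{f}$; the sharp energy estimate $\norm*{\m{L2T}{V}}{u}\le\tfrac1\alpha\norm*{Y^*}{f}$ underlying the stated constant comes directly from testing the solution itself with $y=u$ and invoking coercivity exactly as in the first inf-sup step. The genuinely delicate points are thus establishing the time regularity and terminal condition of $y$ in the injectivity step, and the two-part recovery of the full $W(V,V^*)$ norm in the inf-sup step; everything else is bookkeeping.
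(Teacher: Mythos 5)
Your proposal is correct and follows the paper's overall strategy (verify the hypotheses of the \nameref{thm:BNB} theorem for $b$ on $X\times Y$); the continuity and injectivity arguments are essentially identical to the paper's, including the derivation of the adjoint equation from test functions $\phi v$, the terminal condition $y(T)=0$ via $u(T)$ ranging over $V$, and the final energy identity forcing $y=0$. Where you genuinely diverge is the inf-sup condition. The paper constructs the single test function $z = A(t)^{-1}d_t u + \mu u$, verifies $\norm*{Y}{z}\leq c\norm*{X}{u}$, and bounds $b(u,z)$ from below term by term, using the lower bound $\dual{V}{v^*}{A(t)^{-1}v^*}\geq \alpha M^{-2}\norm*{V^*}{v^*}^2$ and a Young-inequality balancing with the free parameter $\mu$. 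You instead bound the two pieces of $\norm*{W(V,V^*)}{u}$ separately against $S:=\sup_y b(u,y)/\norm*{Y}{y}$: the $\m{L2T}{V}$ part by testing with $y=u$ and coercivity (the standard energy estimate, legitimate since $X\subset Y$), and the $\m{L2T}{V^*}$ norm of $d_t u$ via the isometry $\m{L2T}{V}^*\cong\m{L2T}{V^*}$ and the identity $\int_0^T\dual{V}{d_t u}{y}\,dt = b(u,y)-\int_0^T a(t;u,y)\,dt$. This is cleaner --- it avoids the operator $A(t)^{-1}$, the parameter $\mu$, and the somewhat opaque lower-bound computation --- at the cost of yielding an inf-sup constant $(1/\alpha+1+M/\alpha)^{-1}$ rather than something one can tune; note that neither your constant nor the paper's actually reproduces the stated bound $\alpha^{-1}\norm*{Y^*}{f}$ for the full $W(V,V^*)$ norm (testing with $y=u$ gives that constant only for the $\m{L2T}{V}$ part), so this discrepancy is inherited from the paper rather than introduced by you.
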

\begin{proof}
    Continuity of $b$ and $y\mapsto \dual{Y}{f}{y}$ follows from their definition and the continuity of $a$. To verify the inf--sup condition, we define for almost every $t\in (0,T)$ the operator
    \begin{equation*}
        A(t):V\to V^*,\qquad \dual{V}{A(t)u}{v} := a(t;u,v) \quad\text{for all }u,v\in V.
    \end{equation*}
    Continuity of $a$ implies that for almost every $t\in(0,T)$, the operator $A(t)$ is bounded with constant $M$. Similarly, coercivity of $a$ and the \nameref{thm:weak:laxmilgram} shows that $A(t)$ is an isomorphism, hence $A(t)^{-1}:V^*\to V$ is bounded as well with constant $\alpha^{-1}$. Therefore, for almost every $t\in(0,T)$ and all $v^*\in V^*$
    \begin{equation}\label{eq:evolution:well1}
        \begin{aligned}[t]
            \dual{V}{v^*}{A(t)^{-1}v^*} &= \dual{V}{A(t)A(t)^{-1}v^*}{A(t)^{-1}v^*}
            \geq \alpha \norm*{V}{A(t)^{-1}v^*}^2 \\&\geq \frac{\alpha}{M^2}\norm*{V^*}{v^*}^2.
        \end{aligned}
    \end{equation}

    For arbitrary $u\in X$ and $\mu >0 $, set $z = A(t)^{-1}d_t u + \mu u$. By the triangle inequality, the uniform continuity of $A(t)^{-1}$, and the definition of the norms in $X$ and $Y$, we have that
    \begin{equation*}
        \norm*{Y}{z}^2 \leq 2\alpha^{-2} \int_0^T\norm*{V^*}{d_t u(t)}^2\,dt + 2\mu^2 \int_0^T \norm*{V}{u(t)}^2\,dt \leq c \norm*{X}{u}^2,
    \end{equation*}
    and thus in particular that $z\in Y$.
    Moreover, using \eqref{eq:evolution:well1}, integration by parts, and continuity of $A(t)$ and $A(t)^{-1}$, respectively, we can estimate term by term in
    \begin{equation*}
        \begin{aligned}
            b(u,z) &= \int_0^T \dual{V}{d_tu(t) + A(t)u(t)}{A(t)^{-1}d_tu(t) + \mu u(t)}\,dt\\
            &\geq \frac{\alpha}{M^2}\int_0^T\norm*{V^*}{d_t u(t)}^2\,dt + \frac\mu2 \norm*{H}{u(T)}^2 - \frac{M}{\alpha}\int_0^T\norm*{V}{u(t)}\norm*{V^*}{d_t u(t)}\,dt \\
            \MoveEqLeft[-1] + \mu\alpha\int_0^T\norm*{V}{u(t)}^2\,dt\\
            &\geq \frac{\alpha}{2M^2}\int_0^T\norm*{V^*}{d_t u(t)}^2\,dt + \left(\mu\alpha - \tfrac{M^4}{2\alpha^3}\right)\int_0^T\norm*{V}{u(t)}^2\,dt,
        \end{aligned}
    \end{equation*}
    using the generalized Young's inequality with $\eps=\alpha/M^2$.

    Taking $\mu = M^4\alpha^{-4}$, the term in parenthesis is positive, which yields (for generic constants $c>0$)
    \begin{equation*}
        b(u,z)  \geq c \norm*{X}{u}^2 \geq c\norm*{X}{u}\norm*{Y}{z}.
    \end{equation*}
    This implies the inf--sup condition via
    \begin{equation*}
        \inf_{u\in X}\sup_{y\in Y}\frac{b(u,y)}{\norm*{X}{u}\norm*{Y}{y}}\geq
        \inf_{u\in X}\frac{b(u,z)}{\norm*{X}{u}\norm*{Y}{z}} \geq c.
    \end{equation*}

    It remains to show that the injectivity condition holds. Assume $y\in Y$ is such that $b(u,y) =0$ for all $u\in X$. For any $\phi\in C_0^\infty(0,T)$ and $v\in V$, we have $\phi v\in X$. Due to the definition of the weak time derivative and $b(\phi v,y)=0$, we thus obtain that
    \begin{equation*}
        \begin{aligned}
            \int_0^T\dual{V}{d_t y(t)}{v}\phi(t)\,dt &= - \int_0^T\dual{V}{d_t \phi(t)v}{y(t)}\,dt
            = \int_0^T a(t;\phi(t)v,y(t))\,dt\\
            &= \int_0^T \dual{V}{A(t)^*y(t)}{v}{\phi(t)}\,dt,
        \end{aligned}
    \end{equation*}
    and hence (by density of $C_0^\infty(0,T)$ in $\m{l2}(0,T)$) that $d_t y(t) = A(t)^*y(t)$ for almost all $t\in(0,T)$. In particular, we deduce that $d_t y  \in \m{L2T}{V^*}$ and therefore $y\in W(0,T;V)$.

    Since $d_ty = A^*y$ in $Y^*$ and $tv\in X\hookrightarrow Y$ for any $v\in V$, we obtain using \cref{lem:evolution:intpart} that
    \begin{equation*}
        \begin{aligned}
            0 &=   \int_0^T \dual{V}{-d_t y(t)}{tv} + \dual{V}{A(t)^*y(t)}{tv} \, dt \\
            &= -\scalprod*{H}{y(T)}{Tv} + \int_0^T \dual{V}{d_t (tv)}{y(t)} + a(t;tv,y(t)) \, dt\\
            &= -T\scalprod*{H}{y(T)}{v}.
        \end{aligned}
    \end{equation*}
    By density of $V$ in $H$, this implies that $y(T) =0$. Similary, $y\in W(0,T;V)$ and the first part of \cref{lem:evolution:intpart} yields
    \begin{equation*}
        \begin{aligned}
            0 &= \int_0^T -\dual{V}{d_t y(t)}{y(t)} + \dual{V}{A(t)^*y(t)}{y} \, dt \\
            &\geq \int_0^T -\frac{d}{dt}\left(\frac12\norm*{H}{y(t)}^2\right) +\alpha\norm*{V}{y(t)}^2\,dt\\
            &= \frac12 \norm*{H}{y(0)}^2 + \alpha\norm*{Y}{y}^2
        \end{aligned}
    \end{equation*}
    and hence $y = 0$. We can thus apply the \nameref{thm:BNB} theorem, and the claim follows.
\end{proof}

\chapter{Galerkin approach for parabolic problems}

To obtain a finite-dimensional approximation of \eqref{eq:evolution:parabolic}, we need to discretize in both space and time: either separately (combining finite elements in space with a time stepping method for ordinary differential equations) or all-at-once (using a Galerkin approach with suitable discrete test spaces). Only a brief overview over the different approaches is given here.

\section{Time stepping methods}

These approaches can be further discriminated based on the order of operations:

\paragraph{Method of lines} This method starts with a discretization in space to obtain a (very large) system of ordinary differential equations, which are then solved with one of the vast number of available methods. In the context of finite element methods, we use a discrete space $V_h$ of piecewise polynomials defined on the triangulation $\calT_h$ of the domain $\Omega$. Given a nodal basis $\{\phi_j\}_{j=1}^{N_h}$ of $V_h$, we approximate the unknown solution as $u_h(t,x) = \sum_{j=1}^{N_h} U_j(t)\phi_j(x)$. Letting $\calP_h$ denote the $L^2$ projection on $V_h$ and using the mass matrix $\mathbf{M}_{ij} = (\phi_i,\phi_j)$ and the (time-dependent) stiffness matrix $\mathbf{K}(t)_{ij} = a(t;\phi_i,\phi_j)$ yields the following linear system of ordinary differential equations for the coefficient vector $U(t) = (U_1(t),\dots U_{N_h}(t))^T$:
\begin{equation*}
    \left\{\begin{aligned}
            \mathbf{M} \frac{d}{dt}U(t) +\mathbf{K}(t) U(t) = \mathbf{M} F(t),\\
            U(0) = U_0,
    \end{aligned}\right.
\end{equation*}
where $U_0$ and $F(t)$ are the coefficients vectors of $\calP_h u_0$ and $\calP_h f(t)$, respectively. The choice of integration method for this system depends on the properties of $\mathbf{K}$ (such as its stiffness, which can lead to numerical instability). Some details can be found, e.g., in \cite[Chapter 6.1]{Ern:2004}.

\paragraph{Rothe's method} This method consists in treating \eqref{eq:evolution:parabolic} as an ordinary differential equation in the Banach space $V$, which is discretized in time by replacing the time derivative $d_t u$ by a difference quotient:
\begin{itemize}
    \item The \emph{implicit Euler scheme} uses the backward difference quotient
        \begin{equation*}
            d_t u(t+\tau) \approx \frac{u(t+\tau)-u(t)}{\tau}
        \end{equation*}
        for $\tau>0$ at time $t+\tau$ to obtain for given $u(t)$ and unknown $u(t+\tau)\in V$  the  \emph{stationary} partial differential equation
        \begin{equation*}
            \scalprod*{H}{u(t+\tau)}{v} + \tau\, a(t+\tau;u(t+\tau),v) = \scalprod*{H}{u(t)}{v} + \tau \dual{V}{f(t+\tau)}{v}
        \end{equation*}
        for all $v\in V$.
    \item The \emph{Crank--Nicolson scheme} uses the central difference quotient
        \begin{equation*}
            d_t u(t+\tfrac{\tau}{2}) \approx \frac{u(t+\tau)-u(t)}{\tau}
        \end{equation*}
        for $\tau>0$ at time $t+\frac{\tau}2$ to obtain
        \begin{multline*}
            \scalprod*{H}{u(t+\tau)}{v} + \tfrac{\tau}{2}\, a(t+\tfrac{\tau}2;u(t+\tau),v) =
            \scalprod*{H}{u(t)}{v} - \tfrac{\tau}{2}\, a(t+\tfrac{\tau}2;u(t),v) \\ + \tau \dual{V}{f(t+\tfrac{\tau}{2})}{v}
        \end{multline*}
        for all $v\in V$.
\end{itemize}

Starting with $t=0$, these are then approximated and solved in turn for $u(t_m)$, $t_m := m \tau$, using a finite element discretization in space. This approach is discussed in detail in \cite[Chapters 7--9]{Thomee:2006}.
The advantage of Rothe's method is that at each time step, a different spatial discretization can be used.

\section{Galerkin methods}

Proceeding as in the stationary case, we can apply a Galerkin approximation to
\eqref{eq:evolution:weak} by replacing $X$ and $Y$ with finite-dimensional spaces $X_h$ and $Y_h$. Again, we can further discriminate between conforming and non-conforming approaches.

\paragraph{Conforming Galerkin methods}

In a conforming approach, we choose $X_h\subset X$ and $Y_h\subset Y$ and seek $u_h\in X_h$ such that
\begin{equation}\label{eq:evolution:cg}
    \int_0^T \dual{V}{d_t u_h(t)}{y_h(t)} + a(t;u_h(t),y_h(t)) \,dt = \int_0^T \dual{V}{f(t)}{y_h(t)}\,dt
\end{equation}
for all $y_h\in Y_h$. We now choose the discrete spaces as tensor products in space and time: Let
\begin{equation*}
    0=t_0 < t_1 < \dots < t_N = T
\end{equation*}
and choose for each $t_m$, $ 1\leq m \leq N$, a (possibly different) finite-dimensional subspace $V_m \subset V$. Let $P_r(t_{m-1},t_{m};V_m)$ denote the space of polynomials on the interval $[t_{m-1},t_m]$ with degree up to $r$ with values in $V_m$. Then we define for $r\geq 1$
\begin{align*}
    X_h &= \setof{w_h\in  \m{CT}{V}}{w_h|_{[t_{m-1},t_{m}]}\in P_r(t_{m-1},t_{m};V_m),\ 1\leq m\leq N,\ w_h(0)=u_0},\\
    Y_h &= \setof{y_h\in  \m{L2T}{V}}{y_h|_{(t_{m-1},t_{m}]}\in P_{r-1}(t_{m-1},t_{m};V_m),\ 1\leq m \leq N}.
\end{align*}
Since this is a conforming approximation, we can deduce well-posedness of the corresponding discrete problem in the usual fashion (noting that $d_t u_h\in Y_h$ for $u_h\in X_h$). (Since functions in $X$ -- and hence in $X_h$ -- are continuous in time by \cref{thm:aubin}, this approach is often called \emph{continuous Galerkin} or $cG(r)$ method.)

This approach is closely related to Rothe's method. Consider the case $r=1$ (i.e., piecewise linear in time) and, for simplicity, a time-independent bilinear form.
We also assume that we choose the same space discretization at each time step, i.e., that $V_1 = \dots = V_N =V_h$.
Since functions in $X_h$ are continuous at $t=t_m$ for all $0\leq m \leq N$ and linear on each intervall $[t_{m-1},t_m]$, we can write
\begin{equation*}
    u_h(t) = \frac{t_m-t}{t_m-t_{m-1}}u_h(t_{m-1}) + \frac{t-t_{m-1}}{t_m-t_{m-1}} u_h(t_m), \qquad t\in[t_{m-1},t_m],
\end{equation*}
with coefficients $u_h(t_{m-1}),u_h(t_m)\in V_h$. (For $t_0=0$, we fix $u_h(t_0) = u_0$.)
Similarly, functions in $Y_h$ are constant in time and thus
\begin{equation*}
    y_h(t) \equiv y_h(t_{m}) =:v_h \in V_h, \qquad t\in (t_{m-1},t_m].
\end{equation*}
Inserting this into \eqref{eq:evolution:cg} and setting $\tau_m :=t_m-t_{m-1}$ yields for all $v_h\in V_h$ that
\begin{equation*}
    \dual{V}{u_h(t_{m})-u_h(t_{m-1})}{v_h} + \frac{\tau_m}2 a(u_h(t_{m-1})+u_h(t_m),v_h) = \int_{t_{m-1}}^{t_m} \dual{V}{f(t)}{v_h}\,dt,
\end{equation*}
which is a modified Crank--Nicolson scheme (which, in fact, can be obtained by approximating the integral on the right-hand side using the midpoint rule, which is exact for $y_h\in Y_h$).\footnote{If the discrete spaces are different for each time interval, we need to use the $H$-projection of $u_{m-1}$ on $V_m$.}
For this method, one can show error estimates of the form\footnote{\cite[Theorem 7.8]{Thomee:2006}}
\begin{equation*}
    \norm{L2}{u_h(t_m) - u(t_m)}\leq C(h^s\norm*{H^s(\Omega)}{u_0} + \tau^2 \norm*{H^4(\Omega)}{u_0}),
\end{equation*}
for $f=0$ and $u_0 \neq 0$, where $s$ depends on the accuracy of the spatial discretization, and $\tau=\max_{1\leq m\leq N} \tau_m$.

\paragraph{Discontinuous Galerkin methods}
Instead of enforcing continuity of the discrete solution $u_h$ through the definition of $X_h$, we can also use $X_h=Y_h$ and modify the bilinear form. Let $J_m := (t_{m-1},t_m]$ denote the half-open interval between two time steps of length $\tau_m = t_m - t_{m-1}$. Then we set for $r\geq 0$
\begin{equation*}
    X_h = Y_h = \setof{y_h\in  \m{L2T}{V}}{y_h|_{J_m}\in P_{r}(t_{m-1},t_{m};V_m),\ 1\leq m \leq N}\subset Y,
\end{equation*}
where $V_m$ is again a finite-dimensional subspace of $V$.
Note that functions in $X_h$ can be discontinuous at the points $t_m$ but are continuous from the left with limits from the right, and so we will write for $u_h\in X_h$
\begin{equation*}
    u_m := u_h(t_m) = \lim_{\eps \to 0} u_h(t_m -\eps) ,\qquad u_m^+ := \lim_{\eps\to 0} u_h(t_m+\eps)
\end{equation*}
and
\begin{equation*}
    \jump{u_h}_m = u_m^+-u_m.
\end{equation*}
Similarly to the stationary case, we now define the discrete bilinear form
\begin{equation*}
    \begin{aligned}
        b_h(u_h,y_h) &= \sum_{m=1}^N \int_{J_m}\scalprod*{V}{d_t u_h(t)}{y_h(t)} + a(t;u_h(t),y_h(t))\,dt
        + \sum_{m=1}^{N} \scalprod*{H}{\jump{u_h}_{m-1}}{y_{m-1}^+}
    \end{aligned}
\end{equation*}
(which can be derived by integration by parts on each interval $J_m$ and rearranging the jump terms).
As $0\notin J_1$, we will need to specify $u_h(0)=u_0$ separately, which we do by setting $\jump{u_h}_0 := u_0^+ - u_0$. Note that this makes $b_h$ \emph{affine} instead of bilinear unless $u_0=0$. (In other words, we should actually split the jump term for $m=0$ into the part involving $u_0^+$, which remains part of $b_h$, and the part involving $u_0$, which should be part of the right-hand side. However, we stick with the above formulation for the sake of presentation.)

We then search for $u_h\in X_h$ satisfying
\begin{equation}\label{eq:evolution:dg}
    b_h(u_h,y_h) = \dual{Y}{f}{y_h} \qquad\text{for all }y_h\in X_h.
\end{equation}
Since the exact solution $u\in X$ is continuous and satisfies $u(0) = u_0$, we have
\begin{equation*}
    b_h(u,y_h) = b(u,y_h) =  \dual{Y}{f}{y_h} \qquad\text{for all }y_h\in X_h,
\end{equation*}
and hence this is a consistent approximation.
To prove well-posedness of the discrete problem, we again define a discrete ``jump-norm''
\begin{equation*}
    \norm{dg}{u_h}^2 = \sum_{m=1}^N \int_{J_m} \norm*{H}{d_t u_h(t)}^2 + \norm*{V}{u_h(t)}^2\,dt + \sum_{m=1}^N\norm*{H}{\jump{u_h}_m}^2.
\end{equation*}
We can then proceed as in the proof of \cref{thm:bnb_h}.
\begin{theorem}\label{thm:evolution:dg_well-posed}
    Under the assumptions of \cref{thm:evolution:well-posed}, there exists a unique solution $u_h\in X_h$ to \eqref{eq:evolution:dg}, and
    \begin{equation*}
        \norm{dg}{u_h} \leq C \left(\norm*{Y^*}{f}^2 + \norm*{H}{u_0}^2\right).
    \end{equation*}
\end{theorem}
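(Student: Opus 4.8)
The plan is to verify the hypotheses of \cref{thm:bnb_h}: since $X_h = Y_h$ is finite-dimensional with equal dimension, it suffices to establish the discrete inf-sup condition (which will follow from coercivity in a suitable norm) together with continuity of $b_h$. As in the proof of \cref{thm:evolution:well-posed}, the decisive step is an energy estimate obtained by a careful choice of test function, after which well-posedness and the a priori bound are automatic.

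First I would test with $y_h = u_h$. On each interval $J_m$, integration by parts gives $\int_{J_m}\scalprod*{H}{d_t u_h(t)}{u_h(t)}\,dt = \tfrac12\norm*{H}{u_m}^2 - \tfrac12\norm*{H}{u_{m-1}^+}^2$. Combining these boundary contributions with the jump terms $\scalprod*{H}{\jump{u_h}_{m-1}}{u_{m-1}^+}$ through the polarization identity $\scalprod*{H}{a-b}{a} = \tfrac12\norm*{H}{a}^2 - \tfrac12\norm*{H}{b}^2 + \tfrac12\norm*{H}{a-b}^2$, the intermediate half-norms cancel and the sum telescopes, yielding
\[
b_h(u_h,u_h) = \tfrac12\norm*{H}{u_N}^2 + \tfrac12\norm*{H}{u_0^+}^2 + \tfrac12\sum_{m=1}^{N-1}\norm*{H}{\jump{u_h}_m}^2 + \sum_{m=1}^N\int_{J_m}a(t;u_h(t),u_h(t))\,dt,
\]
where the initial datum has been moved into the functional $y_h\mapsto\scalprod*{H}{u_0}{y_0^+}$ on the right-hand side. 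By the coercivity assumption (iii), the last sum dominates $\alpha\norm*{\m{L2T}{V}}{u_h}^2$, so testing with $u_h$ already controls the final value in $H$, the interface jumps, and the $\m{L2T}{V}$-part of $\norm{dg}{\cdot}$. In particular $b_h(u_h,u_h)=0$ forces $u_h\equiv 0$, which by finite-dimensionality of $X_h=Y_h$ and \cref{thm:bnb_h} already gives existence and uniqueness of the discrete solution.

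The remaining, and hardest, step is to control the term $\sum_m\int_{J_m}\norm*{H}{d_t u_h}^2$ appearing in $\norm{dg}{u_h}^2$, which is invisible to the test function $u_h$. Here I would exploit that the broken time derivative $w_h$, defined by $w_h|_{J_m} = d_t(u_h|_{J_m})$, is again a piecewise polynomial of degree $\le r$ valued in $V_m$ and hence lies in $X_h = Y_h$; testing with $y_h = u_h + \theta w_h$ for small $\theta>0$ produces the missing quantity $\theta\sum_m\int_{J_m}\norm*{H}{d_t u_h}^2$ on the diagonal. I expect this to be the main obstacle, because the cross terms $\int_{J_m}a(t;u_h,w_h)\,dt$ and the jump contributions $\scalprod*{H}{\jump{u_h}_{m-1}}{w_{m-1}^+}$ pair the $\norm*{V}{\cdot}$-norm of $w_h$ against quantities already controlled only in $\norm*{H}{\cdot}$ or $\norm*{\m{L2T}{V}}{\cdot}$; closing the estimate therefore requires an inverse estimate in time together with the bounds from the first step, and a sufficiently small $\theta$ absorbed via Young's inequality. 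This mirrors the role of the auxiliary test function $z = A(t)^{-1}d_t u + \mu u$ in \cref{thm:evolution:well-posed}, which likewise served to recover control of the time derivative.

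Finally, continuity of $b_h$ on $X_h\times Y_h$ with respect to $\norm{dg}{\cdot}$ follows directly by applying the Cauchy--Schwarz inequality to the volume and jump terms and the boundedness (ii) of $a$, each factor being one of the constituents of $\norm{dg}{\cdot}$. With the inf-sup condition and continuity in hand, \cref{thm:bnb_h} yields the unique $u_h\in X_h$ solving \eqref{eq:evolution:dg} together with the stated stability estimate, where the dependence on $u_0$ enters through the norm of the functional $y_h\mapsto\scalprod*{H}{u_0}{y_0^+}$.
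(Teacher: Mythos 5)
Your first step is correct and establishes exactly what the paper's proof establishes, by a mildly different route: you test once, globally, with $y_h=u_h$ and telescope the boundary terms against the jumps via the polarization identity, whereas the paper localizes, inserting $y_h=\chi_{J_m}u_h$ and arguing interval by interval ($u_h|_{J_1}=0$ and $u_1=0$, then $J_2$, etc.). Both yield injectivity of $u_h\mapsto b_h(u_h,\cdot)$ from the coercivity of $a$, and since $X_h=Y_h$ is finite-dimensional this gives bijectivity and hence existence and uniqueness. Your global identity has the small advantage of exhibiting the jump control $\tfrac12\sum_m\norm*{H}{\jump{u_h}_m}^2$ explicitly; the paper's localization has the advantage of needing no bookkeeping of cross terms. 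For the a priori bound the paper does essentially nothing: it simply remarks that the estimate ``follows from bijectivity using the closed range theorem,'' i.e.\ a bijective linear map between finite-dimensional spaces has a bounded inverse. Your first test function already delivers the $\m{L2T}{V}$-part and the jump part of $\norm{dg}{u_h}$ with explicit constants, which is more than the paper offers at this point.

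Where you overreach is the second step. To recover $\sum_m\int_{J_m}\norm*{H}{d_t u_h}^2\,dt$ you propose testing with $u_h+\theta w_h$, $w_h|_{J_m}=d_t(u_h|_{J_m})$. The diagonal term this produces is $\theta\int_{J_m}\dual{V}{d_t u_h}{d_t u_h}$, which is \emph{not} $\theta\norm*{H}{d_t u_h}^2$ unless you first identify $d_t u_h$ with an element of $H$; more seriously, the cross term $\int_{J_m}a(t;u_h,w_h)\,dt$ and the right-hand side $\int_{J_m}\dual{V}{f}{w_h}\,dt$ with $f\in\m{L2T}{V^*}$ both pair against $\norm*{V}{d_t u_h}$, which cannot be absorbed by $\norm*{H}{d_t u_h}$ without a spatial inverse estimate (costing $h^{-1}$) — unless $a$ is symmetric and time-independent so that the cross term telescopes, which is precisely the extra hypothesis under which the paper later proves the sharper stability result (\cref{thm:evolution:dg_stability}, using the weighted test function $\chi_{J_m}(t-t_{m-1})d_t u_h$ and an inverse estimate in time only, but with $f\in\m{L2T}{H}$). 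So your plan as stated does not close under the assumptions of \cref{thm:evolution:well-posed} alone. This is not fatal for the theorem as the paper states it — finite-dimensional bijectivity already gives \emph{some} constant $C$, which is all the paper claims — but you should either fall back on that argument for the $d_t u_h$ term or add the symmetry/regularity hypotheses needed to run the energy argument.
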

\begin{proof}
    Continuity of $b_h$ with respect to $\norm{dg}{\cdot}$ follows from the definition. It remains to show injectivity of $B_h:X_h\to Y_h^*$, $u_h\mapsto b_h(u_h,\cdot)$, (which suffices for bijectivity since $X_h=Y_h$ are finite-dimensional). Instead of verifying the inf--sup condition, we do this directly. Let $u_h\in X_h$ satisfy $b_h(u_h,y_h) = 0$ for all $y_h\in X_h$ with $u_0=0$. Since functions in $Y_h$ can be discontinuous at the time points $t_m$, we can insert $y_h = \1_{J_m}u_h\in Y_h$ for each $1\leq m\leq N$, where $\1_{J_m}(t) = 1$ if $t\in J_m$ and zero else. We start with $J_1= (t_{0},t_1]$. Since $\1_{J_1}$ is constant on $J_1$ and zero outside $J_1$, we have using $u_0=0$ that
    \begin{equation*}
        \begin{aligned}
            0 &= b_h(u_h,\1_{J_1}u_h) \\
            &= \int_{J_1} \dual{V}{d_t u_h(t)}{u_h(t)} + a(t;u_h(t),u_h(t))\,dt + \scalprod*{H}{u_{0}^+-u_{0}}{u_{0}^+}\\
            &\geq \half\norm*{H}{u_1}^2 -\half\norm*{H}{u_{0}^+}^2 + \alpha\int_{J_1} \norm*{V}{u_h(t)}^2\,dt + \norm*{H}{u_{0}^+}^2\\
            &\geq \half\norm*{H}{u_{1}}^2 +  \alpha\int_{J_1} \norm*{V}{u_h(t)}^2\,dt .
        \end{aligned}
    \end{equation*}
    Hence, $u_h|_{J_1} = 0$ and $u_{1}=0$, and we can proceed in a similar way for $J_{2},J_{3},\dots,J_N$ to deduce that $u_h = 0$. The estimate then follows from bijectivity using the closed range theorem.
\end{proof}

Before we address a priori error estimates, we discuss how to formulate discontinuous Galerkin methods as time stepping methods.
For simplicity, we again assume that the bilinear form $a$ is time-independent and that $V_1 = \dots = V_N =V_h$.
First consider the case $r=0$, i.e., piecewise constant functions in time. Then $d_t (u_h|_{J_m}) = 0$ and $u_h|_{J_m} \equiv u_m = u_{m-1}^+\in V_h$. Using as test functions $y_h = \1_{J_m} v_h\in Y_h$ for arbitrary $v_h\in V_h$ and $m=1,\dots, N$, we obtain
\begin{equation*}
    \scalprod*{H}{u_m}{v_h} + \tau_m\, a(u_m,v_h) = \scalprod*{H}{u_{m-1}}{v_h} +  \int_{J_m}\dual{V}{f(t)}{v_h}\,dt
\end{equation*}
for all $v_h\in V_h$, which is a variant of the implicit Euler scheme.
For $r=1$ (piecewise linear functions), we make the ansatz
\begin{equation*}
    u_h|_{J_m} (t) = u_m^0 + \frac{t-t_{m-1}}{\tau_m} u_m^1 \in X_h
\end{equation*}
for coefficients $u_m^0,u_m^1\in V_h$ (such that $u_{m-1}^+ = u_m^0$ and $u_m = u_m^0 + u_m^1$). Again, we choose for each $J_m$ test functions which are zero outside $J_m$; specifically, we take $\1_{J_m}(t)v_h$ and $\1_{J_m}(t) \frac{t-t_{m-1}}{\tau_m}w_h$ for arbitrary $v_h,w_h\in V_h$. Inserting these in turn into the bilinear form and computing the integrals yields the coupled system
\begin{align*}
    \scalprod*{H}{u_m^0}{v_h} + \tau_m\, a(u_m^0,v_h) +
    \scalprod*{H}{u_m^1}{v_h} +\frac{\tau_m}{2}\, a(u_m^1,v_h) \\
    \MoveEqLeft[8]= \scalprod*{H}{u_{m-1}}{v_h} +  \int_{J_m}\dual{V}{f(t)}{v_h}\,dt\quad\text{for all }v_h\in V_h,\\
    \frac{\tau_m}{2}\, a(u_m^0,w_h) +\frac12
    \scalprod*{H}{u_{m}^1}{w_h} +\frac{\tau_m}{3}\, a(u_m^1,w_h) \\
    \MoveEqLeft[8]= \frac{1}{\tau_m} \int_{J_m}(t-t_{m-1})\dual{V}{f(t)}{w_h}\,dt \quad\text{for all }w_h\in V_h.
\end{align*}
By solving this system successively at each time step and setting $u_m = u_m^0 + u_m^1$, we obtain the approximate solution $u_h$. Similarly, discontinuous Galerkin methods for $r\geq 2$ lead to $(r+1)$-stage implicit Runge--Kutta time-stepping schemes.

\section{A priori error estimates for discontinuous Galerkin methods}

To derive a priori error estimates for discontinuous Galerkin approximations, we will first show a discrete  stability result. For simplicity, we assume from now on that the bilinear form $a$ is time-independent and  symmetric, and that $V_1 = \dots = V_N =V_h$. Let $A:V\to V^*$ again denote the operator corresponding to the bilinear form $a$, i.e., $\dual{V}{Au}{v} = a(u,v)$ for all $u,v\in V$. We also assume for the sake of presentation that $f\in L^2(0,T; H)$ and that the discrete solution $u_h$ is sufficiently regular that $Au_h(t)\in H$.
\begin{theorem}\label{thm:evolution:dg_stability}
    For given $f\in\m{L2T}{H}$ and $u_0\in H$, the solution $u_h$ of \eqref{eq:evolution:dg} satisfies
    \begin{equation*}
        \sum_{m=1}^N\left(\int_{J_m}\norm*{H}{d_t u_h(t)}^2 + \norm*{H}{Au_h(t)}^2\,dt + \tau_m^{-1}\norm*{H}{\jump{u_h}_{m-1}}^2\right) \leq C \left(\norm*{L^2(0,T; H)}{f}^2 + \norm*{H}{u_0}^2\right).
    \end{equation*}
\end{theorem}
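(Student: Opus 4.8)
The plan is to establish this strong-stability (discrete maximal-regularity) estimate by testing the discrete equation \eqref{eq:evolution:dg} with the \emph{strong} test function $y_h = d_t u_h + A u_h$ and exploiting the symmetry and time-independence of $a$. This choice is admissible: since $d_t u_h|_{J_m}$ is a $V_h$-valued polynomial of degree $r-1$ it lies in $X_h$, and under the standing assumption that $u_h$ is regular enough that $A u_h(t)\in H$ (so that $A u_h|_{J_m}$ is again a $V_h$-valued polynomial in time) we also have $A u_h\in X_h=Y_h$. Because $f\in\m{L2T}{H}$, the right-hand side is the $H$-pairing $\int_0^T\scalprod*{H}{f(t)}{d_t u_h(t)+A u_h(t)}\,dt$, which by Cauchy--Schwarz and Young's inequality is bounded by $\half\sum_m\int_{J_m}(\norm*{H}{d_t u_h}^2+\norm*{H}{A u_h}^2)\,dt + C\int_0^T\norm*{H}{f}^2\,dt$; the first term is to be absorbed into the left-hand side.

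The key computation is $b_h(u_h,d_t u_h+A u_h)$. On each $J_m$ the volume integrand is $\norm*{H}{d_t u_h}^2+\norm*{H}{A u_h}^2+2a(u_h,d_t u_h)$, and the symmetry and time-independence of $a$ give the crucial identity $2a(u_h,d_t u_h)=\tfrac{d}{dt}a(u_h,u_h)$; integrating over $J_m$ therefore yields exactly the targeted dissipation $\int_{J_m}(\norm*{H}{d_t u_h}^2+\norm*{H}{A u_h}^2)$ together with the endpoint values $a(u_m,u_m)-a(u_{m-1}^+,u_{m-1}^+)$. I would then fold these endpoint values into the inter-element terms $\scalprod*{H}{\jump{u_h}_{m-1}}{(d_t u_h+A u_h)_{m-1}^+}$, using for the $A$-contribution (and $m\ge 2$, where both arguments lie in $V_h$) the polarization identity $\scalprod*{H}{\jump{u_h}_{m-1}}{A u_{m-1}^+}=\half a(u_{m-1}^+,u_{m-1}^+)-\half a(u_{m-1},u_{m-1})+\half a(\jump{u_h}_{m-1},\jump{u_h}_{m-1})$. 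After telescoping, the pure-$V_h$ terms should cancel and leave the nonnegative quantities $\tfrac12 a(u_N,u_N)$ and the jump seminorm $\tfrac12\sum_m a(\jump{u_h}_{m-1},\jump{u_h}_{m-1})$, plus the initial contributions at $t_0$ which involve the data $u_0\in H$ and must be kept as $H$-pairings $\scalprod*{H}{u_0}{A u_0^+}$ since $u_0$ need not lie in $V$. Bounding the latter by Young's inequality and absorbing as above then controls $\sum_m\int_{J_m}(\norm*{H}{d_t u_h}^2+\norm*{H}{A u_h}^2)$.

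For the weighted jump term $\sum_m k_m^{-1}\norm*{H}{\jump{u_h}_{m-1}}^2$ I would argue locally: inserting into \eqref{eq:evolution:dg} the time-constant field $\chi_{J_m}\jump{u_h}_{m-1}\in X_h$ gives $\norm*{H}{\jump{u_h}_{m-1}}^2=\int_{J_m}\scalprod*{H}{f-A u_h}{\jump{u_h}_{m-1}}\,dt-\scalprod*{H}{u_m-u_{m-1}^+}{\jump{u_h}_{m-1}}$, and since $\norm*{H}{u_m-u_{m-1}^+}=\norm*{H}{\int_{J_m}d_t u_h}\le k_m^{1/2}(\int_{J_m}\norm*{H}{d_t u_h}^2)^{1/2}$, Cauchy--Schwarz and division by $\norm*{H}{\jump{u_h}_{m-1}}$ yield $k_m^{-1}\norm*{H}{\jump{u_h}_{m-1}}^2\le C\int_{J_m}(\norm*{H}{f}^2+\norm*{H}{d_t u_h}^2+\norm*{H}{A u_h}^2)$; summing and invoking the bound from the previous step closes the estimate.

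The hard part will be the bookkeeping of the boundary and initial terms, and avoiding circularity. The telescoping only produces clean nonnegative contributions if the jump terms are combined via the polarization identities \emph{and} the first jump is kept in the $H$-inner product (so that $u_0\in V$ is never required); the contribution $\scalprod*{H}{\jump{u_h}_{m-1}}{(d_t u_h)_{m-1}^+}$ arising from the $d_t u_h$ part of the test function is indefinite and does not telescope, so it must be absorbed using an inverse estimate in time on $J_m$ (controlling the pointwise endpoint value $\norm*{H}{(d_t u_h)_{m-1}^+}$ and $\norm*{H}{A u_0^+}$ by the corresponding $\m{L2T}{H}$ integrals) together with the jump seminorm already gained. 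Getting all Young constants to align so that the dissipation terms survive with strictly positive coefficients -- and so that the $d_t u_h$ and jump estimates do not merely feed back into one another -- is the delicate point of the argument.
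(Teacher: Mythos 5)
Your overall architecture is close to the paper's, but the single combined test function $y_h=d_t u_h+Au_h$ creates a problem at the interfaces that your proposal flags but does not actually resolve, and I do not think it can be resolved by the tools you name. The jump contribution of this test function on $J_m$ is
\[
\scalprod*{H}{\jump{u_h}_{m-1}}{(Au_h)_{m-1}^+}+\scalprod*{H}{\jump{u_h}_{m-1}}{(d_t u_h)_{m-1}^+}.
\]
The first summand telescopes via polarization exactly as you describe and leaves the nonnegative remainder $\tfrac12 a(\jump{u_h}_{m-1},\jump{u_h}_{m-1})$. The second summand is the obstruction: any Young splitting that leaves a multiple of $\int_{J_m}\norm*{H}{d_t u_h(t)}^2\,dt$ small enough to be absorbed (after the inverse estimate $\norm*{H}{(d_t u_h)_{m-1}^+}^2\leq Ck_m^{-1}\int_{J_m}\norm*{H}{d_t u_h(t)}^2\,dt$) necessarily produces a term of size $k_m^{-1}\norm*{H}{\jump{u_h}_{m-1}}^2$. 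The jump quantity you have gained, $a(\jump{u_h}_{m-1},\jump{u_h}_{m-1})\geq\alpha\norm*{V}{\jump{u_h}_{m-1}}^2$, controls only a $k_m$-independent multiple of $\norm*{H}{\jump{u_h}_{m-1}}^2$, with no factor $k_m^{-1}$; and the weighted jump bound of your final paragraph is derived \emph{from} the dissipation estimate, so invoking it here is exactly the circularity you were worried about. For $r=0$ the term vanishes (since $d_t u_h=0$) and your argument closes, but for $r\geq 1$ it does not.

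The paper's proof avoids this by decoupling the two halves of your test function. Its Step 1 tests with $\chi_{J_m}Au_h$ alone; your computation for this part, including the polarization identity and the special treatment of $m=1$ via $\scalprod*{H}{u_0}{Au_0^+}$ together with the inverse estimate $k_1\norm*{H}{Au_0^+}^2\leq C\int_{J_1}\norm*{H}{Au_h(t)}^2\,dt$, is exactly right. Its Step 2 then tests with the \emph{time-weighted} function $\chi_{J_m}(t-t_{m-1})\,d_t u_h$: the weight vanishes at $t_{m-1}^+$, so the problematic interface pairing is identically zero, and the inverse estimate $\int_{J_m}\norm*{H}{y_h(t)}^2\,dt\leq Ck_m^{-1}\int_{J_m}(t-t_{m-1})\norm*{H}{y_h(t)}^2\,dt$ recovers the unweighted dissipation afterwards. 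Your final paragraph for the weighted jumps coincides with the paper's Step 3. If you want to keep a one-shot energy identity, you must build this weight (or some other device that annihilates $(d_t u_h)_{m-1}^+$ in the interface pairing) into the $d_t u_h$ part of the test function; as written, the proposal has a genuine gap at this point.
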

\begin{proof}
    We estimate in turn each term on the left-hand side by inserting suitable test functions $y_h$ in \eqref{eq:evolution:dg}.

    \emph{Step 1.} To estimate $ \norm*{H}{A u_h(t)}$, we set $y_h = \1_{J_m}Au_h$ for $1\leq m\leq N$ to obtain
    \begin{equation*}
        \begin{multlined}
            \int_{J_m} \scalprod*{H}{d_t u_h(t)}{A u_h(t)} + \norm*{H}{A u_h(t)}^2\,dt + \scalprod*{H}{\jump{u_h}_{m-1}}{(Au_h)_{m-1}^+}\\ = \int_{J_m} \scalprod*{H}{f(t)}{A u_h(t)}\,dt.
        \end{multlined}
    \end{equation*}
    Due to the bilinearity and symmetry of $a$, we have
    \begin{equation*}
        \begin{aligned}
            \int_{J_m}  \scalprod*{H}{d_t  u_h(t)}{Au_h(t)} \,dt &=  \int_{J_m}  a(u_h(t),d_t u_h(t)) \,dt
            =\int_{J_m} \frac{d}{dt}\left(\frac12 a(u_h(t),u_h(t)) \right)\,dt\\
            &= \frac12 a(u_m,u_m) -\frac12 a(u_{m-1}^+,u_{m-1}^+).
        \end{aligned}
    \end{equation*}
    Similarly, since $A$ is time-independent,
    \begin{equation*}
        \begin{aligned}
            \scalprod*{H}{\jump{u_h}_{m-1}}{(Au_h)_{m-1}^+}
            &= a(\jump{u_h}_{m-1},u_{m-1}^+)\\
            &= \frac12 a(\jump{u_h}_{m-1},u_{m-1}^+ +u_{m-1}+ \jump{u_h}_{m-1})\\
            &= \frac12 a(u_{m-1}^+,u_{m-1}^+) - \frac12 a(u_{m-1},u_{m-1})
            + \frac12 a(\jump{u_h}_{m-1},\jump{u_h}_{m-1}).
        \end{aligned}
    \end{equation*}
    Inserting these into the bilinear form $b_h(u_h,y_h)$ yields
    \begin{multline*}
        a(\jump{u_h}_{m-1},\jump{u_h}_{m-1}) + a(u_m,u_m) - a(u_{m-1},u_{m-1})+ 2\int_{J_m}\norm*{H}{Au_h(t)}^2\,dt  \\= 2\int_{J_m}\scalprod*{H}{f}{Au_h(t)}\,dt.
    \end{multline*}
    Summing over all $1\leq m\leq N$ yields
    \begin{multline*}
        \sum_{m=1}^N a(\jump{u_h}_{m-1},\jump{u_h}_{m-1})+  \sum_{m=1}^N\int_{J_m}2 \norm*{H}{Au_h(t)}^2\,dt \\ \leq \sum_{m=1}^N\int_{J_m} 2\scalprod*{H}{f(t)}{Au_h(t)}\,dt + a(u_0, u_0).
    \end{multline*}
    For $2\leq m\leq N$, we can simply use coercivity of $a$ to eliminate the jump terms  and apply Young's inequality to $\scalprod*{H}{f(t)}{Au_h(t)}$ to absorb the norm of $A u_h$ on $J_m$ in the left-hand side.
    For $m=1$, we use that
    \begin{equation*}
        a(\jump{u_h}_0,\jump{u_h}_0) - a(u_0,u_0) = a(u_0^+,u_0^+) -2 a(u_0,u_0^+)
    \end{equation*}
    and for $\eps>0$ the generalized Young's inequality
    \begin{equation*}
        a(u_0,u_0^+) = \scalprod*{H}{u_0}{A u_0^+} \leq \frac{\eps}{2} \norm*{H}{A u_0^+}^2 + \frac{1}{2\eps}\norm*{H}{u_0}^2.
    \end{equation*}
    Since $t\mapsto\norm*{H}{Au_h(t)}^2$ is a polynomial in $t$ of degree up to $2r$ on $J_1$, we have the estimate
    \begin{equation*}
        \tau_1 \norm*{H}{Au_0^+}^2 \leq C \int_{t_0}^{t_1}\norm*{H}{Au_h(t)}^2\,dt.
    \end{equation*}
    Choosing $\eps>0$ small enough such that $\eps C \tau_1^{-1} < 1$ yields
    \begin{equation}\label{eq:evolution:dg_stab1}
        \sum_{m=1}^N\int_{J_m} \norm*{H}{Au_h(t)}^2\,dt \leq C\left(\int_0^T \norm*{H}{f(t)}^2\,dt + \norm*{H}{u_0}^2\right).
    \end{equation}

    \emph{Step 2.} For the bound on $d_t u_h$, we use the inverse estimate
    \begin{equation*}
        \int_{J_m}\norm*{H}{y_h(t)}^2\,dt \leq C \tau_m^{-1}\int_{J_m}(t-t_{m-1})\norm*{H}{y_h(t)}^2\,dt
    \end{equation*}
    for all $y_h\in P_r(t_{m-1},t_m;V_h)$, which follows from a scaling argument in time and equivalence of norms on the finite-dimensional space $P_r(0,1;V_h)$. Now choose $y_h = \1_{J_m}(t-t_{m-1})d_t u_h$ for $1\leq m\leq N$. Since $y_{m-1}^+ = 0$, we have using the  Cauchy--Schwarz inequality that
    \begin{equation*}
        \begin{aligned}
            \int_{J_m}(t-t_{m-1})\norm*{H}{d_t u_h(t)}^2\,dt
            &= \int_{J_m}(t-t_{m-1}) \scalprod*{H}{f(t)-A u_h(t)}{d_t u_h(t)} \,dt\\
            &\leq \left(\int_{J_m}(t-t_{m-1}) \norm*{H}{f(t)-Au_h(t)}^2\,dt\right)^{\half}\\
            \MoveEqLeft[-5]
            \cdot\left(\int_{J_m}(t-t_{m-1}) \norm*{H}{d_t u_h(t)}^2\,dt\right)^{\half}.
        \end{aligned}
    \end{equation*}
    Applying the inverse estimate for $y_h = d_t u_h$, the Cauchy--Schwarz inequality for the first integral and estimating the norm there using \eqref{eq:evolution:dg_stab1} yields
    \begin{equation}\label{eq:evolution:dg_stab2}
        \sum_{m=1}^N\int_{J_m} \norm*{H}{d_t u_h(t)}^2\,dt  \leq C \left(\int_0^T\norm*{H}{f(t)}^2\,dt + \norm*{H}{u_0}^2\right).
    \end{equation}

    \emph{Step 3.} It remains to estimate the jump terms. For this, we set $y_h = \1_{J_m} \jump{u_h}_{m-1}$ for $1\leq m\leq N$. This yields
    \begin{equation*}
        \begin{aligned}
            \norm*{H}{\jump{u_h}_{m-1}}^2 &= \int_{J_m} \scalprod*{H}{f(t)-Au_h(t)}{\jump{u_h}_{m-1}} - \scalprod*{H}{d_t u_h(t)}{\jump{u_h}_{m-1}}\,dt\\
            &\leq \frac{\tau_m}{2}\int_{J_m}\norm*{H}{f(t)-A u_h(t) - d_t u_h(t)}^2 \,dt
            +\frac{1}{2\tau_m}\int_{J_m} \norm*{H}{\jump{u_h}_{m-1}}^2\,dt,
        \end{aligned}
    \end{equation*}
    where we have used the generalized Young's inequality. Since $\jump{u_h}_{m-1}$ is constant in time, we have
    \begin{equation*}
        \int_{J_m} \norm*{H}{\jump{u_h}_{m-1}}^2\,dt = \tau_m  \norm*{H}{\jump{u_h}_{m-1}}^2.
    \end{equation*}
    From \eqref{eq:evolution:dg_stab1} and \eqref{eq:evolution:dg_stab2}, we thus obtain
    \begin{equation*}
        \sum_{m=1}^N \tau_m^{-1} \norm*{H}{\jump{u_h}_{m-1}}^2 \leq  C \left(\int_0^T\norm*{H}{f(t)}^2\,dt + \norm*{H}{u_0}^2\right),
    \end{equation*}
    which completes the proof.
\end{proof}

As before, we will estimate the error $u-u_h$ using the approximation properties of the space $X_h$. Due to the discontinuity of the functions in $X_h$, we can use a local projection on each time intervall $J_m$ to bound the approximation error. It will be convenient to split this error into two parts: one due to the temporal and one due to the spatial discretization.

We first consider the temporal discretization error. Let
\begin{equation*}
    X_r = \setof{y_r\in  \m{L2T}{V}}{y_r|_{J_m}\in P_{r}(t_{m-1},t_{m};V),\ 1\leq m \leq N}
\end{equation*}
and consider the local projection $\pi_r u \in X_r$ of $u\in X$ defined by $\pi_r u (t_0) = u(t_0)$ and
\begin{equation*}
    \left\{\begin{aligned}
            \pi_r u (t_{m}) &= u(t_{m}),\\
            \int_{J_m} (u(t)-\pi_r u (t))\phi(t) \,dt &= 0 & \text{ for all }\phi\in P_{r-1}(J_m;V),
    \end{aligned}\right.
\end{equation*}
for all $1\leq m\leq N$. (For $r=0$, the second condition is void.) This projection is well-defined since $u\in X$ is continuous in time, and hence the interpolation conditions make sense. Using the \nameref{thm:bramble} and a scaling argument, we obtain for sufficiently smooth $u$ the following error estimate for every $t\in J_m$, $1\leq m\leq N$:
\begin{equation}\label{eq:evolution:proj_time}
    \norm*{H}{u(t)-\pi_r u (t)} \leq C \tau_m^{r+1} \int_{J_m}\norm*{H}{d_t^{r+1}u(\tau)}\,d\tau.
\end{equation}

Similarly, we assume that for each $t\in [0,T]$ the spatial interpolation error in $V_h$ satisfies the estimate
\begin{equation*}
    \norm*{H}{u(t)-\calI_h u(t)}  + h \norm*{V}{u(t)-\calI_h u(t)} \leq C h^{s+1}\norm*{H^{s+1}(\Omega)}{u(t)}.
\end{equation*}
(This is the case, e.g., if $H=\m{L2}$, $V=\m{H10}$, and $V_h$ consists of continuous piecewise polynomials of degree $s\geq 1$; see \cref{thm:interp:global}.)

Finally, we will make use of a duality argument, which requires considering for given $\phi\in H$ the solution of the adjoint equation
\begin{equation*}
    b_h(y_h,z_h) = 0 \qquad\text{with}\quad z_N = \phi.
\end{equation*}
Integrating by parts on each interval $J_m$ and rearranging the jump terms, we can express the adjoint equation as
\begin{equation}\label{eq:evolution:adjoint}
    \begin{multlined}[t][0.9\displaywidth]
        \sum_{m=1}^N  \int_{J_m} - \scalprod*{H}{y_h(t)}{d_t z_h(t)} + a(y_h(t),z_h(t)) \,dt\\ + \sum_{m=1}^{N-1}\scalprod*{H}{y_m}{\jump{z_h}_m} + \scalprod*{H}{y_N}{z_N} = \scalprod*{H}{y_N}{\phi}.
    \end{multlined}
\end{equation}
This can be interpreted as a backwards in time equation with \enquote{initial value} $z_h(t_N) = \phi$.
Making the substitution $t\mapsto t_N -t$, we can apply \cref{thm:evolution:dg_stability} to obtain
\begin{equation}\label{eq:evolution:adjoint_stability}
    \sum_{m=1}^N\int_{J_m}\norm*{H}{d_t z_h(t)}^2 + \norm*{H}{A^*z_h(t)}^2\,dt + \sum_{m=1}^{N}\norm*{H}{\jump{z_h}_{m-1}}^2 \leq C \norm*{H}{\phi}^2,
\end{equation}
where $A$ is again the operator corresponding to the bilinear form $a$ and we have used that $\tau_m<1$ for $N$ sufficiently large.

Now everything is in place to show the following a priori estimate for the discrete solution at each time step.\footnote{It is possible -- though more involved -- to show error estimates for arbitrary $t\in[0,T]$; see, e.g., \cite[Theorem 12.2]{Thomee:2006}.}
\begin{theorem}
    For $r=0$, the solutions $u\in X$ to \eqref{eq:evolution:weak} and $u_h\in X_h$ to \eqref{eq:evolution:dg} satisfy
    \begin{equation*}
        \norm*{H}{u(t_m)-u_m} \leq C \max_{1\leq n\leq m} \left(h^{s+1}\sup_{t\in J_n}\norm*{H^{s+1}(\Omega)}{u(t)} + \tau_n \int_{J_n} \norm*{H}{d_t u}\,dt\right)
    \end{equation*}
    for all $1\leq m\leq N$.
\end{theorem}
\begin{proof}
    We write the error $e(t)$ at almost every $t\in (0,T]$ as
    \begin{equation*}
        \begin{aligned}
            e(t) := u(t) - u_h(t) &= (u(t) - \calI_h\pi_r u (t)) + (\calI_h\pi_r u (t) - u_h(t))\\
            &=: e_1(t) + e_2(t).
        \end{aligned}
    \end{equation*}
    (Note that pointwise a.e., $e(t)\in V$ since $V_h\subset V$, but as a function in time, only $e_2\in X_h$ is in a meaningful function space.)
    For $t=t_m$, we have $\pi_r u (t_m) = u(t_m)$ by construction, and hence
    \begin{equation}\label{eq:evolution:apriori1}
        \norm*{H}{ e_1(t_m)} = \norm*{H}{\calI_h u(t_m) - u(t_m)} \leq C h^{s+1}\norm*{H^{s+1}(\Omega)}{u(t_m)}.
    \end{equation}

    To bound $e_2(t_m)$, we use the duality trick. For arbitrary $\phi\in H$, let $z_h$ denote the solution of \eqref{eq:evolution:adjoint} with $N=m$. Since we have a consistent approximation, we can insert $u$ and $u_h$ in \eqref{eq:evolution:dg} for $y_h\in Y_h \subset Y$ and subtract to deduce that
    \begin{equation*}
        0 = b_h(e,y_h) = b_h(e_1,y_h) + b_h(e_2,y_h)\quad \text{ for all } y_h\in X_h.
    \end{equation*}
    From this and $d_t (z_h|_{J_n}) = 0$, we obtain with $y_h=e_2\in X_h$ that $z_h$ satisfies
    \begin{equation*}
        \begin{aligned}[t]
            \scalprod*{H}{e_2(t_m)}{\phi} &= b_h(e_2,z_h) = - b_h(e_1,z_h)\\
            &= -\sum_{n=1}^m\int_{J_n} a(e_1(t),z_h(t))\,dt - \sum_{n=1}^{m-1}\scalprod*{H}{e_1(t_n)}{\jump{z_h}_n} - \scalprod*{H}{e_1(t_m)}{\phi}.
        \end{aligned}
    \end{equation*}
    Introducing $\scalprod*{H}{Ae_1}{z_h(t)} = a(e_1,z_h)$ as above, using the Cauchy--Schwarz inequality, and estimating $e_1$ by its maximum pointwise in time yields
    \begin{equation*}
        |\scalprod*{H}{e_2(t_m)}{\phi}| \leq \left( \sup_{t\leq t_m}\norm*{H}{e_1(t)}\right) \left(\sum_{n=1}^m \int_{J_n}\norm*{H}{A^*z_h(t)}\,dt + \sum_{n=1}^{m-1}\norm*{H}{\jump{z_h}_{n}} + \norm*{H}{\phi}\right).
    \end{equation*}
    From the dual definition of the norm in $H$ and estimate \eqref{eq:evolution:adjoint_stability}, we obtain
    \begin{equation}\label{eq:evolution:apriori2}
        \norm*{H}{e_2(t_m)} \leq C \max_{1\leq n\leq m} \sup_{t\in J_n}\norm*{H}{e_1(t)}.
    \end{equation}

    It remains to bound $e_1(t)$ for arbitrary $t\in J_n$, which we do by estimating
    \begin{equation}\label{eq:evolution:apriori3}
        \begin{aligned}[t]
            \norm*{H}{e_1(t)}
            & = \norm*{H}{u(t) - \calI_h\pi_r u (t)}\\
            &\leq \norm*{H}{u(t) - \pi_r u (t)} + \norm*{H}{\pi_r u (t) - \calI_h\pi_r u (t)}\\
            &\leq  C\tau_n \int_{J_n}\norm*{H}{d_t u(\tau)}\,d\tau + Ch^{s+1}\norm*{H^{s+1}(\Omega)}{u(t)} ,
        \end{aligned}
    \end{equation}
    where we have used that the spatial approximation properties are independent of time and that $\pi_r u(t)$ has the same spatial regularity as $u(t)$.
    Combining \eqref{eq:evolution:apriori1}, \eqref{eq:evolution:apriori2} and \eqref{eq:evolution:apriori3} yields the claim.
\end{proof}

For $r=1$, one can proceed similarly (using that $d_t z_h|_{J_m} \in P_{r-1}(J_m,V_h)$, and hence that $\int_{J_n}\scalprod*{H}{d_t z_h(t)}{u(t)-\pi_r u(t)}dt$ vanishes by definition of $\pi_r$) to obtain\footnote{e.g., \cite[Theorem 12.7]{Thomee:2006}}
\begin{theorem}
    For $r=1$, the solutions $u\in X$ to \eqref{eq:evolution:weak} and $u_h\in X_h$ to \eqref{eq:evolution:dg} satisfy
    \begin{equation*}
        \norm*{H}{u(t_m)-u_m} \leq C \max_{1\leq n\leq m} \left(h^{s+1}\sup_{t\in J_n}\norm*{H^{s+1}(\Omega)}{u(t)} + \tau_n^3 \int_{J_n} \norm{H2}{d_t^2 u(t)}\,dt\right)
    \end{equation*}
    for all $1\leq m\leq N$.
\end{theorem}

The general case (including time-dependent bilinear form $a$ and different discrete spaces $V_m$) can be found in \cite{Chrysafinos:2006a}.

\backmatter

\printbibliography

\end{document}